\newcommand{\Ueberschrift}{Categories of abelian varieties over finite fields II:  \\[1ex] 
Abelian varieties over \texorpdfstring{$\bF_q$}{finite fields}  and Morita equivalence}
\newcommand{\Kurztitel}{Abelian varieties over \texorpdfstring{$\bF_q$}{finite fields}  and Morita equivalence}
\DeclareMathOperator{\rH}{H}
\DeclareMathOperator{\rM}{M}
\newcommand{\bF}{{\mathbb F}}
\newcommand{\bN}{{\mathbb N}}
\newcommand{\bQ}{{\mathbb Q}}
\newcommand{\bZ}{{\mathbb Z}}
\newcommand{\cD}{{\mathscr D}}
\newcommand{\cF}{{\mathscr F}}
\newcommand{\cS}{{\mathscr S}}
\newcommand{\cV}{{\mathscr V}}
\newcommand{\dA}{{\mathcal A}}
\newcommand{\dO}{{\mathcal O}}
\newcommand{\dR}{{\mathcal R}}
\newcommand{\dS}{{\mathcal S}}
\newcommand{\fO}{{\mathfrak O}}
\newcommand{\fo}{{\mathfrak o}}
\newcommand{\fp}{{\mathfrak p}}
\DeclareSymbolFont{cyrletters}{OT2}{wncyr}{m}{n}
\DeclareMathSymbol{\Sha}{\mathalpha}{cyrletters}{"58}
\newcommand{\surj}{\twoheadrightarrow} 
\newcommand{\inj}{\hookrightarrow}
\DeclareMathOperator{\id}{id}
\DeclareMathOperator{\pr}{pr}
\DeclareMathOperator{\Hom}{Hom}
\DeclareMathOperator{\End}{End}
\DeclareMathOperator{\im}{im}
\DeclareMathOperator{\AV}{{\sf AV}}
\DeclareMathOperator{\Mod}{Mod}
\DeclareMathOperator{\Spec}{Spec}
\DeclareMathOperator{\Br}{Br}
\newcommand{\inv}{{\rm inv}}
\DeclareMathOperator{\Frob}{Frob}
\DeclareMathOperator{\ord}{ord}
\DeclareMathOperator{\Ext}{Ext}
\DeclareMathOperator{\Gal}{Gal}
\newcommand{\expo}{r}
\newcommand{\otherw}{{w'}}
\newcommand{\ellplace}{v}
\newcommand{\pplace}{v}
\newcommand{\ep}{\varepsilon}
\newcommand{\ph}{\varphi}
\newcommand{\com}{{\rm com}}
\newcommand{\op}{{\rm op}}
\DeclarePairedDelimiter\abs{\lvert}{\rvert}
\DeclareMathOperator{\lcm}{lcm}
\DeclareMathOperator{\rank}{rk}
\newcommand{\Ztf}{{\bZ\text{\rm -tf}}}
\newcommand{\Zltf}{{\bZ_\ell\text{\rm -tf}}}
\newcommand{\Zptf}{{\bZ_p\text{\rm -tf}}}
\newtheorem{thm}{Theorem}[section]
\newtheorem{prop}[thm]{Proposition}
\newtheorem{lem}[thm]{Lemma}
\newtheorem{cor}[thm]{Corollary}
\theoremstyle{definition}
\newtheorem{defi}[thm]{Definition}
\theoremstyle{remark}
\newtheorem{rmk}[thm]{Remark}
\newtheorem{ex}[thm]{Example}
\newenvironment{pro*}[1][Proof]{{\it{#1:}} }{}
\newenvironment{pro**}[1][]{{\it{#1}} }{\hfill $\square$}
\numberwithin{equation}{section}
\begin{document}

\hrule width\hsize

\vskip 0.5cm

\title[\Kurztitel]{\Ueberschrift} 
\author{Tommaso Giorgio Centeleghe}
\address{Tommaso Giorgio Centeleghe, 69120 Heidelberg, Germany}
\email{tommaso.centeleghe@gmail.com}

\author{Jakob Stix}
\address{Jakob Stix, Institut f\"ur Mathematik, 
Goethe--Universit\"at Frankfurt, Robert-Mayer-Stra\ss e {6--8},
60325~Frankfurt am Main, Germany}
\email{stix@math.uni-frankfurt.de}

\thanks{The second author acknowledges support by Deutsche  Forschungsgemeinschaft  (DFG) through the Collaborative Research Centre TRR 326 "Geometry and Arithmetic of Uniformized Structures", project number 444845124.}

\date{May 9, 2022} 
\dedicatory{dedicated to Moshe Jarden on the occasion of his 80th birthday} 

\maketitle

\begin{quotation} 
\noindent \small {\bf Abstract} --- The category of abelian varieties over 
$\bF_q$ is shown to be anti-equivalent to a category of $\bZ$-lattices that are 
modules for a non-commutative pro-ring of endo\-morphisms of a suitably chosen direct system of  abelian varieties over $\bF_q$.  On full subcategories cut out by a finite set $w$ of conjugacy classes of Weil $q$-numbers, the anti-equivalence is represented by what we call \emph{$w$-locally projective} abelian varieties.
\end{quotation}

\setcounter{tocdepth}{1} {\scriptsize \tableofcontents}

\section{Introduction}
\label{sec:intro}

\subsection{The scene} 
\label{sec:introduction_categories_of_reflexive_modules}
Let $q=p^\expo$ be a power of a prime number $p$, and let $\bF_q$ be a finite field with $q$ elements.
In this paper we generalize \cite{CS:part1} to the category $\AV_{\bF_q}$ of abelian varieties over $\bF_q$. Our main result says that there is a non-commutative pro-ring $\dS_q$ and an anti-equivalence
\begin{equation}\label{eq:mainresult}
T:{\AV_{\bF_q}} \longrightarrow \Mod_{\Ztf}(\dS_q)
\end{equation}
between $\AV_{\bF_q}$ and the category of left $\dS_q$-modules that are free and of finite rank over $\bZ$.
The ring $\dS_q$ arises in the construction of \eqref{eq:mainresult} and admits a description as a pro-ring
\[
\dS_q = \varprojlim_w S_w
\]
for $\bZ$-orders $S_w$ of certain finite $\bQ$-algebras. The construction of
$T$ involves several choices. As a consequence $\dS_q$ is not unique, however, its center $\dR_q$ is unique and can be described explicitly in terms of the set of Weil $q$-numbers.  Besides having its own  interest, our result provides a non-commutative algebra into which potentially every question on $\AV_{\bF_q}$ can be translated. 

It will be clear to the reader that our work is permeated by Honda--Tate theory \cite{Honda, Tate:bourbakiHondaTate} and by Tate's result on the local structure of the $\Hom$-groups \cite{Tate:endomorphisms}. 
Since these milestones for the subject were placed in the 1960s,
variants of the anti-equivalence \eqref{eq:mainresult} for several subcategories of $\AV_{\bF_q}$ have been studied by many mathematicians, with reviving interest in recent years.
We recall  results on this classical topic in a similar spirit as ours, and apologize in advance for possible omissions.
Waterhouse \cite{Wa} studied isomorphism classes and endomorphism rings of certain simple objects of $\AV_{\bF_q}$. More recently,
Yu \cite[Theorem 3.1]{Yu} classified isomorphism classes of objects of $\AV_{\bF_p}$ whose characteristic polynomial of Frobenius is relatively prime to $x^2-p$. 
Giraud \cite[\S1]{Giraud:Shimura} and Waterhouse \cite[Appendix]{Wa} (see also \cite[\S4]{JKPRSBT}) exploited a functor due to Serre and Tate in the converse direction as \eqref{eq:mainresult} to study elliptic curves.
Deligne \cite{De} used the canonical lifting of Serre and Tate to give a complete description of the full subcategory of $\AV_{\bF_q}$ consisting of ordinary objects.
This construction was further developed by Howe \cite{Howe2} to solve questions concerning polarizations,
a topic also addressed in a similar fashion by Bergstr\"om, Karemaker and Marseglia \cite{BKM}.
The canonical lifting technique was extended by Oswal and Shankar \cite{OS} and used to classify objects in isogeny classes of simple, almost ordinary abelian varieties. 
Classification of abelian varieties isogenous to powers of a given simple abelian variety (often an elliptic curve)
have been the focus of attention in \cite{Kani}, \cite{Yu}, \cite{JKPRSBT}. 

Our emphasis is on a uniform categorical description of all of $\AV_{\bF_q}$ in terms of modules.

\subsection{Frobenius and Weil numbers}\label{subsection:notation}
Before giving more details on our method and stating a precise theorem we recall some notation and terminology.
Two Weil $q$-numbers $\pi$ and $\pi'$ are \emph{conjugate} if there is an isomorphism
$\bQ(\pi) \simeq \bQ(\pi')$ sending $\pi$ to $\pi'$. The set of conjugacy classes of Weil $q$-numbers is denoted by $W_q$.
When no confusion is likely to arise, we may suppress the distinction between a Weil number and its conjugacy class. For any $\pi \in W_q$, we fix once and for all an $\bF_q$-simple abelian variety $B_\pi$ belonging to the isogeny class of $\AV_{\bF_q}$ defined by $\pi$ according to Honda--Tate theory \cite{Honda, Tate:bourbakiHondaTate}.

The \emph{Weil support} $w(A)$ of an object $A$ of $\AV_{\bF_q}$ is the finite subset of $W_q$ consisting of
Weil numbers associated to the simple factors of $A$. This is to say that there is an $\bF_q$-isogeny
\[
A\longrightarrow\prod_{\pi\in w(A)}B_\pi^{n_\pi},
\]
with positive multiplicities $n_\pi$.
For a subset $w\subseteq W_q$, finite or infinite, we denote by $\AV_w$ the full subcategory of $\AV_{\bF_q}$ whose objects are those
abelian varieties $A$ such that $w(A)\subseteq w$.

For $w\subseteq W_q$ finite, the \emph{minimal central order $R_w$} is the largest quotient through which the ring homomorphism
\begin{equation}\label{FV-q}
\bZ[F,V]/(FV-q)\longrightarrow \bQ(w) := \prod_{\pi\in w}\bQ(\pi),
\end{equation}
sending $F$ and $V$ to the diagonal images of $\pi$ and $q/\pi$ respectively, factors. 
As $w$ ranges through the finite subsets of $W_q$, the $R_w$ form a pro-ring
$\dR_q=(R_w,r_{w,\otherw})$,
where the transition maps $r_{w,\otherw}:R_\otherw \to R_w$ are the natural surjections, defined when $w\subseteq \otherw$.
The category $\AV_w$ has a natural $R_w$-linear structure which varies compatibly as $w$ increases. This datum forms what we refer to as the
$\dR_q$-linear structure on $\AV_{\bF_q}$. 

\subsection{The main result}

The strategy for constructing the anti-equivalence \eqref{eq:mainresult} is similar to that followed in \cite{CS:part1}. We first construct, for any finite subset $w\subseteq W_q$, what we call a \emph{$w$-balanced} abelian variety $A_w$, see  Definition~\ref{defi:balanced}. We then show that $A_w$ represents  an anti-equivalence
\begin{equation}
\label{eq:functorTwIntro}
T_w \colon \AV_w \longrightarrow \Mod_{\Ztf}(S_w), \qquad T_w(X) =  \Hom_{\bF_q}(X,A_w),
\end{equation}
where $S_w = \End_{\bF_q}(A_w)$ and $\Mod_{\Ztf}(S_w)$ is the category of left $S_w$-modules that are 
free and of finite rank over $\bZ$. 
The $w$-balanced object $A_w$ and hence $S_w$ is not unique.
However the center of $S_w$ is isomorphic to $R_w$ under the map induced by the $R_w$-linear structure of $\AV_w$.

The second step is to show that the objects $A_w$ can be chosen compatibly
as $w$ increases. More precisely, we prove that there is a family of $w$-balanced abelian varieties $A_w$ and an ind-object $\mathcal{A}=(A_w, \ph_{w,\otherw})$ such that the corresponding ind-representable functor
\[
\Hom_{\bF_q}(-,\mathcal{A})=\varinjlim_w\Hom_{\bF_q}(-,A_w)
\]
interpolates the anti-equivalences \eqref{eq:functorTwIntro} for each $w$. An important by-product of the construction of $\mathcal{A}$
is that the endomorphism rings $S_w$ form a projective system
\[
\dS_q = \End_{\bF_q}(\mathcal{A}) = (S_w, s_{w,\otherw})
\]
with $r_{w,\otherw}$-linear and surjective transition maps $s_{w,\otherw}:S_\otherw \to S_w$, for $w\subseteq \otherw$.
Denote by 
\[
\Mod_{\Ztf}(\dS_q)
\]
the category of $\dS_q$-modules for which the structural action factors through some $S_w$, and which as $\bZ$-modules
are free of finite rank. This category is $\dR_q$-linear and for each object there is a clear notion of support, parallel to that in $\AV_q$.
Our main result can be formulated as follows. 

\begin{thm}
\label{MainThm} 
Let $q=p^\expo$ be a power of a prime number $p$. There exists an ind-abelian variety $\dA = (A_w,\ph_{w,\otherw})$ such that $A_w$ is $w$-balanced for all finite $w \subseteq W_q$ and the transition maps are inclusions. With $\dS_q = \End_{\bF_q}(\dA)$, the contravariant and  $\dR_q$-linear functor
\[
T \colon \AV_{\bF_q} \longrightarrow  \Mod_{\Ztf}(\dS_q), \qquad  T(X) = \Hom_{\bF_q}(X,\mathcal{A})
\]
is an anti-equivalence of categories which preserves the support.
Moreover, the $\bZ$-rank of $T(X)$ 
is equal to $4\expo\dim(X)$. 
\end{thm}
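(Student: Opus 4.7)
The plan is to separate the argument into two stages that together prove Theorem~\ref{MainThm}: first, exhibit a compatible family $(A_w, \ph_{w,\otherw})$ of $w$-balanced abelian varieties with inclusion transition maps; second, verify that the resulting ind-representable functor assembles into the claimed anti-equivalence, using the level-$w$ anti-equivalences $T_w$ of \eqref{eq:functorTwIntro} as building blocks.

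For the construction of $\dA$, I would exploit the multiplicativity of the balanced condition along the Weil support. For each $\pi \in W_q$, fix once and for all a $\{\pi\}$-balanced abelian variety $C_\pi$; by its local structure $C_\pi$ is isomorphic to $B_\pi^{n_\pi}$ for a multiplicity $n_\pi$ depending only on $\pi$ (and encoding the Brauer invariants of $\End^0(B_\pi)$). I then set
\[
A_w = \prod_{\pi \in w} C_\pi,
\]
so that for $w \subseteq \otherw$ there is a canonical closed immersion $\ph_{w,\otherw} \colon A_w \inj A_{\otherw}$ given by inclusion of factors. The first critical check is that $A_w$ is $w$-balanced in the sense of Definition~\ref{defi:balanced}: because distinct $\pi, \pi' \in W_q$ yield simple factors with $\Hom_{\bF_q}(B_\pi, B_{\pi'}) = 0$, one has $\End_{\bF_q}(A_w) = \prod_{\pi \in w} \End_{\bF_q}(C_\pi)$, and the center of this product surjects onto $R_w$. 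The same vanishing makes the projection maps $s_{w,\otherw} \colon S_{\otherw} \surj S_w$ surjective and $r_{w,\otherw}$-linear, giving $\dS_q = \varprojlim_w S_w$.

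With $\dA$ fixed, the functor $T(X) = \varinjlim_w \Hom_{\bF_q}(X, A_w)$ is well defined. The decisive observation is that for $X \in \AV_{\bF_q}$ and any $w \supseteq w(X)$, the transition map $\Hom_{\bF_q}(X, A_w) \to \Hom_{\bF_q}(X, A_{\otherw})$ is an isomorphism for every $\otherw \supseteq w$: the complementary factor $\prod_{\pi \in \otherw \setminus w} C_\pi$ has Weil support disjoint from $w(X)$ and receives no non-zero morphism from $X$. Hence the direct system stabilizes at $T_w(X)$, the $\dS_q$-action on $T(X)$ factors through $S_w$, and $T(X) \in \Mod_{\Ztf}(\dS_q)$ has support precisely $w(X)$. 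This identification reduces the fully faithfulness of $T$ on $\AV_w$ to that of the already established $T_w$; essential surjectivity follows symmetrically, since any $M \in \Mod_{\Ztf}(\dS_q)$ has $\dS_q$-action factoring through some $S_w$ by definition of the pro-module category, and $T_w$ then produces $X \in \AV_w$ with $T(X) \cong M$ as $\dS_q$-modules.

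For the rank formula it suffices, by additivity under isogeny decomposition, to check $\rank_{\bZ} T(X) = 4\expo\dim(X)$ for $X = B_\pi$. There $T(X) = \Hom_{\bF_q}(B_\pi, C_\pi) \cong \End_{\bF_q}(B_\pi)^{n_\pi}$, of $\bZ$-rank $n_\pi [\bQ(\pi):\bQ] e_\pi^2$ with $e_\pi = 2\dim(B_\pi)/[\bQ(\pi):\bQ]$; the $\{\pi\}$-balanced calibration, which forces $n_\pi e_\pi = 2\expo$, then yields the desired value $4\expo\dim(B_\pi)$. The main obstacle I anticipate lies in the first stage: rigorously verifying that the product $\prod_{\pi \in w} C_\pi$ satisfies the full $w$-balanced condition of Definition~\ref{defi:balanced}. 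Everything downstream depends on this local-to-global compatibility, and while the definition should be engineered to accommodate it, the verification requires unpacking the defining properties together with the surjection of the center of $\End_{\bF_q}(A_w)$ onto $R_w$ traced through the ring \eqref{FV-q}.
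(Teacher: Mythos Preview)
Your second stage---showing that the ind-representable functor assembles the level-$w$ anti-equivalences into a global one, together with the rank computation---is fine and essentially matches the paper's argument in Section~\ref{sec:proof} and Proposition~\ref{prop:rankOfTw}.

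The gap is in the first stage, and it is precisely the point you yourself flag as worrisome: the product $A_w = \prod_{\pi \in w} C_\pi$ is \emph{not} $w$-balanced in general. The $w$-balanced condition of Definition~\ref{defi:balanced} demands $T_\ell(A_w) \simeq (R_w \otimes \bZ_\ell)^{\oplus 2\expo}$, whereas your product has $T_\ell\big(\prod_\pi C_\pi\big) \simeq \big(\prod_\pi R_\pi \otimes \bZ_\ell\big)^{\oplus 2\expo}$. The natural map $R_w \to \prod_{\pi \in w} R_\pi$ is injective but typically not surjective: by \eqref{eq:RwtensorZell} one has $R_w \otimes \bZ_\ell \simeq \bZ_\ell[x]/(P_w(x))$, and this equals $\prod_\pi \bZ_\ell[x]/(P_\pi(x))$ only when the $P_\pi$ are pairwise coprime modulo $\ell$. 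Whenever two Weil numbers in $w$ admit a congruence at $\ell$, your lattice is strictly larger than the balanced one, and $\prod_\pi R_\pi$ is not even projective over $R_w \otimes \bZ_\ell$, so your $A_w$ is not $w$-locally projective either and Theorem~\ref{thm:truncatedfullyfaithful} does not apply. The analogous failure occurs at $p$: $\cD_w$ is not the product of the $\cD_\pi$. This is exactly why Theorem~\ref{thm:integralstructure} constructs $A_w$ only as an isogenous modification of the na\"ive product, adjusting the lattice at finitely many bad primes.

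Because of this, there is no canonical choice of $A_w$, and no obvious inclusion $A_v \hookrightarrow A_w$ compatible across all $v \subseteq w$. The paper resolves this non-canonicity by a compactness argument: for each finite $w$ the set $Z(w)$ of isomorphism classes of $w$-balanced abelian varieties is finite and non-empty, and Proposition~\ref{prop:formal_compatiblefunctors} provides transition maps $Z(w) \to Z(v)$ (taking the subvariety generated by images of objects with support in $v$); a point of $\varprojlim_w Z(w)$ then exists and, after passing to a cofinal chain, can be upgraded to an actual ind-object with inclusion maps. Your argument would need to be replaced by something of this kind.
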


\begin{rmk} 
\begin{enumerate}[label=(\arabic*),align=left,labelindent=0pt,leftmargin=*,widest = (8)]
\item
The main difference with \cite{CS:part1} is that the varieties $A_w$ can no longer be chosen to be multiplicity free
in general (see Theorem~\ref{thm:connectedSpec classification of locallyprojective}), and so  their endomorphism rings $S_w$ are non--commutative, hence harder to describe explicitly, see Theorem~\ref{thm:Sw}. 
We consider the use of multiplicities as a major insight which is stimulated  by and used here in the context of Morita equivalence. This explains how we decided the title of this note.

\item 
The rank of the associated $\bZ$-lattice $T(X)$ exceeds the dimension of $\rH^1(X)$ in any Weil cohomology by a factor of $2\expo$.
This cannot be avoided in general, see Proposition~\ref{prop:constantC}. However the ratio $\rank_\bZ(T(X))/2\dim(X)$ can be lowered
to $\expo$ if $\expo$ is even or by excluding real Weil numbers from the scope of the anti-equivalence, see Remark~\ref{rmk:equiv_reducedbalanced}.

\item
The rational version of Theorem~\ref{MainThm} follows easily from Honda--Tate theory. 
Here rational means $-\otimes \bQ$ for modules and working up to isogeny for abelian varieties. In fact, Theorem~\ref{MainThm} can be seen as a categorification of an integral version of Honda--Tate theory.
\end{enumerate}
\end{rmk}

\subsection{Geometry of the category of abelian varieties}
\label{subsection:geometry} 
Let $w$ be a finite set of Weil $q$-numbers. Restricted to $\AV_w$, our main result has a geometric interpretation on the $1$-dimensional scheme $X_w = \Spec(R_w)$. The $R_w$-algebra $S_w$ corresponds to a coherent sheaf of (in general) non-commutative $\dO_{X_w}$-algebras $\cS_w$. Finitely generated modules for $S_w$ are coherent sheaves on $X_w$ endowed with an $\dO_{X_w}$-linear  $\cS_w$-module structure.  We call these coherent $\cS_w$-modules. The anti-equivalence~\eqref{eq:functorTwIntro} describes abelian varieties from $\AV_w$ as coherents sheaves of $\cS_w$-modules that are flat over $\Spec(\bZ)$.

With this point of view, it makes sense to talk about \textit{categorically local properties} of an abelian variety $X$ in $\AV_w$ at a prime $\ell \not=p$ (or at $p$) as properties of the $\ell$-adic (or $p$-adic) completion 
of the module $\Hom_{\bF_q}(X,A_w)$. With hindsight, one should be able to detect local structure of $X$ in a more direct way.  Indeed, Tate's theorems 
identify local structure of $X$ in terms of the Galois module $T_\ell(X)$ (resp.\ the Dieudonn\'e module $T_p(X)$). The proof of our main result compares the datum of the Tate module and the completion of  $\Hom_{\bF_q}(X,A_w)$ based on the special properties of $w$-balanced objects. We therefore adopt the terminology of \emph{local} properties of abelian varieties if these are defined in terms of Tate modules only (see Definition~\ref{defi:localTate modules} 
for a refinement which indicates that the present point of view is only semi-local).

The following definition is an important example of a local property. We use the notation $\cD_w$  to denote a certain quotient of the Dieudonn\'e ring to be explained in \eqref{eq:definitionDw} of Section~\ref{sec:mincenord}.

\begin{defi} 
\label{defi:locallyprojective} 
Let $q=p^\expo$ be a power of a prime number $p$, and let $w \subseteq W_q$ be a finite subset of Weil $q$-numbers. 
An abelian variety $A \in \AV_w$ is \emph{$w$-locally projective} 
if 
\begin{enumerate}[label=(\roman*),align=left,labelindent=0pt,leftmargin=*,widest = (iii)]
\item
for all $\ell \not= p$ the Tate module $T_\ell(A)$ is a projective $R_w \otimes \bZ_\ell$-module, and 
\item
$T_p(A)$ is a projective $\cD_{w}$-module.
\end{enumerate}
\end{defi}

The $w$-balanced abelian varieties, constructed in Theorem~\ref{thm:integralstructure}, are $w$-locally projective. The  $w$-locally projective objects of $\AV_w$ can be characterized as follows.

\begin{thm} 
\label{thm:truncatedfullyfaithful Intro}
Let $w\subseteq W_q$ be a finite subset, and let $A$ in $\AV_w$ be an abelian variety with $S = \End_{\bF_q}(A)$. Then the following are equivalent.
\begin{enumerate}[label=(\alph*),align=left,labelindent=0pt,leftmargin=*,widest = (m)]
\item 
\label{thmitem:wlocallyprojective intro}
$A$ is $w$-locally projective with support $w(A) = w$. 
\item
\label{thmitem:equivalence intro}
The functor $\Hom_{\bF_q}(-,A) \colon  \AV_w  \longrightarrow \Mod_\Ztf(S)$ is an anti-equivalence of categories. 
\end{enumerate}
\end{thm}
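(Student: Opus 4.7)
The strategy is to bootstrap from the $w$-balanced case. Fix a $w$-balanced abelian variety $A_w$ from Theorem~\ref{thm:integralstructure}, set $S_w = \End_{\bF_q}(A_w)$, and use the fact (established for $w$-balanced objects) that $T_w = \Hom_{\bF_q}(-, A_w) \colon \AV_w \to \Mod_\Ztf(S_w)$ is an anti-equivalence. Comparing $\Hom_{\bF_q}(-, A)$ with $T_w$ is then governed by the $(S_w, S)$-bimodule
\[
M \;:=\; \Hom_{\bF_q}(A, A_w) \;=\; T_w(A),
\]
with left $S_w$-action by post-composition and right $S$-action by pre-composition. The full faithfulness of $T_w$ provides a functorial identification $\Hom_{\bF_q}(X, A) = \Hom_{S_w}(M, T_w(X))$ for all $X \in \AV_w$, together with $\End_{S_w}(M)^{\op} = S$. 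Thus $\Hom_{\bF_q}(-, A) = \Hom_{S_w}(M, -) \circ T_w$, and condition (b) is equivalent, by Morita theory, to $M$ being a progenerator of $\Mod_\Ztf(S_w)$.

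For (a)~$\Rightarrow$~(b), I would verify this Morita condition prime-by-prime. By Tate's theorem, for $\ell \ne p$ one has
\[
M \otimes \bZ_\ell \;=\; \Hom_{\Gal_{\bF_q}}(T_\ell A, T_\ell A_w),
\]
with the analogous identification via $\cD_w$ at $\ell = p$ (see Definition~\ref{defi:locallyprojective}). The $w$-local projectivity of $A$ together with the corresponding property of $A_w$ (built into its construction) gives that $T_\ell A$ and $T_\ell A_w$ are projective over $R_w \otimes \bZ_\ell$, and the assumption $w(A) = w$ provides full support at every factor of the semilocal ring $R_w \otimes \bZ_\ell$. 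Since $T_\ell A_w$ is by design a Morita bimodule between $R_w \otimes \bZ_\ell$ and $S_w \otimes \bZ_\ell$, the progenerator property of $M \otimes \bZ_\ell$ over $S_w \otimes \bZ_\ell$ follows locally, and hence globally.

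For (b)~$\Rightarrow$~(a), the support equality $w(A) = w$ is immediate: if $\pi \in w \setminus w(A)$, then $B_\pi$ is a nonzero object of $\AV_w$ with $\Hom_{\bF_q}(B_\pi, A) = 0$, contradicting the anti-equivalence. Granting (b), the factorisation $\Hom_{\bF_q}(-, A) = \Hom_{S_w}(M, -) \circ T_w$ forces $\Hom_{S_w}(M, -) \colon \Mod_\Ztf(S_w) \to \Mod_\Ztf(S)$ to be an equivalence, so $M$ is a progenerator. Reversing the local computation — using the explicit Morita role of $T_\ell A_w$ at each prime — then yields projectivity of $T_\ell A$ over $R_w \otimes \bZ_\ell$ for $\ell \ne p$ and of $T_p A$ over $\cD_w$.

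The principal technical obstacle, in both directions, is the local calculation underpinning the Morita dictionary: one must show that for the reference variety $A_w$, the Tate modules $T_\ell A_w$ and $T_p A_w$ realise progenerator bimodules between $R_w \otimes \bZ_\ell$ (resp.\ $\cD_w$) and $S_w \otimes \bZ_\ell$ (resp.\ $S_w \otimes \bZ_p$). This is precisely the content baked into the $w$-balanced construction of Theorem~\ref{thm:integralstructure}; once it is in hand, the translation between the local Tate-module conditions defining $w$-local projectivity and the module-theoretic progenerator condition on $M$ becomes essentially formal, and the equivalence (a)~$\Leftrightarrow$~(b) follows prime-by-prime.
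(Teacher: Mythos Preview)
Your bootstrap strategy via the bimodule $M = T_w(A)$ is close in spirit to the paper, but differs in one direction and has a gap in the other.

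For (a)~$\Rightarrow$~(b), your approach is essentially the paper's proof repackaged in Morita language. The paper does not reduce to the $w$-balanced case at all: the direct argument (Lemmas~\ref{lem:upperstarfunctor} and~\ref{lem:lowerstarfunctor}, plus the Gorenstein property of $R_w\otimes\bZ_\ell$ for essential surjectivity) works uniformly for any $w$-locally projective $A$ with full support. Your factorisation through a fixed $A_w$ adds a layer but does not save work, since the ``fact established for $w$-balanced objects'' you invoke is itself the implication (a)~$\Rightarrow$~(b) applied to $A_w$.

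For (b)~$\Rightarrow$~(a), there is a genuine gap. You write that the equivalence on $\Mod_\Ztf$ ``forces $\Hom_{S_w}(M,-)$ to be an equivalence, so $M$ is a progenerator''. Standard Morita theory yields this implication for \emph{abelian} module categories, but $\Mod_\Ztf(S_w)$ is only additive: epimorphisms and cokernels are not well-behaved there, so the usual progenerator characterisations do not apply directly. ``Reversing the local computation'' then begs the question, because without knowing $M\otimes\bZ_\ell$ is projective you cannot simply read off projectivity of $T_\ell A$ from the Gorenstein duality. The paper avoids this entirely: it passes through the intermediate condition ``$A$ is an injective cogenerator'' (equivalent to (b) by Theorem~\ref{thm:formal}), then uses the cogenerator property to embed $A\hookrightarrow (A'_w)^n$ for an auxiliary $w$-balanced $A'_w$, and the injectivity property to split this embedding. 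Thus $A$ is a direct summand of $(A'_w)^n$, and projectivity of its Tate modules is immediate. This retraction argument is the missing ingredient in your sketch.
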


This result will follow from Theorem~\ref{thm:truncatedfullyfaithful}. More on the classification of $w$-locally projective abelian varieties is explained by Theorem~\ref{thm:connectedSpec classification of locallyprojective} and  Theorem~\ref{thm:classification wlocallyprojectiveordinary}.

\begin{rmk}
While \emph{$w$-locally projective} is the decisive property for the representable anti-equivalence of $\AV_w$ with a suitable category of modules (see Theorem~\ref{thm:truncatedfullyfaithful}), we have formulated our main result Theorem~\ref{MainThm} in the more restrictive setting using \emph{$w$-balanced} abelian varieties. 
The \emph{$w$-balanced} abelian varieties are easy-to-construct (see Theorem~\ref{thm:integralstructure}) examples of $w$-locally projective
objects which fit well together to form an ind-object suitable to prove our main result, see Section~\S\ref{sec:proof}. Furthermore, (reduced) $w$-balanced abelian varieties enjoy a minimality property
among the larger set of \emph{$w$-locally projective} ones, as long as $w$ is large enough and $\Spec(R_w)$ is connected (see Theorem~\ref{thm:connectedSpec classification of locallyprojective}).
\end{rmk}

\subsection{The commutative case} 
\label{subsection:subcategories} 
Our method can be applied also to the study of any full subcategory of $\AV_{\bF_q}$ of the form
$\AV_W$, where $W\subseteq W_q$ is any subset. 
The outcome is the existence of a pro-ring $\dS_W$ and an ind-representable anti-equivalence
\begin{equation}\label{eq:submainresult}
T_W:{\AV_W} \longrightarrow \Mod_{\Ztf}(\dS_W)
\end{equation}
between $\AV_W$ and the category $\Mod_{\Ztf}(\dS_W)$ of modules over $\dS_W$ that are finite and torsion free over $\bZ$.
As in the case where $W=W_q$, the functor $T_W$ and the pro-ring $\dS_W$ appearing in \eqref{eq:submainresult} are far from being unique.
The center of any such $\dS_W$ is the quotient $\dR_W$ of $\dR_q$ defined by $W$.

We say that a subset $W \subseteq W_q$ is a \emph{commutative} set of Weil numbers if there is a functor $T_W(-)$ as in \eqref{eq:submainresult} for which $\dS_W$ is commutative. There are two examples:
\begin{itemize}
\item
the subset $W_q^{\ord}$ of ordinary Weil $q$-numbers and, 
\item 
for $q=p$, the set $W_p^{\com}=W_p\setminus \{\pm\sqrt p\}$ given by
the complement of the real conjugacy class of Weil $p$-numbers, see \cite{CS:part1} 
and Section~\S\ref{sec:proof}. 
\end{itemize}
The existence of \eqref{eq:submainresult} for $W=W_q^{\ord}$ was first shown by Deligne in \cite{De}
(up to switching the variance of $T_W$), who exploited the existence of the Serre-Tate canonical lifting for ordinary objects. In \S\ref{sec:ordinary}, see Theorem~\ref{thm:reprovedeligne}, we reprove his result without involving lifts to characteristic zero. Instead, our argument relies on a calculation of an endomorphism ring. 
Then a Morita-equivalence trick deduces the claimed anti-equivalence of categories. 

These two cases $W_q^{\ord}$ and $W_p^\com$ are the only maximal commutative $W$, 
see Proposition~\ref{prop:commutative cases}, and in this case 
the anti-equivalence \eqref{eq:submainresult} reads as
\begin{equation}\label{eq:submainresultcom}
T_W:{\AV_W} \longrightarrow \Mod_{\Ztf}(\dR_W)
\end{equation}
with  the rank of $T_W(X)$ being equal to $2\dim(X)$.

\subsection{Outline}
This article is structured as follows. In Section~\S\ref{sec:rationaltheory} we recall Honda--Tate theory which can be considered as the rational version 
of our anti-equivalence of categories. This leads directly to the existence of $w$-balanced abelian varieties in Theorem~\ref{thm:integralstructure}, a choice of integral structure within the rational theory. The study of the local integral theory starts in Section~\S\ref{sec:truely local} with the definition of the local Tate modules $T_\lambda(A)$ (resp.\ $T_\fp(A)$) for maximal ideals $\lambda$ (resp.\ $\fp$) of $R_w$.
We moreover compute in Section~\S\ref{sec:balanced} in Theorem~\ref{thm:Sw} a description of the endomorphism ring of balanced abelian varieties and thus make the main theorem more concrete and potentially accessible to computations.

In Section~\S\ref{sec:representablefunctors} we provide a formal criterion 
(Morita equivalence, Theorem~\ref{thm:formal}) for a representable functor (in our setting)
to be an anti-equivalence. We subsequently apply the criterion in Section~\S\ref{sec:truncequiv} to $w$-locally projective 
abelian varieties, in particular to $w$-balanced abelian varieties: these are injective cogenerators for the respective $w$-truncated category of abelian varieties $\AV_w$. It should be emphasized that this step relies on Tate's theorems \eqref{TateThmell} and \eqref{TateThmp}, and, for the prime-to-$p$ part, on the minimal central order $R_w$ being Gorenstein. 
We moreover determine the center of $\dS_q$ in Section~\ref{subsec:center}. Section~\S\ref{sec:compatibleantiequivalences} glues the truncated anti-equivalences by a limit argument and so completes the proof of Theorem~\ref{MainThm}.

Section~\S\ref{sec:lowermult} adresses special subcategories for which the multiplicity of the representing object can be lowered, and we show in Proposition~\ref{prop:constantC}  that multiplicities are unavoidable in general. 

We conclude in Section~\S\ref{sec:multiplicity} with an instructive example that shows why multiplicity for the injective cogenerator is important (there are two non-isomorphic simple objects) and that the cogenerator is not just a product of simple objects but involves a congruence (the product has a cyclic isogeny onto the injective cogenerator). Moreover, we classify isogeny classes of injective cogenerators for Weil numbers of supersingular elliptic curves.

\subsection*{Acknowledgements}
We are grateful to the anonymous referee for numerous suggestions that helped improve the presentation of our results.

\section{Rational theory and choice of a lattice} 
\label{sec:rationaltheory}

We recall Honda--Tate theory in order to fix notation and to prepare for Theorem~\ref{thm:integralstructure}.

\subsection{Minimal central orders}
\label{sec:mincenord}
For a Weil $q$-number $\pi$, denote by $P_\pi(x)$ its monic minimal polynomial over $\bQ$, and by
$h_\pi(F,V)$ the associated \emph{symmetric polynomial}, which  is uniquely determined by the equation
\[
h_\pi(x,q/x)  =  x^{-\deg(P_\pi)/2} \cdot P_\pi(x)
\]
and being a linear combination of powers $F^i$ and $V^j$ with $i,j  \in \frac{1}{2}\bZ$ and $i,j \geq 0$. The symmetric polynomial $h_{\pi}(F,V)$ lies in the polynomial ring $\bZ[F,V]$ if $\pi$ is not rational, as then $\deg(P_\pi)$ is even. If $\pi$ is rational, then $\expo$ is even and $\pi = \ep \sqrt{q}$ with $\ep \in \{1, -1\}$ and the positive rational square root $\sqrt{q}$. The symmetric polynomial then is $h_\pi(F,V) = F^{1/2} - \ep V^{1/2}$ and lies in $\bZ[F^{1/2}, V^{1/2}]$. 

For a finite subset $w\subseteq W_q$, we define 
\[
P_w(x)=\prod_{\pi\in w}P_\pi(x) \quad \text{ and } \quad h_w(F,V)=\prod_{\pi\in w}h_\pi(F,V).
\]
We moreover define the degree of $w$ as 
\[
\deg(w) = \deg(P_w) = [\bQ(w):\bQ]  = \sum_{\pi \in w} \deg(\pi).
\]
The degree $\deg(w)$ is even unless $\expo = [\bF_q:\bF_p]$ is even and $w$ contains exactly one of the two rational Weil $q$-numbers $\sqrt{q}$ and $-\sqrt{q}$. 

Let us recall the structure of the minimal central order $R_w$ from \cite[\S2.5]{CS:part1}.  If $w$ either contains both or no rational Weil $q$-numbers, i.e.\ if $\deg(w) = 2d$ is even,  
then the natural map induces an isomorphism
\[
\bZ[F,V]/(FV-q, h_w(F,V)) \stackrel{\sim}{\longrightarrow} R_w,
\]
and shows that we have a $\bZ$-baisis of $R_w$ by the elements represented by 
\begin{equation}
\label{eq:Z basis Rw even}
F^d, \ldots, F,1,V, \ldots, V^{d-1} \quad \text{  or alternatively by } \quad F^{d-1}, \ldots, F,1,V, \ldots, V^{d}.
\end{equation}

If, on the other hand,  we have $w$ equal to $w_0 \cup \{\ep \cdot p^m\}$, where  $\expo=2m$ is even, $\ep \in\{1,-1\}$, and $w_0$ contains no rational Weil $q$-number, then the degree $\deg(w) = 2d_0 + 1$ is odd.  Moreover, we have an isomorphism
\[
\bZ[F, V]/(FV-q, \ h_{w_0}(F,V)(F-\ep p^m), \ h_{w_0}(F,V)(V-\ep p^m)) \stackrel{\sim}{\longrightarrow}  R_w,
\]
and a $\bZ$-basis of $R_w$ is given by the elements represented by 
\begin{equation}
\label{eq:Z basis Rw odd}
F^{d_0}, \ldots, F, 1 , V, \ldots, V^{d_0}.
\end{equation}

\subsection{Localizations of \texorpdfstring{$\AV_{\bF_q}$}{abelian varieties over the field with q elements}}\label{subsec:localizations}

Let $\ell$ be a prime number different from $p$. As usual, for an object $A$ of $\AV_{\bF_q}$ we denote the $\ell$-adic Tate module of $A$ by 
\[
T_\ell(A),
\]
and by $V_\ell(A)$ the $\bQ_\ell$-vector space $T_\ell(A)\otimes_{\bZ_\ell}\bQ_\ell$. 
For any finite subset $w\subseteq W_q$, and any
$A$ in $\AV_w$, the $R_w$-linear structure on $\AV_w$ induces on $T_\ell(A)$ the structure of an $R_w\otimes\bZ_\ell$-module. The ring $R_w\otimes\bZ_\ell$
is generated over $\bZ_\ell$ by image of $F$, since $\ell\neq p$, and we have
\begin{equation}
\label{eq:RwtensorZell}
R_w\otimes\bZ_\ell\simeq\bZ_\ell[x]/\big(P_w(x)\big).
\end{equation}
Let $\overline\bF_p$ be a fixed algebraic closure of the prime field $\bF_p$, and consider $\bF_q$ as a subfield of $\overline\bF_p$. The Galois action on $T_\ell(A)$  can be recovered from the $R_w\otimes\bZ_\ell$-structure, in that $F$ acts on $T_\ell(A)$
as the arithmetic Frobenius $\Frob_q\in\Gal(\overline\bF_p/\bF_q)$. 
 In particular, maps as Galois representations between $\ell$-adic Tate modules for objects in $\AV_w$ are the same as $R_w \otimes \bZ_\ell$-module homomorphisms.
The theorem of Tate \cite[Main Theorem]{Tate:endomorphisms} 
thus says that the functor $T_\ell(-)$ induces an isomorphism
\begin{equation}\label{TateThmell}
\Hom_{\bF_q}(A,B)\otimes\bZ_\ell\stackrel{\sim}{\longrightarrow}\Hom_{R_w\otimes\bZ_\ell}(T_\ell(A), T_\ell(B)),
\end{equation}
for all $A, B$ in $\AV_w$.

\smallskip

To study the localization of $\AV_{\bF_q}$ at $p$, consider the Dieudonn\'e ring
\[
\cD_q = W(\bF_q)\{\cF,\cV\}/(\cF \cV - p).
\]
Here $W(\bF_q)$ is the ring of Witt vectors of $\bF_q$, the variable $\cF$ is $\sigma$-linear and $\cV$ is $\sigma^{-1}$-linear
where $\sigma$ is the arithmetic Frobenius of $W(\bF_q)/\bZ_p$. Moreover, $\cF$ and $\cV$ commute with each other. We introduce the notation 
\[
\cF_i \coloneqq \begin{cases}
\cF^i & i > 0, \\
1 & i = 0, \\
\cV^{-i} & i < 0. 
\end{cases}
\]
The ring $\cD_q$ is a free $W(\bF_q)$-module with basis $(\cF_i)_{i \in \bZ}$, and
\[
\bZ_p[F,V]/(FV-q) \longrightarrow \cD_q, \qquad F \mapsto \cF^\expo \text{ and } V \mapsto \cV^\expo
\]
is an isomorphism onto the center of $\cD_q$. The contravariant Dieudonn\'e module 
\[
T_p(A)
\]
of an object $A$ of $\AV_{\bF_q}$ is a left $\cD_q$-module which is free of rank $2\dim(A)$ as a $W(\bF_q)$-module.
The $\bQ_p$-vector space  $T_p(A)\otimes_{\bZ_p} \bQ_p$ will be denoted by $V_p(A)$.
For any finite subset $w\subseteq W_q$ and any $A$ in $\AV_w$, just like in the $\ell\neq p$ case, we deduce an $R_w\otimes\bZ_p$-structure on $T_p(A)$
such that $F$ acts on $T_p(A)$ as the central element $\cF^\expo\in\cD_q$ and $V$ as $\cV^\expo$. 

Let $\deg(w) = 2d$ be even, i.e. $w$ contains either both rational Weil $q$-numbers or none. Then $\cD_q$ acts on $T_p(A)$ via its quotient
\begin{equation}
\label{eq:definitionDw}
\cD_w \coloneqq W(\bF_q)\{\cF,\cV\}/(\cF \cV - p, \ h_w(\cF^\expo,\cV^\expo)),
\end{equation}
which is a free $W(\bF_q)$-module with basis by the elements represented by 
\[
\cF_{-d\expo}, \ldots, \cF_{d\expo - 1} \quad \text{ or alternatively } \quad \cF_{-d\expo + 1}, \ldots, \cF_{d\expo}.
\]
Let now $\deg(w) = 2d_0  +1$ be odd, i.e., $w$ is equal to $w_0 \cup\{\ep p^m\}$, where  $\expo=2m$ is even, $\ep \in\{1,-1\}$, and $w_0$ contains no rational Weil $q$-number. Then $\cD_q$ acts on $T_p(A)$ via its quotient
\[
\cD_w \coloneqq W(\bF_q)\{\cF,\cV\}/\left(
\cF \cV - p, \ h_{w_0}(\cF^\expo,\cV^\expo)(\cF^{m} - \ep \cV^m)
\right)
\]
which is a free $W(\bF_q)$-module with basis by the elements represented by 
\begin{equation}
\label{eq:W basis Dw}
\cF_{-(2d_0+1)m}, \ldots, \cF_{(2d_0+1)m - 1} \quad \text{ or alternatively } \quad \cF_{-(2d_0+1)m + 1}, \ldots, \cF_{(2d_0+1)m}.
\end{equation}
We remark that $(\cF^m - \ep \cV^m) \cdot \cF^m = F - \sqrt{q}$ and similarly $(\cF^m - \ep \cV^m) \cdot \cV^m = -\ep (V - \sqrt{q})$.

In both cases, even and odd degree $\deg(w)$, we find a natural map 
\[
R_w \otimes \bZ_p \longrightarrow \cD_w, \qquad F \mapsto \cF^\expo \text{ and } V \mapsto \cV^\expo. 
\]
\begin{lem}
\label{lem:centerDw}
The natural map  $R_w \otimes \bZ_p \to \cD_w$ identifies $R_w \otimes \bZ_p$ with the center of $\cD_w$. 
\end{lem}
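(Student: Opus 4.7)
The plan is to compute the center of $\cD_w$ directly in the explicit free $W(\bF_q)$-basis recorded just above the lemma, and then match the result against the image of $R_w\otimes\bZ_p$. The inclusion of the image in the center is formal: the image is generated as a $\bZ_p$-subalgebra of $\cD_w$ by $F=\cF^\expo$ and $V=\cV^\expo$, and both of these commute with the ring generators of $\cD_w$, namely with $\cF$ and $\cV$ via $\cF\cV=\cV\cF=p$, and with $W(\bF_q)$ because $\sigma$ has order $\expo$.

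For the converse inclusion, I would expand an arbitrary $x\in\cD_w$ as $x=\sum_i a_i\cF_i$ in the basis with $a_i\in W(\bF_q)$, and test centrality in two steps. First, from $\cF_i\alpha=\sigma^i(\alpha)\cF_i$ (valid for both $i\ge0$ and $i<0$), commuting $x$ with every $\alpha\in W(\bF_q)$ and invoking uniqueness of the basis expansion forces $a_i(\sigma^i(\alpha)-\alpha)=0$ for all $\alpha$. Since $\sigma$ has order $\expo$ and $W(\bF_q)$ is a domain, this kills every $a_i$ with $\expo\nmid i$. Second, the identity $\cF\cV=\cV\cF=p$ yields $\cF\cF_i=\cF_i\cF$ (equal to $\cF_{i+1}$ if $i\ge 0$ and to $p\,\cF_{i+1}$ if $i<0$); commuting the surviving sum $x=\sum_j a_{j\expo}\cF_{j\expo}$ with $\cF$ then gives $\sum_j c_{j\expo}(\sigma(a_{j\expo})-a_{j\expo})\cF_{j\expo+1}=0$ with $c_{j\expo}\in\{1,p\}$ nonzero. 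The elements $\cF_{j\expo+1}$ occurring here form part of one of the two $W(\bF_q)$-bases of $\cD_w$ supplied in the text (one switches to the alternative basis in a boundary case, e.g.\ when $\expo=1$), so they are $W(\bF_q)$-linearly independent, whence $\sigma(a_{j\expo})=a_{j\expo}$, that is, $a_{j\expo}\in\bZ_p$. Commutativity of such an $x$ with $\cV$ follows either from the analogous computation using $\cV\cF_i=\cF_i\cV$ or from $\cF\cV=\cV\cF=p$.

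Inspecting the ranges, $j$ runs through $\{-d,\ldots,d-1\}$ when $\deg(w)=2d$ and through $\{-d_0,\ldots,d_0\}$ when $\deg(w)=2d_0+1$. Under $F\mapsto\cF^\expo$, $V\mapsto\cV^\expo$, the $\bZ$-bases \eqref{eq:Z basis Rw even} and \eqref{eq:Z basis Rw odd} of $R_w$ map bijectively onto precisely these $\cF_{j\expo}$. Since these elements lie in a $W(\bF_q)$-basis of $\cD_w$, they are $\bZ_p$-linearly independent, so $R_w\otimes\bZ_p\to\cD_w$ is injective with image equal to the center computed above. The only point requiring real care is the uniform treatment of the odd-degree case, where the extra defining relation $h_{w_0}(\cF^\expo,\cV^\expo)(\cF^m-\ep\cV^m)$ involves the non-central factor $\cF^m-\ep\cV^m$; however, the clean $W(\bF_q)$-basis \eqref{eq:W basis Dw} means the centrality computation proceeds identically to the even case, so no separate argument is needed.
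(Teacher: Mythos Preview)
Your proof is correct and follows essentially the same strategy as the paper's: expand a central element in the $W(\bF_q)$-basis $\cF_i$, use commutation with $W(\bF_q)$ to kill all $a_i$ with $\expo\nmid i$, then use commutation with $\cF$ (together with the alternative basis shifted by one) to force the surviving coefficients into $\bZ_p$, and finally match the result against the explicit $\bZ$-basis of $R_w$. The paper carries out the $\cF$-commutation on the full sum rather than first restricting to the surviving terms, and phrases the basis-comparison for injectivity slightly more tersely, but there is no substantive difference.
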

\begin{proof}
Injectivity follows by comparing the $\bZ_p$-basis of $R_w \otimes \bZ_p$ (arising from the $\bZ$-basis of $R_w$ described in \eqref{eq:Z basis Rw even} and \eqref{eq:Z basis Rw odd}) with the $W(\bF_q)$-basis of $\cD_w$  described in \eqref{eq:W basis Dw}. The image is also clearly contained in the center since the $\expo$-th power of the Frobeniuis $\sigma$ of $W(\bF_q)$ equals the identity.

The number $N= \deg(w)\expo/2$ is an integer and $\cF_i$ for $-N \leq i < N$ form a $W(\bF_q)$-basis of $\cD_w$. Let $x = \sum_{i = -N}^{N-1} x_i \cF_i$ be an element of the center of $\cD_w$. For an arbitrary $a \in W(\bF_q)$ we have
\[
\sum_{i = -N}^{N-1} a x_i \cF_i = ax = xa = \sum_{i = -N}^{N-1} x_i  \sigma^i(a) \cF_i. 
\]
It follows that $x_i = 0$ for all $i$ not divisible by $\expo$. Next, let $m_i = 1$ for $i< 0$ and $m_i = 0$ for $i \geq 0$. Then the equation $\cF x = x \cF$ yields
\[
\sum_{i = -N}^{N-1} \sigma(x_i) p^{m_i} \cF_{i+1} = \sum_{i = -N}^{N-1} x_i p^{m_i} \cF_{i+1}.
\]
Since the  $\cF_i$ for $-N <  i \leq  N$ also form a $W(\bF_q)$-basis of $\cD_w$, we find that all $x_i$ are in 
$\bZ_p$. 
Hence $x$ lies in the image of $R_w \otimes \bZ_p \to \cD_w$. 
\end{proof}

Using the language of Dieudonn\'e modules, Tate's theorem \cite[Part II Theorem 1]{WM} says that the natural map gives an isomorphism
\begin{equation}\label{TateThmp}
\Hom_{\bF_q}(A, B)\otimes\bZ_p\stackrel{\sim}{\longrightarrow}\Hom_{\cD_w}(T_p(B), T_p(A))
\end{equation}
for all $A, B$ in $\AV_w$.

\subsection{Rational local module structure}
For any $\pi\in W_q$, recall that $B_\pi$ is an $\bF_q$-simple abelian variety associated to $\pi$ via Honda--Tate theory. The ring
\[
E_\pi=\End_{\bF_q}(B_\pi)\otimes\bQ
\]
of endomorphisms of $B_\pi$ in the isogeny category of abelian varieties over $\bF_q$ is a central division ring over the subfield $\bQ(\pi)$. The index of $E_\pi$ will be denoted by
$s_\pi$, so that $[E_\pi:\bQ(\pi)]=s_\pi^2$. While $E_\pi$ is determined by $\pi$ up to isomorphism, it is well known that its order $\End_{\bF_q}(B_\pi)$ is not
an isogeny invariant in general. The structure of $E_\pi$ as a central simple algebra over $\bQ(\pi)$ is determined by its local invariants. 
\begin{thm}[Tate {\cite[Theorem 1]{Tate:bourbakiHondaTate}}]
\label{thm:TateEndostructure}
Let $\pi$ be a Weil $q$-number. 
\begin{enumerate}[label=(\arabic*),align=left,labelindent=0pt,leftmargin=*,widest = (8)]
\item
\label{thmitem:dimfoemula} 
The dimension of the simple object $B_\pi$ is computed by the dimension formula
\[
2\dim(B_\pi)=s_\pi[\bQ(\pi):\bQ].
\]
\item
\label{thmitem:invariantatp} 
The local invariant of $E_\pi = \End_{\bF_q}(B_\pi)\otimes\bQ$ at a place $v$ of its center $\bQ(\pi)$  is the following element of $\bQ/\bZ$:
\[
\inv_v(E_\pi) = 
\begin{cases}
0 & \text{ if } \ellplace \nmid p\infty \text{ or if  $\ellplace$ is complex,} \\
\frac{1}{2} & \text{ if $v$ is real}, \\
\dfrac{v(\pi)[K_\pplace:\bQ_p]}{v(q)} = \dfrac{v(\pi)f_\pplace}{\expo} & \text{ if } \pplace \mid p,
\end{cases} 
\]
where $f_\pplace = [K_{\pplace}:\bQ_p]$ is the inertial degree of $\bQ(\pi)$ at $\pplace \mid p$. 
\end{enumerate}
\end{thm}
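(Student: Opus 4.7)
The plan is to determine the Brauer class of the central division algebra $E_\pi$ over $F := \bQ(\pi)$ by inspecting each of its completions $E_\pi \otimes_F F_v$ using the Tate isogeny theorems \eqref{TateThmell} and \eqref{TateThmp}. The index $s_\pi$, and with it the dimension formula (1), will fall out as a byproduct of the analysis at any single finite place $\ell \ne p$.

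For $\ell \ne p$, I take $w = \{\pi\}$ and decompose $F \otimes \bQ_\ell = \prod_{\lambda \mid \ell} F_\lambda$. Since $P_\pi$ is irreducible and Frobenius acts on $V_\ell(B_\pi)$ with minimal polynomial $P_\pi$, the module $V_\ell(B_\pi)$ splits as $\bigoplus_\lambda V_\lambda$ with each $V_\lambda$ a free $F_\lambda$-module of some rank $m_\lambda$. Applying \eqref{TateThmell} yields
\[
E_\pi \otimes_\bQ \bQ_\ell \;\simeq\; \prod_{\lambda \mid \ell} \End_{F_\lambda}(V_\lambda),
\]
so each local factor $E_\pi \otimes_F F_\lambda$ is a matrix algebra over $F_\lambda$; in particular $\inv_\lambda(E_\pi) = 0$ and comparison of $F_\lambda$-dimensions forces $m_\lambda = s_\pi$. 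Summing $\dim_{\bQ_\ell} V_\lambda = m_\lambda[F_\lambda:\bQ_\ell]$ over all $\lambda \mid \ell$ yields $2\dim(B_\pi) = s_\pi \cdot [F:\bQ]$, establishing part (1) together with the vanishing of $\inv_\lambda(E_\pi)$ at all finite places away from $p$.

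For the $p$-adic places the strategy is parallel but uses \eqref{TateThmp}. Decomposing the rational Dieudonn\'e module $V_p(B_\pi)$ along $F \otimes \bQ_p = \prod_{\pplace \mid p} F_\pplace$, each summand $V_\pplace$ becomes, by Manin's classification, an isoclinic isocrystal of Newton slope $v_\pplace(\pi)/v_\pplace(q)$; this slope reads off the action of $\pi = \cF^\expo$ on the isocrystal. Dieudonn\'e--Manin identifies the $\bQ_p$-endomorphism algebra of the simple isoclinic isocrystal of slope $a/h$ (in lowest terms) with the central $\bQ_p$-division algebra of invariant $a/h$; tracking the $F_\pplace$-refinement of this identification and using $v_\pplace(q) = e_\pplace \expo$ together with $[F_\pplace:\bQ_p] = e_\pplace f_\pplace$ produces the stated invariant $v_\pplace(\pi) f_\pplace/\expo$ at $\pplace$.

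Finally, for the archimedean places, a non-rational Weil $q$-number satisfies $\bar\pi = q/\pi \ne \pi$, so $F$ is a CM field with only complex places and automatic invariant $0$. The only real place arises when $\pi = \pm\sqrt{q}$ (hence $\expo$ is even), so $F = \bQ$. In that case the cleanest argument is to combine the already computed $p$-adic invariant $\tfrac{1}{2}$ with the global reciprocity law $\sum_v \inv_v(E_\pi) = 0$ to force $\inv_\infty(E_\pi) = \tfrac{1}{2}$; alternatively, the positivity of the Rosati involution forces $E_\pi$ to be the non-split quaternion algebra over $\bQ$ ramified at $\infty$ and $p$. The main technical obstacle throughout is the $p$-adic step: extracting the correct slope-to-invariant dictionary from Dieudonn\'e--Manin and faithfully tracking how invariants transform under the base change $\bQ_p \rightsquigarrow F_\pplace$. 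Once that is in hand, the remaining steps are formal consequences of the Tate theorems already recalled above.
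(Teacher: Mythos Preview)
The paper does not give a proof of this theorem at all; it is simply quoted from Tate's Bourbaki expos\'e and used as a black box throughout. So there is no ``paper's own proof'' to compare against, and your sketch is essentially the standard argument (Tate's original, via the Tate isogeny theorems plus Dieudonn\'e--Manin at $p$). The treatment at $\ell\ne p$ and the strategy at $p$ are fine.

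There is, however, a genuine gap in your archimedean step. You assert that a real place of $F=\bQ(\pi)$ forces $\pi\in\bQ$ and $\expo$ even. That is false: when $\expo$ is odd, $\pi=\pm\sqrt{q}$ is a Weil $q$-number with minimal polynomial $x^2-q$, and $F=\bQ(\sqrt{p})$ is a real quadratic field with \emph{two} real places. Your global reciprocity argument collapses here: at the unique $p$-adic place one computes $\inv_\pplace(E_\pi)=\tfrac{v(\pi)}{v(q)}[F_\pplace:\bQ_p]=\tfrac{1}{2}\cdot 2\equiv 0$, so reciprocity only says the two real invariants sum to $0$ in $\bQ/\bZ$, which does not distinguish $(0,0)$ from $(\tfrac12,\tfrac12)$. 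The Rosati positivity remark you offer as an alternative is stated only for $F=\bQ$; to cover the real quadratic case you need the Albert classification to rule out a totally indefinite quaternion algebra (type II) in favour of a totally definite one (type III), and that requires an additional argument (e.g.\ via Honda's lifting or a direct analysis of the Rosati involution on $E_\pi$). As written, the archimedean case is incomplete.
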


\smallskip

Let now $\ell$ be a prime number including for the moment possibly $\ell = p$. 
The completions $K_v$ of $\bQ(\pi)$ at the places $v \mid \ell$ are factors of the product decomposition
\begin{equation}
\label{decompositioncenter}
\bQ(\pi) \otimes \bQ_\ell = \prod_{\ellplace | \ell} K_\ellplace.
\end{equation}
The algebra $E_\pi\otimes\bQ_\ell$ has center $\bQ(\pi)\otimes\bQ_\ell$ and \eqref{decompositioncenter} similarly induces a decomposition
\[
E_\pi \otimes_{\bQ} \bQ_\ell = \prod_{\ellplace|\ell} E_\ellplace,
\]
where $E_\ellplace$ is a central simple algebra over $K_\ellplace$. Since the idempotents cutting out $K_\ellplace$ from $\bQ(\pi)\otimes\bQ_\ell$ lie in the center of $E_\pi\otimes\bQ_\ell$, we
have an analogous decomposition
\begin{equation}
\label{eq:decomposeV}
V_\ell(B_\pi) = \bigoplus_{\ellplace| \ell} V_\ellplace(B_\pi)
\end{equation}
as a module under $E_\pi\otimes\bQ_\ell$ with the summand $V_\ellplace(B_\pi)$ being an $E_v$-module. 

\smallskip

For a prime $\ell\neq p$, the rational Tate module $V_\ell(B_\pi)$ has both an action by $E_\pi \otimes \bQ_\ell$  and a commuting action by the absolute Galois group of $\bF_q$. The latter was identified in Section~\ref{subsec:localizations} as the action of the center $R_\pi \otimes \bQ_\ell$ of $E_\pi \otimes \bQ_\ell$.
Hence the decomposition \eqref{eq:decomposeV} is also a decomposition as Galois representations, with $K_v$ acting on $V_v(B_\pi)$ capturing the Galois action. 
The summands have the following structure.

\begin{lem} \label{lem:rationalAtell}
For a prime $\ell\neq p$ and a place $\ellplace \mid \ell$ of $\bQ(\pi)$,
there is an isomorphim
\begin{equation}\label{rationalAtell1} 
V_\ellplace(B_\pi) \simeq {K_\ellplace}^{\oplus s_\pi}
\end{equation}
as Galois modules, i.e.\ as modules for $K_\ellplace$.  In particular, there is an isomorphism 
\begin{equation}\label{rationalAtell}
V_\ell(B_\pi) \simeq (\bQ(\pi) \otimes \bQ_\ell)^{\oplus s_\pi}
\end{equation}
as Galois modules, i.e.\ as modules for $\bQ(\pi) \otimes \bQ_\ell$. 
\end{lem}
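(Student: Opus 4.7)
I would use the decomposition \eqref{eq:decomposeV} together with the local structure of $E_\pi$ at places above $\ell$. The first step is to compute the local algebra: for a place $v \mid \ell$ with $\ell \neq p$, Theorem~\ref{thm:TateEndostructure}\ref{thmitem:invariantatp} gives $\inv_v(E_\pi) = 0$. Since $E_\pi$ has index $s_\pi$ over $\bQ(\pi)$, the factor $E_v = E_\pi \otimes_{\bQ(\pi)} K_v$ is a split central simple algebra over $K_v$ of dimension $s_\pi^2$, so $E_v \simeq \Mat_{s_\pi}(K_v)$. By Morita theory, the unique simple $E_v$-module is $K_v^{\oplus s_\pi}$, and every $E_v$-module is isomorphic to $(K_v^{\oplus s_\pi})^{\oplus m_v}$ for a unique multiplicity $m_v \geq 0$.

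Next I would show $V_v(B_\pi) \neq 0$ for every $v \mid \ell$, i.e.\ $m_v \geq 1$. This follows from the faithfulness of the $E_\pi \otimes \bQ_\ell$-action on $V_\ell(B_\pi)$: Tate's theorem \eqref{TateThmell} (in its rational version) identifies $E_\pi \otimes \bQ_\ell$ with $\End_{\bQ_\ell[G]}(V_\ell(B_\pi))$, so in particular the structural map $E_\pi \otimes \bQ_\ell \to \End_{\bQ_\ell}(V_\ell(B_\pi))$ is injective. Since the decomposition \eqref{eq:decomposeV} is induced by the central idempotents $e_v$ cutting $\bQ(\pi)\otimes\bQ_\ell$ into the $K_v$, faithfulness means that each $e_v$ acts non-trivially, forcing the summand $V_v(B_\pi) = e_v \cdot V_\ell(B_\pi)$ to be non-zero.

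Finally, a dimension count pins down $m_v = 1$. By Theorem~\ref{thm:TateEndostructure}\ref{thmitem:dimfoemula} we have $\dim_{\bQ_\ell} V_\ell(B_\pi) = 2\dim(B_\pi) = s_\pi [\bQ(\pi):\bQ]$. On the other hand, from the above structure of each summand,
\[
\dim_{\bQ_\ell} V_\ell(B_\pi) = \sum_{v \mid \ell} \dim_{\bQ_\ell} V_v(B_\pi) = s_\pi \sum_{v \mid \ell} m_v [K_v : \bQ_\ell].
\]
Using $[\bQ(\pi):\bQ] = \sum_{v \mid \ell} [K_v:\bQ_\ell]$ from \eqref{decompositioncenter}, this becomes $\sum_v (m_v - 1)[K_v:\bQ_\ell] = 0$. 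Since $m_v \geq 1$ and $[K_v:\bQ_\ell] > 0$, we conclude $m_v = 1$ for every $v$, which yields \eqref{rationalAtell1}. Assertion \eqref{rationalAtell} then follows by summing over all $v \mid \ell$ and using $\bQ(\pi) \otimes \bQ_\ell = \prod_v K_v$. The only step requiring real thought is the non-vanishing of each $V_v(B_\pi)$; once that is in hand, the splitting of $E_v$ together with the dimension formula does the rest.
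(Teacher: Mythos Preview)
Your argument is correct, but the paper takes a shorter and more local route. Instead of using the invariant formula to split $E_v$ and then running a global dimension count, the paper applies Tate's theorem \eqref{TateThmell} for $A=B=B_\pi$ and base-changes along $R_\pi\otimes\bZ_\ell \to K_v$ to obtain directly $E_v \simeq \End_{K_v}(V_v(B_\pi))$. Since $\dim_{K_v} E_v = s_\pi^2$, this forces $\dim_{K_v} V_v(B_\pi)=s_\pi$ at once, place by place, with no appeal to Theorem~\ref{thm:TateEndostructure}\ref{thmitem:invariantatp} or to the global dimension formula~\ref{thmitem:dimfoemula}. Your approach uses only the \emph{injectivity} of Tate's map (to get $m_v\geq 1$) and compensates with the explicit invariant computation and the global degree identity $[\bQ(\pi):\bQ]=\sum_v [K_v:\bQ_\ell]$; this works, but it is less self-contained, and the non-vanishing step plus the summation are rendered unnecessary once one exploits that Tate's theorem already identifies $E_v$ with the \emph{full} $K_v$-endomorphism algebra of $V_v(B_\pi)$.
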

\begin{proof}
From Tate's Theorem \eqref{TateThmell} for $A = B = B_\pi$ we deduce, by applying base change along 
$R_\pi \otimes \bZ_\ell \to K_v$,
a natural $K_v$-algebra isomorphism
\begin{equation*}
E_\ellplace \simeq \End_{K_\ellplace}(V_\ellplace(B_\pi)).
\end{equation*}
The algebra $E_v$ has $K_v$-dimension $s_\pi^2$ as a base change of $E_\pi$.
Thus $s_\pi = \dim_{K_\ellplace}(V_\ellplace(B_\pi))$ and \eqref{rationalAtell1} follows  because $K_\ellplace$ is a field.
The existence of the isomorphism \eqref{rationalAtell} follows from \eqref{rationalAtell1} together with \eqref{decompositioncenter}, since the exponents $s_\pi$ are independent
of $\ellplace$.
\end{proof}

\smallskip

We now turn our attention to the local structure at $p$. The rational Dieudonn\'e ring
\[
\cD_q^0 = W(\bF_q)\{\cF,\cV\}/(\cF\cV-p)\otimes_{\bZ_p}  \bQ_p
\]
is central over the algebra $\bQ_p[F,F^{-1}]$ 
with $F =\cF^\expo$. We set $W(\bF_q)^0  = W(\bF_q)[\frac{1}{p}] $, and observe that 
\[
W(\bF_q)^0 \otimes_{\bQ_p} \bQ_p[F,F^{-1}]
\]
is a cyclic \'etale $\bQ_p[F,F^{-1}]$-algebra of degree $\expo$ and canonical generator $\sigma \otimes \id =: \sigma$ of its Galois group. This imposes on $\cD_q^0$ the structure of a cyclic Azumaya algebra
\[
\cD_q^0 = \big(W(\bF_q)^0 \otimes_{\bQ_p} \bQ_p[F,F^{-1}],\sigma, F\big)
\]
over  $\bQ_p[F,F^{-1}]$. 
The action of $\cD_q^0$ on $V_p(B_\pi)$ factors through the base change with 
\[
\bQ_p[F,F^{-1}] \surj \bQ(\pi) \otimes \bQ_p, \qquad F \mapsto \pi,
\]
namely the quotient
\[
\cD_\pi^0 = \cD_q^0 \otimes_{\bQ_p[F,F^{-1}]} (\bQ(\pi) \otimes \bQ_p) = W(\bF_q)^0\{\cF\}/(P_\pi(\cF^\expo)).
\]
The $\bQ_p$-linear map sending $\pi$ to $F=\cF^\expo$ gives an isomorphism between $\bQ(\pi)\otimes\bQ_p$ and the center of $\cD_\pi^0$.
As a base change of a cyclic algebra, the ring $\cD_\pi^0$ is itself a cyclic algebra over $\bQ(\pi) \otimes \bQ_p$ as follows. Consider
\[
L_\pi = W(\bF_q)^0 \otimes_{\bQ_p} \big(\bQ(\pi) \otimes \bQ_p\big),
\]
as a cyclic \'etale $\bQ(\pi)\otimes\bQ_p$-algebra of degree $\expo$ and $\sigma\otimes \id=\sigma$ as a distinguished generator of the Galois group. Then 
\begin{equation} \label{eq:rationalcyclicalgebra}
\cD_\pi^0 = (L_\pi,\sigma, \pi) = L_\pi\{\cF\}/(\cF \text{ is } \sigma\text{-semilinear, and } \cF^\expo = \pi)
\end{equation}
as a cyclic Azumaya algebra of index $\expo$ over $\bQ(\pi) \otimes \bQ_p$. After decomposing $\cD_\pi^0$ according to the components $K_\pplace$ of its center,
we obtain a decomposition
\[
\cD_\pi^0 = \prod_{\pplace | p} \cD_{\pi,\pplace}^0
\]
into the product over the $p$-adic places of $\bQ_p$ of certain central simple algebras $\cD_{\pi,\pplace}^0$ over $K_\pplace$, each of index $\expo$.

The rational Dieudonn\'e module $V_p(B_\pi)$ is simultaneously a left module for the Dieudonn\'e ring $\cD_\pi^0$ and for the opposite endomorphisms actions by $E_\pi^\op\otimes \bQ_p$.
Since the idempotents cutting out $K_\pplace$ commute with both these actions, we can decompose
\[
V_p(B_\pi) = \bigoplus_{\pplace | p} V_{\pplace}(B_\pi)
\]
as a module for both actions. The summands have the following structure.

\begin{lem} \label{lem:rationalAtp}
For a  place $\pplace \mid p$ of $\bQ(\pi)$, 
there is an isomorphism
\begin{equation} \label{rationalAtp1}
V_\pplace(B_\pi)^{\oplus \expo} \simeq {(\cD_{\pi,\pplace}^0)}^{\oplus s_\pi}
\end{equation}
as Dieudonn\'e modules, i.e.\ as modules for $\cD_{\pi,\pplace}^0$.  In particular, there is an isomorphism 
\begin{equation} \label{rationalAtp}
V_p(B_\pi)^{\oplus \expo} \simeq (\cD_\pi^0)^{\oplus s_\pi}
\end{equation}
as Dieudonn\'e modules, i.e.\ as modules for $\cD_{\pi}^0$.
\end{lem}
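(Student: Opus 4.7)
The plan is to follow the same pattern as the proof of Lemma~\ref{lem:rationalAtell}: apply Tate's theorem \eqref{TateThmp} to identify the commutant of $\cD_{\pi,\pplace}^0$ acting on $V_\pplace(B_\pi)$, and then use that $\cD_{\pi,\pplace}^0$ is a central simple $K_\pplace$-algebra so that its left modules are classified up to isomorphism by their $K_\pplace$-dimension.

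Concretely, applying \eqref{TateThmp} to $A = B = B_\pi$, tensoring with $\bQ_p$, and base-changing along the idempotent projection $\bQ(\pi)\otimes\bQ_p \twoheadrightarrow K_\pplace$ gives, in view of the contravariance of $\Hom_{\bF_q}(-,-)$ introducing an opposite-algebra twist, an isomorphism
\[
E_\pplace \cong \End_{\cD_{\pi,\pplace}^0}\bigl(V_\pplace(B_\pi)\bigr)^{\op}
\]
of central simple $K_\pplace$-algebras, both of $K_\pplace$-dimension $s_\pi^2$.

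The next step is a dimension count. Since $\cD_{\pi,\pplace}^0$ is a central simple $K_\pplace$-algebra of degree $\expo$ (as a factor of the cyclic Azumaya algebra $\cD_\pi^0$ from \eqref{eq:rationalcyclicalgebra}), we may write $\cD_{\pi,\pplace}^0 \cong \Mat_{r_\pplace}(\Delta_\pplace)$ with $\Delta_\pplace$ a central division $K_\pplace$-algebra of $K_\pplace$-dimension $e_\pplace^2$ and $r_\pplace e_\pplace = \expo$; the unique simple left $\cD_{\pi,\pplace}^0$-module $S_\pplace$ then has $K_\pplace$-dimension $\expo\, e_\pplace$. Writing $V_\pplace(B_\pi) \cong S_\pplace^{\oplus N_\pplace}$, the displayed isomorphism forces $N_\pplace^2 e_\pplace^2 = \dim_{K_\pplace} E_\pplace = s_\pi^2$, hence $N_\pplace = s_\pi/e_\pplace$ and
\[
\dim_{K_\pplace} V_\pplace(B_\pi) \ = \ N_\pplace \cdot \expo \, e_\pplace \ = \ s_\pi\, \expo.
\]

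To conclude, both sides of \eqref{rationalAtp1} are then left $\cD_{\pi,\pplace}^0$-modules of $K_\pplace$-dimension $s_\pi \expo^2$, and hence isomorphic by the Morita picture over the central simple algebra $\cD_{\pi,\pplace}^0$. The global isomorphism \eqref{rationalAtp} follows by summing \eqref{rationalAtp1} over all places $\pplace \mid p$ of $\bQ(\pi)$, using $V_p(B_\pi) = \bigoplus_\pplace V_\pplace(B_\pi)$ and the product decomposition $\cD_\pi^0 = \prod_\pplace \cD_{\pi,\pplace}^0$. I do not anticipate a serious obstacle: once Tate's theorem supplies the identification of the commutant, everything reduces to matching $K_\pplace$-dimensions, and the only point requiring care is tracking the opposite-algebra twist coming from the contravariance of the Tate identification, which is inessential for the dimension calculation.
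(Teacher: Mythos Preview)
Your proposal is correct and follows essentially the same strategy as the paper: use Tate's theorem \eqref{TateThmp} to identify the commutant of $\cD_{\pi,\pplace}^0$ on $V_\pplace(B_\pi)$ with $E_\pplace^\op$, compute $\dim_{K_\pplace} V_\pplace(B_\pi)=s_\pi\expo$, and conclude by the fact that modules over a central simple algebra are determined by their dimension. The only difference is cosmetic: the paper extracts the dimension formula in one stroke by invoking the corollary to the double centralizer theorem \cite[Corollary~7.14]{reiner:maxorder}, namely $E_\pplace^\op \otimes_{K_\pplace} \cD_{\pi,\pplace}^0 \simeq \End_{K_\pplace}(V_\pplace(B_\pi))$, whereas you unpack the Wedderburn structure of $\cD_{\pi,\pplace}^0$ by hand and match multiplicities of the simple module; both routes yield the same count.
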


\begin{proof}
From Tate's Theorem \eqref{TateThmp}  for $A = B = B_\pi$ we deduce, by applying base change along 
$R_\pi \otimes \bZ_p \to K_\pplace$, a natural $K_v$-algebra isomorphism
\[
E_\pplace^\op \simeq \End_{\cD_{\pi,\pplace}^0}\big(V_{\pplace}(B_\pi)\big).
\]
This means that $E_\pplace^\op$ is the centralizer of $\cD_{\pi,\pplace}^0$ in $\End_{K_v}\big(V_{\pplace}(B_\pi)\big)$. A corollary to the double centralizer theorem, \cite[Corollary 7.14]{reiner:maxorder}, then states that 
\begin{equation*} 
E_\pplace^\op \otimes_{K_\pplace} \cD_{\pi,\pplace}^0 = \End_{K_\pplace}(V_\pplace(B_\pi)).
\end{equation*}
In particular, comparing the degrees of these simple central $K_\pplace$-algebras, we find 
\[
s_\pi \cdot \expo = \dim_{K_\pplace}(V_\pplace(B_\pi)).
\]
Since modules over $\cD_{\pi,\pplace}^0$ are determined, up to isomorphism, by their $K_\pplace$-dimension, assertion \eqref{rationalAtp1} follows by comparing dimensions
\[
\dim_{K_v}\big(V_\pplace(B_\pi)^{\oplus \expo} \big) = r \cdot \dim_{K_\pplace}(V_\pplace(B_\pi)) = s_\pi \cdot r^2 = \dim_{K_v}\big(  {(\cD_{\pi,\pplace}^0)}^{\oplus s_\pi}\big) .
\]
The existence of the isomorphism \eqref{rationalAtp} follows from \eqref{rationalAtp1} together with \eqref{decompositioncenter}, since the exponents $s_\pi$ and $r$ are independent
of $\pplace$.
\end{proof}

\subsection{Choosing good multiplicities and choice of a lattice}\label{sec:multiplicities} 

The index $s_\pi$ of $E_\pi$ agrees with the period of $E_\pi$, defined to be the order of $E_\pi$ in the Brauer group $\Br(\bQ(\pi))$ of $\bQ(\pi)$ (essentially due to \cite{BHN}, see \cite{reiner:maxorder} Theorem 32.19 for a textbook reference).
By the Hasse--Brauer--Noether Theorem the period of $E_\pi$ is the
least common multiple over all places of $\bQ(\pi)$ of all local orders of $E_\pi$ in the appropriate Brauer groups. Since the localization of $E_\pi$
at any $\ell$-adic place is trivial for $\ell\neq p$, only $p$-adic and real places (if any) of $\bQ(\pi)$ must be taken into account to compute the global order
$s_\pi$ from the local ones.

The denominator of the local invariant of $E_\pi$ at a $p$-adic place $\fp$ divides $\expo$ by Theorem~\ref{thm:TateEndostructure}~\ref{thmitem:invariantatp}. 
Because the local invariant of $E_\pi$ at a real place of $\bQ(\pi)$ equals $1/2$, we conclude that
\[
s_\pi\mid\lcm(\expo,2),
\]
and in particular $s_\pi$ divides $2\expo$ in all cases.
Notice that unless $\expo$ is odd and $\pi$ is the real conjugacy class $\{\pm\sqrt{q}\}$ of Weil $q$-numbers, then $s_\pi$ divides $\expo$.

\begin{defi} 
The \emph{balanced multiplicity of $\pi$} is the integer
\[
m_\pi=\frac{2 \expo}{s_\pi}.
\]
Unless $\expo$ is odd and $\pi$ is real we define the \emph{reduced balanced multiplicity} of $\pi$ to be the integer
\[
\widebar{m}_\pi = m_\pi/2 = \frac{\expo}{s_\pi}.
\]
\end{defi}

This choice of $m_\pi$ (resp.\ $\widebar{m}_\pi$) ensures that the ranks are independent of 
$\pi \in W_q$ in the following freeness statement for the rational Tate and Dieudonn\'e modules of $B_\pi^{m_\pi}$ (resp.\ $B_\pi^{\widebar{m}_\pi}$). 

\begin{prop}
\label{prop:rationalfreeness} 
Let $\pi$ be a Weil $q$-number. There is an isomorphism 
\begin{enumerate}[label=(\arabic*),align=left,labelindent=0pt,leftmargin=*,widest = (8)]
\item  
$V_\ell(B_\pi^{m_\pi})\simeq {(\bQ(\pi) \otimes \bQ_\ell)}^{\oplus 2 \expo}$ of $\bQ(\pi) \otimes \bQ_\ell$-modules, for any prime $\ell\neq p$, and
\item 
$V_p(B_\pi^{m_\pi})\simeq{(\cD_{\pi}^0)}^{\oplus  2}$ of $\cD_\pi^0$-modules.
\end{enumerate}
Unless $\expo$ is odd and $\pi=\pm \sqrt{q}$  there is an isomorphism 
\begin{enumerate}[resume]
\item 
$V_\ell(B_\pi^{\widebar{m}_\pi})\simeq{(\bQ(\pi) \otimes \bQ_\ell)}^{\oplus \expo}$ of $\bQ(\pi) \otimes \bQ_\ell$-modules, for any prime $\ell\neq p$, and
\item 
$V_p(B_\pi^{\widebar{m}_\pi})\simeq \cD_{\pi}^0$ of $\cD_\pi^0$-modules. 
\end{enumerate}
\end{prop}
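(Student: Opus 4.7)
The plan is to derive each assertion directly from Lemma~\ref{lem:rationalAtell} (for (1) and (3)) and Lemma~\ref{lem:rationalAtp} (for (2) and (4)) by raising the isomorphisms given there to an appropriate power, calibrated so that the resulting exponent becomes independent of $\pi$. The balanced multiplicities are tailor-made for this: the defining identities are $s_\pi m_\pi = 2\expo$, and, when $\widebar{m}_\pi$ is defined, $s_\pi \widebar{m}_\pi = \expo$.

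For parts (1) and (3), Lemma~\ref{lem:rationalAtell} already provides $V_\ell(B_\pi) \simeq (\bQ(\pi)\otimes\bQ_\ell)^{\oplus s_\pi}$ as a $\bQ(\pi)\otimes\bQ_\ell$-module. Taking $m_\pi$ (respectively $\widebar{m}_\pi$) direct summands and replacing $s_\pi m_\pi$ by $2\expo$ (respectively $s_\pi\widebar{m}_\pi$ by $\expo$) yields the asserted isomorphisms immediately.

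For the $p$-adic assertions (2) and (4), I would first localize: decompose both $V_p(B_\pi)$ and $\cD_\pi^0$ as products over the $p$-adic places $\fp$ of $\bQ(\pi)$, reducing the problem to proving, for each $\fp$, an isomorphism of $\cD_{\pi,\fp}^0$-modules. Since $\cD_{\pi,\fp}^0$ is a central simple algebra over the local field $K_\fp$, it has a unique simple module up to isomorphism, so any two finitely generated $\cD_{\pi,\fp}^0$-modules are isomorphic as soon as their $K_\fp$-dimensions agree. Applying Lemma~\ref{lem:rationalAtp} gives $\expo \cdot \dim_{K_\fp} V_\fp(B_\pi) = s_\pi \cdot \dim_{K_\fp} \cD_{\pi,\fp}^0$, and hence
\[
\dim_{K_\fp} V_\fp(B_\pi^{m_\pi}) \;=\; \frac{m_\pi s_\pi}{\expo} \cdot \dim_{K_\fp} \cD_{\pi,\fp}^0 \;=\; 2 \cdot \dim_{K_\fp} \cD_{\pi,\fp}^0
\]
by $s_\pi m_\pi = 2\expo$, which proves (2) at $\fp$; the same calculation with $\widebar{m}_\pi$ in place of $m_\pi$ and $s_\pi \widebar{m}_\pi = \expo$ yields (4). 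Taking products over $\fp \mid p$ assembles these local statements into the asserted global isomorphisms.

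The main conceptual point of the plan is the reduction to a dimension count via the unique-simple-module property of the central simple algebras $\cD_{\pi,\fp}^0$; once this structural fact is in hand, the rest is a formal manipulation of the multiplicities $m_\pi$ and $\widebar{m}_\pi$, which were designed precisely to balance the ratio $s_\pi/\expo$ that appears in Lemma~\ref{lem:rationalAtp}. I do not expect a genuine obstacle here; the only subtlety for (3) and (4) is that $\widebar{m}_\pi$ must be an integer, which fails exactly in the excluded case ($\expo$ odd and $\pi$ real), where $s_\pi$ picks up an extra factor of $2$ from the real place of $\bQ(\pi)$.
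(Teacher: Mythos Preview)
Your proof is correct and matches the paper's approach: the paper's one-line proof invokes exactly the same two lemmas together with the fact that the modules in question decompose over simple algebras, which is precisely the unique-simple-module/dimension-count argument you spell out. You have simply made explicit what the paper leaves as a remark.
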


\begin{proof}
The proposition follows from the fact that $V_\ell(B_\pi)$ is a product of modules over simple algebras, and from Lemmas~\ref{lem:rationalAtell} and \ref{lem:rationalAtp}.
\end{proof}

Proposition~\ref{prop:rationalfreeness} admits the following integral refinement. 

\begin{thm} 
\label{thm:integralstructure}
Let $w\subseteq W_q$ be a finite subset. There is an abelian variety $A_w$, isogenous to $\prod_{\pi\in w}B_\pi^{m_\pi}$, such that 
\begin{enumerate}[label=(\arabic*),align=left,labelindent=0pt,leftmargin=*,widest = (8)]
\item $T_\ell(A_w) \simeq (R_w\otimes \bZ_\ell)^{\oplus 2\expo}$ as Galois modules, for all primes $\ell\neq p$.
\item $T_p(A_w) \simeq (\cD_{w})^{\oplus 2}$ as Dieudonn\'e module.
\end{enumerate}

If $\expo$ is even or $w$ avoids $\pi=\pm \sqrt{q}$ there is an abelian variety $\widebar{A}_w$, isogenous to $\prod_{\pi\in w}B_\pi^{\widebar{m}_\pi}$, such that 
\begin{enumerate}[resume]
\item $T_\ell(\widebar{A}_w) \simeq (R_w\otimes \bZ_\ell)^{\oplus \expo}$ as Galois modules, for all primes $\ell\neq p$.
\item $T_p(\widebar{A}_w) \simeq \cD_{w}$ as Dieudonn\'e module.
\end{enumerate}
\end{thm}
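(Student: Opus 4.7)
The plan is to start with $B = \prod_{\pi \in w} B_\pi^{m_\pi}$ (respectively $\widebar{B} = \prod_{\pi \in w} B_\pi^{\widebar{m}_\pi}$ in the reduced case) and then pass to an $\bF_q$-isogenous abelian variety whose integral Tate and Dieudonn\'e modules match the desired lattices. Combining Proposition~\ref{prop:rationalfreeness} over $\pi \in w$ and using the Chinese remainder decompositions $R_w \otimes \bQ \simeq \prod_\pi \bQ(\pi)$ and $\cD_w^0 \simeq \prod_\pi \cD_\pi^0$ (pairwise coprimality of the central polynomials $h_\pi(\cF^\expo,\cV^\expo)$), I first identify $V_\ell(B) \simeq (R_w \otimes \bQ_\ell)^{\oplus 2\expo}$ as $R_w \otimes \bQ_\ell$-modules for $\ell \neq p$, and $V_p(B) \simeq (\cD_w^0)^{\oplus 2}$ as $\cD_w^0$-modules, where $\cD_w^0 = \cD_w \otimes \bQ$. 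Inside these rational modules I fix the target lattices $M_\ell := (R_w \otimes \bZ_\ell)^{\oplus 2\expo}$ for $\ell \neq p$ and $M_p := \cD_w^{\oplus 2}$.

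Next I observe that $T_\ell(B)$ is already isomorphic to $M_\ell$ as an $R_w \otimes \bZ_\ell$-module for all primes $\ell \neq p$ outside the finite set $S$ of primes dividing the conductor of $R_w$ inside a maximal order of $\bQ(w)$. Indeed, for such $\ell$ the ring $R_w \otimes \bZ_\ell$ is a finite product of complete discrete valuation rings, so decomposing $T_\ell(B)$ via the corresponding central idempotents reduces the question to lattices over DVRs, which are automatically free; the rank count from the rational statement then forces freeness of rank $2\expo$. Hence the Tate modules of $B$ already agree with the target at all but finitely many primes.

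The final step invokes the standard dictionary between $\bF_q$-isogenies $f \colon B \to A$ and lattice manipulations: given a collection of Galois-stable lattices $N_\ell \subset V_\ell(B)$ together with a $\cD_w$-stable lattice $N_p \subset V_p(B)$, all coinciding with the corresponding Tate modules of $B$ at almost all primes, there exists an abelian variety $A$ isogenous to $B$ over $\bF_q$ with $T_\ell(A) \cong N_\ell$ for every prime. This principle follows from the correspondence between finite $\bF_q$-subgroup schemes of $B$ and finite-index sublattices of the adelic Tate module, combined with composition with dual isogenies to allow adjustments in both directions. Applying it with $N_\ell = M_\ell$ for $\ell \in S \cup \{p\}$ and $N_\ell = T_\ell(B)$ otherwise produces $A_w$; the reduced case is identical. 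I expect the main subtle point to be the $p$-adic step: the lattice $M_p = \cD_w^{\oplus 2}$ must genuinely correspond to an abelian variety and not merely a $p$-divisible group, but this is automatic since $M_p$ is a $\cD_w$-stable lattice commensurable with the Dieudonn\'e module $T_p(B)$ of a genuine abelian variety, so the slope constraints are inherited.
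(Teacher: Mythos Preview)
Your proof is correct and follows essentially the same strategy as the paper: start with the product $\prod_\pi B_\pi^{m_\pi}$, use Proposition~\ref{prop:rationalfreeness} to establish rational freeness, observe that at all but finitely many primes the integral Tate modules are already free of the right rank (because $R_w \otimes \bZ_\ell$ is regular there), and then invoke the isogeny--lattice dictionary to adjust at the remaining primes. The paper phrases the last step slightly differently---it first modifies the rational isomorphisms at good primes so that they carry one lattice onto the other, then rescales at the bad primes to force a containment and passes to the quotient by a single isogeny---but this is only a cosmetic difference from your appeal to the general correspondence.
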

\begin{proof}
We first work rationally and set $A'_w = \prod_{\pi \in w} B_\pi^{m_\pi}$.
Proposition~\ref{prop:rationalfreeness} thus gives isomorphisms
\begin{enumerate}[label=(\arabic*),align=left,labelindent=0pt,leftmargin=*,widest = (8)]
\item $V_\ell(A'_w) \simeq (\bQ(w) \otimes \bQ_\ell)^{\oplus 2\expo}$,
for any $\ell\neq p$,
\item $V_p(A'_w) \simeq (\cD_w^0)^{\oplus 2}$.
\end{enumerate}

Both sides of these isomorphisms contain natural lattices: $T_\ell(A'_w)$ versus $(R_w\otimes \bZ_\ell)^{\oplus 2\expo}$, and $T_p(A'_w)$ versus ${(\cD_w)}^{\oplus 2}$. For abstract algebraic reasons we may modify all but finitely many of the isomorphisms forcing them to respect these lattices. The point is that for suitable $N \in \bZ$ the localized ring $R_w[\frac{1}{N}]$ is a product of Dedekind rings, hence after completing at $\ell$, for $\ell \nmid Np$, all torsion free modules are locally free. Since the rank of $T_\ell(A'_w)$ is constant and equal to $2\expo$ we may even deduce that it is necessarily free. A suitable modification of the isomorphism then maps one free lattice into the other.

It remains to deal with finitely many primes $\ell$, and the prime $p$. But here the two lattices on both sides are commensurable, hence without loss of generality we can assume that
one is contained in the other, after rescaling the isomorphisms. Then a suitable isogeny allows us to replace $A_w'$ with the required $A_w$. 

The construction of $\widebar{A}_w$ with reduced balanced multiplicities follows with the same proof.
\end{proof}

\begin{defi}
\label{defi:balanced}
Let $A$ be an abelian variety in $\AV_w$.
\begin{enumerate}[label=(\arabic*),align=left,labelindent=0pt,leftmargin=*,widest = (8)]
\item
We say that $A$ is \emph{$w$-balanced} if $A$ satisfies the conditions of Theorem~\ref{thm:integralstructure} with respect to the balanced multiplicities $m_\pi$. An abelian variety is \emph{balanced} if it is $w$-balanced for some finite set $w$ of Weil numbers (necessarily with $w$ equal to the Weil support of the abelian variety).

\item
We say that $A$ is \emph{reduced $w$-balanced} if $A$ satisfies the conditions of Theorem~\ref{thm:integralstructure} with respect to the reduced balanced multiplicities $\widebar{m}_\pi$. 
\end{enumerate}
\end{defi}

A (reduced) $w$-balanced abelian variety is 
$w$-locally projective in the sense of Definition~\ref{defi:locallyprojective}.

\section{Integral local theory and locally projective abelian varieties}
\label{sec:truely local}

Let $w$ be a finite set of Weil $q$-numbers. Let $\ell \not=p$  be a prime number. The ring $R_w \otimes \bZ_\ell$ is a semi-local $\bZ_\ell$-algebra, all maximal ideals $\lambda$ lie over $(\ell)$ and there is a canonical product decomposition
\[
R_w \otimes \bZ_\ell = \prod_\lambda R_{w,\lambda}
\] 
where $R_{w,\lambda} = (R_w \otimes \bZ_\ell)_\lambda$ is the localization (isomorphic to the completion of $R_w$ at the maximal ideal corresponding to $\lambda$). 
Similarly, we can decompose $R_w \otimes \bZ_p = \prod_{\fp} R_{w,\fp}$ with respect to the maximal ideals $\fp$ of $R_w \otimes \bZ_p$ and the localization $R_{w,\fp} = (R_w \otimes \bZ_p)_\fp$.

\begin{defi}
\label{defi:localTate modules}
Let $A$ be an abelian variety in $\AV_w$. Let $\lambda$ (resp. $\fp$) be a maximal ideal of $R_w \otimes \bZ_\ell$ (resp.\ of $R_w \otimes \bZ_p)$ for some $\ell \not= p$.
\begin{enumerate}[label=(\arabic*),align=left,labelindent=0pt,leftmargin=*,widest = (8)]
\item 
We define the \emph{$\lambda$-adic Tate module} of $A$ as the $R_{w,\lambda}$-module
\[
T_\lambda(A) = T_\ell(A) \otimes_{R_w \otimes \bZ_\ell} R_{w,\lambda}.
\]
\item
We define the \emph{$\fp$-component} of the Dieudonn\'e ring as
\[
\cD_{w,\fp} = \cD_w \otimes_{R_w \otimes \bZ_p} R_{w,\fp},
\]
and  the \emph{$\fp$-adic Tate module} of $A$ as the $\cD_{w,\fp}$-module
\[
T_\fp(A) = T_p(A) \otimes_{R_w \otimes \bZ_p} R_{w,p}.
\]
\end{enumerate}
\end{defi}

The analogs of Tate's theorems \eqref{TateThmell} and \eqref{TateThmp} continue to hold.

\begin{prop}
\label{prop:localTateTheorem}
Let $A$ and $B$ be abelian varieties in $\AV_w$. 
\begin{enumerate}[label=(\arabic*),align=left,labelindent=0pt,leftmargin=*,widest = (8)]
\item \label{propitem:localTateTheorem lambda}
For all maximal ideals $\lambda$ of $R_w \otimes \bZ_\ell$ with $\ell \not= p$, the functor $T_{\lambda}(-)$ induces an isomorphism
\[
\Hom_{\bF_q}(A,B) \otimes_{R_w} R_{w,\lambda}  \stackrel{\sim}{\longrightarrow} \Hom_{R_{w,\lambda}}(T_\lambda(A),T_\lambda(B))
\]
of $R_{w,\lambda}$-modules.
\item \label{propitem:localTateTheorem p}
For all maximal ideals $\fp$ of $R_w \otimes \bZ_p$, the functor $T_{\fp}(-)$ induces an isomorphism
\[
\Hom_{\bF_q}(A,B) \otimes_{R_w} R_{w,\fp}  \stackrel{\sim}{\longrightarrow} \Hom_{\cD_{w,\fp}}(T_\fp(B),T_\fp(A))
\]
of $R_{w,\fp}$-modules.
\end{enumerate}
\end{prop}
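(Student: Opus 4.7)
Both parts follow formally from Tate's theorems \eqref{TateThmell} and \eqref{TateThmp} by exploiting the product decomposition of the semi-local rings $R_w \otimes \bZ_\ell$ and $R_w \otimes \bZ_p$. The plan is to tensor each of Tate's isomorphisms with the corresponding local factor and then identify both sides of the resulting equation via a centrality argument.

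For part \ref{propitem:localTateTheorem lambda}, write the decomposition $R_w \otimes \bZ_\ell = \prod_\lambda R_{w,\lambda}$ using a system of central orthogonal idempotents $e_\lambda$. Because these idempotents are central, they split any $R_w \otimes \bZ_\ell$-module, so
\[
T_\ell(A) = \bigoplus_\lambda T_\lambda(A), \qquad T_\lambda(A) = e_\lambda \cdot T_\ell(A),
\]
and likewise for $T_\ell(B)$ and for $\Hom_{\bF_q}(A,B) \otimes \bZ_\ell$. Any $R_w \otimes \bZ_\ell$-linear map between $T_\ell(A)$ and $T_\ell(B)$ respects this decomposition, which yields
\[
\Hom_{R_w \otimes \bZ_\ell}\big(T_\ell(A), T_\ell(B)\big) = \prod_\lambda \Hom_{R_{w,\lambda}}\big(T_\lambda(A), T_\lambda(B)\big).
\]
The Tate isomorphism \eqref{TateThmell} is $R_w \otimes \bZ_\ell$-linear and hence splits into a product of isomorphisms indexed by $\lambda$; the statement is obtained by projecting onto the $\lambda$-factor.

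For part \ref{propitem:localTateTheorem p}, the same strategy applies, but one must ensure that the $p$-adic idempotents $e_\fp \in R_w \otimes \bZ_p$ remain central in the non-commutative Dieudonn\'e ring $\cD_w$ and not only in $R_w \otimes \bZ_p$ itself. This is exactly the content of Lemma~\ref{lem:centerDw}, which identifies $R_w \otimes \bZ_p$ with the center of $\cD_w$. Granted this, the $e_\fp$ furnish compatible decompositions $\cD_w = \prod_\fp \cD_{w,\fp}$ and $T_p(A) = \bigoplus_\fp T_\fp(A)$ as $\cD_w$-modules, and exactly the same argument as above produces
\[
\Hom_{\cD_w}\big(T_p(B), T_p(A)\big) = \prod_\fp \Hom_{\cD_{w,\fp}}\big(T_\fp(B), T_\fp(A)\big),
\]
while the source $\Hom_{\bF_q}(A,B) \otimes \bZ_p$ decomposes as $\prod_\fp \Hom_{\bF_q}(A,B) \otimes_{R_w} R_{w,\fp}$. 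Tate's isomorphism \eqref{TateThmp} then splits accordingly, and projection to the $\fp$-factor gives the claim.

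There is no essential obstacle beyond algebraic bookkeeping once Tate's theorems are available; the only non-formal ingredient in the proof is Lemma~\ref{lem:centerDw}, whose role is precisely to reduce the non-commutative case \ref{propitem:localTateTheorem p} to the same idempotent decomposition argument that handles the commutative case \ref{propitem:localTateTheorem lambda}.
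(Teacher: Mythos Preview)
Your proof is correct and follows essentially the same approach as the paper: both deduce the result from Tate's theorems \eqref{TateThmell} and \eqref{TateThmp} by passing to the local factor of the semi-local ring $R_w \otimes \bZ_\ell$ (resp.\ $R_w \otimes \bZ_p$). The paper phrases this as ``base change along the flat map $R_w \otimes \bZ_\ell \to R_{w,\lambda}$'' (resp.\ $R_w \otimes \bZ_p \to R_{w,\fp}$), which is exactly your idempotent projection; your version is simply more explicit in spelling out, via Lemma~\ref{lem:centerDw}, why the $\fp$-idempotents are central in $\cD_w$ and hence compatible with $\cD_w$-linear maps --- a point the paper leaves implicit (and for which only the easy inclusion $R_w \otimes \bZ_p \subseteq Z(\cD_w)$ is needed, not the full equality of Lemma~\ref{lem:centerDw}).
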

\begin{proof}
This follows at once from Tate's theorems \eqref{TateThmell} and \eqref{TateThmp} in view of base change by the flat map $R_w \otimes \bZ_\ell \to R_{w,\lambda}$ (resp. the flat map $R_w \otimes \bZ_p \to R_{w,\fp}$).
\end{proof}

\begin{defi}
\label{defi:locallyfree}
Let $q=p^\expo$ be a power of a prime number $p$, and let $w \subseteq W_q$ be a finite subset of Weil $q$-numbers. 
An abelian variety $A \in \AV_w$ is \emph{$w$-locally free} if 
\begin{enumerate}[label=(\roman*),align=left,labelindent=0pt,leftmargin=*,widest = (iii)]
\item
for all maximal ideals $\lambda$  of $R_w \otimes \bZ_\ell$ the Tate module $T_\lambda(A)$ is a free $R_{w,\lambda}$-module, and
\item
for all maximal ideals $\fp$  of $R_w \otimes \bZ_p$ the Tate module $T_\fp(A)$ is a free $\cD_{w,\fp}$-module.
\end{enumerate}
\end{defi}

\begin{rmk}
The notion of an abelian variety $A$ in $\AV_w$ being $w$-locally projective, see Definition~\ref{defi:locallyprojective}, is compatible with the point of view towards local properties taken in Definition~\ref{defi:locallyfree}, because $T_\ell(A)$ is projective if and only if $T_\lambda(A)$ is projective for all $\lambda \mid \ell$ and similarly $T_p(A)$ is projective if and only if $T_\fp(A)$ is projective for all $\fp \mid p$.
\end{rmk}

\begin{lem}
\label{lem:topologicalNakayama}
Let $w \subseteq W_q$ be a finite set of Weil $q$-numbers. Let $\fp$ be a maximal ideal of $R_w \otimes \bZ_p$ such that $F$ and $V$ are in $\fp$. 

Let $M$ be a finitely generated $\cD_{w,\fp}$-module, and set $\bar M \coloneqq M/(\cF,\cV)M$. Then the following holds.
\begin{enumerate}[label=(\arabic*),align=left,labelindent=0pt,leftmargin=*,widest = (8)]
\item 
\label{lemitem:Dieudonnemodsupersingular}
The ideal $\fp$ equals $(p,F,V)$, and the ring $\cD_{w,\fp}/(\cF,\cV)$ equals $\bF_q$.
\item 
\label{lemitem:TopologicalNakaGenerators}
Elements $x_1, \ldots, x_m \in M$ generate $M$ as a $\cD_{w,\fp}$-module if and only if their images generate $\bar M$ as an $\bF_q$-vector space.
\item
\label{lemitem:TopologicalNakaVanishing}
$M = 0$ if and only if $\bar M = 0$.
\end{enumerate}
\end{lem}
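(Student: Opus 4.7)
\begin{pro}
The three parts are established in sequence, with (2) and (3) reduced to non-commutative Nakayama.

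For (1), the containment $(p, F, V) \subseteq \fp$ is immediate: $FV = q = p^\expo \in \fp$ and primality of $\fp$ force $p \in \fp$. Using the presentation of $R_w$ as a quotient of $\bZ[F, V]/(FV - q)$, the quotient $R_w \otimes \bZ_p/(p, F, V)$ is a quotient of $\bZ_p/(p, p^\expo) = \bF_p$; as $\fp$ is proper and contains $(p, F, V)$, this is nonzero and hence equals $\bF_p$. Thus $(p, F, V)$ is already maximal and $\fp = (p, F, V)$. For $\cD_{w, \fp}/(\cF, \cV) = \bF_q$: the identity $\cF \cV = p$ forces $p = 0$ in $\cD_w/(\cF, \cV)$, and the explicit $W(\bF_q)$-basis $(\cF_i)$ together with the formulas $\cF \cdot \cF_i = \cF_{i+1}$ for $i \geq 0$ (scaled by $p$ for $i < 0$) and their $\cV$-analogues realize an isomorphism $\cD_w/(\cF, \cV) \simeq W(\bF_q)/p = \bF_q$ via $\cF_0 \mapsto 1$. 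Since $\fp \cdot \cD_w \subseteq (\cF, \cV) \cdot \cD_w$ (using $p = \cF \cV$, $F = \cF^\expo$, $V = \cV^\expo$), the $R_w \otimes \bZ_p$-action on $\bF_q$ factors through $R_w \otimes \bZ_p/\fp = \bF_p$, so base change to $R_{w, \fp}$ gives $\cD_{w, \fp}/(\cF, \cV) = \bF_q$.

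For (2) and (3), the key auxiliary fact is $(\cF, \cV) \subseteq J$, where $J$ denotes the Jacobson radical of $\cD_{w, \fp}$. By Lemma~\ref{lem:centerDw} and the finite $W(\bF_q)$-basis of $\cD_w$, the ring $\cD_w$ is module-finite over its central subring $R_w \otimes \bZ_p$, so $\cD_{w, \fp}$ is module-finite over the commutative Noetherian local ring $R_{w, \fp}$. It is a classical fact that for such algebras the image of the maximal ideal lies in the Jacobson radical, giving $\fp \cdot \cD_{w, \fp} \subseteq J$. Passing to the finite-dimensional $\bF_p$-algebra $\cD_{w, \fp}/\fp \cdot \cD_{w, \fp}$, the images of $F = \cF^\expo$, $V = \cV^\expo$, and $p = \cF \cV = \cV \cF$ all vanish, so any length-$\expo$ product in $\cF$ and $\cV$ is zero (pure powers equal $\cF^\expo$ or $\cV^\expo$, mixed products contain an adjacent $\cF \cV$ or $\cV \cF$). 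Hence the image of $(\cF, \cV)$ is nilpotent, and nilpotent two-sided ideals lie in the Jacobson radical; combining with $\fp \cdot \cD_{w, \fp} \subseteq J$ yields $(\cF, \cV) \subseteq J$.

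Given $(\cF, \cV) \subseteq J$, part (2) follows from the standard non-commutative Nakayama argument: setting $N = \sum_i \cD_{w, \fp} x_i$, the hypothesis that the $\bar x_i$ generate $\bar M$ over $\cD_{w, \fp}/(\cF, \cV) = \bF_q$ translates to $M = N + (\cF, \cV) M$, so the finitely generated module $M/N$ satisfies $(\cF, \cV)(M/N) = M/N$ and hence vanishes. The converse direction is immediate. Part (3) is the case $m = 0$ of (2). The main technical step is the inclusion $(\cF, \cV) \subseteq J$ in the non-commutative setting; this splits cleanly into two classical ingredients, namely module-finiteness over a commutative local ring and the explicit nilpotence of $(\cF, \cV)$ modulo $\fp$ forced by the Dieudonn\'e relations.
\end{pro}
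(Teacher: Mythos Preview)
Your proof is correct. The treatment of part (1) is essentially the same as the paper's. For parts (2) and (3), however, you take a genuinely different route: the paper proves (3) first by a topological argument---showing that the $(\cF,\cV)$-adic and $p$-adic topologies on $M$ coincide (via the chain $(\cF,\cV)^{n\expo} \subseteq \fp^n\cD_{w,\fp} \subseteq p\cD_{w,\fp}$) and then invoking that $M$ is $p$-adically Hausdorff---and deduces (2) from (3) applied to a cokernel. You instead establish the inclusion $(\cF,\cV) \subseteq J(\cD_{w,\fp})$ directly, by combining the classical fact $\fp\cD_{w,\fp} \subseteq J$ (valid since $R_{w,\fp}$ is central and $\cD_{w,\fp}$ is module-finite over it) with the nilpotence of $(\cF,\cV)$ modulo $\fp$, and then apply the ordinary non-commutative Nakayama lemma; (3) becomes the degenerate case $m=0$ of (2). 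Both arguments rest on the same elementary observation that any length-$\expo$ word in $\cF,\cV$ lies in $\fp\cD_{w,\fp}$. Your packaging via the Jacobson radical is arguably more transparent and avoids any mention of topologies, while the paper's version makes the Hausdorff property explicit; neither is materially shorter or more general than the other.
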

\begin{proof}
Assertion~\ref{lemitem:Dieudonnemodsupersingular} follows from the definition of $R_w \otimes \bZ_p$ and of $\cD_{w,\fp}$. The assumption implies that the constant term in $h_w(F,V)$ is divisible by $p$ and thus $(\cF,\cV)$ is actually a nontrivial two-sided ideal.

Assertion~\ref{lemitem:TopologicalNakaGenerators} follows from assertion~\ref{lemitem:TopologicalNakaVanishing} applied to the cokernel of the map $\cD_{w,\fp}^{\oplus m} \to M$ induced by the tupel of elements $x_1, \ldots, x_m$. It therefore remains to show that $\bar M = 0$ implies $M= 0$. 

Since $R_{w,\fp}$ is a finite $\bZ_p$-module, the $\fp$-adic topology agrees with the $p$-adic topology. Therefore $\fp^n \subseteq pR_{w,\fp}$ for sufficiently large $n$. 
By assumption then 
\[
(\cF,\cV)^{n\expo} \subseteq (\cF^\expo, p, \cV^\expo)^n = \fp^n\cD_{w,\fp}  \subseteq p \cD_{w,\fp} \subseteq (\cF,\cV). 
\]
It follows that the $p$-adic topology on $M$ agrees with the $(\cF,\cV)$-adic topology. Since $M$ is finitely generated as a $\cD_{w,\fp}$-module, it is also finitely generated as a $\bZ_p$-module and thus $p$-adically hausdoff. Therefore 
\[
0 = \bigcap_{t \geq 0} p^t M = \bigcap_{t \geq 0} (\cF,\cV)^t M.
\]
If $\bar M = 0$, then  $M$ equals $(\cF,\cV)M$, and  the latter intersection equals $M$. 
\end{proof}

\begin{prop}
\label{prop:localprojectiveDieudonnestructure}
Let $\fp$ be a maximal ideal of $R_w \otimes \bZ_p$. Up to isomorphism there is a unique non-zero finitely generated indecomposable projective $\cD_{w,\fp}$-module $P_{w,\fp}$. All finitely generated projective $\cD_{w,\fp}$-modules are isomorphic to multiples $P_{w,\fp}^{\oplus m}$  
for some $m \geq 0$.

If $\fp$ equals the supersingular maximal ideal $\fp_o = (F,V,p)$, then the indecomposable projective module $P_{w,\fp_o}$ is the free $\cD_{w,\fp_o}$-module of rank $1$.
\end{prop}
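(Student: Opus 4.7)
My plan is to identify $\cD_{w,\fp}$ as a semi-perfect algebra and thereby reduce both statements to the assertion that the Jacobson quotient $\cD_{w,\fp}/J$ is a simple Artinian ring. The proof then splits naturally into the supersingular case $\fp = \fp_o$ and the non-supersingular case.

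The localization $R_{w,\fp}$ is a complete local Noetherian commutative ring, being a local factor of the finite $\bZ_p$-algebra $R_w \otimes \bZ_p$. The basis counts of Section~\ref{sec:mincenord} (combined with Lemma~\ref{lem:centerDw}) show that $\cD_{w,\fp}$ is a finite free $R_{w,\fp}$-module of rank $\expo^2$, so $\cD_{w,\fp}$ is semi-perfect. Standard theory for semi-perfect rings then gives a bijection, via $P \mapsto P/JP$, between the isomorphism classes of indecomposable finitely generated projective $\cD_{w,\fp}$-modules and the isomorphism classes of simple $\cD_{w,\fp}/J$-modules, and every finitely generated projective decomposes essentially uniquely (Krull--Schmidt) as a direct sum of indecomposables. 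Both assertions of the proposition therefore reduce to showing that $\cD_{w,\fp}/J$ has a unique simple module up to isomorphism. Since the central two-sided ideal $\fp \cdot \cD_{w,\fp}$ is topologically nilpotent---inverses $1-x$ for $x \in \fp \cdot \cD_{w,\fp}$ arise from the geometric series---it lies in $J$, so it suffices to prove that $\cD_{w,\fp}/\fp \cdot \cD_{w,\fp}$ is a simple $k(\fp)$-algebra.

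For the supersingular case $\fp = \fp_o = (F,V,p)$, Lemma~\ref{lem:topologicalNakayama}\ref{lemitem:Dieudonnemodsupersingular} identifies $\cD_{w,\fp_o}/(\cF,\cV) = \bF_q$, a field. The two-sided ideal $(\cF,\cV)$ is topologically nilpotent by the estimate $(\cF,\cV)^{n\expo} \subseteq p \cdot \cD_{w,\fp_o}$ used in the proof of Lemma~\ref{lem:topologicalNakayama}\ref{lemitem:TopologicalNakaVanishing}, and therefore lies in $J$. Combined with the field quotient, $J = (\cF,\cV)$ and $\cD_{w,\fp_o}/J = \bF_q$; hence $\cD_{w,\fp_o}$ is a local ring and is itself the unique indecomposable finitely generated projective, free of rank one, as claimed.

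For the non-supersingular case $\fp \neq \fp_o$, the relation $\cF\cV = p \in \fp$ forces exactly one of $F,V$ to lie outside $\fp$; without loss of generality $V \notin \fp$, so $\cV$ is a unit in $\cD_{w,\fp}$ and $\cF = p\cV^{-1}$. Thus $\cD_{w,\fp}$ is presented as the cyclic crossed product of $W(\bF_q) \otimes_{\bZ_p} R_{w,\fp}$ by $\cV^{\pm 1}$, with $\cV a = \sigma^{-1}(a)\cV$ for $a \in W(\bF_q)$ and $\cV^\expo = V \in R_{w,\fp}^{\times}$, paralleling the cyclic Azumaya description \eqref{eq:rationalcyclicalgebra}. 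Reducing modulo $\fp$ yields the cyclic crossed product of the \'etale $k(\fp)$-algebra $W(\bF_q) \otimes_{\bZ_p} k(\fp) = \bF_q \otimes_{\bF_p} k(\fp)$ by the cyclic group $\langle\sigma\rangle$ of order $\expo$, whose $\sigma$-fixed subalgebra is $k(\fp)$. This crossed product is a central simple $k(\fp)$-algebra of degree $\expo$; since $k(\fp)$ is a finite field with trivial Brauer group it must be a matrix algebra, and therefore admits a unique simple module. The main obstacle is precisely this last step: verifying that the reduction modulo $\fp$ remains a single central simple algebra---a point that rests on $\sigma$ acting on $\bF_q \otimes_{\bF_p} k(\fp)$ as the Galois generator of an \'etale extension of $k(\fp)$ of order $\expo$, which requires care because $\bF_q \otimes_{\bF_p} k(\fp)$ need not be a field.
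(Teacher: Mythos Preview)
Your proof is correct and proceeds via a somewhat different organization than the paper's. You set up a uniform semi-perfect framework and reduce both cases to the simplicity of $\cD_{w,\fp}/J$, whereas the paper treats the two cases separately and more concretely: in the non-supersingular case it proves the stronger statement $\cD_{w,\fp}\simeq\rM_\expo(R_{w,\fp})$ as a cyclic Azumaya algebra over the complete local ring $R_{w,\fp}$ (citing \cite[IV, Proposition~1.4]{milne:etcoh} and the triviality of $\Br$ of finite fields), then invokes Morita equivalence; in the supersingular case it argues directly from Lemma~\ref{lem:topologicalNakayama} that any projective is free by lifting a basis and killing the kernel. Your reduction-modulo-$\fp$ argument in the non-supersingular case is essentially the residue-field shadow of the paper's argument, and your supersingular case coincides with the paper's once one observes that showing $(\cF,\cV)=J$ with field quotient is equivalent to the Nakayama step. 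The paper's approach buys the explicit isomorphisms \eqref{eq:structureDwfp Finvertible}, \eqref{eq:structureDwfp Vinvertible}, which are reused later (Proposition~\ref{prop:VB}, Lemma~\ref{lem:ordinaryDisAzumaya}); your approach is cleaner conceptually but does not yield these. One small point: your claim that $\cD_{w,\fp}$ is \emph{free} of rank $\expo^2$ over $R_{w,\fp}$ is not what Section~\ref{sec:mincenord} establishes (those are $W(\bF_q)$-bases, not $R_w\otimes\bZ_p$-bases), and in the supersingular case freeness over the center is not obvious; but you only need module-finiteness for semi-perfectness, so the argument survives.
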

\begin{proof}
There are three cases. If $F \notin \fp$, then $F$ is a unit in $R_{w,\fp}$. So $V = q F^{-1}$ holds in $R_{w,\fp}$ and 
\[
\cD_{w,\fp} = \big(W(F_q) \otimes R_{w,\fp} \big)\{\cF\}/(\cF^\expo - F)
\]
is a cyclic algebra over the complete local ring $R_{w,\fp}$ with finite residue field. It follows from  
\cite[IV Proposition 1.4]{milne:etcoh} and the fact that finite fields have trivial Brauer group that this cyclic algebra is trivial, i.e.\ there is an $R_{w,\fp}$-algebra isomorphism
\begin{equation}
\label{eq:structureDwfp Finvertible}
\cD_{w,\fp} \simeq \rM_\expo(R_{w,\fp}).
\end{equation}
By Morita equivalence, projective $\cD_{w,\fp}$-modules are translated into projective $R_{w,\fp}$-modules, which are free. So all projective $R_{w,\fp}$-modules are multiples of a unique indecomposable one. This translates back by Morita equivalence to $\cD_{w,\fp}$-modules. The module $P_{w,\fp}$ corresponds to the $\rM_\expo(R_{w,\fp})$-module of column vectors $(R_{w,\fp})^\expo$.

If $V \notin \fp$, then we have $F = q V^{-1}$ in $R_{w,\fp}$ and the above holds with $F$ and $V$ interchanged. So again we have 
\begin{equation}
\label{eq:structureDwfp Vinvertible}
\cD_{w,\fp} \simeq \rM_\expo(R_{w,\fp})
\end{equation}
with the same conclusion.

In the third case both $F$ and $V$ are contained in $\fp$ and we are in the situation of Lemma~\ref{lem:topologicalNakayama}. Let $P$ be a finitely generated projective $\cD_{w,\fp}$-module, and let $m = \dim_{\bF_q} P/(\cF, \cV) P$. Then, by choosing an $\bF_q$-basis of $P/(\cF, \cV) P$, Lemma~\ref{lem:topologicalNakayama}~\ref{lemitem:TopologicalNakaGenerators} shows there is a surjection
\[
f: \cD_{w,\fp}^m \surj P.
\]
This surjection splits since $P$ is projective. Therefore $Q=\ker(f)$ is a direct summand, and consequently also finitely generated as a $\cD_{w,\fp}$-module. Since $f$ modulo $(\cF,\cV)$ is an isomorphism, we deduce that $Q/(\cF,\cV)Q = 0$. Lemma~\ref{lem:topologicalNakayama}~\ref{lemitem:TopologicalNakaVanishing} implies $Q = 0$ and hence $P \simeq \cD_{w,\fp}^{\oplus m}$ is free. So in this case $P_{w,\fp}$ equals $\cD_{w,\fp}$.
\end{proof}

The connection between the notions `$w$-locally projective' and `$w$-locally free' is summarized by the following proposition.

\begin{prop}
\label{prop:locallyfreeandprojective}
Let $q=p^\expo$ be a power of a prime number $p$, and let $w \subseteq W_q$ be a finite subset of Weil $q$-numbers. Let $A$ be an abelian variety in $\AV_w$. 
\begin{enumerate}[label=(\arabic*),align=left,labelindent=0pt,leftmargin=*,widest = (8)]
\item 
\label{propitem:locallyfreeandprojective1}
If $A$ is $w$-locally free, then $A$ is $w$-locally projective. 
\item 
\label{propitem:locallyfreeandprojective2}
If $A$ is $w$-locally projective, then there is an $n \geq 1$ such that $A^n$ is $w$-locally free. In fact $n = r$ always works. 
\end{enumerate}
\end{prop}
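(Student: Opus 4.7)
The plan is to assemble the proposition from the results already established in this section. For part~\ref{propitem:locallyfreeandprojective1}, I first observe that $R_w\otimes\bZ_\ell = \prod_\lambda R_{w,\lambda}$ is the canonical decomposition over the maximal ideals $\lambda$ above $\ell$, and that likewise $\cD_w = \prod_{\fp\mid p}\cD_{w,\fp}$: the central idempotents of the semilocal ring $R_w\otimes\bZ_p$ remain central in $\cD_w$, since $R_w\otimes\bZ_p$ is the center of $\cD_w$ by Lemma~\ref{lem:centerDw}. Under any such finite product decomposition of (possibly non-commutative) rings, a module is projective if and only if each of its factors is projective, and free modules are always projective, so $w$-local freeness implies $w$-local projectivity.

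For part~\ref{propitem:locallyfreeandprojective2}, I would treat the primes away from $p$ and the prime $p$ separately. At a maximal ideal $\lambda$ of $R_w\otimes\bZ_\ell$ with $\ell\neq p$, the ring $R_{w,\lambda}$ is a commutative noetherian local ring, hence every finitely generated projective $R_{w,\lambda}$-module is free. Thus $T_\lambda(A)$ is already free, and the same is true of $T_\lambda(A^n)=T_\lambda(A)^{\oplus n}$ for any $n\geq 1$; no condition on $n$ arises from the primes $\ell\neq p$.

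At a maximal ideal $\fp$ of $R_w\otimes\bZ_p$, Proposition~\ref{prop:localprojectiveDieudonnestructure} provides a unique indecomposable projective $\cD_{w,\fp}$-module $P_{w,\fp}$, and $T_\fp(A)\simeq P_{w,\fp}^{\oplus m_\fp}$ for some $m_\fp\geq 0$; correspondingly $T_\fp(A^n)\simeq P_{w,\fp}^{\oplus nm_\fp}$. In the supersingular case $\fp=\fp_o$, the same proposition identifies $P_{w,\fp_o}$ with the free $\cD_{w,\fp_o}$-module of rank one, so $T_{\fp_o}(A^n)$ is automatically free for every $n$. Otherwise either $F$ or $V$ is invertible at $\fp$, and the structure isomorphism $\cD_{w,\fp}\simeq\rM_\expo(R_{w,\fp})$ established in the proof of Proposition~\ref{prop:localprojectiveDieudonnestructure} yields, by splitting off columns, an isomorphism $\cD_{w,\fp}\simeq P_{w,\fp}^{\oplus\expo}$ of left $\cD_{w,\fp}$-modules. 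Consequently $T_\fp(A^n)$ is free over $\cD_{w,\fp}$ as soon as $\expo$ divides $nm_\fp$, and the choice $n=\expo=r$ enforces this divisibility uniformly in $\fp$.

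The argument is essentially a bookkeeping of multiplicities, and the only point worth pausing over is that in the non-supersingular $p$-adic cases one needs precisely $\expo$ copies of the indecomposable projective $P_{w,\fp}$ to recover the free module $\cD_{w,\fp}$; this is exactly what forces the uniform choice $n=r$ claimed in the statement.
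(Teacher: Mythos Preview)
Your proof is correct and follows essentially the same approach as the paper: part~\ref{propitem:locallyfreeandprojective1} reduces to ``free implies projective'' (you are a bit more careful about passing between the semilocal rings $R_w\otimes\bZ_\ell$, $\cD_w$ and their local factors, which is appropriate given that the two definitions live at different levels), and part~\ref{propitem:locallyfreeandprojective2} uses the same ingredients as the paper---freeness of finitely generated projectives over the commutative local rings $R_{w,\lambda}$, and Proposition~\ref{prop:localprojectiveDieudonnestructure} at the primes $\fp\mid p$ to see that $P_{w,\fp}^{\oplus r}$ is free in every case. Your explicit split into the supersingular and non-supersingular cases at $p$ merely unpacks what the paper compresses into a single sentence.
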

\begin{proof}
Assertion \ref{propitem:locallyfreeandprojective1} is obvious because free modules are projective. We now prove \ref{propitem:locallyfreeandprojective2} and assume that $A$ is $w$-locally projective. Tate modules are finitely generated modules, and finitely generated projective modules over a commutative local ring are free. Therefore, since for $\ell \not= p$ and all maximal ideals $\lambda$ of $R_w \otimes \bZ_\ell$ the ring $R_{w,\lambda}$ is a commutative local ring, we find that $T_\lambda(A)$ is a free $R_{w,\lambda}$-module and the same holds for multiples $A^n$. 

It remains to discuss the local structure at all maximal ideals $\fp$ of $R_w \otimes \bZ_p$. By Proposition~\ref{prop:localprojectiveDieudonnestructure}, the $\fp$-adic Tate module $T_\fp(A)$ is a multiple of $P_{w,\fp}$. As in all cases considered in Proposition~\ref{prop:localprojectiveDieudonnestructure} the  
$r$-th multiple $P_{w,\fp}^{\oplus r}$ is a free $\cD_{w,\fp}$-module, the claim follows.
\end{proof}

\begin{ex}
Let $w$ be a finite set of non-real Weil $q$-numbers, and set $P_w(x)$ to be the product of the  minimal polynomials $P_\pi(x)$ for $\pi \in w$. Let $2d$ be the degree of $P_w(x)$. Since $x \mapsto q/x$ permutes the roots of $P_w(x)$, there is a Polynomial $Q_\beta(x) \in \bZ[x]$, the polynomial with roots $\beta = \pi + q/\pi$ for $\pi \in w$, with $P_w(x) = x^d Q_\beta(x + q/x)$. Now $Q_\beta(x)$ has totally real roots of absolute value $\abs{\beta} < 2 \sqrt{q}$. Let us assume that $Q_\beta(x) + 1$ still is separable with totally real roots, all of which have absolute value bounded by $2 \sqrt{q}$. This certainly can happen when $d=1$, i.e.\ when $w = \{\pi\}$ and $P_\pi(x)$ is a quadratic polynomial. Then $P_w(x) + x^d = x^d(Q_\beta(x+ q/x) + 1)$ is still a separable Polynomial with Weil $q$-numbers as roots. Let $w'$ be the set of conjugacy classes of roots of $P_w(x) + x^d$, so that $P_{w'}(x) = P_w(x) + x^d$. It follows that 
\[
h_{w'}(F,V) = h_w(F,V) + 1.
\]
Therefore the the natural map 
\[
R_{w \cup w'} \longrightarrow R_{w} \times R_{w'}
\]
is an isomorphism in this case. There are no `congruences' between $\pi \in w$ and $\pi' \in w'$. 

Let us consider balanced abelian varieties $A_w$  in $\AV_{w}$ and $A_{w'}$ in $\AV_{w'}$. Then, for $n,n' \in \bN$, the abelian variety $A =A_w^n \times A_{w'}^{n'}$ is $w\cup w'$-locally projective. But $T_\ell(A)$  is free as $R_{w \cup w'} \otimes \bZ_\ell$-module only if $n=n'$, while $T_\lambda(A)$ is a free $R_{w \cup w',\lambda}$-module for all $n, n'$. This phenomenon occurs because $\Spec(R_{w \cup w',\lambda})$ is local while $\Spec(R_{w \cup w'} \otimes \bZ_\ell)$ is only semi-local and not connected in the example. This illustrates the conceptual advantage of $T_\lambda(-)$ over $T_\ell(-)$, since $T_\lambda(-)$ is local in the sense of Section~\S\ref{subsection:geometry}. 
\end{ex}

The above example exploits a case where $\Spec(R_w)$ is not connected. Now we analyse the connected case.

\begin{prop}
\label{prop:VB}
Let $w$ be a finite set of Weil $q$-numbers. Let $A$ and $B$  be $w$-locally projective abelian varieties. 
\begin{enumerate}[label=(\arabic*),align=left,labelindent=0pt,leftmargin=*,widest = (8)]
\item 
\label{propitem:projectiveHom}
If $w$ consists only of ordinary Weil $q$-numbers, then $\fp_o = (F,V,p)$ is not a maximal ideal of $R_w$ and  $\Hom_{\bF_q}(A,B)$ is a projective $R_w$-module.
\item
\label{propitem:projectiveHom punctured}
If $w$ contains a non-ordinary Weil $q$-number, then $\fp_o = (F,V,p)$ is a point in $X_w =  \Spec(R_w)$, the coherent sheaf described by the $R_w$-module $\Hom_{\bF_q}(A,B)$ is locally free on $X_w - \{\fp_o\}$, and there is a Zariski open neighborhood $U_w$ of $\fp_o$ such that the rank is constant on $U_w - \{\fp_o\}$.
\end{enumerate}
\end{prop}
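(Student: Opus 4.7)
The plan is to analyse $M \coloneqq \Hom_{\bF_q}(A,B)$ as a finitely generated $R_w$-module via localization at each maximal ideal, using Tate's theorems in the local form of Proposition~\ref{prop:localTateTheorem}. Since $M$ is finitely generated over $\bZ$, hence over $R_w$, projectivity over the Noetherian ring $R_w$ reduces to local freeness at every maximal ideal, so both parts of the proposition hinge on the behaviour at $\fp_o$.

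The first step is to establish local freeness at every maximal ideal $\fm \neq \fp_o$. If $\fm$ lies over a rational prime $\ell \neq p$, Proposition~\ref{prop:localTateTheorem}\ref{propitem:localTateTheorem lambda} gives $M \otimes_{R_w} R_{w,\fm} \simeq \Hom_{R_{w,\fm}}(T_\fm(A), T_\fm(B))$; the Tate modules are projective over the local ring $R_{w,\fm}$ and hence free, and so is this Hom. For $\fm = \fp$ over $p$ with $\fp \neq \fp_o$, Proposition~\ref{prop:localprojectiveDieudonnestructure} furnishes $\cD_{w,\fp} \simeq \rM_\expo(R_{w,\fp})$, and Morita equivalence applied to $M \otimes_{R_w} R_{w,\fp} \simeq \Hom_{\cD_{w,\fp}}(T_\fp(B), T_\fp(A))$ once again produces a free $R_{w,\fp}$-module.

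The status of $\fp_o$ itself is then settled by the explicit computation
\[
R_w/(F,V,p) \simeq \bZ/(p, h_w(0,0)) \simeq \bF_p/(h_w(0,0) \bmod p),
\]
in which $h_w(0,0) = \prod_{\pi \in w} h_\pi(0,0)$ agrees up to sign with the product of the middle coefficients of the minimal polynomials $P_\pi$. Since ordinarity of $\pi$ is characterised by $p \nmid h_\pi(0,0)$, the ideal $\fp_o$ is proper (hence maximal) exactly when $w$ contains a non-ordinary Weil number. Combined with the first step, this settles the non-maximality and projectivity claims of part~(1), and the local freeness of the coherent sheaf on $X_w - \{\fp_o\}$ in part~(2).

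The final and most delicate step is the uniform rank assertion in part~(2). I choose $U_w \coloneqq X_w - X_{w_{\mathrm{ord}}}$, where $w_{\mathrm{ord}} \subseteq w$ is the ordinary subset and $X_{w_{\mathrm{ord}}}$ is the closed subscheme defined by the natural surjection $R_w \surj R_{w_{\mathrm{ord}}}$; the previous step gives $\fp_o \notin X_{w_{\mathrm{ord}}}$, so $U_w$ is a Zariski-open neighborhood of $\fp_o$ whose points all lie on non-ordinary irreducible components of $X_w$. By Proposition~\ref{prop:localprojectiveDieudonnestructure}, $T_{\fp_o}(A) \simeq \cD_{w,\fp_o}^{\oplus a}$ and $T_{\fp_o}(B) \simeq \cD_{w,\fp_o}^{\oplus b}$ for integers $a,b \geq 0$. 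Rationalising yields $T_{\fp_o}(A) \otimes \bQ \simeq (\cD_{w,\fp_o}^0)^{\oplus a}$ with $\cD_{w,\fp_o}^0 \coloneqq \cD_{w,\fp_o} \otimes \bQ$, and this $\bQ_p$-algebra decomposes as a product $\prod \cD_{\pi,\fp}^0$ over the non-ordinary $p$-adic places $\fp$ of the various $\bQ(\pi)$. Taking $K_\fp$-dimensions in the resulting isomorphism $V_\fp(B_\pi)^{n_\pi(A)} \simeq (\cD_{\pi,\fp}^0)^a$ and invoking Lemma~\ref{lem:rationalAtp} gives $n_\pi(A) \cdot s_\pi = a\expo$, and similarly $n_\pi(B) \cdot s_\pi = b\expo$, for every non-ordinary $\pi \in w$. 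Consequently the generic rank of $M$ along each non-ordinary irreducible component of $X_w$ equals $n_\pi(A) n_\pi(B) s_\pi^2 = ab\expo^2$, a value independent of $\pi$; local freeness on $X_w - \{\fp_o\}$ then propagates this constant to all of $U_w - \{\fp_o\}$. The main obstacle lies in extracting from projectivity at the single point $\fp_o$ the compatible family of constraints $n_\pi(A) \cdot s_\pi = a\expo$ simultaneously for all non-ordinary $\pi \in w$, which is precisely where the product decomposition of $\cD_{w,\fp_o}^0$ over non-ordinary $p$-adic places does the crucial work.
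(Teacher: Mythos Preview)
Your proof is correct. For part~(1) and the local-freeness claim on $X_w - \{\fp_o\}$ in part~(2), you follow the same route as the paper: local Tate theorems at $\lambda$, and the matrix description $\cD_{w,\fp} \simeq \rM_\expo(R_{w,\fp})$ plus Morita at $\fp \neq \fp_o$.

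For the rank constancy near $\fp_o$, the two arguments diverge in bookkeeping. The paper stays with the Hom-module: after replacing $A,B$ by powers to make the $\fp_o$-adic Tate modules free (Proposition~\ref{prop:locallyfreeandprojective}), it identifies $\Hom_{\bF_q}(A,B)\otimes R_{w,\fp_o}$ with a matrix of copies of $\cD_{w,\fp_o}$, then simply observes that $\cD_w \otimes \bQ_p$ is Azumaya of degree $\expo$ over $\bQ(w)\otimes\bQ_p$, so its rank at every generic point of $\Spec(R_{w,\fp_o})$ is $\expo^2$. You instead translate $T_{\fp_o}(A)\simeq \cD_{w,\fp_o}^{\oplus a}$ into the multiplicity constraint $n_\pi(A)s_\pi = a\expo$ via Lemma~\ref{lem:rationalAtp}, and then compute the generic rank $n_\pi(A)n_\pi(B)s_\pi^2 = ab\expo^2$ from those. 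Both reach $ab\expo^2$; the paper's route is quicker (one Azumaya observation instead of Lemma~\ref{lem:rationalAtp} plus a multiplicity count), while yours makes explicit exactly which isogeny-theoretic constraint on $n_\pi(A)$ projectivity at $\fp_o$ forces --- information the paper only extracts later in Theorem~\ref{thm:connectedSpec classification of locallyprojective}. One small caveat: your formula $R_w/(F,V,p)\simeq \bF_p/(h_w(0,0)\bmod p)$ uses the even-degree presentation of $R_w$; in the odd-degree case you need the presentation from \eqref{eq:Z basis Rw odd}, but there the unique rational Weil number in $w$ is non-ordinary, so $\fp_o$ is maximal anyway and the conclusion is unaffected.
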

\begin{proof}
Since $X_w = \Spec(R_w)$ is the union of the $X_\pi = \Spec(R_\pi)$ in $\Spec \bZ[F,V]/(FV-q)$, the point $\fp_o$ lies on $X_w$ if and only if there is a $\pi \in w$ with $\fp_o$ contained in $X_\pi$. The latter is equivalent to the existence of a quotient $R_\pi \surj \bF_p$  that sends $F, V$ to $0$. In other words, there is a prime ideal of the order $R_\pi$ at which $\pi$ and $q/\pi$ lie in the maximal ideal. This happens if and only if $\pi$ is not ordinary.

The (completed) local structure as an $R_w$-module at a maximal ideal $\lambda$ above a prime $\ell \not= p$ (resp.\ at $\fp$ above $p$) is described by Proposition~\ref{prop:localTateTheorem}. Since $A$ and $B$ are $w$-locally projective, at $\lambda$ the modules $T_\lambda(A)$ and $T_\lambda(B)$ are projective and 
\[
\Hom_{\bF_q}(A,B) \otimes_{R_w} R_{w,\lambda} = \Hom_{R_{w,\lambda}}(T_\lambda(A),T_\lambda(B))
\]
is a projective $R_{w,\lambda}$-module. To analyse the situatioin at $\fp$ we may replace $A$ and $B$ by multiples according to Proposition~\ref{prop:locallyfreeandprojective} and thus assume that $A$ and $B$ are $w$-locally free. It then follows that $T_\fp(A) \simeq \cD_{w,\fp}^{\oplus n}$ and $T_\fp(B) \simeq \cD_{w,\fp}^{\oplus m}$ for some $n,m \in \bN_0$ and 
\begin{equation}
\label{eq:HomAtPo}
\Hom_{\bF_q}(A,B) \otimes_{R_w} R_{w,\fp}  = \Hom_{\cD_{w,\fp}}(\cD_{w,\fp}^{\oplus m}, \cD_{w,\fp}^{\oplus n})
\end{equation}
is isomorphic to a space of matrices with entries in $\End_{\cD_{w,\fp}} (\cD_{w,\fp}) \simeq \cD_{w,\fp}^\op$. 

The first claim follows if $\cD_{w,\fp}$ for $\fp \not= \fp_o$ is a free $R_{w,\fp}$-module. This follows if  $F \not\in \fp$  from \eqref{eq:structureDwfp Finvertible} and if $V \not\in \fp$ from \eqref{eq:structureDwfp Vinvertible}. This completes the proof of~\ref{propitem:projectiveHom}.

We now prove~\ref{propitem:projectiveHom punctured}. From the proof of~\ref{propitem:projectiveHom} we learn that still the coherent sheaf on $X_w$ associated to $\Hom_{\bF_q}(A,B)$ is a locally free sheaf of finite rank on $X_w - \{\fp_o\}$. So it is locally away from $\fp_o$ a vector bundle, but the rank a priori may depend on the connected component of $X_w - \{\fp_o\}$. Assertion~\ref{propitem:projectiveHom punctured} claims that this rank is actually the same on all components of $X_w - \{\fp_o\}$ whose Zariski closure passes through $\fp_o$. Therefore it remains to work complete locally at $\fp = \fp_o$, i.e.\ on $\Spec(R_{w,\fp_o})$, where our coherent sheaf is described by \eqref{eq:HomAtPo} as isomorphic to the coherent sheaf associated to the $R_{w,\fp_o}$-module $(\cD_{w,\fp_o})^{\oplus mn}$. Note that the $R_{w,\fp_o}$-module structure is insensitive to passing to the opposite ring. 

The generic points of $\Spec(R_{w,\fp_o})$ correspond to the branches of $X_w$ at $\fp_o$. We must therefore  show that $\cD_{w,\fp_o} \otimes \bQ_p$ has the same rank at each generic point of $\Spec(R_{w,\fp_0})$. These generic points correspond to $\pi \in w$ and places $v$ of $\bQ(\pi)$ above $p$ with $0 < v(\pi) < v(q)$. But after inverting $p$, the $\bQ_p$-algebra $\cD_{w} \otimes \bQ_p$ is an Azumaya algebra of dimension $\expo^2$ over $\bQ(w) \otimes \bQ_p$, see \eqref{eq:rationalcyclicalgebra},  and so the same holds for the base change to $R_{w,\fp_0} \otimes \bQ_p$. It follows that indeed the rank of $\cD_{w,\fp_o} \otimes \bQ_p$ on the generic points of $\Spec(R_{w,\fp_0})$ is constant equal to $\expo^2$.
\end{proof}

\begin{thm}
\label{thm:connectedSpec classification of locallyprojective}
Let $w$ be a finite set of Weil $q$-numbers. We assume that 
\begin{enumerate}[label=(\roman*),align=left,labelindent=0pt,leftmargin=*,widest = (iii)]
\item
$X_w = \Spec(R_w)$ is connected, and
\item
there exists $\pi \in w$ which is non-ordinary.
\end{enumerate}
Let $A_w$ be a $w$-balanced abelian variety. Then the following holds.
\begin{enumerate}[label=(\arabic*),align=left,labelindent=0pt,leftmargin=*,widest = (8)]
\item 
If $\expo$ is even or $w$ only consists of non-real Weil $q$-numbers, in which case a reduced $w$-balanced abelian variety $\widebar{A}_w$ exists, then any $w$-locally projective abelian variety $A$ is isogenous to a power of $\widebar A_w$.
\item
If $\expo$ is odd and the real conjugacy class of Weil $q$-numbers $\{\pm \sqrt{q}\}$ is contained in $w$, then any $w$-locally projective abelian variety $A$ is isogenous to a power of $A_w$.
\end{enumerate} 
\end{thm}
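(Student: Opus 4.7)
The plan is to work up to isogeny throughout and reduce the statement to a computation of multiplicities. Write $A \sim \prod_{\pi \in w} B_\pi^{a_\pi}$ with integers $a_\pi \geq 0$; since $\bar m_\pi s_\pi = \expo$ and $m_\pi s_\pi = 2\expo$, the theorem amounts to showing that $a_\pi s_\pi$ is independent of $\pi \in w$ and, in case~(2), that it is moreover divisible by $2\expo$ rather than only by $\expo$. All constraints will come from comparing the local integral Tate modules against the rational structure supplied by Lemmas~\ref{lem:rationalAtell} and~\ref{lem:rationalAtp}.

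The key local computation is as follows. For a pair $(\pi,v)$ with $v$ a place of $\bQ(\pi)$ above a prime $\ell$ (possibly $\ell=p$), the $(\pi,v)$-component of $V_\ell(A)$ has $\bQ(\pi)_v$-dimension $a_\pi s_\pi$ when $\ell \neq p$, and $\bQ(\pi)_v$-dimension $a_\pi s_\pi \expo$ when $\ell = p$. On the integral side, the $w$-locally projective hypothesis constrains the Tate modules via Proposition~\ref{prop:localprojectiveDieudonnestructure}: at a maximal ideal $\lambda$ of $R_w \otimes \bZ_\ell$ with $\ell \neq p$, the module $T_\lambda(A)$ is finitely generated projective, hence free, over the local ring $R_{w,\lambda}$ of some rank $c_\lambda$; at a maximal ideal $\fp \neq \fp_o$ of $R_w \otimes \bZ_p$, Morita equivalence via $\cD_{w,\fp} \simeq \rM_\expo(R_{w,\fp})$ forces $T_\fp(A) \simeq (R_{w,\fp}^{\oplus \expo})^{\oplus n_\fp}$; and at the supersingular ideal $\fp_o$, the third case of Proposition~\ref{prop:localprojectiveDieudonnestructure} forces $T_{\fp_o}(A) \simeq \cD_{w,\fp_o}^{\oplus n_{\fp_o}}$. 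Matching $\bQ(\pi)_v$-ranks branch-by-branch then yields $a_\pi s_\pi = c_\lambda$ for all $(\pi,v)$ above $\lambda$; $a_\pi s_\pi = n_\fp$ for all $(\pi,v)$ above $\fp \neq \fp_o$; and, at $\fp_o$, the equation $a_\pi s_\pi \expo = n_{\fp_o} \expo^2$, i.e.\ the crucial divisibility $a_\pi s_\pi = n_{\fp_o} \expo$.

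Connectedness of $X_w$ then closes the argument. Decompose $X_w = \bigcup_{\pi \in w} X_\pi$ with each $X_\pi = \Spec(R_\pi)$ irreducible; distinct components meet only at closed points, and connectedness of $X_w$ means that the graph on $\{X_\pi\}$ with an edge for every shared closed point is connected. Hence, by the local identities above, $c := a_\pi s_\pi$ is forced to be constant on $w$. Hypothesis~(ii) supplies a non-ordinary $\pi_0 \in w$, which passes through $\fp_o$, so $c = n_{\fp_o} \expo$ is a multiple of $\expo$. In case~(1), $\bar m_\pi = \expo/s_\pi$ is always an integer, so setting $n = n_{\fp_o}$ gives $a_\pi = n \bar m_\pi$ and hence $A \sim \bar A_w^n$. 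In case~(2), the real class $\pi_{\mathrm{re}} = \{\pm\sqrt{q}\}$ lies in $w$ with $s_{\pi_{\mathrm{re}}} = 2$, so $c = 2 a_{\pi_{\mathrm{re}}}$ is even; combined with $c = n_{\fp_o} \expo$ and $\expo$ odd, $n_{\fp_o}$ must be even, $n_{\fp_o} = 2n$, yielding $a_\pi = n m_\pi$ and $A \sim A_w^n$. The most delicate point is the divisibility $\expo \mid c$ extracted at $\fp_o$, which is precisely what enables the ``power of a single object'' conclusion; in case~(2) the additional factor of $2$ is supplied by the real place of $\bQ(\pi_{\mathrm{re}})$ forcing $s_{\pi_{\mathrm{re}}}$ to be even.
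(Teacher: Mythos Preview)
Your proof is correct, and the underlying mechanism is the same as the paper's: extract local rank constraints from the $w$-locally projective hypothesis, propagate them across $X_w$ using connectedness, and read off the crucial divisibility $\expo \mid a_\pi s_\pi$ at the supersingular point $\fp_o$. The packaging, however, differs. The paper introduces the auxiliary $R_w$-module $\Hom_{\bF_q}(A_w,A)$ and invokes Proposition~\ref{prop:VB} to show it defines a vector bundle of constant rank on $X_w \smallsetminus \{\fp_o\}$; that rank is then computed once over the punctured formal neighbourhood of $\fp_o$ (yielding $2n\expo^2$) and once at an $\ell$-adic point (yielding $2\expo \cdot \rank_{R_{w,\lambda}} T_\lambda(A)$), and the two are equated. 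You bypass the $\Hom$ object entirely and work directly with the multiplicities $a_\pi s_\pi$, propagating the equality $a_\pi s_\pi = c_\lambda = a_{\pi'} s_{\pi'}$ along edges in the incidence graph of the irreducible components $X_\pi$. Your route is more elementary and self-contained; the paper's route has the advantage of isolating Proposition~\ref{prop:VB} as a reusable structural statement about $\Hom$ between $w$-locally projective objects. In case~(2) both arguments are identical: the parity constraint from $s_{\pi_{\rm re}} = 2$ combined with $\expo$ odd forces $n_{\fp_o}$ to be even.
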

\begin{proof}
Since $w$ contains a non-ordinary Weil $q$-number by assumption, the ideal $\fp_o = (F,V,p)$ is a maximal ideal of $R_w$, see Proposition~\ref{prop:VB}. By Proposition~\ref{prop:localprojectiveDieudonnestructure}
there is an $n \in \bN$ such that
\begin{equation}
\label{eq:compareatsusiprime}
T_{\fp_o}(A) \simeq \cD_{w,\fp_o}^{\oplus n} .
\end{equation} 
The $R_w$-module $\Hom_{\bF_q}(A_w,A)$ is locally free on $X_w - \{\fp_o\}$ by Proposition~\ref{prop:VB}. 
Moreover, by assertion \ref{propitem:projectiveHom punctured} of Proposition~\ref{prop:VB} its rank is locally constant in a neighborhood of $\fp_o$. 
Since $X_w$ is connected, such a neighborhood meets every connected component of $X_w - \{\fp_o\}$. 
It follows that $\Hom_{\bF_q}(A_w,A)$ actually corresponds to a vector bundle on $X_w - \{\fp_o\}$ of some fixed rank. 
This rank can be computed over $R_{w,\fp_o} \otimes \bQ_p$ from 
the proof of Proposition~\ref{prop:VB} and \eqref{eq:compareatsusiprime} to be
\begin{align*}
& \rank _{R_{w,\fp_o} \otimes \bQ_p}\big(\Hom_{\bF_q}(A_w,A) \otimes_{R_w} R_{w,\fp_o} \otimes \bQ_p\big) \\
& \quad = \rank _{R_{w,\fp_o} \otimes \bQ_p}\big(\Hom_{\cD_{w,\fp_o}^0}(V_{\fp_o}(A), V_{\fp_o}(A_w)) \big) \\
& \quad =  \rank _{\cD_{w,\fp_o}^0}\big(V_{\fp_o}(A)\big)  \cdot \rank _{\cD_{w,\fp_o}^0}\big(V_{\fp_o}(A_w)\big) \cdot \rank _{R_{w,\fp_o} \otimes \bQ_p}\big(\cD_{w,\fp_o}^0 \big) 
=  2n  \expo^2 .
\end{align*}
For all maximal ideals $\lambda$  of $R_w$ over a prime $\ell \not= p$ we deduce that 
\begin{align*}
2 \expo \cdot  \rank_{R_{w,\lambda}} \big(T_\lambda(A)\big)   & = 
\rank_{R_{w,\lambda}} \big(T_\lambda(A_w)\big) \cdot \rank_{R_{w,\lambda}}\big(T_\lambda(A)\big)\\
& = \rank_{R_{w,\lambda}} \big(\Hom_{R_{w,\lambda}}(T_\lambda(A_w),T_\lambda(A))\big) \\
& =  \rank_{R_{w,\lambda}} \big(\Hom_{\bF_q}(A_w,A) \otimes_{R_w} R_{w,\lambda} \big) \\
& =  \rank _{R_{w,\fp_o} \otimes \bQ_p}\big(\Hom_{\bF_q}(A_w,A) \otimes_{R_w} R_{w,\fp_o} \otimes \bQ_p\big) =  2n  \expo^2 ,
\end{align*}
where we used that the rank of the vector bundle $\Hom_{\bF_q}(A_w,A)$ on $X_w - \{\fp_o\}$ is the same at $\lambda$ and over a formal punctured neighborhood of $\fp_o$.
In particular, the $R_{w,\lambda}$-rank of $T_\lambda(A)$ equals $nr$ and is independent of $\lambda$, so that
as $R_w \otimes \bZ_\ell$-modules
\[
T_\ell(A) \simeq \prod_{\lambda \mid \ell} T_\lambda(A) \simeq (R_w \otimes \bZ_\ell)^{\oplus nr}.
\]

For a moment we assume that $\expo$ is odd and the real Weil number $\{\pm \sqrt{q}\}$ is contained in $w$. We set $A(\pi)$ to be the maximal abelian subvariety of $A$ with Weil support in $\pi$, and we let $n_\pi$ denote the  multiplicity up to isogeny of the simple $B_\pi$ in $A(\pi)$. Let $\ell$ be a prime different from $p$. By performing base change along $R_w \to \bQ(w) \to \bQ(\sqrt{q})$ we obtain
the rational $\ell$-adic Tate module 
\begin{equation} \label{eq:sqrtqisotypicalcomponent}
V_\ell\big(A(\pm \sqrt{q})\big) = T_\ell(A) \otimes_{R_w \otimes \bZ_\ell}  \bQ(\sqrt{q}) \otimes \bQ_\ell  \simeq  (\bQ(\sqrt{q}) \otimes \bQ_\ell )^{\oplus nr}.
\end{equation}
Comparing $\bQ_\ell$-dimensions of \eqref{eq:sqrtqisotypicalcomponent} with the help of Theorem~\ref{thm:TateEndostructure}~\ref{thmitem:dimfoemula} yields
\begin{align*}
n_{\pm \sqrt{q}} \cdot s_{\pm \sqrt{q}} \cdot  [ \bQ(\sqrt{q}): \bQ] & = n_{\pm \sqrt{q}} \cdot 2 \dim(B_{\pm \sqrt{q}}) 
= 2 \dim\big(A(\pm \sqrt{q})\big) \\
& = \dim_{\bQ_\ell}\big( V_\ell\big(A(\pm \sqrt{q})\big) \big) 
= \dim_{\bQ_\ell}\big( (\bQ(\sqrt{q}) \otimes \bQ_\ell )^{\oplus nr} \big) \\
& = nr \cdot [ \bQ(\sqrt{q}): \bQ]   .
\end{align*}
Since $\expo$ is odd and $s_{\pm \sqrt{q}} = 2$, it follows that $n$ must be even in this case.

Now we come back to the general case. We set (note that we just proved that $n/2$ is an integer in the second case)
\[
B = \begin{cases}
\widebar{A}_w^n & \text{ if $\expo$ is even or $w$ only consists of non-real Weil $q$-numbers}, \\
A_w^{n/2} &  \text{ if $\expo$ is odd and $w$ contains the real Weil $q$-number}.
\end{cases}
\]
It follows that as $R_w \otimes \bZ_\ell$-modules and thus also as Galois modules
\[
T_\ell(A) \simeq (R_w \otimes \bZ_\ell)^{\oplus nr} \simeq T_\ell(B) ,
\]
Tate's theorem \eqref{TateThmell} and the usual arguments show that $A$ and $B$ are isogenous.
\end{proof}

\begin{rmk}
\label{rmk:survey of injective cogenerators}
The connected components of $X_w = \Spec(R_w)$ correspond to a partition $w = w_1 \amalg \ldots \amalg w_s$ such that the natural map 
\[
R_w  \stackrel{\sim}{\longrightarrow}  R_{w_1} \times \ldots \times R_{w_s}
\]
is an isomorphism. Since all connected components $X_i = \Spec (R_{w_i})$ with a non-ordinary Weil $q$-number in $w_i$ contain the point $\fp_o = (F,V,p)$, there is at most one such connected component: we call it the \emph{non-ordinary} component. All the other connected components will be called \emph{ordinary} connected components. 
Any $w$-locally projective abelian variety is the product of $w_i$-locally projective abelian varieties. 
Theorem~\ref{thm:connectedSpec classification of locallyprojective} describes the factor arising from the non-ordinary component if that is present. The factor from ordinary components will be explained in  Theorem~\ref{thm:classification wlocallyprojectiveordinary}.
\end{rmk}

\section{Endomorphism rings of balanced abelian varieties} \label{sec:balanced}

\subsection{Modifying endomorphism rings by local conjugation}

In Section~\S\ref{sec:ordinary} we will benefit from a certain flexibility to modify the endomorphism ring of a balanced abelian variety. This technique already occured in \cite{Wa} and we explain it here in a form suitable for our application.

\begin{defi}
\label{defi:Tatelocallyisomorphic}
Let $A$ and $B$ are abelian varieties in $\AV_{\bF_q}$. We say that $A$ and $B$ are \emph{Tate-locally isomorphic} if 
\begin{enumerate}[label=(\roman*),align=left,labelindent=0pt,leftmargin=*,widest = (iii)]
\item for all primes $\ell \not= p$ the Tate modules $T_\ell(A)$ and $T_\ell(B)$ are isomorphic as Galois modules,
\item and $T_p(A)$ is isomorphic to $T_p(B)$ as Dieudonn\'e modules.
\end{enumerate}
Versions of this notion have appeared with various terminology, e.g., Zarhin uses `\emph{almost isomorphic}' in the context of abelian varieties over finitely generated fields over $\bQ$ in \cite{zarhin:almostisomorphic}.
\end{defi}

\begin{rmk}
Since we have canonical product decompositions
\[
T_\ell(A) = \prod_{\lambda \mid \ell} T_\lambda(A), \quad \Big(\text{resp.\ } T_p(A) = \prod_{\fp \mid p} T_\fp(A) \Big)
\]
as $R_w \otimes \bZ_\ell$-modules (resp.\ as $\cD_w$-modules),  we do not get something different in Definition~\ref{defi:Tatelocallyisomorphic} if we replace $T_\ell(A)$ and $T_p(A)$ by their more local versions $T_\lambda(A)$ and $T_\fp(A)$.
\end{rmk}

\begin{prop}
\label{prop:2outof3locallyisom}
Let $w$ be a finite set of Weil $q$-numbers. Let $A$ and $B$ be isogenous abelian varieties in $\AV_w$.  Then, if two of the properties
\begin{enumerate}[label=(\alph*),align=left,labelindent=0pt,leftmargin=*,widest = (m)]
\item \label{enumitem:2outof3a}
$A$ and $B$ are Tate-locally isomorphic,
\item \label{enumitem:2outof3b}
$A$ is $w$-locally projective, 
\item \label{enumitem:2outof3c}
$B$ is $w$-locally projective. 
\end{enumerate}
hold, then also the third holds. The same holds with `$w$-locally projective' replaced by  `$w$-locally free', or (even without assuming $A$ and $B$ being isogenous) `$w$-balanced' or `reduced $w$-balanced'.
\end{prop}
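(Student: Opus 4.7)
The strategy is to reduce every assertion to a statement about the local Tate modules $T_\lambda(A)$ over $R_{w,\lambda}$ (for $\lambda$ a maximal ideal above $\ell \neq p$) and $T_\fp(A)$ over $\cD_{w,\fp}$ (for $\fp$ a maximal ideal above $p$), because Tate-local isomorphism, $w$-local projectivity, and $w$-local freeness are all genuinely local notions in this sense.

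The implications \ref{enumitem:2outof3a}+\ref{enumitem:2outof3b}$\Rightarrow$\ref{enumitem:2outof3c} and \ref{enumitem:2outof3a}+\ref{enumitem:2outof3c}$\Rightarrow$\ref{enumitem:2outof3b} are essentially by inspection: condition \ref{enumitem:2outof3a} supplies isomorphisms $T_\lambda(A)\simeq T_\lambda(B)$ of $R_{w,\lambda}$-modules and $T_\fp(A)\simeq T_\fp(B)$ of $\cD_{w,\fp}$-modules, through which projectivity (or freeness) of one side transfers to the other.

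For the substantive implication \ref{enumitem:2outof3b}+\ref{enumitem:2outof3c}$\Rightarrow$\ref{enumitem:2outof3a} (in the $w$-locally projective case), we treat the two kinds of places separately. At $\lambda \mid \ell \neq p$ the ring $R_{w,\lambda}$ is commutative local, so finitely generated projective modules over it are automatically free and classified by their rank. The isogeny hypothesis yields an isomorphism $V_\ell(A)\simeq V_\ell(B)$ as $R_w\otimes\bQ_\ell$-modules, which after localizing at $\lambda$ forces $T_\lambda(A)$ and $T_\lambda(B)$ to have the same rank, hence to be isomorphic. At $\fp \mid p$ the difficulty is the non-commutativity of $\cD_{w,\fp}$, but this is exactly what Proposition~\ref{prop:localprojectiveDieudonnestructure} is designed to handle: finitely generated projective $\cD_{w,\fp}$-modules are classified up to isomorphism by a single multiplicity $m \geq 0$ through $P_{w,\fp}^{\oplus m}$, and this multiplicity is detected by $\dim_{\bQ_p}$ of the rationalization, since $P_{w,\fp}\otimes\bQ_p$ has fixed positive $\bQ_p$-dimension in each of the three cases described in the proof of Proposition~\ref{prop:localprojectiveDieudonnestructure}. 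The isogeny supplies $V_\fp(A)\simeq V_\fp(B)$, which forces the multiplicities to coincide and thus $T_\fp(A)\simeq T_\fp(B)$. The classification of projective $\cD_{w,\fp}$-modules is the only nontrivial input.

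The variant for $w$-locally free abelian varieties is handled by the same local analysis, replacing `projective of multiplicity $m$' by `free of rank $m$' everywhere, since free modules of equal rank are trivially isomorphic. For the $w$-balanced and reduced $w$-balanced variants, the situation is even more direct: by Theorem~\ref{thm:integralstructure} these properties are defined as specific, prescribed Tate-local isomorphism types, so \ref{enumitem:2outof3a} transfers the $w$-balanced structure between $A$ and $B$ immediately, and two $w$-balanced (resp.\ reduced $w$-balanced) abelian varieties are both Tate-locally isomorphic to the same standard model, hence to each other. The isogeny hypothesis is superfluous in this case because the prescribed Tate and Dieudonn\'e modules determine the rational isogeny class through Honda--Tate theory and thus fix the isogeny type of $A$ and $B$ already.
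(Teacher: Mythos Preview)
Your proof is correct and follows essentially the same approach as the paper: the key implication \ref{enumitem:2outof3b}+\ref{enumitem:2outof3c}$\Rightarrow$\ref{enumitem:2outof3a} is reduced to the classification of finitely generated projectives over $R_{w,\lambda}$ (local, hence free) and over $\cD_{w,\fp}$ (via Proposition~\ref{prop:localprojectiveDieudonnestructure}), with the isogeny hypothesis pinning down the multiplicity. Your account is more detailed than the paper's, and your closing remark explaining why the isogeny assumption is redundant in the balanced case is a welcome clarification.
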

\begin{proof}
Projective $R_{w,\lambda}$-modules (resp.\ projective $\cD_{w,\fp}$-modules) are isomorphic to a multiple of a single indecomposable projective module, because $R_{w,\lambda}$ is a local ring (resp.\ by Proposition~\ref{prop:localprojectiveDieudonnestructure}). Therefore it suffices to compare ranks to deduce \ref{enumitem:2outof3a} from \ref{enumitem:2outof3b} and \ref{enumitem:2outof3c}. This is taken care of by the assumption that $A$ and $B$ be isogenous. The other implications are easy.
\end{proof}

\begin{lem}[compare with {\cite[Lemma~2.3]{zarhin:almostisomorphic}}]
\label{lem:TateONlocallyisomorphic}
Let $A$ and $B$ are abelian varieties in $\AV_{\bF_q}$. Then $A$ and $B$ are Tate-locally isomorphic if and only if for every prime number $\ell$ (including $\ell = p$) there exists an isogeny $A \to B$ of degree prime to $\ell$.

In particular, if $f: A \to B$ and $g: B \to A$ are isogenies of coprime degree, then $A$ and $B$ are Tate-locally isomorphic. 
\end{lem}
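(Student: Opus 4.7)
The ``if'' direction of the main equivalence is essentially formal: an isogeny $\phi\colon A\to B$ of degree $d$ coprime to a prime $\ell\neq p$ has kernel of order prime to $\ell$, so $\phi$ restricts to an isomorphism on $\ell^\infty$-torsion, yielding an isomorphism $T_\ell(\phi)\colon T_\ell(A)\simeq T_\ell(B)$ of Galois modules. For $\ell=p$, the same degree hypothesis ensures that the contravariant Dieudonn\'e map $T_p(\phi)\colon T_p(B)\to T_p(A)$ is an isomorphism, and inverting it gives the required identification of Dieudonn\'e modules. This already handles the ``in particular'' clause: given $f\colon A\to B$ and $g\colon B\to A$ of coprime degrees $d_1,d_2$, for every prime $\ell$ at least one of $d_1,d_2$ is prime to $\ell$, and whichever of $f$ or $g$ has such degree induces an isomorphism on the corresponding Tate module.

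For the ``only if'' direction, fix a prime $\ell$ and an isomorphism $u\colon T_\ell(A)\simeq T_\ell(B)$ as Galois modules (respectively, as Dieudonn\'e modules for $\ell=p$, passing to the inverse to accommodate the reversal of variance). By Tate's theorems \eqref{TateThmell} and \eqref{TateThmp}, $u$ corresponds to an element of $\Hom_{\bF_q}(A,B)\otimes\bZ_\ell$. Since $\Hom_{\bF_q}(A,B)$ is finitely generated over $\bZ$, its image in $\Hom_{\bF_q}(A,B)\otimes\bZ_\ell$ is $\ell$-adically dense, so I choose $\psi\in\Hom_{\bF_q}(A,B)$ with $T_\ell(\psi)\equiv u\pmod{\ell}$, viewed as maps of free $\bZ_\ell$-modules (resp.\ free $W(\bF_q)$-modules when $\ell=p$). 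Invoking Nakayama, the mod-$\ell$ reduction of $T_\ell(\psi)$ agrees with that of $u$ and is therefore bijective, so $T_\ell(\psi)$ itself is an isomorphism of free modules of equal rank (the ranks agree because $T_\ell(A)\simeq T_\ell(B)$ forces $\dim A=\dim B$). Consequently $\psi$ is an isogeny, and the $\ell$-adic valuation of $\deg\psi$ equals the length of $\coker(T_\ell(\psi))=0$, so $\deg\psi$ is coprime to $\ell$ as required.

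The only technical subtlety is correctly interpreting Tate's theorem at $p$ in the contravariant Dieudonn\'e setting and keeping track of variance when transporting $u$ across \eqref{TateThmp}; once this is done, the Nakayama step goes through uniformly in $\ell$ since $T_p$ of an abelian variety is free over $W(\bF_q)$. I do not anticipate a serious obstacle beyond this bookkeeping, and the argument applies symmetrically to extract an isogeny in the opposite direction whenever that is preferred.
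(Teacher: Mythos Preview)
Your proof is correct and follows essentially the same approach as the paper: both directions rest on Tate's theorems \eqref{TateThmell} and \eqref{TateThmp}, with the ``only if'' direction obtained by $\ell$-adically approximating the given isomorphism by an honest homomorphism and then invoking Nakayama to see the approximant is an isomorphism on Tate modules. Your treatment of the ``if'' direction is slightly more direct (arguing that an isogeny of degree prime to $\ell$ is bijective on $\ell^\infty$-torsion), whereas the paper produces a quasi-inverse isogeny $g$ with $g\circ f=[n]$ for $n$ prime to $\ell$; both are standard and equivalent.
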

\begin{proof}
This follows from Tate's theorems recalled in \eqref{TateThmell} and \eqref{TateThmp} as follows. Given an isomorphism $\ph: T_\ell(A) \to T_\ell(B)$ as Galois or Dieudonn\'e modules, there are isogenies $f: A \to B$ and $g: B \to A$ such that $T_\ell(f)$ and $T_\ell(g)$ agree with $\ph$ respectively $\ph^{-1}$ modulo $\ell$. It follows that $f$ and $g$ are bijective on $\ell$-torsion, hence their degree is coprime to $\ell$. 

For the converse direction let $f: A \to B$ be an isogeny of degree prime to $\ell$. Then there is an isogeny $g: B \to A$ such that $g \circ f$ is multiplication by an integer prime to $\ell$, and consequently both $T_\ell(f)$ and $T_\ell(g)$ are isomorphisms.
\end{proof}

\begin{defi}
Let $w \subseteq W_q$ be a finite set of Weil $q$-numbers. An \emph{$R_w$-order} in a finite dimensional $\bQ(w)$-algebra $E$ is an $R_w$-subalgebra $\fO \subseteq E$ which is finitely generated as an $R_w$-module and contains a $\bQ(w)$-basis of $E$. 

\begin{enumerate}[label=(\arabic*),align=left,labelindent=0pt,leftmargin=*,widest = (8)]
\item
Two $R_w$-orders $\fO \subseteq E$ and $\fO' \subseteq E'$ are called \emph{Tate-locally isomorphic} if for every prime number $\ell$ (including $\ell = p$) the rings $\fO \otimes \bZ_\ell$ and $\fO' \otimes \bZ_\ell$ are isomorphic as $R_w \otimes \bZ_\ell$-algebras.
\item
Two $R_w$-orders $\fO, \fO' \subseteq E$ are called \emph{Tate-locally conjugate} if for every prime number $\ell$ (including $\ell = p$) the rings $\fO \otimes \bZ_\ell$ and $\fO' \otimes \bZ_\ell$ are conjugate inside $E \otimes \bQ_\ell$.
\end{enumerate}
\end{defi}

\begin{rmk}
\label{rmk:SkolemNoether}
We will consider $R_w$-orders in $E = \End_{\bF_q}(A) \otimes \bQ$ for abelian varieties $A \in \AV_w$. Since these $E$ are Azumaya algebras over $\bQ(w)$, it follows from the 
Skolem--Noether Theorem \cite[IV, Proposition 1.4]{milne:etcoh} that after identifying $E \simeq E'$ all isomorphisms to be considered are restrictions of inner automorphisms of $E$. 
In this case the term \emph{Tate-locally isomorphic} coincides with the notion of being \emph{Tate-locally conjugate}.
\end{rmk}

\begin{lem}
\label{lem:pushEndprimetodegree}
Let $f: A \to B$ be an isogeny in $\AV_{w}$, and let $\ell$ be a prime number 
that is coprime to the degree of $f$. Then the isomorphism
\[
f(-)f^{-1} : \End_{\bF_q}(A) \otimes \bQ \xrightarrow{\sim} \End_{\bF_q}(B) \otimes \bQ
\]
maps the $R_w$-order $\End_{\bF_q}(A)$ to an $R_w$-order that agrees with $\End_{\bF_q}(B)$ after completion at $\ell$, i.e.\ considered as coherent algebras on $\Spec(R_w)$, these orders agree Zariski locally at $(\ell)$.
\end{lem}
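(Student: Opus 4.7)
\begin{pro}[Proof plan]
The plan is to verify the equality of orders one prime at a time, for the single prime $\ell$ mentioned in the hypothesis, by appealing to Tate's theorems \eqref{TateThmell} and \eqref{TateThmp}. Concretely, I would write $S_A = \End_{\bF_q}(A)$ and $S_B = \End_{\bF_q}(B)$, identify them with $R_w$-orders inside the common isogeny algebra $E = S_A \otimes \bQ = S_B \otimes \bQ$ via the isomorphism $f(-)f^{-1}$, and then show that after tensoring with $\bZ_\ell$ the image $f \cdot S_A \cdot f^{-1} \otimes \bZ_\ell$ equals $S_B \otimes \bZ_\ell$ inside $E \otimes \bQ_\ell$. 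Once this equality holds in the completion (equivalently, in the localization at the prime ideal $(\ell)\subseteq\bZ$), the two coherent $R_w$-algebras $S_A$ (transported by $f$) and $S_B$ agree Zariski locally at $(\ell)$.

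The key observation is that because $\ell$ is coprime to $\deg(f)$, the kernel of $f$ has order prime to $\ell$, so $f$ induces an isomorphism on $\ell$-power torsion. Hence for $\ell \ne p$ the induced map $T_\ell(f)\colon T_\ell(A)\to T_\ell(B)$ is an isomorphism of $R_w\otimes\bZ_\ell$-modules (being Galois equivariant), and for $\ell = p$ the map $T_p(f) \colon T_p(B) \to T_p(A)$ is an isomorphism of $\cD_w$-modules. In either case the contravariant (resp.\ covariant) module functor $T_\ell(-)$ intertwines the conjugation action of $f$ on endomorphism algebras with the genuine conjugation action of $T_\ell(f)$ on the algebra $\End(T_\ell(A))$.

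Now apply Tate's theorems: \eqref{TateThmell} gives $S_A \otimes \bZ_\ell = \End_{R_w\otimes\bZ_\ell}(T_\ell(A))$ and $S_B\otimes\bZ_\ell = \End_{R_w\otimes\bZ_\ell}(T_\ell(B))$ for $\ell\ne p$, and \eqref{TateThmp} yields the analogous identifications at $\ell = p$ using $\cD_w$-module endomorphisms of $T_p(-)$. Conjugation by the isomorphism $T_\ell(f)$ sends the former ring onto the latter tautologically, and by the functoriality just recorded this conjugation coincides on $S_A \otimes \bZ_\ell$ with $f(-)f^{-1} \otimes \bZ_\ell$. Thus $f\cdot S_A\cdot f^{-1} \otimes \bZ_\ell = S_B \otimes \bZ_\ell$, as required.

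The only delicate point is the bookkeeping at $\ell = p$, where the Dieudonn\'e functor is contravariant: one must check that the transported action $f(-)f^{-1}$ on endomorphisms corresponds under \eqref{TateThmp} to conjugation by $T_p(f)^{-1}$ on $\End_{\cD_w}(T_p(A))$, with the inverse compensating for the direction reversal. Once that sign-check is in place, the two sides match identically and nothing else is needed.
\end{pro}
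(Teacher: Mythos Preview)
Your proof is correct and follows essentially the same route as the paper: complete at $\ell$, invoke Tate's theorems \eqref{TateThmell} and \eqref{TateThmp} to identify the completed endomorphism rings with $\End_{R_w\otimes\bZ_\ell}(T_\ell(-))$ (resp.\ $\End_{\cD_w}(T_p(-))$), and observe that $T_\ell(f)$ is an isomorphism because $\ell\nmid\deg(f)$, so conjugation by it matches the two orders. The paper dispatches the case $\ell=p$ with a \textit{mutatis mutandis}, whereas you spell out the contravariance bookkeeping; both are fine.
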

\begin{proof}
We may complete $\ell$-adically and use Tate's theorem. We will only discuss the case $\ell \not= p$. 
The case of $\ell=p$ works \textit{mutatis mutandis}. 
We are led to consider the isomorphism
\[
f(-)f^{-1} : \End_{R_w \otimes \bQ_\ell}(V_\ell(A)) \xrightarrow{\sim}  \End_{R_w \otimes \bQ_\ell}(V_\ell(B))
\]
But by assumption and the proof of Lemma~\ref{lem:TateONlocallyisomorphic} the map 
\[
T_\ell(f)  : T_\ell(A) \to T_\ell(B)
\]
is an isomorphism. The claim follows at once from Tate's theorems \eqref{TateThmell} (resp.\  \eqref{TateThmp}).
\end{proof}

\begin{prop}
\label{prop:modifyEnd}
Let $A$ be an abelian variety in $\AV_{w}$.  Let $\fO$ be an $R_w$-order in $\End_{\bF_q}(A) \otimes \bQ$ that is Tate-locally
isomorphic to $\End_{\bF_q}(A)$. Then there is an abelian variety $B$ that is Tate-locally isomorphic to $A$ and an
$\bF_q$-isogeny $f:A\to B$ such that the induced isomorphism of
$R_w\otimes\bQ$-algebras 
\[
\End_{\bF_q}(A)\otimes\bQ \xrightarrow{f(-)f^{-1}} \End_{\bF_q}(B)\otimes\bQ
\]
restricts to an isomorphism $\fO\simeq \End_{\bF_q}(B)$  of $R_w$-orders. 
\end{prop}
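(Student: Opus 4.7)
The plan is to construct $B$ by modifying the Tate and Dieudonn\'e lattices of $A$ prime by prime. By Remark~\ref{rmk:SkolemNoether}, for each prime $\ell$ (including $\ell = p$), the given isomorphism $\fO \otimes \bZ_\ell \simeq \End_{\bF_q}(A) \otimes \bZ_\ell$ is realized by conjugation with some $u_\ell \in (E \otimes \bQ_\ell)^\times$, where $E = \End_{\bF_q}(A) \otimes \bQ$. Since the two $R_w$-orders $\fO$ and $\End_{\bF_q}(A)$ in $E$ agree away from a finite set $S$ of primes, one can take $u_\ell = 1$ for $\ell \notin S$. After rescaling each $u_\ell$ by a central scalar (a power of $\ell$), which does not alter the conjugation action, I arrange that the lattice $M_\ell \coloneqq u_\ell \cdot T_\ell(A) \subseteq V_\ell(A)$ contains $T_\ell(A)$ with finite quotient.

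Next, each $M_\ell$ is stable under the $R_w$-action, because $u_\ell$ centralizes $R_w \subseteq Z(E)$; at $\ell = p$, the double-centralizer relation between $\cD_q^0$ and $E^{\op} \otimes \bQ_p$ on $V_p(A)$, used in the proof of Lemma~\ref{lem:rationalAtp}, shows that $u_p$ commutes with the Dieudonn\'e operators, so $M_p$ is $\cD_w$-stable. The family $(M_\ell/T_\ell(A))_{\ell \in S}$ thus defines a finite $\bF_q$-subgroup scheme $K \subseteq A$, and I set $B \coloneqq A/K$ with quotient isogeny $f \colon A \to B$. Under the canonical identification $V_\ell(f) \colon V_\ell(A) \xrightarrow{\sim} V_\ell(B)$ the lattice $T_\ell(B)$ matches $M_\ell$, so the composite $V_\ell(f) \circ u_\ell$ provides an isomorphism $T_\ell(A) \xrightarrow{\sim} T_\ell(B)$ of Galois (respectively Dieudonn\'e) modules at every $\ell$; Lemma~\ref{lem:TateONlocallyisomorphic} then confirms that $A$ and $B$ are Tate-locally isomorphic.

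To conclude, I would verify the equality $c_f(\fO) = \End_{\bF_q}(B)$ by comparing $\ell$-adic completions for every prime $\ell$: Proposition~\ref{prop:localTateTheorem} identifies $\End_{\bF_q}(B) \otimes \bZ_\ell$ with the stabilizer of $T_\ell(B)$ inside $\End_{R_w \otimes \bQ_\ell}(V_\ell(B))$. Transporting via $V_\ell(f)$ turns this stabilizer into the stabilizer of $M_\ell = u_\ell T_\ell(A)$ inside $\End_{R_w \otimes \bQ_\ell}(V_\ell(A))$, which by construction equals $u_\ell \End_{\bF_q}(A)_\ell u_\ell^{-1} = \fO_\ell$; conjugating back by $V_\ell(f)$ yields $c_f(\fO)_\ell$. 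Two $R_w$-orders in $\End_{\bF_q}(B) \otimes \bQ$ agreeing at every completion are equal, giving the claim. The main bookkeeping challenge, which is where I expect any hidden sign to lurk, is keeping straight at each $\ell$ the two commuting conjugations---by $u_\ell$, which adjusts the lattice within $V_\ell(A)$, and by $V_\ell(f)$, which transports it to $V_\ell(B)$---so that their composite correctly implements the global map $c_f$ on endomorphism algebras.
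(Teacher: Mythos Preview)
Your approach is sound but genuinely different from the paper's. You modify all local lattices simultaneously and then construct $B=A/K$ from that adelic data. The paper instead proceeds by induction on the finite set $\Sigma$ of bad primes: for $\ell \in \Sigma$ it approximates the local conjugator $\hat\varphi \in E\otimes\bQ_\ell$ by a global $\varphi \in \End_{\bF_q}(A)$, factors $\varphi\colon A \xrightarrow{f} B \xrightarrow{g} A$ with $\deg f$ an $\ell$-power and $\deg g$ prime to $\ell$, and then uses Lemma~\ref{lem:pushEndprimetodegree} twice (for $f$ at primes $\neq \ell$, for $g$ at $\ell$) to see that the bad set shrinks. Your route is more direct and conceptual; the paper's, by working with honest global isogenies throughout, never has to assemble a finite subgroup scheme from mixed Tate/Dieudonn\'e data, and so avoids the variance bookkeeping entirely.

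The one place your sketch actually slips is precisely the variance at $p$ that you flagged. Because the Dieudonn\'e functor is contravariant, the quotient $f\colon A \to B=A/K$ induces an inclusion $T_p(B)\hookrightarrow T_p(A)$ (cf.\ Lemma~\ref{lem:T-on-injective}\ref{lemitem:T-on-injective2}), so the $p$-part of $K$ corresponds to a \emph{sublattice} $M_p \subseteq T_p(A)$, with $T_p(A)/M_p$ the Dieudonn\'e module of $K[p^\infty]$, not to an overlattice. Hence you must rescale $u_p$ so that $M_p \subseteq T_p(A)$, replace ``$M_p/T_p(A)$'' by $T_p(A)/M_p$ in the definition of $K$, and use $V_p(f)^{-1}\colon V_p(A)\xrightarrow{\sim}V_p(B)$ when transporting stabilizers. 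With these corrections the argument goes through. Also make sure the direction of conjugation you fix in the first paragraph matches the identity $u_\ell\,\End_{\bF_q}(A)_\ell\,u_\ell^{-1}=\fO_\ell$ you invoke in the verification step.
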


\begin{proof}
Since $\fO$ and $\End_{\bF_q}(A)$ are both $R_w$-orders of $\End_{\bF_q}(A) \otimes \bQ$ there is only a finite set $\Sigma$ of bad prime numbers $\ell$ (including potentially $\ell=p$) for which $\End_{\bF_q}(A)$ and $\fO$ do not agree after completion at $\ell$. We argue by induction on the size $\#\Sigma$. If $\Sigma$ is empty, then $\End_{\bF_q}(A) = \fO$, and we are done. 

We now assume that $\ell \in \Sigma$ and construct an isogeny $f: A \to B$ of degree a power of $\ell$ such that $A$ and $B$ are Tate-locally isomorphic and the set of bad primes for $B$ with respect to $\fO \simeq f \fO f^{-1} \subseteq \End_{\bF_q}(B) \otimes \bQ$ is contained in $\Sigma \setminus \{\ell\}$. 

By Remark~\ref{rmk:SkolemNoether} there is an element $\hat\ph \in \End_{\bF_q}(A) \otimes \bQ_\ell$ such that 
\[
\hat\ph (\fO \otimes \bZ_\ell)  \hat\ph^{-1} = \End_{\bF_q}(A) \otimes \bZ_\ell
\]
in $\End_{\bF_q}(A) \otimes \bQ_\ell$. Since the stabilizer of $\fO \otimes \bZ_\ell$ is open, we may approximate $\hat \ph$ by an element $\ph \in \End_{\bF_q}(A)$ which still conjugates $\fO \otimes \bZ_\ell$ onto $\End_{\bF_q}(A) \otimes \bZ_\ell$. By primary decomposition of $\ker(\ph)$ we may factor $\ph : A \to A$ as
\[
\ph \colon A \xrightarrow{f} B \xrightarrow{g} A
\]
such that $f$ has degree a power of $\ell$ and $g$ has degree prime to $\ell$. It follows from 
Lemma~\ref{lem:TateONlocallyisomorphic} that $B$ is Tate-locally isomorphic to $A$. 

Moreover, Lemma~\ref{lem:pushEndprimetodegree} applied to $f$ tells us that primes different from $\ell$ at which $\fO$ and $\End_{\bF_q}(A)$ agree locally, remain primes at which $f\fO f^{-1}$ agrees locally with $\End_{\bF_q}(B)$.
In particular the bad primes for $B$ with respect to $f \fO f^{-1}$ are contained in $\Sigma$. 

It remains to show that the situation has improved at $\ell$. For that we compare via $g$ and again by Lemma~\ref{lem:pushEndprimetodegree}, now applicable to $\ell$ since $g$ is of degree coprime to $\ell$. It follows that 
\begin{align*}
\End_{\bF_q}(B) \otimes \bZ_\ell = g^{-1}(\End_{\bF_q}(A) \otimes \bZ_\ell) g & = f( \ph^{-1} (\End_{\bF_q}(A) \otimes \bZ_\ell) \ph)f^{-1}  \\
& = f(\fO \otimes \bZ_\ell) f^{-1} = f \fO f^{-1} \otimes \bZ_\ell.
\end{align*}
This concludes the inductive step and thus proves the proposition.
\end{proof}

\subsection{A structure theorem for endomorphism rings of balanced objects}
\label{sec:endomorphisms}

We now describe  the endomorphism ring $S_w = \End_{\bF_q}(A_w)$ for a particular choice of a $w$-balanced abelian variety $A_w$  in $\AV_{\bF_q}$. A similar description also holds for a reduced $w$-balanced abelian variety by cancelling a factor of $2$ at various places. 
But first we need to choose some data and fix notation.

Let $w$ be a finite set of Weil $q$-numbers. Recall that $\bQ(w) = R_w \otimes \bQ = \prod_{\pi \in w} \bQ(\pi)$. Then 
\[
\cS^0(w) = \prod_{\pi \in w} \rM_{m_\pi}(E_\pi)
\]
is an Azumaya algebra over $\bQ(w)$ of degree $2\expo$ (so locally a form of $\rM_{2\expo}(-)$) with the local invariants as specified by Tate's formulas recalled in Theorem~\ref{thm:TateEndostructure}~\ref{thmitem:invariantatp}. It follows that the desired $S_w$ is an $R_w$-order in $\cS^0(w)$. 

Waterhouse proves in \cite[Theorem 3.13]{Wa} that there is a simple abelian variety $B_\pi \in \AV_\pi$ such that 
$\End_{\bF_q}(B_\pi)$ is a maximal order  $\fO_\pi$ in $E_\pi$. We set  $B_w = \prod_{\pi} B_\pi^{m_\pi}$ and define
\[
\tilde{\cS}(w) :=  \End_{\bF_q}(B_w) = \prod_{\pi \in w} \rM_{m_\pi}(\fO_\pi)  \subseteq  \prod_{\pi \in w} \rM_{m_\pi}(E_\pi) 
= \cS^0(w).
\]
Then $\tilde\cS(w)$ is a maximal $R_w$-order of $\cS^0(w)$. We next choose a splitting for all $\ell \not= p$ and for $p$ as
\begin{align*}
\psi_\ell: \tilde{\cS}(w)  \inj \cS^0(w) \otimes \bQ_\ell & \xrightarrow[\sim]{\psi_\ell^0} \rM_{2\expo}(\bQ(w) \otimes \bQ_\ell),  \\
\psi_p: \tilde{\cS}(w)  \inj  \cS^0(w) \otimes \bQ_p &\xrightarrow[\sim]{\psi_p^0} \rM_{2}(\cD^0_w) .
\end{align*}

Now, for every prime number $\ell \not= p$, the proof of the Skolem-Noether theorem (see also Proposition~\ref{prop:rationalfreeness}) shows that there is an isomorphism of Galois modules
\[
h_\ell \colon  (\bQ(w) \otimes \bQ_\ell)^{\oplus 2\expo} \xrightarrow{\sim}  V_\ell(B_w)
\]
such that conjugation $h_\ell(-)h_\ell^{-1}$ agrees with 
\[
\rM_{2\expo}(\bQ(w) \otimes \bQ_\ell) \xleftarrow[\sim]{\psi_\ell^0} \cS^0(w) \otimes \bQ_\ell \xrightarrow[\sim]{\rm Tate} 
\End_{\bQ(w) \otimes \bQ_\ell}(V_\ell(B_w)) 
\]
And, similarly for $p$, we have an isomorphism of Dieudonn\'e modules
\[
h_{p} \colon   V_p(B_w)  \xrightarrow{\sim}  (\cD_w^0)^{\oplus 2}
\]
such that conjugation $h_{p}(-)h_p^{-1}$ agrees with  (note that we have passed to the opposite rings)
\[
 \End_{\cD_w^0}( (\cD_w^0)^{\oplus 2})^\op = \rM_2(\cD_w^0) \xleftarrow[\sim]{\psi^0_p}  \cS^0(w) \otimes  \bQ_p \xrightarrow[\sim]{\rm Tate} \End_{\cD_w^0}(V_p(B_w))^\op
\]
Now we perform the construction of a $w$-balanced abelian variety from the proof of Theorem~\ref{thm:integralstructure}. We choose $N$ coprime to $p$ and large enough such that $R_w \otimes \bZ[\frac{1}{Np}]$ is a product of Dedekind rings. For all prime numbers $\ell \nmid Np$ we may modify $h_\ell$ and perforce $\psi^0_\ell$ and $\psi_\ell$ so that $h_\ell$ induces  an isomorphism of integral structures 
\[
h_\ell \colon (R_w \otimes \bZ_\ell)^{\oplus 2\expo} \xrightarrow{\sim} T_\ell(B_w).
\]
Moreover, we scale suitably $h_\ell$ for $\ell \mid N$ and $h_p$ such that these maps restrict to maps
\[
h_\ell \colon  (R_w \otimes \bZ_\ell)^{\oplus 2\expo} \inj T_\ell(B_w),
\]
and 
\[
h_{p} \colon   T_p(B_w)  \inj  (\cD_w)^{\oplus 2}.
\]
Using the standard dictionary translating lattices in Tate and Dieudonn\'e modules into isogenies, it follows that there is a corresponding isogeny 
\[
h \colon A_w \to B_w
\]
of degree a product of primes dividing $Np$, such that $h_\ell = T_\ell(h)$ for all $\ell \mid N$ and $h_p = T_p(h)$. It follows by construction and the proof of Lemma~\ref{lem:TateONlocallyisomorphic}
that $A_w$ is indeed $w$-balanced. Moreover, we have the following description of its endomorphism ring. 

\begin{thm}
\label{thm:Sw}
With the above notation the endomorphism ring $S_w = \End_{\bF_q}(A_w)$ of the $w$-balanced abelian variety $A_w$  constructed above sits in the following cartesian square:
\[
\xymatrix@M+1ex{
S_w \ar[r] \ar[d] & \tilde{\cS}(w) \otimes \bZ[\frac{1}{Np}] \ar[d]^{ (\psi_p, \psi_\ell)} \\
{\displaystyle \rM_2(\cD_w) \times \prod_{\ell \mid N} \rM_{2\expo}(R_w \otimes \bZ_\ell)} \ar[r] & 
{\displaystyle \rM_2(\cD^0_w) \times \prod_{\ell \mid N} \rM_{2\expo}(\bQ(w) \otimes \bQ_\ell)}.
}
\]
\end{thm}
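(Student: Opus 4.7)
The plan is to compute the endomorphism ring $S_w$ prime-by-prime using Tate's theorems \eqref{TateThmell} and \eqref{TateThmp}, and then recognize the cartesian square as the resulting local-to-global reconstruction of $S_w$ inside its rational hull $\cS^0(w)$.

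First I would use the isogeny $h \colon A_w \to B_w$ constructed above. Since $\deg(h)$ is a product of primes dividing $Np$, the induced inner isomorphism $h^{-1}(-)h$ identifies $\End_{\bF_q}(A_w) \otimes \bQ$ with $\cS^0(w)$, and this identification matches $S_w \otimes \bZ[\tfrac{1}{Np}]$ with $\tilde\cS(w) \otimes \bZ[\tfrac{1}{Np}]$. Indeed, for every prime $\ell \nmid Np$ the map $T_\ell(h)$ is an isomorphism by Lemma~\ref{lem:TateONlocallyisomorphic}, so Tate's theorem yields $S_w \otimes \bZ_\ell = \End_{R_w \otimes \bZ_\ell}(T_\ell(A_w)) = \End_{R_w \otimes \bZ_\ell}(T_\ell(B_w)) = \tilde\cS(w) \otimes \bZ_\ell$. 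This supplies and describes the upper horizontal arrow of the square.

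Next, for each $\ell \mid N$, Tate's theorem \eqref{TateThmell} combined with the $w$-balanced property $T_\ell(A_w) \simeq (R_w \otimes \bZ_\ell)^{\oplus 2\expo}$ yields an isomorphism $S_w \otimes \bZ_\ell \simeq \rM_{2\expo}(R_w \otimes \bZ_\ell)$. Analogously, the Dieudonn\'e version \eqref{TateThmp} together with $T_p(A_w) \simeq \cD_w^{\oplus 2}$ gives (after passing to opposite rings because of contravariance) $S_w \otimes \bZ_p \simeq \End_{\cD_w}(T_p(A_w))^{\op} \simeq \rM_2(\cD_w)$. Combined, these provide the left vertical arrow. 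Commutativity of the square at $\ell \mid N$ and at $p$ is exactly the statement that the embeddings $\psi_\ell$ and $\psi_p$ were defined, via the framings $h_\ell$ and $h_p$, to be compatible with Tate's identifications: by construction $\psi_\ell$ (resp.\ $\psi_p$) sends an element $\alpha \in \tilde\cS(w)$ to the matrix describing the action of $h^{-1}\alpha h$ on $V_\ell(A_w)$ (resp.\ on $V_p(A_w)$) in the chosen basis.

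For the cartesian property, I would argue adelically: the $R_w$-order $S_w$ is determined inside $\cS^0(w)$ by the collection of all its completions $S_w \otimes \bZ_\ell$. The upper horizontal arrow already captures the completions at all $\ell \nmid Np$, and the left vertical arrow captures precisely those at $p$ and at $\ell \mid N$. Hence an element $\alpha \in \tilde\cS(w) \otimes \bZ[\tfrac{1}{Np}]$ extends to an element of $S_w$ if and only if the corresponding rational endomorphism $h^{-1}\alpha h$ preserves the integral Tate lattice $T_\ell(A_w)$ for each $\ell \mid N$ and the integral Dieudonn\'e lattice $T_p(A_w)$. By the explicit identifications in the previous paragraph this is exactly the condition that $\psi_\ell(\alpha) \in \rM_{2\expo}(R_w \otimes \bZ_\ell)$ for all $\ell \mid N$ and $\psi_p(\alpha) \in \rM_2(\cD_w)$, which is the fibre product condition defining the square.

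The main technical point is bookkeeping around the contravariance of Dieudonn\'e theory at $p$ and around the opposite rings that appear in $\psi_p^0 \colon \cS^0(w) \otimes \bQ_p \simeq \rM_2(\cD_w^0) = \End_{\cD_w^0}(V_p(B_w))^{\op}$; once this is matched with the opposite-ring side of \eqref{TateThmp}, and once the integral nature of $h_\ell$ for $\ell \nmid Np$ (built into the construction of $A_w$) is invoked to ensure that the comparison at good primes is genuinely an equality of orders, all three arrows of the square are seen to restrict to the same data in the lower right corner, and the square is cartesian.
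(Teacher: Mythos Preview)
Your proposal is correct and follows essentially the same approach as the paper's proof: both start from the local-to-global reconstruction of $S_w$ inside $\cS^0(w)$ (the paper phrases this as the ``obvious cartesian square'' of fpqc descent for $\End_{\bF_q}(A_w)$), then translate the integral completions at $\ell\mid N$ and at $p$ via Tate's theorems together with the $w$-balanced structure, and translate the $\bZ[\tfrac{1}{Np}]$-part via conjugation by the isogeny $h$ (the paper invokes Lemma~\ref{lem:pushEndprimetodegree} here, which packages exactly your observation that $T_\ell(h)$ is an isomorphism for $\ell\nmid Np$). The only cosmetic difference is that the paper writes the conjugation as $h(-)h^{-1}\colon \End(A_w)\to\End(B_w)$ rather than its inverse, and organises the verification of the right vertical arrow as two explicit commutative diagrams unwinding the definitions of $h_\ell$ and $h_p$.
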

\begin{proof}
The cartesian square is essentially nothing but the obvious cartesian square   (fpqc descent)
\[
\xymatrix@M+1ex{
\End_{\bF_q}(A_w) \ar[r] \ar[d] & \End_{\bF_q}(A_w) \otimes \bZ[\frac{1}{Np}] \ar[d]  \\
{\displaystyle \End_{\bF_q}(A_w) \otimes \bZ_p \times \prod_{\ell \mid N} \End_{\bF_q}(A_w) \otimes \bZ_\ell} \ar[r] & 
{\displaystyle \End_{\bF_q}(A_w) \otimes \bQ_p \times \prod_{\ell \mid N} \End_{\bF_q}(A_w) \otimes \bQ_\ell} . }
\]
suitably translated by isomorphisms as follows. In view of $T_\ell(A_w) = (R_w \otimes \bZ_\ell)^{\oplus 2\expo}$ for $\ell \not= p$ (used here only for $\ell \mid N$) and $T_p(A_w) = \cD_w^{\oplus 2}$ as by construction, Tate's theorem translates the bottom row into the  inclusion
\[
\rM_2(\cD_w) \times \prod_{\ell \mid N} \rM_{2\expo}(R_w \otimes \bZ_\ell)
\to
\rM_2(\cD^0_w) \times \prod_{\ell \mid N} \rM_{2\expo}(\bQ(w) \otimes \bQ_\ell).
\]
It remains to identify the right vertical arrow. We may compare $A_w$ with $B_w$ along $h: A_w \to B_w$ as in the following diagram, in which the top arrow is an isomorphism because $\deg(h)$ is invertible in $\bZ[\frac{1}{Np}]$ in combination with  Lemma~\ref{lem:pushEndprimetodegree}. 
\[
\xymatrix@M+0.4ex{
\End_{\bF_q}(A_w) \otimes \bZ[\frac{1}{Np}]  \ar[rr]^{h(-)h^{-1}}_\sim \ar[d] && \End_{\bF_q}(B_w) \otimes \bZ[\frac{1}{Np}] = \tilde{\cS}(w)\otimes \bZ[\frac{1}{Np}]  \ar[d]  \\
{\displaystyle \End_{\bF_q}(A_w) \otimes \bQ_p \times \prod_{\ell \mid N} \End_{\bF_q}(A_w) \otimes \bQ_\ell} 
\ar[rr]^{h(-)h^{-1}}_\sim
 && 
{\displaystyle \End_{\bF_q}(B_w) \otimes \bQ_p \times \prod_{\ell \mid N} \End_{\bF_q}(B_w) \otimes \bQ_\ell} }
\]
Now we translate the bottom row of the previous diagram into the top row of the following diagram due to Tate's theorem. 
\[
\xymatrix@M+0.4ex{
{\displaystyle \End(V_p(A_w))^\op \times \prod_{\ell \mid N} \End(V_\ell(A_w)) } 
\ar[rrr]^{(h_p(-)h_p^{-1} , h_\ell(-)h_\ell^{-1})}_\sim
\ar@{=}[d] &&& 
{\displaystyle \End(V_p(B_w))^\op \times \prod_{\ell \mid N} \End(V_\ell(B_w)) } \\
{\displaystyle \rM_2(\cD^0_w) \times \prod_{\ell \mid N} \rM_{2\expo}(\bQ(w) \otimes \bQ_\ell)}  &&& 
{\displaystyle \cS^0(w) \otimes \bQ_p \times \prod_{\ell \mid N} \cS^0(w) \otimes \bQ_\ell} 
\ar[lll]_{(\psi^0_p, \psi^0_\ell)}^\sim \ar[u]_(0.4){\rm Tate}^(0.4)\simeq.
}
\]
As the bottom square commutes by the definition of $h_p$ and $h_\ell$, we have identified the right vertical map in the asserted cartesian square as the map claimed in the theorem.
\end{proof}

\begin{rmk}
The ring $S_w$ constructed in Theorem~\ref{thm:Sw} considered as a coherent algebra over $\Spec(R_w)$  differs from a maximal order in the appropriate Azumaya algebra over $\bQ(w)$ at most above prime numbers $\ell$ at which $R_w$ is singular and those above $p$. The construction can be performed more carefully so that $S_w$ only differs from a maximal order at most in the singularities of $\Spec(R_w)$ and in the supersingular locus, the vanishing locus of  the ideal $(F,V,p)$.
\end{rmk}

\section{Representable functors on \texorpdfstring{$\AV_w$}{abelian varieties with Weil support in w}}
\label{sec:representablefunctors}

In this section we spell out abstract  Morita equivalence, originally formulated with abelian categories as for example in \cite[Chapter 4]{reiner:maxorder}, in the context of the additive category $\AV_{\bF_q}$.

\smallskip

Let $w\subseteq W_q$ be a finite set of conjugacy classes of Weil $q$-numbers, and let $A$ be any object of $\AV_w$. Denote by
\[
S(A) = \End_{\bF_q}(A)
\]
the $R_w$-algebra of $\bF_q$-endomorphisms of $A$, and by
\begin{equation}\label{equation:functorM_X}
T_A \colon \AV_w \longrightarrow \Mod_{\Ztf}(S(A)), \qquad  T_A(X) = \Hom_{\bF_q}(X,A)
\end{equation}
the contravariant functor represented by $A$, viewed as valued in the category of left $S(A)$-modules that are torsion free as $\bZ$-modules.
Just like $\AV_w$, the category $\Mod_{\Ztf}(S(A))$ has a natural $R_w$-linear structure, since the center of $S(A)$ is an $R_w$-algebra in a natural way.
Moreover, it is clear that the functor $T_A(-)$ is $R_w$-linear.

The main result of this section is Theorem~\ref{thm:formal}, a twofold criterion for deciding when \eqref{equation:functorM_X} induces
an anti-equivalence.

\subsection{The maps \texorpdfstring{$I_A$}{IA} and \texorpdfstring{$G_A$}{GA}}
The following constructions go back to Waterhouse \cite{Wa}.
Let $X$ be any object of $\AV_w$, and let $H\subseteq X$ be any closed subgroup scheme of $X$.
Define the kernel of restriction to $H$:
\[
I_A(H)=\{f\in T_A(X): H\subseteq \ker(f) \} = \ker\big(\Hom_{\bF_q}(X,A) \to \Hom_{\bF_q}(H,A)\big),
\]
a left $S(A)$-submodule of $T_A(X)=\Hom_{\bF_q}(X, A)$. Notice that the quotient $X/H$ as group scheme is in fact an abelian variety in $\AV_w$ and the pull-back
via the quotient map $\psi_H:X\to X/H$ gives an identification of left $S(A)$-modules
\begin{equation}\label{esssurj}
T_A(X/H)\stackrel{\sim}{\longrightarrow}I_A(H).
\end{equation}

Conversely, if $M\subseteq T_A(X)$ is a left $S(A)$-submodule, define
\[
G_A(M)=\bigcap_{r\in M}\ker(r)
\]
as the closed subgroup scheme of $X$ given by the scheme-theoretic intersection of all kernels of elements of $M$. Notice that the above intersection equals an intersection of finitely many kernels, since $M$ is a finitely generated abelian group. We collect in the next proposition the basic properties of the maps $I_A$ and $G_A$.

\begin{prop}\label{prop:IGproperties} 
Let $X$ be any object of $\AV_w$. For any closed subgroup schemes $H, H'$ of $X$ and any left $S(A)$-submodules
$M, M'$ of $T_A(X)$ the following properties hold:
\begin{enumerate}[label=(\arabic*),align=left,labelindent=0pt,leftmargin=*,widest = (8)]
\setlength\itemsep{0.2em}
\item\label{IGprop1} $H\subseteq H'\implies I_A(H)\supseteq I_A(H')$, 
\item\label{IGprop2} $M\subseteq M'\implies G_A(M)\supseteq G_A(M')$,
\item\label{IGprop3} $H\subseteq G_A(I_A(H))$,
\item\label{IGprop4} $M\subseteq I_A(G_A(M))$,
\item\label{IGprop5} $I_A(H)=I_A(G_A(I_A(H)))$,
\item\label{IGprop6} $G_A(M)=G_A(I_A(G_A(M)))$,
\item\label{IGprop7} $I_A(X[n]) = n T_A(X)$, for all integers $n\geq 1$,
\item\label{IGprop8} $G_A(n T_A(X))= X[n]$, for all integers $n\geq 1$.
\end{enumerate}
\end{prop}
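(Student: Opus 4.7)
\begin{pro*}[Proof plan]
The eight properties express the fact that $(I_A, G_A)$ is an order-reversing Galois connection between the closed subgroup schemes of $X$ and the left $S(A)$-submodules of $T_A(X)$, so most of the proof is formal. I would organize the argument into three blocks.

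\emph{Formal block (items \ref{IGprop1}--\ref{IGprop6}).} The monotonicity statements \ref{IGprop1} and \ref{IGprop2} are immediate from the definitions: enlarging $H$ imposes more vanishing on $f$, and intersecting kernels over a larger family only shrinks the result. For \ref{IGprop3}, every $f \in I_A(H)$ vanishes on $H$ by definition, so $H$ sits inside the intersection defining $G_A(I_A(H))$; \ref{IGprop4} is dual. The triple-composition identities \ref{IGprop5} and \ref{IGprop6} are then pure Galois-connection formalism: combining \ref{IGprop3} with monotonicity \ref{IGprop1} gives one inclusion in \ref{IGprop5}, while applying \ref{IGprop4} to $M = I_A(H)$ gives the reverse; \ref{IGprop6} is dual.

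\emph{Property \ref{IGprop7}.} Here I would invoke the universal property of multiplication by $n$ on $X$ as the faithfully flat quotient by $X[n]$: a morphism $f \colon X \to A$ vanishes on $X[n]$ if and only if it factors uniquely as $f = g \circ [n]_X$ for some $g \in T_A(X)$, in which case $f = n \cdot g$. This identifies $I_A(X[n])$ with $n\, T_A(X)$.

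\emph{Property \ref{IGprop8} (the main obstacle).} Combining \ref{IGprop7} with the identity $n \cdot g = g \circ [n]_X$ and the commutation of scheme-theoretic intersection with pullback along the flat map $[n]_X$, one computes
\[
G_A\bigl(n\, T_A(X)\bigr) \;=\; \bigcap_{g \in T_A(X)} [n]_X^{-1}\bigl(\ker g\bigr) \;=\; [n]_X^{-1}\bigl(G_A(T_A(X))\bigr),
\]
so \ref{IGprop8} reduces to the separation statement $G_A(T_A(X)) = 0$. This is the only step that is not purely formal, and it is where the representing object $A$ has to be rich enough: in the intended applications $A$ will be a $w$-balanced (hence $w$-locally projective) abelian variety with $w(X) \subseteq w(A)$, so that every simple factor of $X$ admits a nonzero morphism into $A$ and the intersection $\bigcap_{g} \ker(g)$ collapses to zero, giving \ref{IGprop8}.
\end{pro*}
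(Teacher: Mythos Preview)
Your treatment of \ref{IGprop1}--\ref{IGprop7} matches the paper's, which simply declares \ref{IGprop1}--\ref{IGprop4}, \ref{IGprop7} and \ref{IGprop8} to ``easily follow from the definition'' and derives \ref{IGprop5} and \ref{IGprop6} from the Galois-connection formalism exactly as you do.

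On \ref{IGprop8} you are in fact more careful than the paper. Your reduction is correct and sharp: for $n=1$ the statement \ref{IGprop8} reads $G_A(T_A(X)) = X[1] = 0$, and conversely this special case implies the general one via your identity $G_A(nT_A(X)) = [n]_X^{-1}\bigl(G_A(T_A(X))\bigr)$. But $G_A(T_A(X)) = 0$ is \emph{not} a consequence of the definitions for arbitrary $A \in \AV_w$; it is exactly the embedding condition~\ref{thmitem:formal c1} of Theorem~\ref{thm:formal}, which the paper goes on to characterise rather than assume. A concrete failure: for distinct $\pi, \pi' \in w$ take $A = B_\pi$ and $X = B_{\pi'}$; then $T_A(X) = 0$ and $G_A(T_A(X)) = X \neq 0$.

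So the paper's claim that \ref{IGprop8} is immediate is an oversight; as stated for arbitrary $A$, item \ref{IGprop8} is false. This does not damage the paper's main arguments: the only downstream use of \ref{IGprop8} is Remark~\ref{rmk:finite-cofinite}, invoked in the step \ref{thmitem:formal a1}~$\Rightarrow$~\ref{thmitem:formal b1} of Theorem~\ref{thm:formal}, and under the hypothesis \ref{thmitem:formal a1} one can check directly (via Poincar\'e reducibility applied to the abelian subvariety $(G_A(T_A(X))^0)_{\redu}$) that $G_A(T_A(X))$ is finite, which is all that is needed there. Your instinct to impose an extra hypothesis on $A$ --- ultimately, that $A$ be $w$-locally projective with full support, as in Theorem~\ref{thm:truncatedfullyfaithful} --- is the correct repair of the proposition itself.
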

\begin{proof} The properties \ref{IGprop1} to \ref{IGprop4}, \ref{IGprop7} and \ref{IGprop8} easily follow from the definition of $I_A$ and $G_A$.
To see \ref{IGprop5} and \ref{IGprop6}, notice that the inclusions
\[
I_A(H)\subseteq I_A(G_A(I_A(H)))\:\:\text{ and }\:\: G_A(M)\subseteq G_A(I_A(G_A(M)))
\]
are the special cases \ref{IGprop4} for $M = I_A(H)$ and \ref{IGprop3} for $H=G_A(M)$. 
The corresponding opposite inclusions can be deduced after applying
properties \ref{IGprop1} and \ref{IGprop2} to the inclusions expressed by \ref{IGprop3} and \ref{IGprop4} respectively.
\end{proof}

\begin{rmk} 
\label{rmk:finite-cofinite}
A consequence of Proposition~\ref{prop:IGproperties} is that $I_A(H)\subseteq T_A(X)$ has finite index if and only if $H\subseteq X$ is
finite, and that $G_A(M)\subseteq X$ is a finite subgroup if and only if $M\subseteq T_A(X)$ has finite index.
\end{rmk}

\begin{rmk} 
We point out that the map $G_A$ has already been considered by Waterhouse for $X=A$, see \cite[\S3.2]{Wa}. 
In this circumstance, a finite index submodule $M\subseteq T_A(A) = \End_{\bF_q}(A)$ is an ideal containing an isogeny, and $G_A(M)$, denoted $H(M)$ in \emph{loc.~cit.}, is studied via the isogeny $A \to A/G_A(M)$. The converse construction $I_A(H)$ yields a kernel ideal in the terminology of \cite[p.~533]{Wa}. Waterhouse proves
that if $M$ is a kernel ideal then the structure of $A/H(M)$ depends only on $M$ as an $\End_{\bF_q}(A)$-module \cite[Theorem~3.11]{Wa}, He moreover proves in \cite[Theorem 3.15]{Wa} that for an abelian variety $A$ whose endomorphism ring is a maximal order in $\End(A) \otimes \bQ$ any finite index submodule $M\subseteq T_A(A)$ arises as  a kernel ideal.
\end{rmk}

\subsection{A criterion for a functor to be an anti-equivalence of categories}
We now state and prove the main theorem of the section.
Condition \ref{thmitem:formal c} of Theorem~\ref{thm:formal} is inspired by the use of injective cogenerators in embedding theorems for abelian categories, see \cite{freyd}. Note that the category $\AV_w$ is additive as a full subcategory of the abelian category of all finite type group schemes over the field $\bF_q$, see \cite[Exp. VI, \S5.4 Th\'eor\`eme]{sga3}. But $\AV_w$ is not an abelian category, because an isogeny of abelian varieties is both a monomorphism and an epimorphism in $\AV_w$ without necessarily being an isomorphism. 

\begin{thm} 
\label{thm:formal}
Let $w \subseteq W_q$ be a finite subset and let $A \in \AV_w$ be an abelian variety. Let 
\[
T_A \colon  \AV_w \longrightarrow  \Mod_\Ztf(S(A)), \qquad  T_A(X) = \Hom_{\bF_q}(X,A),
\]
be the functor represented by $A$. We consider the following statements.
\begin{enumerate}[label=(\alph*),align=left,labelindent=0pt,leftmargin=*,widest = (m)]
\item
\label{thmitem:formal a}
$T_A$ is an anti-equivalence of categories: 
\begin{enumerate}[label = (a$_\text{\arabic*}$) \ , , ref = (a$_\text{\arabic*}$)] 
\item 
\label{thmitem:formal a1}
$T_A$ is fully faithful, 
\item 
\label{thmitem:formal a2}
$T_A$ is essentially surjective.
\end{enumerate}

\item
\label{thmitem:formal b}
For all $X \in \AV_w$, the assignments $H\mapsto I_A(H)$ and $M\mapsto G_A(M)$ are mutually inverse maps that describe a bijection
\[
\left\{H \subseteq X \ ; \ 
\begin{array}{c}
\text{finite} \\
\text{subgroup scheme}
\end{array}
\right\}
\longleftrightarrow 
\left\{M \subseteq T_A(X) \ ; \ 
\begin{array}{c}
\text{finite index} \\
\text{$S(A)$-submodule}
\end{array}
\right\}.
\]
More precisely:
\begin{enumerate}[label = (b$_\text{\arabic*}$) \ , ref = (b$_\text{\arabic*}$)] 
\item 
\label{thmitem:formal b1}
for all $X \in \AV_w$ and all finite subgroup schemes $H \subseteq X$, we have
\[
H=G_A(I_A(H)),
\]
\item
\label{thmitem:formal b2}
for all $X \in \AV_w$ and all $S(A)$-submodules $M \subseteq T_A(X)$ of finite index, we have
\[
M=I_A(G_A(M)).
\]
\end{enumerate}
\item
\label{thmitem:formal c}
$A$ is an injective cogenerator:
\begin{enumerate}[label = (c$_\text{\arabic*}$) \ , ref = (c$_\text{\arabic*}$)] 
\item 
\label{thmitem:formal c1}
for all $X \in \AV_w$, there exist an $n \in \bN$ and an injective homomorphism
\[
X \inj A^n,
\]
\item
\label{thmitem:formal c2}
for any $X,Y \in \AV_w$ and any injective homomorphism $\ph: X \inj Y$, the induced map 
\[
\ph^\ast: T_A(Y)  \surj T_A(X)
\]
is surjective.
\end{enumerate}
\end{enumerate}
We have the following two chains of equivalences
\[
\ref{thmitem:formal a}  \iff  \ref{thmitem:formal b} \iff \ref{thmitem:formal c} ,
\]
and
\[
\textrm{\emph{\ref{thmitem:formal a1}}}  \iff  \textrm{\emph{\ref{thmitem:formal b1}}}   \iff  \textrm{\emph{\ref{thmitem:formal c1}}}.
\]
\end{thm}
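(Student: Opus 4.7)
\emph{The equivalence (b1) $\iff$ (c1) and faithfulness of $T_A$.} Condition (b1) applied to the zero subgroup $H = 0 \subseteq X$ reads $G_A(T_A(X)) = \bigcap_{g \in T_A(X)} \ker g = 0$; since $T_A(X)$ is finitely generated over $\bZ$, a finite generating set $g_1, \ldots, g_n$ already provides an embedding $(g_1, \ldots, g_n) \colon X \hookrightarrow A^n$, i.e., (c1). Conversely, given (c1) and a finite $H \subseteq X$, an embedding $X/H \hookrightarrow A^n$ pulled back along $\pi \colon X \twoheadrightarrow X/H$ produces elements of $I_A(H)$ with common vanishing locus $\pi^{-1}(0) = H$, so $G_A(I_A(H)) \subseteq H$, and Proposition~\ref{prop:IGproperties}~\ref{IGprop3} upgrades this to equality. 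A corollary of (b1) is that $T_A$ is \emph{faithful}: if $T_A(f) = 0$ for $f \colon X \to Y$, then $\im f \subseteq G_A(T_A(Y)) = 0$.

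\emph{The equivalence (a1) $\iff$ (b1).} For (a1) $\Rightarrow$ (b1), set $H' = G_A(I_A(H))$; the natural surjection $X/H \twoheadrightarrow X/H'$ corresponds via~\eqref{esssurj} to the equality $I_A(H') = I_A(H)$ from Proposition~\ref{prop:IGproperties}~\ref{IGprop5}, hence becomes an isomorphism under $T_A$, and full faithfulness forces $H = H'$. For the converse (b1) $\Rightarrow$ (a1), faithfulness is already at hand. For \emph{fullness}, given $\phi \colon T_A(Y) \to T_A(X)$, choose $S(A)$-module generators $g_1, \ldots, g_m$ of $T_A(Y)$; by $G_A(T_A(Y)) = 0$ these define an embedding $\iota = (g_1, \ldots, g_m) \colon Y \hookrightarrow A^m$. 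Form $\Phi = (\phi(g_1), \ldots, \phi(g_m)) \colon X \to A^m$ and let $\pi \colon A^m \twoheadrightarrow A^m/Y$. By~\eqref{esssurj}, $T_A(A^m/Y)$ is identified with the kernel of the presentation $S(A)^m \twoheadrightarrow T_A(Y)$, namely tuples $(s_1, \ldots, s_m)$ satisfying $\sum s_i g_i = 0$. For any such relation, $S(A)$-linearity of $\phi$ gives $\sum s_i \phi(g_i) = \phi(\sum s_i g_i) = 0$, so $T_A(\pi \circ \Phi) = 0$, whence $\pi \circ \Phi = 0$ by faithfulness. Thus $\Phi$ factors through $Y$ as $f \colon X \to Y$, and the identity $T_A(f)(g_i) = g_i \circ f = \phi(g_i)$ on generators extends by $S(A)$-linearity to $T_A(f) = \phi$.

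\emph{Completing (a) $\iff$ (b) $\iff$ (c).} With the 1-level equivalences established, it remains to link the second clauses. For (a) $\Rightarrow$ (b2): given finite-index $M \subseteq T_A(X)$, (a2) produces $Z \in \AV_w$ with $T_A(Z) \simeq M$, and (a1) realizes the inclusion $M \hookrightarrow T_A(X)$ as $T_A(f)$ for $f \colon X \to Z$; under (c1), $T_A(f)$ is injective iff $f$ is surjective (the kernel $\ker T_A(f) \simeq T_A(Z/\im f)$ is trivial iff $Z/\im f = 0$), so $Z \simeq X/\ker f$ and $M = I_A(\ker f) = I_A(G_A(M))$ via (b1). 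For (c2) $\Rightarrow$ (b2): write $\bar X = X/G_A(M)$, so $M \subseteq T_A(\bar X)$ has trivial common zero in $\bar X$; its generators embed $\bar X \hookrightarrow A^r$, and (c2) then gives a surjection $T_A(A^r) = S(A)^r \twoheadrightarrow T_A(\bar X)$ whose image is the $S(A)$-submodule generated by those generators, namely $M$; hence $M = T_A(\bar X) = I_A(G_A(M))$. For (b) $\Rightarrow$ (a2): any $N \in \Mod_\Ztf(S(A))$ satisfies $N \otimes \bQ \simeq T_A(Y) \otimes \bQ$ for some $Y$ by rational essential surjectivity from Honda--Tate theory (simple $S(A) \otimes \bQ$-modules are $T_A(B_\pi) \otimes \bQ$); scaling $N$ by a suitable integer makes it a finite-index submodule of $T_A(Y)$, to which (b2) applies to realize $N \simeq T_A(Y/H)$. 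Finally, (a) $\Rightarrow$ (c2) follows by the same factorization trick: the image of $T_A(\phi)$ is of finite index (rationally $\phi$ is a split mono), so (b2) writes it as $T_A(X/H)$, and (a1) plus faithfulness yield a factorization $\phi = \alpha \circ \pi_H$, forcing $H = 0$ by injectivity of $\phi$.

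\emph{Main obstacle.} The decisive step is fullness in (b1) $\Rightarrow$ (a1): transporting the algebraic $S(A)$-linearity of $\phi$ into the geometric vanishing $T_A(\pi \circ \Phi) = 0$ requires identifying $T_A(A^m/Y)$ with the relation module $\ker(S(A)^m \twoheadrightarrow T_A(Y))$ via~\eqref{esssurj}, which repackages the presentation of $T_A(Y)$ as an $S(A)$-module in geometric language. Every other implication then reduces to a direct application of Proposition~\ref{prop:IGproperties}, a finite-index lattice argument, or Honda--Tate rational essential surjectivity.
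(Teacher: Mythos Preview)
Your proof is correct, and the level-one equivalences $(a_1)\Leftrightarrow(b_1)\Leftrightarrow(c_1)$ match the paper's closely (your $(b_1)\Rightarrow(a_1)$ via faithfulness and the quotient $A^m/Y$ is a repackaging of the paper's $(c_1)\Rightarrow(a_1)$ co-presentation argument: the paper embeds $A^n/Y$ into a further $A^m$ to avoid invoking faithfulness as a lemma, but the content is identical).

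The substantive difference is in essential surjectivity. The paper proves $(c)\Rightarrow(a_2)$ by a self-contained presentation argument: given $M$, take a presentation $S(A)^m\to S(A)^n\to M\to 0$, realize the matrix as $\varphi\colon A^n\to A^m$, and identify $M$ with $T_A$ of the reduced connected component of $\ker\varphi$. You instead prove $(b)\Rightarrow(a_2)$ by invoking Honda--Tate to classify the simple $S(A)\otimes\bQ$-modules as the $T_A(B_\pi)\otimes\bQ$, then descending from $N\otimes\bQ\simeq T_A(Y)\otimes\bQ$ to a finite-index sublattice and applying $(b_2)$. Both work, but the paper's route keeps Theorem~\ref{thm:formal} purely formal---independent of the arithmetic classification---which is deliberate, since the theorem is meant as an abstract Morita criterion to be applied after the arithmetic input (Tate's theorems) has been used to verify~(c). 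Your route is shorter but fuses the Morita step with the rational isogeny classification. A secondary difference: your $(c)\Rightarrow(b_2)$ is slicker than the paper's $(b)\Rightarrow(c_2)$, bypassing Poincar\'e complete reducibility at that stage, though you still invoke it for $(a)\Rightarrow(c_2)$, so the overall dependency is unchanged.
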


\begin{proof}
\ref{thmitem:formal a1} $\Longrightarrow$ \ref{thmitem:formal b1}: 
Let $H \subseteq X$ be a finite subgroup scheme, and set $G=G_A(I_A(H))$. By Remark~\ref{rmk:finite-cofinite} also $G$ is a finite subgroup scheme. Since the inclusion $H \subseteq G$ trivially holds, see Proposition~\ref{prop:IGproperties}, there is a natural isogeny
\[
\ph : X/H \longrightarrow X/G.
\]
By the very definition of $G_A(-)$, it follows that the induced map
\[
\ph^\ast : T_A(X/G) \longrightarrow T_A(X/H) 
\]
is an isomorphism.
Since by assumption the functor $T_A(-)$ is fully faithful, we conclude that $\ph$ is an isomorphism as well and therefore $H = G = G_A(I_A(H))$.

\ref{thmitem:formal b1} $\Longrightarrow$ \ref{thmitem:formal c1}: 
This is just the special case $H = 1$. Let $\{\psi_1, \ldots, \psi_n\}$ be a set of generators of $T_A(X)$ as an $S(A)$-module and consider the homomorphism $\psi : X \to A^n$ whose $i$-th component is $\psi_i$. Since the $\psi_i$'s generate $T_A(X)$, we have
\[
\ker(\psi)=G_A(T_A(X)).
\]
Assuming \ref{thmitem:formal b1}, we have $1=G_A(I_A(1))$; hence we deduce
\[
\ker(\psi) = G_A(T_A(X)) = G_A(I_A(1)) = 1,
\]
which proves \ref{thmitem:formal c1}.

\ref{thmitem:formal c1} $\Longrightarrow$ \ref{thmitem:formal a1}: 
We must show that for all abelian varieties $X,Y \in \AV_w$ the map 
\begin{equation}\label{eq:M_is_fully_faithful}
\Hom_{\bF_q}(X, Y) \longrightarrow \Hom_{S(A)}(T_A(Y), T_A(X))
\end{equation}
is bijective. 

We first show that \eqref{eq:M_is_fully_faithful} is injective. By assumption $(c_1)$ there is an injective map $\psi : Y \inj A^n$, denote by $\psi_i$ its
$i$-th component. If $f: X \to Y$ is a homomorphism with 
\[
0 = f^\ast : T_A(Y) \longrightarrow T_A(X),
\]
then $\psi_i \circ f = 0$ for all $i$, and thus $\psi \circ f = 0$. But since $\psi$ is injective this implies $f =0$.

We next address the surjectivity of  \eqref{eq:M_is_fully_faithful}. Let, as above, $\psi: Y\inj A^n$ be an injection. Since $T_A(Y)$ is a finitely generated $S(A)$-module,
we may also assume that $\psi$ is chosen so that its components $\psi_1, \ldots, \psi_n : Y \to A$ generate $T_A(Y)$. Construct the quotient $\pr: A^n \to Z = A^n/Y$, which is itself an object of $\AV_w$, and
choose an injection $\iota : Z \inj A^m$, which exists again by assumption. Setting $\sigma = \iota \circ \pr:A^n\to A^m$, we obtain a short exact sequence 
(a co-presentation)
\[
0 \to Y \xrightarrow{\psi} A^n \xrightarrow{\sigma} A^m.
\]
We may view $\sigma$ as an $m\times n$ matrix $\sigma = (s_{ij}) \in \rM_{m \times n}(S(A))$ with entries in $S(A)$. Let now
\[
g: T_A(Y) \to T_A(X)
\]
be any map of $S(A)$-modules, and let $\ph:X\to A^n$ be the morphism whose $i$-th component is given by $\ph_i = g(\psi_i) : X \to A$, where $1\leq i\leq n$.
Let ${(\sigma \circ \ph)}_j$ be the $j$-th component of the composition $\sigma\circ\ph$, where $1\leq j\leq m$. From the $S(A)$-linearity of $g$ we deduce for any $j$
\[
{(\sigma \circ \ph)}_j = \sum_{1\leq k\leq n} s_{jk} \ph_k = \sum_{1\leq k\leq n} s_{jk} g(\psi_k) = g(\sum_{1\leq k\leq n} s_{jk} \psi_k) = g((\sigma \circ \psi)_j) = 0.
\]
Therefore $\ph$ factors as $\psi\circ f$, for a unique map $f : X \to Y$. Since 
\[
f^\ast(\psi_i) = \psi_i \circ  f = \ph_i = g(\psi_i),
\]
the maps $f^\ast$ and $g : T_A(Y) \to T_A(X)$ agree on a generating set, hence $g = f^\ast$ and \eqref{eq:M_is_fully_faithful} is indeed surjective.

\smallskip

After having established the equivalence of the assertions \ref{thmitem:formal a1}, \ref{thmitem:formal b1} and \ref{thmitem:formal c1} we show that of 
\ref{thmitem:formal a}, \ref{thmitem:formal b} and \ref{thmitem:formal c}. We recall that $A$ has Weil support equal to $w$ by assumption.

\smallskip

\ref{thmitem:formal a} $\Longrightarrow$ \ref{thmitem:formal b2}: Let $X$ be an object of $\AV_w$ and let $M \subseteq T_A(X)$ be a submodule of finite index. Thanks to the the fact that $T_A(-)$ is essentially surjective by assumption, there is an object $Y$ of $\AV_w$ and an isomorphism $T_A(Y) \simeq M$ of $S(A)$-modules. We deduce an injective $S(A)$-homomorphism $\iota:T_A(Y) \inj T_A(X)$ with finite cokernel and which is induced from a homomorphism $\ph : X \to Y$, because $T_A(-)$ is assumed to be fully faithful.

Since $\iota\otimes\bQ:\Hom_{\bF_q}(Y, A) \otimes\bQ \to \Hom_{\bF_q}(X, A) \otimes\bQ$
is an isomorphism and $A$ has Weil support equal to $w$, the map $\ph$ must be an isogeny. It then follows from the definition that
$M = I_A(\ker(\ph))$. Applying now property \ref{IGprop5} of Proposition~\ref{prop:IGproperties} to the subgroup $\ker(\ph)$ we find
\[
M = I_A(\ker(\ph)) = I_A(G_A(I_A(\ker(\ph))) = I_A(G_A(M)).
\]

\ref{thmitem:formal b} $\Longrightarrow$ \ref{thmitem:formal c2}:  Let $\ph: X \inj Y$ be an injective homomorphism in $\AV_w$. We have to show that an arbitrary map $f: X \to A$ extends to a certain $\tilde{f} : Y \to A$. By composing $\ph$ with an inclusion $Y \inj A^n$, which exists by \ref{thmitem:formal c1}, we may assume that $Y=A^n$. 

Let $\ph_i :  X \to A$ be the $i$-th component of $\ph$ and denote by $M$ the $S(A)$-submodule of $T_A(X)$ generated by all the $\ph_1, \ldots, \ph_n$. 
Poincar\'e's theorem of complete reducibility implies that $X$ is a direct factor of $A^n$ up to isogeny, i.e.\ there is an abelian subvariety $Z \subseteq A^n$ such that $\ph: X \inj A^n$ factors as the inclusion of $X \inj X \times Z$ followed by an isogeny
\[
\psi \colon X \times Z \xrightarrow{} A^n.
\]
Then $f$ extends as the composition $f \circ \pr_1$ with the first projection to a map $X \times Z \to A$, and the nonzero multiple $g=mf$ with $m = \deg(\psi)$ extends to a morphism
$\tilde g:A^n\to A$. This means precisely that $g$ belongs to $M$. In this way we see that $M$ has finite index in $T_A(X)$ and hence, by assumption \ref{thmitem:formal b2}, we have 
\[
M = I_A(G_A(M)) = I_A(\bigcap_i \ker(\ph_i)) = I_A(\ker(\ph)) = I_A(1) = T_A(X).
\]
Thus there are $s_i \in S(A)$ with $f = \sum_i s_i \ph_i$ and therefore the map $s : A^n \to A$ defined as
\[
s=\sum_{i} s_i\circ\pr_i,
\]
where $\pr_i:A^n\to A$ is the $i$-th projection, extends $f$ as desired.

\ref{thmitem:formal c} $\Longrightarrow$ \ref{thmitem:formal a2}:  Let $M$ be an object of $\Mod_{\Ztf}(S(A))$ and choose a finite presentation 
\[
S(A)^{\oplus m} \xrightarrow{f} S(A)^{\oplus n} \to M \to 0.
\]
By $S(A) = T_A(A)$ and additivity of $T_A(-)$ the map $f$ comes from a $\ph: A^n \to A^m$. Let $Y$ be the image of $\ph$ and let $X = \ker(\ph)^0$ be the reduced connected component of the kernel. Both $X$ and $Y$ are objects of $\AV_w$. Let $G = \ker(\ph)/X$ be the finite group scheme of connected components. 
Since the functor $T_A(-)$ is left exact as a functor on group schemes over $\bF_q$, we obtain a diagram of $S(A)$-modules
\[
\xymatrix@M+1ex{
 & & S(A)^{\oplus m} \ar[d]^f \ar@{->>}[dl]& T_A(G) \ar[d] \\
 0 \ar[r] & T_A(Y) \ar[r] & S(A)^{\oplus n} \ar[r]^(0.4)g \ar[d] \ar@{->>}[dr]^{i^\ast} & T_A(\ker(\ph)) \ar[d] \\
 & & M \ar[d] \ar@{.>}[r]^(0.4)h & T_A(X) \\
 & & 0 & }
\]
with an exact middle row and exact columns. The map $S(A)^{\oplus m} \surj T_A(Y)$ comes from $Y \inj A^m$ and is surjective by assumption \ref{thmitem:formal c2}. The same applies to $i^\ast : S(A)^{\oplus n} \surj T_A(X)$ which comes from the inclusion $i: X \inj \ker(\ph) \inj A^n$. It follows that the map denoted $g$ induces an injective map $M \inj T_A(\ker(\ph))$. Because $T_A(G)$ is finite, we deduce that the composite $h$ 
\[
h: M \longrightarrow T_A(X)
\]
has finite kernel, and moreover is surjective due to $i^\ast$ being surjective. But $M$ is torsion free by assumption, hence $h$ is an isomorphism showing that $M$ lies in the essential image of $T_A(-)$.
\end{proof}

\section{The truncated anti-equivalence for finite Weil support}
\label{sec:truncequiv}

Let $w\subseteq W_q$ be any finite subset fixed throughout the section, and let $A_w$ be a $w$-locally projective abelian variety (see Definition~\ref{defi:locallyprojective}) with $\bF_q$-endomorphism ring $\End_{\bF_q}(A_w)$ denoted by $S_w$.
In this section we show that the functor represented by $A_w$
\[
T_w \colon \AV_w\longrightarrow \Mod_{\Ztf}(S_w), \qquad  T_w(X) = \Hom_{\bF_q}(X,A_w)
\]
is an anti-equivalence of categories if $A_w$ has full support $w(A_w) = w$. 

The ring $S_w$ is a finite $R_w$-algebra and free as a $\bZ$-module. If $\lambda$ (resp.\ $\fp$) is a maximal ideal of $R_w \otimes \bZ_\ell$ for $\ell \not= p$ (resp.\ of $R_w \otimes \bZ_p$), we introduce notation 
\[
S_{w,\lambda} = S_w \otimes_{R_w} R_{w,\lambda} \quad \text{ and } \quad S_{w,\fp} = S_w \otimes_{R_w} R_{w,\fp}.
\]
It follows from the local version of Tate's theorem, Proposition~\ref{prop:localTateTheorem}, that $T_\lambda(-)$ and $T_\fp(-)$ induce natural isomorphisms
\begin{align*}
S_{w,\lambda} & = \End_{R_{w,\lambda}}\big(T_\lambda(A_w) \big), \\
S_{w,\fp} & = \End_{R_{w,\fp}}\big(T_\fp(A_w) \big)^\op.
\end{align*}

\subsection{Preliminary lemmata}

We now present four lemmata that are needed when proving that $T_w(-)$ is an anti-equivalence of categories.
The first lemma clarifies an assumption on the support. 

\begin{lem}
\label{lem:support}
Let $A$ be an abelian variety in $\AV_w$ with support $w(A) = w$. 
\begin{enumerate}[label=(\arabic*),align=left,labelindent=0pt,leftmargin=*,widest = (8)]
\item
\label{lemitem:support1}
$T_\lambda(A)$ is nontrivial for all maximal ideals $\lambda$ of $R_w \otimes \bZ_\ell$ for $\ell \not= p$.
\item
\label{lemitem:support2}
$T_\fp(A)$ is nontrivial for all maximal ideals $\fp$ of $R_w \otimes \bZ_p$.
\end{enumerate}
\end{lem}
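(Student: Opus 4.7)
The plan is to reduce both assertions to showing the rational analogs $V_\lambda(A) := T_\lambda(A) \otimes \bQ$ and $V_\fp(A) := T_\fp(A) \otimes \bQ$ are nonzero. This reduction is legitimate because $T_\ell(A)$ is $\bZ_\ell$-free and $T_\lambda(A)$ is the direct factor of $T_\ell(A)$ cut out by the idempotent of $R_w \otimes \bZ_\ell$ corresponding to $\lambda$ under the product decomposition $R_w \otimes \bZ_\ell = \prod_\lambda R_{w,\lambda}$; the analogous statement holds for $T_\fp(A)$ via $R_w \otimes \bZ_p = \prod_\fp R_{w,\fp}$.

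Next, the support hypothesis $w(A) = w$ allows me to write $A$ up to $\bF_q$-isogeny as $\prod_{\pi \in w} B_\pi^{n_\pi}$ with all multiplicities $n_\pi \geq 1$. Combined with the factorization $R_w \otimes \bQ_\ell = \bQ(w) \otimes \bQ_\ell = \prod_{\pi \in w} \prod_{v \mid \ell} K_v$ from \eqref{decompositioncenter} and its $p$-adic analog, this produces refined decompositions
\[
V_\ell(A) \simeq \bigoplus_{\pi \in w} \bigoplus_{v \mid \ell} V_v(B_\pi)^{\oplus n_\pi}, \qquad V_p(A) \simeq \bigoplus_{\pi \in w} \bigoplus_{v \mid p} V_v(B_\pi)^{\oplus n_\pi},
\]
as modules over $R_w \otimes \bQ_\ell$ (respectively $R_w \otimes \bQ_p$). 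Lemma~\ref{lem:rationalAtell} ensures every $V_v(B_\pi)$ with $v \mid \ell$ is nonzero (its $K_v$-dimension is $s_\pi \geq 1$), and Lemma~\ref{lem:rationalAtp} ensures every $V_v(B_\pi)$ with $v \mid p$ is nonzero (its $K_v$-dimension is $s_\pi \expo \geq 1$).

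Finally, for each maximal ideal $\lambda$ of $R_w \otimes \bZ_\ell$, rationalizing $R_w \otimes \bZ_\ell = \prod_\lambda R_{w,\lambda}$ and matching against $R_w \otimes \bQ_\ell = \prod_{\pi,v} K_v$ realizes $R_{w,\lambda} \otimes \bQ_\ell$ as a nonempty subproduct of the $K_v$'s; nonemptiness follows because $R_{w,\lambda}$ is a nonzero $\bZ_\ell$-flat ring (a direct factor of the $\bZ_\ell$-free ring $R_w \otimes \bZ_\ell$). Localizing $V_\ell(A)$ at $\lambda$ therefore retains at least one nonzero direct summand $V_v(B_\pi)^{\oplus n_\pi}$, proving $V_\lambda(A) \neq 0$. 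The argument for $V_\fp(A) \neq 0$ is identical. I expect no serious obstacle beyond the bookkeeping that matches maximal ideals of the semi-local ring $R_w \otimes \bZ_\ell$ (resp.\ $R_w \otimes \bZ_p$) with the field factors $K_v$ appearing in its rationalization.
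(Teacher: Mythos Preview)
Your argument is correct. Both proofs ultimately rest on the same rational structure results (Lemmas~\ref{lem:rationalAtell} and~\ref{lem:rationalAtp}), but the route differs.

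The paper's proof is integral throughout: it invokes the existence of a $w$-balanced abelian variety $B$ (Theorem~\ref{thm:integralstructure}), for which $T_\lambda(B) \simeq R_{w,\lambda}^{\oplus 2\expo}$ and $T_\fp(B) \simeq \cD_{w,\fp}^{\oplus 2}$ are visibly nonzero, and then uses an isogeny between $B \times A'$ and $A^n$ to inject $T_\lambda(B)$ (resp.\ $T_\fp(B)$) into a power of $T_\lambda(A)$ (resp.\ $T_\fp(A)$). The variance of $T_p$ forces the isogeny to point in the opposite direction for part~\ref{lemitem:support2}, a subtlety your rational reduction sidesteps entirely since isogenies become isomorphisms after $\otimes\,\bQ$.

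Your approach is more elementary in that it bypasses the construction of balanced objects and appeals directly to the dimension counts $\dim_{K_v} V_v(B_\pi) = s_\pi$ (for $v \mid \ell$) and $\dim_{K_v} V_v(B_\pi) = s_\pi \expo$ (for $v \mid p$) established in the proofs of Lemmas~\ref{lem:rationalAtell} and~\ref{lem:rationalAtp}. The paper's approach, on the other hand, is natural in context: balanced objects are the central players, and reusing them here keeps the argument uniform and avoids unpacking the rational decomposition again. Both are short; yours is marginally more self-contained, the paper's marginally more in keeping with its overall strategy.
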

\begin{proof}
Since $A$ has support $w$, there is an $n \geq 1$ and an isogeny $B \times A' \to A^n$ with $B$ being a $w$-balanced abelian variety. By Definition~\ref{defi:balanced} and Definition~\ref{defi:localTate modules}, we have $T_\lambda(B)  \simeq (R_{w,\lambda})^{\oplus 2\expo}$. The induced map 
\[
(R_{w,\lambda})^{\oplus 2\expo} \simeq T_\lambda(B) \inj T_\lambda(A^n) = T_\lambda(A)^{\oplus n},
\]
is injective, and this shows claim \ref{lemitem:support1}. The argument for \ref{lemitem:support2} is similar but uses an isogeny $A^n \to B \times A'$ and the map
\[
(\cD_{w,\fp})^{\oplus 2} \simeq T_\fp(B) \inj T_\fp(A^n)  = T_\fp(A)^{\oplus n} . \qedhere
\] 
\end{proof}

We continue by introducing some notation.
If $\lambda$ (resp.\ $\fp$) is a maximal ideal of $R_w \otimes \bZ_\ell$ for $\ell \not= p$ (resp.\ of $R_w \otimes \bZ_p$), denote by
\[
\Mod_\Zltf(R_{w,\lambda}) \quad \big(\text{resp.\ }  \Mod_\Zptf(\cD_{w,\fp}) \, \big)
\]
the category of finitely generated $R_{w,\lambda}$-modules that are free over $\bZ_\ell$ (resp.\  finitely generated $\cD_{w,\fp}$-modules that are free over $\bZ_p$). Similarly, let
\[
\Mod_\Zltf(S_{w,\lambda}) \quad \big(\text{resp.\ } \Mod_\Zptf(S_{w,\fp}) \, \big)
\]
be the category of finitely generated left modules over $S_{w,\lambda}$ that are free over $\bZ_\ell$ (resp.\  finitely generated $S_{w,\fp}$-modules that are free over $\bZ_p$). 
If $N$ is any object of $\Mod_\Zltf(R_{w,\lambda})$, the formula
\begin{equation}\label{dualmodules}
N^\star := \Hom_{R_{w,\lambda}}(N, T_\lambda(A_w))
\end{equation}
defines a contravariant functor on $\Mod_\Zltf(R_{w,\lambda})$ with values in 
$\Mod_\Zltf(S_{w,\lambda})$,
thanks to the identification $S_{w,\lambda} =\End_{R_{w,\lambda}}(T_\lambda(A_w))$ that follows from Proposition~\ref{prop:localTateTheorem}~\ref{propitem:localTateTheorem lambda}.

\begin{lem}[Local anti-equivalence at $\lambda$]
\label{lem:upperstarfunctor} 
Let $w\subseteq W_q$ be any finite subset, and let $A_w$ be a $w$-locally projective abelian variety with support $w(A_w) = w$. Let $\lambda$ be a maximal ideal of $R_w \otimes \bZ_\ell$ for $\ell \not= p$.
The functor
\[
{(-)}^\star: \Mod_\Zltf(R_{w,\lambda})  \longrightarrow\Mod_\Zltf(S_{w,\lambda})
\]
is an anti-equivalence of categories.
\end{lem}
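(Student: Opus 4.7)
The plan is to reduce the statement, via Morita equivalence, to the classical $R_{w,\lambda}$-linear duality, and then to invoke the Gorenstein property of $R_w$ established in \cite{CS:part1}. First I would observe that $R_{w,\lambda}$ is a commutative local noetherian ring (a direct factor of $R_w\otimes\bZ_\ell$), so every finitely generated projective $R_{w,\lambda}$-module is free. By the $w$-locally projective assumption the Tate module $T_\lambda(A_w)$ is projective, and it is non-zero by the full support hypothesis $w(A_w)=w$ together with Lemma~\ref{lem:support}~\ref{lemitem:support1}. Hence there exists $n\ge 1$ such that $T_\lambda(A_w)\simeq R_{w,\lambda}^{\oplus n}$, and consequently $S_{w,\lambda}\simeq \rM_n(R_{w,\lambda})$.

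Next, (covariant) Morita equivalence provides an equivalence $\Mod(R_{w,\lambda})\simeq \Mod(\rM_n(R_{w,\lambda}))$ sending $M$ to $M\otimes_{R_{w,\lambda}} R_{w,\lambda}^{\oplus n}$ with the natural left matrix action on the column factor; this equivalence preserves $\bZ_\ell$-freeness and so restricts to an equivalence $\Mod_\Zltf(R_{w,\lambda}) \simeq \Mod_\Zltf(S_{w,\lambda})$. Using the natural isomorphism $\Hom_{R_{w,\lambda}}(M, R_{w,\lambda}^{\oplus n}) \simeq \Hom_{R_{w,\lambda}}(M, R_{w,\lambda}) \otimes_{R_{w,\lambda}} R_{w,\lambda}^{\oplus n}$ one checks that under Morita the contravariant functor $(-)^\star$ corresponds to the classical $R_{w,\lambda}$-linear dual $(-)^\vee := \Hom_{R_{w,\lambda}}(-, R_{w,\lambda})$. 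It therefore suffices to prove that $(-)^\vee$ is an anti-equivalence of $\Mod_\Zltf(R_{w,\lambda})$ with itself.

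For this last step, which I regard as the crucial input, I would invoke that $R_w$ is Gorenstein by \cite{CS:part1}, and hence so is $R_{w,\lambda}$ as a localization. Being a one-dimensional local Gorenstein ring, $R_{w,\lambda}$ is itself a dualizing module, so $(-)^\vee$ induces an anti-equivalence on the category of maximal Cohen--Macaulay $R_{w,\lambda}$-modules. It remains to identify this category with $\Mod_\Zltf(R_{w,\lambda})$: since $R_w$ is reduced (as a subring of the product of fields $\bQ(w)$), so is $R_{w,\lambda}$, and a Cayley--Hamilton argument applied to multiplication by a non-zero-divisor of $R_{w,\lambda}$ acting on a finitely generated $\bZ_\ell$-free $R_{w,\lambda}$-module shows that being $\bZ_\ell$-torsion free is equivalent to being $R_{w,\lambda}$-torsion free; in a one-dimensional reduced local Cohen--Macaulay ring the latter coincides with being maximal Cohen--Macaulay.

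Thus the main conceptual content sits in the third step: one needs the Gorenstein property of $R_w$ at one's disposal. Once this is granted, Morita equivalence handles the bookkeeping between $R_{w,\lambda}$-modules and $S_{w,\lambda}$-modules, and the remaining identifications are essentially routine.
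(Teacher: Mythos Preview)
Your proposal is correct and follows essentially the same route as the paper: both factor $(-)^\star$ as the composition of the $R_{w,\lambda}$-linear dual $(-)^\vee$ with the Morita equivalence $-\otimes_{R_{w,\lambda}} T_\lambda(A_w)$, using freeness and non-triviality of $T_\lambda(A_w)$ (via Lemma~\ref{lem:support}) for the Morita step and the Gorenstein property of $R_{w,\lambda}$ for the duality step. Your treatment of the Gorenstein duality is more detailed (identifying $\Mod_\Zltf(R_{w,\lambda})$ with maximal Cohen--Macaulay modules), whereas the paper simply appeals to \cite[Lemma~13]{CS:part1} via Remark~\ref{rmk:reflexivmodules}, but the substance is the same.
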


\begin{rmk}\label{rmk:reflexivmodules}
In \eqref{eq:RwtensorZell} we recalled that for a prime $\ell \not=p$ the ring $R_w\otimes\bZ_\ell$ is isomorphic to $\bZ_\ell[x]/P_w(x)$. In particular,
$R_w\otimes\bZ_\ell$ is a Gorenstein ring of dimension one, being a complete intersection. The relevant consequence for us is that
any object of $\Mod_\Zltf(R_{w,\lambda})$ is reflexive, that is to say that the dual module functor
\begin{equation}\label{eq:dualmodule}
(-)^\vee :=\Hom_{R_{w,\lambda}}(-, R_{w,\lambda})
\end{equation}
is an anti-equivalence of $\Mod_\Zltf(R_{w,\lambda})$ to itself. For more details see 
\cite[Lemma~13]{CS:part1}.
\end{rmk}

\begin{proof}[Proof of Lemma~\ref{lem:upperstarfunctor}] For any object $N$ of $\Mod_\Zltf(R_{w,\lambda})$ there is a natural isomorphism
\[
\xi_N:N^\star\stackrel{\sim}{\longrightarrow} N^\vee\otimes_{R_{w,\lambda}}T_\lambda(A_w)
\]
which depends functorially on $N$, where $N^\vee$ is the dual of $N$, as defined in \eqref{eq:dualmodule}. This is to say that ${(-)}^\star$ is isomorphic to the composition of functors
\begin{equation}\label{MoritaDual}
\xymatrix@M+1ex{
\Mod_\Zltf(R_{w,\lambda}) \ar[rr]^{(-)^\vee} && \Mod_\Zltf(R_{w,\lambda}) 
\ar[rrr]^{-\otimes_{R_{w,\lambda}} T_\lambda(A_w)} &&& \Mod_\Zltf(S_{w,\lambda}).}
\end{equation}
Since $T_\lambda(A_w)$ is a free $R_{w,\lambda}$-module, and moreover nontrivial by Lemma~\ref{lem:support} due to the assumption on the Weil support, the functor $-\otimes_{R_{w,\lambda}} T_\lambda(A_w)$ is a Morita equivalence, and
thus ${(-)}^\star$ is an anti-equivalence of categories, being a composition of the anti-equivalence \eqref{eq:dualmodule} with an equivalence.
\end{proof}

For what concerns the situation at $p$, for an object $N$ of $\Mod_\Zptf(\cD_{w,\fp})$ the formula
\[
N_\star :=\Hom_{\cD_{w,\fp}}(T_\fp(A_w), N)
\]
defines a covariant functor from $\Mod_\Zptf(\cD_{w,\fp})$ to $\Mod_\Zptf(S_{w,\fp})$. Notice that $S_{w,\fp}$ acts on $N_\star$ from the left thanks to the contravariance of Proposition~\ref{prop:localTateTheorem}~\ref{propitem:localTateTheorem p} and the natural right action of $\End_{\cD_{w,\fp}}(T_\fp(A_w))$ on $N_\star$.

\begin{lem}[Local equivalence at $\fp$]
\label{lem:lowerstarfunctor} 
Let $w\subseteq W_q$ be any finite subset, and let $A_w$ be a $w$-locally projective abelian variety with support $w(A_w) = w$. Let $\fp$ be a maximal ideal of $R_w \otimes \bZ_p$.
The functor
\[
{(-)}_\star: \Mod_\Zptf(\cD_{w,\fp})   \longrightarrow   \Mod_\Zptf(S_{w,\fp}),
\]
is an equivalence of categories.
\end{lem}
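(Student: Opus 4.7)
The plan is to recognise $(-)_\star$ as the Morita equivalence attached to the progenerator $T_\fp(A_w)$ of the category of $\cD_{w,\fp}$-modules.

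First I would show that $T_\fp(A_w)$ is a finitely generated projective generator. Since $A_w$ is $w$-locally projective, $T_\fp(A_w)$ is a finitely generated projective $\cD_{w,\fp}$-module. By Proposition~\ref{prop:localprojectiveDieudonnestructure} there is, up to isomorphism, a unique indecomposable finitely generated projective $\cD_{w,\fp}$-module $P_{w,\fp}$, and every finitely generated projective is a multiple $P_{w,\fp}^{\oplus m}$. By Lemma~\ref{lem:support}~\ref{lemitem:support2}, the assumption $w(A_w) = w$ ensures $T_\fp(A_w) \neq 0$, hence $T_\fp(A_w) \simeq P_{w,\fp}^{\oplus n}$ with $n \geq 1$. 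In particular $P_{w,\fp}$ is a direct summand of $T_\fp(A_w)$, so $T_\fp(A_w)$ is a progenerator of the category of $\cD_{w,\fp}$-modules.

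Second, by the classical Morita theorem, the functor $\Hom_{\cD_{w,\fp}}(T_\fp(A_w), -)$ is an equivalence from left $\cD_{w,\fp}$-modules to right $E$-modules, where $E = \End_{\cD_{w,\fp}}(T_\fp(A_w))$, with quasi-inverse $T_\fp(A_w) \otimes_E -$. By Tate's theorem, in the form of Proposition~\ref{prop:localTateTheorem}~\ref{propitem:localTateTheorem p} applied to $A = B = A_w$, one has $E = S_{w,\fp}^{\op}$, and right $E$-modules are exactly left $S_{w,\fp}$-modules, matching the $S_{w,\fp}$-action on $N_\star$ described before the statement.

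Third, I would check that this equivalence restricts to the full subcategories of finitely generated modules that are free of finite rank over $\bZ_p$. Finite generation is preserved in both directions since $T_\fp(A_w)$ is finitely generated as a $\cD_{w,\fp}$-module and $E$ is finitely generated as a $\bZ_p$-module. For the forward direction, $\Hom_{\cD_{w,\fp}}(T_\fp(A_w), N)$ embeds into $\Hom_{\bZ_p}(T_\fp(A_w), N)$, which is $\bZ_p$-torsion-free whenever $N$ is, since $T_\fp(A_w)$ is finitely generated free over $W(\bF_q)$. For the quasi-inverse, $T_\fp(A_w)$ is $\bZ_p$-flat for the same reason, and $E$ is a subring of $\End_{W(\bF_q)}(T_\fp(A_w))$, hence $\bZ_p$-torsion-free, so $T_\fp(A_w) \otimes_E N$ stays $\bZ_p$-torsion-free when $N$ is. There is no serious obstacle here; the only thing to be careful about is the bookkeeping of the opposite ring arising from the contravariance of the Dieudonné functor, which is exactly what turns $\End_{\cD_{w,\fp}}(T_\fp(A_w))^{\op}$ into $S_{w,\fp}$.
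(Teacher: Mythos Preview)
Your approach is correct and is essentially the same as the paper's: both arguments recognise $(-)_\star$ as a Morita equivalence. The paper takes a slightly more concrete route, first invoking Proposition~\ref{prop:locallyfreeandprojective} to replace $A_w$ by a power $A_w^n$ for which $T_\fp(A_w^n)$ is \emph{free}, then identifying $(-)_\star$ explicitly with $N \mapsto N^{\oplus n}$ and $S_{w,\fp}$ with $\rM_n(\cD_{w,\fp})$. You instead appeal directly to the progenerator form of Morita's theorem, which is cleaner and avoids the reduction step; the paper's version has the advantage that the preservation of $\bZ_p$-torsion-freeness in both directions is manifest from $N_\star \simeq N^{\oplus n}$.

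On that last point, your backward-direction argument has a small gap: flatness of $T_\fp(A_w)$ over $\bZ_p$ and torsion-freeness of $E$ do not by themselves imply that $T_\fp(A_w) \otimes_E N$ is $\bZ_p$-torsion-free for $\bZ_p$-torsion-free $N$. The clean fix is to observe that the Morita equivalence is $\bZ_p$-linear (indeed $R_{w,\fp}$-linear), so multiplication by $p$ on $N$ corresponds to multiplication by $p$ on its image; since an equivalence of abelian categories reflects monomorphisms, $N$ is $\bZ_p$-torsion-free if and only if its image is.
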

\begin{proof}
Since $A_w$ is assumed $w$-locally projective, by Proposition~\ref{prop:locallyfreeandprojective} there is an integer $n \geq 1$ such that $A_w^n$ is $w$-locally free. The functors ${(-)}_\star$ for $A_w$ and the power $A_w^n$ are linked by a Morita equivalence. Therefore we may assume withlout loss of generality that $T_\fp(A_w)$ is a free $\cD_{w,\fp}$-module.

Let $T_p(A_w)$ be free of rank $n$ over $\cD_{w,\fp}$. By Lemma~\ref{lem:support}, due to the assumption on the Weil support, we have $n \geq 1$. A choice of a basis determines an isomorphism 
$T_p(A_w) \simeq \cD_{w,\fp}^{\oplus n}$ and a natural isomorphism
\[
N_\star = \Hom_{\cD_{w,\fp}}(T_\fp(A_w), N) \simeq \Hom_{\cD_{w,\fp}}(\cD_{w,\fp}^{\oplus n}, N)  = N^{\oplus n}
\]
with the $S_{w,\fp}$ action on $N_\star$  corresponding to the 
$\rM_n(\cD_{w,\fp})$-action on $N^{\oplus n}$ under the isomorphism 
\[
S_{w,\fp} = \End_{\cD_{w,\fp}}(T_\fp(A_w))^\op \simeq  \End_{\cD_{w,\fp}}(\cD_{w,\fp}^{\oplus n})^\op = \rM_n(\cD_{w,\fp})
\]
of Proposition~\ref{prop:localTateTheorem}~\ref{propitem:localTateTheorem p} and 
induced by that very same choice of basis.
Hence the functor ${(-)}_\star$ is isomorphic to a Morita equivalence restricted to modules that are free and finitely generated as $\bZ_p$-modules.
\end{proof}

While Lemmata~\ref{lem:upperstarfunctor} and ~\ref{lem:lowerstarfunctor} above enter in the proof of fully faithfulness of $T_w(-)$, the next lemma
is needed to show that $T_w(-)$ is essentially surjective.

\begin{lem} \label{lem:T-on-injective}
Let $A \inj B$ be an injective homomorphism of abelian varieties in $\AV_{\bF_q}$, and let $\ell \not=p$ be a prime number. Then the following holds.
\begin{enumerate}[label=(\arabic*),align=left,labelindent=0pt,leftmargin=*,widest = (8)]
\item 
\label{lemitem:T-on-injective1}
The natural map $T_\ell(A) \to T_\ell(B)$ is injective and cotorsion free, i.e.\ the cokernel is a free $\bZ_\ell$-module.
\item 
\label{lemitem:T-on-injective2}
The natural map $T_p(B) \to T_p(A)$ is surjective.
\end{enumerate}
\end{lem}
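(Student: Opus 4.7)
My plan is to form the quotient abelian variety $C = B/A$ and exploit the resulting short exact sequence
$$0 \to A \to B \to C \to 0$$
of abelian varieties in $\AV_{\bF_q}$. Both assertions then reduce to standard exactness properties of the appropriate torsion and Dieudonn\'e functors applied to this sequence.

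For part \ref{lemitem:T-on-injective1}, since $\ell \neq p = \Char(\bF_q)$, multiplication by $\ell^n$ on any abelian variety over $\bF_q$ is an \'etale isogeny. Consequently, the $\ell^n$-torsion subgroup schemes $A[\ell^n]$, $B[\ell^n]$, $C[\ell^n]$ sit in a short exact sequence of \'etale commutative group schemes over $\bF_q$. Passing to $\overline{\bF}_q$-points preserves this exactness, and taking the inverse limit over $n$ remains exact since it is performed in the category of finite abelian groups, where the Mittag--Leffler condition is trivially satisfied. The outcome is a short exact sequence
$$0 \to T_\ell(A) \to T_\ell(B) \to T_\ell(C) \to 0$$
of free $\bZ_\ell$-modules of finite rank, from which the injectivity of $T_\ell(A) \to T_\ell(B)$ and the cotorsion-freeness of its cokernel are immediate.

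For part \ref{lemitem:T-on-injective2}, the same short exact sequence of abelian varieties passes, on $p$-power torsion, to a short exact sequence of $p$-divisible groups
$$0 \to A[p^\infty] \to B[p^\infty] \to C[p^\infty] \to 0$$
over $\bF_q$. Applying the classical contravariant Dieudonn\'e functor, which is an exact anti-equivalence between $p$-divisible groups over the perfect field $\bF_q$ and the appropriate category of $\cD_q$-modules, I obtain the reversed short exact sequence
$$0 \to T_p(C) \to T_p(B) \to T_p(A) \to 0,$$
whose final arrow is the claimed surjection.

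There is no serious obstacle here: the only non-trivial inputs are the existence of the quotient $C = B/A$ as an object of $\AV_{\bF_q}$ and the exactness of the contravariant Dieudonn\'e functor on $p$-divisible groups over a perfect field, both of which are classical. If anything subtle needs spelling out, it is only the compatibility of the $p$-torsion sequence with the Dieudonn\'e module structures introduced in \S\ref{subsec:localizations}, but this is automatic from the functoriality of the construction.
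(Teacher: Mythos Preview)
Your argument is correct and follows essentially the same route as the paper: form the quotient $C=B/A$, obtain the short exact sequence $0\to A[n]\to B[n]\to C[n]\to 0$ (the paper derives this via the snake lemma in the abelian category of finite type group schemes, while you appeal to \'etaleness for $\ell\neq p$), and then pass to the limit to conclude. The only cosmetic difference is that for part~\ref{lemitem:T-on-injective2} you invoke exactness of the contravariant Dieudonn\'e functor on $p$-divisible groups, whereas the paper works level by level with the exactness on finite flat group schemes; these are equivalent formulations.
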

\begin{proof}
The quotient $C=B/A$ exists in the category of finite type group schemes over $\Spec(\bF_q)$ and is an abelian variety. For all $n \in \bN$ multiplication by $n$ is an isogeny, hence surjective. It follows from the snake lemma in the abelian category of finite type group schemes over $\Spec(\bF_q)$, see \cite[Exp. VI, \S5.4 Th\'eor\`eme]{sga3},  that we have an exact sequence 
\begin{equation} \label{eq:kernelof[n]}
0 \longrightarrow A[n] \longrightarrow B[n] \longrightarrow C[n] \longrightarrow 0
\end{equation}
of finite flat group schemes. 
By passing to the limit for $n = \ell^m$ one deduces from \eqref{eq:kernelof[n]} the exact sequence
\[
0 \longrightarrow T_\ell(A) \longrightarrow T_\ell(B) \longrightarrow T_\ell(C) \longrightarrow 0.
\]
Now \ref{lemitem:T-on-injective1} follows because the cokernel $T_\ell(C)$ is a free $\bZ_\ell$-module. 

\ref{lemitem:T-on-injective2} Since $T_p(A)$ is the limit of the Dieudonn\'e modules for the system of $A[p^m]$ and the Dieudonn\'e module functor is exact on finite flat group schemes we also have an exact sequence
\[
0 \longrightarrow T_p(C) \longrightarrow T_p(B) \longrightarrow T_p(A) \longrightarrow 0. \qedhere
\]
\end{proof}

\subsection{Existence and characterisation of injective cogenerators}\label{subsec:injectivecogen}

We are now ready to prove the main result of the section.

\begin{thm} 
\label{thm:truncatedfullyfaithful}
Let $w\subseteq W_q$ be a finite subset, and let $A_w$ in $\AV_w$ be an abelian variety. Then the following are equivalent.
\begin{enumerate}[label=(\alph*),align=left,labelindent=0pt,leftmargin=*,widest = (m)]
\item 
\label{thmitem:wlocallyprojective}
$A_w$ is $w$-locally projective with support $w(A_w) = w$. 
\item
\label{thmitem:injectivecogen}
$A_w$ is an injective cogenerator for $\AV_w$.
\item
\label{thmitem:equivalence}
The functor\footnote{To avoid double indices, we use the notation $T_w$ for the functor that was previously denote by $T_{A_w}$.}
\[
T_w \colon  \AV_w  \longrightarrow \Mod_\Ztf(S_w), \qquad T_w(X) = \Hom_{\bF_q}(X,A_w)
\]
is an anti-equivalence of categories. 
\end{enumerate}
\end{thm}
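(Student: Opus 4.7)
The plan is to use Theorem~\ref{thm:formal}, which directly gives $(b) \Leftrightarrow (c)$, so it suffices to establish $(a) \Leftrightarrow (b)$. I will first prove $(a) \Rightarrow (b)$, and then bootstrap from it by applying that implication to a $w$-balanced abelian variety $A'_w$ (whose existence is furnished by Theorem~\ref{thm:integralstructure}) to deduce $(b) \Rightarrow (a)$.

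For $(a) \Rightarrow (b)$ I verify both cogenerator properties from Theorem~\ref{thm:formal}(c). The extension property \textrm{(c$_2$)} is fairly direct: for an inclusion $X \hookrightarrow Y$ in $\AV_w$, Lemma~\ref{lem:T-on-injective} produces short exact sequences of $R_{w,\lambda}$-modules $0 \to T_\lambda(X) \to T_\lambda(Y) \to T_\lambda(Y/X) \to 0$ for $\lambda \mid \ell \neq p$ and of $\cD_{w,\fp}$-modules $0 \to T_\fp(Y/X) \to T_\fp(Y) \to T_\fp(X) \to 0$ for $\fp \mid p$. Since $T_\lambda(A_w)$ is $R_{w,\lambda}$-projective (hence free of positive rank, as $R_{w,\lambda}$ is local and $T_\lambda(A_w) \neq 0$ by Lemma~\ref{lem:support}) and $R_{w,\lambda}$ is one-dimensional Gorenstein (Remark~\ref{rmk:reflexivmodules}), while $T_\lambda(Y/X)$ is maximal Cohen--Macaulay being $\bZ_\ell$-torsion-free, we have $\Ext^1(T_\lambda(Y/X), T_\lambda(A_w)) = 0$; similarly $\Ext^1(T_\fp(A_w), T_\fp(Y/X)) = 0$ because $T_\fp(A_w)$ is $\cD_{w,\fp}$-projective. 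Proposition~\ref{prop:localTateTheorem} then gives surjectivity of $\Hom_{\bF_q}(Y, A_w) \to \Hom_{\bF_q}(X, A_w)$ prime-by-prime. For the cogenerator property \textrm{(c$_1$)}, let $f_1, \ldots, f_n$ be $\bZ$-generators of $\Hom_{\bF_q}(X, A_w)$ and form $f = (f_1, \ldots, f_n) \colon X \to A_w^n$; I show $\ker(f) = 0$. At each $\lambda \mid \ell \neq p$, the Artinian quotient $R_{w,\lambda}/\ell$ is Gorenstein and hence self-injective, so every non-zero class $\bar v \in T_\lambda(X)/\ell\, T_\lambda(X)$ is detected by an $R_{w,\lambda}/\ell$-linear functional; this functional lifts via $\Ext^1(T_\lambda(X), R_{w,\lambda}) = 0$ to $T_\lambda(X) \to R_{w,\lambda}$ and then composes into $T_\lambda(A_w)$ via a free summand. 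Expanding the lift in the basis $\{T_\lambda(f_i)\}_i$ forces some $T_\lambda(f_i)(\bar v) \neq 0$, establishing saturation of $T_\lambda(X) \hookrightarrow T_\lambda(A_w)^n$ and hence $\ker(f)_\ell = 0$ for all $\ell \neq p$. At each $\fp \mid p$, the non-zero projective module $T_\fp(A_w)$ is a progenerator of $\cD_{w,\fp}$-modules by Proposition~\ref{prop:localprojectiveDieudonnestructure}; Morita equivalence combined with Tate (Proposition~\ref{prop:localTateTheorem}) implies that the $T_\fp(f_i)$ collectively surject $T_\fp(A_w)^n \twoheadrightarrow T_\fp(X)$, and via the contravariant Dieudonn\'e exact sequence attached to $0 \to \ker(f) \to X \to \im(f) \to 0$ this forces $\ker(f)_p = 0$.

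For $(b) \Rightarrow (a)$, assume $A_w$ is an injective cogenerator. The support equality $w(A_w) = w$ is immediate: a $\pi \in w \setminus w(A_w)$ would give $\Hom_{\bF_q}(B_\pi, A_w) = 0$, contradicting \textrm{(c$_1$)} for $X = B_\pi$. For the local projectivity, choose a $w$-balanced $A'_w$ via Theorem~\ref{thm:integralstructure}; by the direction $(a) \Rightarrow (b)$ just proved, $A'_w$ is itself an injective cogenerator. So \textrm{(c$_1$)} for $A'_w$ yields an injection $\iota \colon A_w \hookrightarrow (A'_w)^m$ for some $m$, and \textrm{(c$_2$)} for $A_w$ applied to $\iota$ extends $\id_{A_w}$ to a retraction $r \colon (A'_w)^m \to A_w$. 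Hence $A_w$ is a direct summand of $(A'_w)^m$ in $\AV_w$, and the additive functors $T_\lambda$ and $T_\fp$ exhibit $T_\lambda(A_w)$ as a direct summand of the free module $(R_{w,\lambda})^{\oplus 2\expo m}$ and $T_\fp(A_w)$ as a direct summand of the free module $\cD_{w,\fp}^{\oplus 2 m}$, so both Tate modules are projective. The main technical obstacle is the integral step in $(a) \Rightarrow \textrm{(c$_1$)}$: rational triviality of $\ker(f)$ follows easily from Poincar\'e reducibility together with $w(A_w) = w$, but ruling out surviving finite $\ell$- and $p$-primary subgroup schemes requires the Gorenstein/MCM duality at the primes $\lambda \mid \ell \neq p$ and the progenerator/Morita structure at the primes $\fp \mid p$, which are genuinely different arguments reflecting the different nature of Tate and Dieudonn\'e modules.
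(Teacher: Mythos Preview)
Your proof is correct, and for the implication $(b)\Rightarrow(a)$ it matches the paper's argument exactly. The verification of \textrm{(c$_2$)} is also the same as the paper's: Lemma~\ref{lem:T-on-injective} plus the Gorenstein $\Ext^1$-vanishing at $\lambda$ and projectivity of $T_\fp(A_w)$ at $\fp$.

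The genuine difference is in how you reach \textrm{(c$_1$)}. The paper never verifies \textrm{(c$_1$)} directly: instead it proves that $T_w$ is \emph{fully faithful} by assembling the local (anti-)equivalences of Lemma~\ref{lem:upperstarfunctor} and Lemma~\ref{lem:lowerstarfunctor} (which package reflexivity over the Gorenstein ring $R_{w,\lambda}$ and Morita equivalence over $\cD_{w,\fp}$) and then invoking Tate at each completion; once full faithfulness is known, \textrm{(c$_1$)} drops out from the equivalence \textrm{(a$_1$)}$\Leftrightarrow$\textrm{(c$_1$)} of Theorem~\ref{thm:formal}. You instead establish \textrm{(c$_1$)} by a direct kernel computation: self-injectivity of $R_{w,\lambda}/\ell$ to detect elements mod $\ell$, the lifting via $\Ext^1_{R_{w,\lambda}}(T_\lambda(X),R_{w,\lambda})=0$, and the progenerator property of $T_\fp(A_w)$ at $p$. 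Both routes rest on the same underlying facts (Gorenstein duality at $\ell\neq p$, the structure of projective $\cD_{w,\fp}$-modules at $p$), but the paper's packaging via the two local equivalence lemmas is more modular and yields full faithfulness as a clean intermediate statement, whereas your argument is more self-contained at the cost of unpacking those lemmas inline.
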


\begin{proof} 
We start by showing \ref{thmitem:wlocallyprojective} $\Longrightarrow$ \ref{thmitem:equivalence}. So  $A_w$ is $w$-locally projective with support $w$. The first thing to show is that $T_w(-)$ is fully faithful. To this purpose let $X, Y$ be abelian varieties in $\AV_w$, and consider the map
\[
\tau : \Hom_{\bF_q}(X,Y) \longrightarrow \Hom_{S_w}(T_w(Y), T_w(X))
\]
induced by $T_w(-)$. Both source and target of $\tau$ are finitely generated $R_w$-modules. Hence it suffices to verify that $\tau \otimes R_{w,\lambda}$  and also $\tau \otimes R_{w,\fp}$ are bijective for all maximal ideals $\lambda$ of $R_w \otimes \bZ_\ell$, for $\ell \not= p$,   and  all maximal ideals $\fp$ of $R_w \otimes \bZ_p$ respectively.
 
\smallskip
 
\textit{The case $\lambda$:}  consider the commutative diagram
 \[
\xymatrix@M+1ex@R-2ex
{\Hom_{\bF_q}(X,Y) \otimes_{R_w} R_{w,\lambda} \ar[r]^(0.4){\tau \otimes R_{w,\lambda}}  \ar[dd]^{\rm Tate} & \Hom_{S_w}\big(T_w(Y), T_w(X)\big) \otimes_{R_w} R_{w,\lambda}  \ar[d] \\
& \Hom_{S_{w,\lambda}}\big(T_w(Y) \otimes_{R_w} R_{w,\lambda}, T_w(X) \otimes_{R_w} R_{w,\lambda} \big) \ar[d]^{\rm Hom(Tate,Tate)} \\
\Hom_{R_{w,\lambda}} \big(T_\lambda(X),T_\lambda(Y) \big) \ar[r] &
\Hom_{S_{w,\lambda}}\big({T_\lambda(Y)}^\star, {T_\lambda(X)}^\star \big)}
\]
whose arrows all are the natural ones. Thanks to the $\lambda$-adic version of Tate's isomorphism as in Proposition~\ref{prop:localTateTheorem}, and to flat localization along
$R_w \to R_{w,\lambda}$, all vertical arrows of the diagram are isomorphisms. Thanks to Lemma~\ref{lem:upperstarfunctor}, the bottom
horizontal arrow is an isomorphism, hence the same is true for $\tau \otimes R_{w,\lambda}$.

\smallskip
 
\textit{The case $\fp$:} Similarly, the vertical and the bottom arrows of the commutative diagram
\[
\xymatrix@M+1ex@R-2ex
{\Hom_{\bF_q}(X,Y) \otimes_{R_w} R_{w,\fp} \ar[r]^(0.4){\tau \otimes R_{w,\fp}}  \ar[dd]^{\rm Tate} & \Hom_{S_w}\big(T_w(Y), T_w(X)\big) \otimes_{R_w} R_{w,\fp}  \ar[d] \\
& \Hom_{S_{w,\fp}}\big(T_w(Y)\otimes_{R_w} R_{w,\fp},T_w(X)\otimes_{R_w} R_{w,\fp} \big) \ar[d]^{\rm Hom(Tate,Tate)} \\
\Hom_{\cD_{w,\fp}}\big(T_\fp(Y),T_\fp(X) \big) \ar[r] &
\Hom_{S_{w,\fp}}\big({T_\fp(Y)}_\star, {T_\fp(X)}_\star \big)}
\]
are isomorphisms, thanks to the $\fp$-adic version of Tate's isomorphism as in Proposition~\ref{prop:localTateTheorem},  flat localization along
$R_w \to R_{w,\fp}$, and Lemma~\ref{lem:lowerstarfunctor}. This implies that  $\tau \otimes R_{w,\fp}$ is an isomorphism, completing the proof that $T_w(-)$ is fully faithful.

\smallskip

We now show that $T_w(-)$ is essentially surjective. Since we already know that $T_w(-)$ is fully faithful, it is enough
to check that $T_w(-)$ satisfies condition \ref{thmitem:formal c2} of Theorem~\ref{thm:formal}, i.e.\ we need to show that for all injections $i: X \inj Y$ in $\AV_w$ the induced map 
\begin{equation}\label{eq:M=IGM}
i^\ast: T_w(Y) \longrightarrow T_w(X) 
\end{equation}
is surjective. We accomplish this by showing that $i^\ast\otimes\bZ_\ell$ is surjective for all primes $\ell$. 

\smallskip
 
\textit{The case $\ell \not= p$:} by Tate's isomorphism \eqref{TateThmell} the scalar extension $i^\ast\otimes\bZ_\ell$ is identified with the induced map 
\[
\Hom_{R_w \otimes \bZ_\ell} (T_\ell(Y), T_\ell(A_w)) \longrightarrow \Hom_{R_w \otimes \bZ_\ell} (T_\ell(X), T_\ell(A_w)), \qquad \ph \mapsto \ph \circ T_\ell(i) 
\]
on $\ell$-adic Tate modules. Define the $R_w\otimes\bZ_\ell$-module $M$ by exactness of the sequence
\[
0 \longrightarrow T_\ell(X) \longrightarrow T_\ell(Y) \longrightarrow M \longrightarrow 0.
\]
Since $i$ is injective, Lemma~\ref{lem:T-on-injective}~\ref{lemitem:T-on-injective1} shows that $M$ is free as $\bZ_\ell$-module and hence reflexive as $R_w \otimes \bZ_\ell$-module, see Remark~\ref{rmk:reflexivmodules}. The $\Ext$-sequence
\[
\Hom_{R_w \otimes \bZ_\ell} (T_\ell(Y), T_\ell(A_w)) \longrightarrow \Hom_{R_w \otimes \bZ_\ell} (T_\ell(X), T_\ell(A_w)) \longrightarrow \Ext^1_{R_w \otimes \bZ_\ell}(M,T_\ell(A_w))
\]
shows that surjectivity of $i^\ast\otimes\bZ_\ell$ follows from the vanishing of $\Ext^1_{R_w \otimes \bZ_\ell}(M,T_\ell(A_w))$.
Since $R_w \otimes \bZ_\ell$ is the product of the local rings $R_{w,\lambda}$, we have
\[
\Ext^1_{R_w \otimes \bZ_\ell}(M,T_\ell(A_w)) = \bigoplus_{\lambda} \Ext^1_{R_{w,\lambda}}(M \otimes R_{w,\lambda},T_\lambda(A_w)).
\]
Since $A_w$ is locally projective, Proposition~\ref{prop:locallyfreeandprojective} shows that $T_\lambda(A_w)$ is a free $R_{w,\lambda}$-module. Now the vanishing 
\[
\Ext^1_{R_{w,\lambda}}(M \otimes R_{w,\lambda},T_\lambda(A_w)) = 0
\]
is a consequence of  \cite[Lemma 17]{CS:part1}  and the fact that $R_{w,\lambda}$ is a Gorenstein ring of dimension $1$ as the localization of $R_w \otimes \bZ_\ell$.

\smallskip
 
\textit{The case $\ell = p$:} using Tate's isomorphism \eqref{TateThmp}, the surjectivity of $i^\ast\otimes\bZ_p$ translates into the surjectivity of 
\[
\Hom_{\cD_w} (T_p(A_w),T_p(Y)) \longrightarrow \Hom_{\cD_w} (T_p(A_w),T_p(X)),
\]
which is an immediate consequence of Lemma~\ref{lem:T-on-injective}~\ref{lemitem:T-on-injective2} and the fact that $T_p(A_w)$ is a projective $\cD_w$-module.

\smallskip

The equivalence of \ref{thmitem:equivalence} and \ref{thmitem:injectivecogen} is proven as part of 
Theorem~\ref{thm:formal}. 
It remains to show that \ref{thmitem:injectivecogen} implies  \ref{thmitem:wlocallyprojective}. For that we pick an auxiliary abelian variety $A'_w$ that is $w$-balanced. 
Such an $A'_w$ exists by Theorem~\ref{thm:integralstructure}, and $A'_w$ is $w$-locally projective with support $w$. By what we have already shown, $A'_w$ is an injective cogenerator.  Therefore there exist an $n$ and an embedding $i: A_w \inj (A'_w)^n$ according to Theorem~\ref{thm:formal} \ref{thmitem:formal c1}. Now we apply Theorem~\ref{thm:formal}~\ref{thmitem:formal c2} to this embedding, but with respect to the injective cogenerator $A_w$. It follows that the map 
\[
\Hom_{\bF_q}((A'_w)^n, A_w) \surj \Hom_{\bF_q}(A_w, A_w), \qquad \ph \mapsto \ph \circ i
\]
is surjective. A preimage of the identity is a retraction $(A'_w)^n \to A_w$ which shows that $A_w$ is a direct factor of $(A'_w)^n$. In particular, $T_\lambda(A_w)$ (resp.\ $T_\fp(A_w)$) is a direct factor of a 
projective $R_{w,\lambda}$ (resp.\ a projective $\cD_{w,\fp}$)-module and hence is itself projective. This shows that $A_w$ is $w$-locally projective. 

For all $\pi \in w$, applying  Theorem~\ref{thm:formal} \ref{thmitem:formal c1} to a simple abelian variety $B_\pi$ in the isogeny class associated to $\pi$ yields an embedding $B_\pi \inj A_w^n$ for some $n$. In particular, $\pi$ is contained in the support of $A_w$. This completes the proof. 
\end{proof}

\begin{rmk}
In an appendix to \cite{lauter:serre} Serre proves a version of Theorem~\ref{thm:truncatedfullyfaithful} for $w = \{\pi\}$ and an ordinary Weil number $\pi$ associated to an ordinary elliptic curve $E$ such that $R_\pi$ is a maximal order in $\bQ(\pi)$. In contrast to Deligne's proof in \cite{De}, Serre's proof uses the covariant functor 
$\Hom_{\bF_q}(E,-)$, 
so no multiplicities are used. 

We reprove in Theorem~\ref{thm:reprovedeligne} Deligne's result for the category of all ordinary abelian varieties. Also in this case the representing object $A_w^{\ord}$ is isogenous to $\prod_{\pi \in w} B_\pi$, so again no multi\-pli\-cities are necessary. However, the general case requires multiplicities. In Section~\ref{sec:multiplicity} we dicuss completely the necessity of multiplicities in the case of $\AV_\pi$ for a Weil number $\pi$ associated to an elliptic curve. The crucial result concerning multiplicities is 
Theorem~\ref{thm:connectedSpec classification of locallyprojective}.
\end{rmk}

\begin{cor}
Let $A$ in $\AV_w$ be an injective cogenerator. Then $A$ is a direct factor of a power of a $w$-balanced abelian variety.
\end{cor}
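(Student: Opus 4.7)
The plan is to reuse almost verbatim the short argument appearing in the final paragraph of the proof of Theorem~\ref{thm:truncatedfullyfaithful}, where the same type of statement is invoked to deduce $w$-local projectivity of an injective cogenerator. The only data needed are the abstract injective cogenerator conditions \ref{thmitem:formal c1} and \ref{thmitem:formal c2} of Theorem~\ref{thm:formal}, together with the existence of an auxiliary $w$-balanced object supplied by Theorem~\ref{thm:integralstructure}.

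First, I would fix a $w$-balanced abelian variety $A'_w$, whose existence is guaranteed by Theorem~\ref{thm:integralstructure}. Such an $A'_w$ is $w$-locally projective with support equal to $w$, hence by the equivalence \ref{thmitem:wlocallyprojective}$\iff$\ref{thmitem:injectivecogen} of Theorem~\ref{thm:truncatedfullyfaithful} it is itself an injective cogenerator for $\AV_w$. Applying property \ref{thmitem:formal c1} of Theorem~\ref{thm:formal} to the cogenerator $A'_w$ and the object $A \in \AV_w$, I obtain an integer $n \geq 1$ and an injective homomorphism
\[
i : A \inj (A'_w)^n.
\]

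Next, I would switch roles and invoke property \ref{thmitem:formal c2} of Theorem~\ref{thm:formal} for the injective cogenerator $A$ (which holds by the hypothesis on $A$) applied to the embedding $i$. This yields surjectivity of
\[
i^\ast : \Hom_{\bF_q}\big((A'_w)^n, A\big) \surj \Hom_{\bF_q}(A,A).
\]
A preimage of $\id_A$ under $i^\ast$ is precisely a retraction $r : (A'_w)^n \to A$ of $i$, which exhibits $A$ as a direct factor of $(A'_w)^n$.

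There is no real obstacle here: the statement is an immediate consequence of the formal injective-cogenerator formalism developed in Theorem~\ref{thm:formal} together with the concrete existence statement in Theorem~\ref{thm:integralstructure}. The only minor subtlety worth flagging is that \ref{thmitem:formal c2} is applied with respect to the \emph{given} cogenerator $A$, not with respect to the auxiliary $w$-balanced $A'_w$ which merely serves to produce the ambient embedding.
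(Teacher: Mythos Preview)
Your proposal is correct and follows essentially the same argument as the paper, which simply points back to the final paragraph of the proof of Theorem~\ref{thm:truncatedfullyfaithful}. You have reproduced exactly that argument: pick an auxiliary $w$-balanced $A'_w$, use \ref{thmitem:formal c1} for $A'_w$ to embed $A$ into a power $(A'_w)^n$, then use \ref{thmitem:formal c2} for the given cogenerator $A$ to split the embedding.
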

\begin{proof}
This was proved at the end of the proof of Theorem~\ref{thm:truncatedfullyfaithful}.
\end{proof}

\begin{prop}
\label{prop:rankOfTw}
Let $w\subseteq W_q$ be a finite subset, and let $A_w$ in $\AV_w$ be a $w$-balanced abelian variety.
Then, for any $X$ in $\AV_w$, the $\bZ$-rank of $T_w(X) = \Hom_{\bF_q}(X,A_w)$ equals
\[
\rank_{\bZ} T_w(X) = 4 
\expo  \dim(X).
\]
\end{prop}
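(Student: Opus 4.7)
The plan is to reduce the computation of $\rank_\bZ T_w(X)$ to the case where $X$ is (up to isogeny) a product of simple abelian varieties $B_\pi$ with $\pi \in w(X)$, and then perform an explicit count using the balanced multiplicity formula $m_\pi = 2r/s_\pi$ together with Tate's dimension formula from Theorem~\ref{thm:TateEndostructure}~\ref{thmitem:dimfoemula}.

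First I observe that both sides of the asserted equality depend only on the isogeny class of $X$. The right-hand side is obviously isogeny invariant. For the left-hand side, note that $\rank_\bZ T_w(X) = \dim_\bQ \bigl(\Hom_{\bF_q}(X, A_w) \otimes \bQ\bigr)$, and the latter is isogeny invariant because isogenies become isomorphisms after tensoring with $\bQ$. By Honda--Tate theory together with Poincar\'e's complete reducibility theorem, I may therefore assume $X = \prod_{\pi \in w(X)} B_\pi^{n_\pi}$ for some non-negative integers $n_\pi$.

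Next I exploit additivity in both arguments together with the fact that $A_w$ is isogenous to $\prod_{\pi'} B_{\pi'}^{m_{\pi'}}$ by Theorem~\ref{thm:integralstructure}, and that $\Hom_{\bF_q}(B_\pi, B_{\pi'}) = 0$ when $\pi \neq \pi'$. This yields
\[
\Hom_{\bF_q}(X, A_w) \otimes \bQ \;\simeq\; \prod_{\pi \in w(X)} \Hom_{\bF_q}(B_\pi^{n_\pi}, B_\pi^{m_\pi}) \otimes \bQ \;\simeq\; \prod_{\pi \in w(X)} \rM_{m_\pi \times n_\pi}(E_\pi).
\]
Since $[E_\pi : \bQ(\pi)] = s_\pi^2$, the $\bQ$-dimension of the $\pi$-factor is $m_\pi n_\pi s_\pi^2 [\bQ(\pi):\bQ]$.

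Finally I insert the balanced multiplicity $m_\pi = 2r/s_\pi$ and use $2\dim(B_\pi) = s_\pi [\bQ(\pi):\bQ]$ from Theorem~\ref{thm:TateEndostructure}~\ref{thmitem:dimfoemula} to simplify the $\pi$-contribution to
\[
\frac{2r}{s_\pi} \cdot n_\pi \cdot s_\pi^2 \cdot [\bQ(\pi):\bQ] \;=\; 2r \cdot n_\pi \cdot 2\dim(B_\pi) \;=\; 4r \cdot \dim(B_\pi^{n_\pi}).
\]
Summing over $\pi \in w(X)$ gives $\rank_\bZ T_w(X) = 4r \dim(X)$, as claimed. No step presents a serious obstacle; the computation essentially amounts to verifying that the definition of $m_\pi$ is tailored precisely to produce the uniform factor $4r$ (with the factor $2$ accounting for passing from $\dim(B_\pi)$ to $2\dim(B_\pi)$, and the factor $2r$ coming from the balanced choice of multiplicity).
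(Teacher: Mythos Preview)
Your proof is correct and follows essentially the same approach as the paper: reduce by isogeny invariance and additivity to simple factors $B_\pi$, then compute using $m_\pi = 2r/s_\pi$ and Tate's dimension formula. The only cosmetic difference is that the paper reduces all the way to a single simple $X = B_\pi$ before computing, whereas you keep the product decomposition explicit throughout.
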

\begin{proof}
Since both sides of the equation are isogeny invariant and additive with respect to products of abelian varieties, it suffices to consider $\bF_q$-simple objects $X=B_\pi$ for  $\pi \in w$. Then 
\begin{align*}
\rank_\bZ(T_w(B_\pi)) &
= m_\pi\rank_\bZ\left(\End_{\bF_q}(B_\pi)\right) 
= \dfrac{2\expo}{s_\pi}\cdot s_\pi^2[\bQ(\pi):\bQ] 
= 4 \expo  \dim(B_\pi),
\end{align*}
where the last equality uses Theorem~\ref{thm:TateEndostructure}~\ref{thmitem:dimfoemula}  from Honda-Tate theory.
\end{proof}

\subsection{Realizing \texorpdfstring{$R_w$}{the minimal central order} as the center}
\label{subsec:center}

If $w=\{\pi\}$ consists of a single {\it ordinary} Weil $q$--number $\pi$, then Waterhouse shows in 
\cite[Chapter 7]{Wa} that the minimal endomorphism ring $R_\pi$ arises as the endomorphism ring of a simple, ordinary abelian variety $B_\pi$. We will reprove this later as part of Proposition~\ref{prop:MoritaReduction}. In this section we will show that $R_w$ agrees with the center of $\End_{\bF_q}(A)$ for any injective cogenerator $A$ in $\AV_w$. This verifies the claim made in the introduction that the center $\dR_q$ of $\dS_q$ can be described explicitly in terms of Weil $q$-numbers.

\begin{prop}
\label{prop:centerofinjectivecogenerators}
Let $A \in \AV_w$ be an injective cogenerator. Then the natural map 
\[
R_w \longrightarrow \End_{\bF_q}(A)
\]
identifies $R_w$ with the center of $\End_{\bF_q}(A)$. 
\end{prop}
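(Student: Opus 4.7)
The image of $R_w$ in $S \coloneqq \End_{\bF_q}(A)$ is automatically central, since it already factors through $\bZ[F,V]/(FV-q)$ and both Frobenius and Verschiebung commute with every $\bF_q$-endomorphism. The plan is thus to show that the resulting map of finitely generated torsion-free $\bZ$-modules $\alpha \colon R_w \to Z(S)$ becomes an isomorphism after $\otimes \bZ_\ell$ for every prime $\ell$ (including $\ell = p$), which forces $\alpha$ itself to be an isomorphism. Since $S$ is $\bZ$-free, the formation of the center commutes with tensoring with $\bZ_\ell$, so this amounts to proving that $R_w \otimes \bZ_\ell \xrightarrow{\sim} Z(S \otimes \bZ_\ell)$ for every prime $\ell$.

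Next I would invoke Theorem~\ref{thm:truncatedfullyfaithful}, according to which the injective cogenerator $A$ is $w$-locally projective with full support $w(A) = w$. Combining Tate's local theorem (Proposition~\ref{prop:localTateTheorem}) with the canonical product decompositions of $R_w \otimes \bZ_\ell$ and $R_w \otimes \bZ_p$ will identify
\[
S \otimes \bZ_\ell \simeq \prod_{\lambda \mid \ell} \End_{R_{w,\lambda}}(T_\lambda(A)) \quad \text{and} \quad S \otimes \bZ_p \simeq \prod_{\fp \mid p} \End_{\cD_{w,\fp}}(T_\fp(A))^\op,
\]
so that it suffices to compute the center of each local factor and check that it agrees with $R_{w,\lambda}$, respectively $R_{w,\fp}$.

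At a maximal ideal $\lambda$ above $\ell \neq p$, Lemma~\ref{lem:support} ensures that $T_\lambda(A)$ is a nonzero finitely generated projective module over the commutative local ring $R_{w,\lambda}$, hence free of positive rank, so the center of its endomorphism ring is the matrix-algebra center $R_{w,\lambda}$. At a maximal ideal $\fp$ above $p$, Proposition~\ref{prop:localprojectiveDieudonnestructure} together with Lemma~\ref{lem:support} forces $T_\fp(A) \simeq P_{w,\fp}^{\oplus m}$ with $m \geq 1$, and the center of the factor reduces to that of $\End_{\cD_{w,\fp}}(P_{w,\fp})^\op$. I would then walk through the three cases appearing in the proof of Proposition~\ref{prop:localprojectiveDieudonnestructure}: when $F \notin \fp$ or $V \notin \fp$, the isomorphism $\cD_{w,\fp} \simeq \rM_\expo(R_{w,\fp})$ makes $P_{w,\fp}$ the standard Morita generator and yields $\End(P_{w,\fp}) \simeq R_{w,\fp}$, already commutative; at the supersingular ideal $\fp_o = (F,V,p)$, the module $P_{w,\fp_o}$ is the free $\cD_{w,\fp_o}$-module of rank one, so the center of its opposite endomorphism ring equals the center of $\cD_{w,\fp_o}$, and this center is identified with $R_{w,\fp_o}$ by Lemma~\ref{lem:centerDw}. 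The only delicate case is the supersingular one, and there the work has already been prepared by Lemma~\ref{lem:centerDw}; once all three cases are treated the matching of centers is complete, which finishes the proof.
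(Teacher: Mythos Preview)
Your argument is correct, but it takes a different route from the paper. The paper first treats the special case of a $w$-balanced $A_w$ by the same kind of local computation you outline (there the Tate and Dieudonn\'e modules are globally free, so the endomorphism rings at $\ell \neq p$ and at $p$ are visibly $\rM_{2\expo}(R_w\otimes\bZ_\ell)$ and $\rM_2(\cD_w)$, and Lemma~\ref{lem:centerDw} finishes). For a general injective cogenerator $A$ with $S=\End_{\bF_q}(A)$ the paper then uses the anti-equivalence $T=\Hom_{\bF_q}(-,A)$ to sandwich
\[
R_w \hookrightarrow Z(S) \hookrightarrow Z\big(\End_S(T(A_w))\big)=Z\big(\End_{\bF_q}(A_w)^\op\big),
\]
and concludes since the composite is already known to be an isomorphism. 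So the paper trades your case analysis at the primes $\fp\mid p$ (the three cases of Proposition~\ref{prop:localprojectiveDieudonnestructure}) for a short categorical squeeze. Your approach has the virtue of being entirely local and self-contained once Theorem~\ref{thm:truncatedfullyfaithful} gives $w$-local projectivity; the paper's approach is slicker for the general $A$ but still needs the balanced computation as input. Both rely in the end on Lemma~\ref{lem:centerDw} for the center of $\cD_w$.
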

\begin{proof}
We first prove the result for a $w$-balanced abelian variety $A_w$. In order to show that the map $R_w \to Z(\End_{\bF_q}(A_w))$ is an isomorphism, where $Z(-)$ denotes the center, it suffices to show this after completion at all prime numbers.  We deduced from Tate's theorems \eqref{TateThmell} and \eqref{TateThmp}
that for all primes $\ell\neq p$
\begin{align*}
\End_{\bF_q}(A_w) \otimes\bZ_\ell & = \End_{R_w \otimes \bZ_\ell}(T_\ell(A_w)) \simeq  \End_{R_w \otimes \bZ_\ell}\big((R_w \otimes \bZ_\ell)^{\oplus 2\expo}\big) = \rM_{2\expo}(R_w\otimes\bZ_\ell), \\
\End_{\bF_q}(A_w)  \otimes\bZ_p  & = {\End_{\cD_w}(T_p(A_w))}^\op \simeq  {\End_{\cD_w}(\cD_w^{\oplus 2})}^\op = \rM_2(\cD_w).
\end{align*}
In both cases we determine the center as indeed $R_w \otimes \bZ_\ell$ (resp.\ $R_w \otimes \bZ_p$, see Lemma~\ref{lem:centerDw}).

Now let $A$ be an arbitrary injective cogenerator. Let $S  = \End_{\bF_q}(A)$ and denote the anti-equivalence $\Hom_{\bF_q}(-,A)$ by $T(-)$.  Then the composition of the natural injective maps
\[
R_w \inj Z(S) \inj Z(\End_S(T(A_w))) = Z(\End_{\bF_q}(A_w)^\op)
\]
is an isomorphism due to the discussion of the $w$-balanced case. 
The result follows. 
\end{proof}

\begin{rmk}
The converse to Proposition~\ref{prop:centerofinjectivecogenerators} does not hold for $q = p^4$ and $\pi = p^2$. The simple object in $\AV_\pi$ is an elliptic curve $B_\pi$ with $\End(B_\pi)$ a maximal order in the  quaternion algebra over $\bQ$ ramified in $p\infty$. Then $R_\pi = \bZ$ equals the center of $\End(B_\pi)$. But $\Hom(-,B_\pi)$ is not an anti-equivalence according to \cite[Theorem 1.1]{JKPRSBT}. 
Since $\pi$ is non-ordinary, Theorem~\ref{thm:connectedSpec classification of locallyprojective} says that all injective cogenerators of $\AV_\pi$ are isogenous to a power of a reduced $\pi$-balanced object. Thus injective cogenerators of $\AV_\pi$ have even multiplicity because of $\widebar{m}_\pi = \expo/s_\pi = 2$.
\end{rmk}

\section{Compatible truncated anti-equivalences}
\label{sec:compatibleantiequivalences}

\subsection{Maximal subgroup with partial Weil support}
\label{sec:subgroup}
Recall that for an abelian variety  $A$ over $\bF_q$ we set
\[
S(A) = \End_{\bF_q}(A),
\]
and that $S(A)$ is an  $R_w$-algebra if $A \in \AV_w$.

\begin{prop}
\label{prop:formal_compatiblefunctors}
Let $v \subseteq w \subseteq W_q$ be finite sets of Weil numbers, and let $A_w \in \AV_w$ be an injective cogenerator.
Then the subgroup generated by all images
\[
A_{v,w} : = \left\langle \im(f) \ ; \ f: X \to A_{w}, \ X \in \AV_{v}\right\rangle \subseteq A_{w}
\] 
satisfies the following:
\begin{enumerate}[label=(\arabic*),align=left,labelindent=0pt,leftmargin=*,widest = (8)]
\item
\label{propitem:inAVv} $A_{v,w}$ belongs to $\AV_{v}$ and is an abelian subvariety of $A_{w}$.
\item 
\label{prop:formal_compatiblefunctors2}
$A_{v,w}$ is an injective cogenerator in $\AV_v$. 
\item 
\label{prop:formal_compatiblefunctors3}
Restriction to $A_{v,w}$ induces a natural surjection 
\[
\pr_{v,w} : S(A_{w}) \surj S(A_{v,w})
\]
that is an  $R_w \surj R_v$ algebra map.
\item 
\label{propitem:commute} 
We set $T_w(X) = \Hom_{\bF_q}(X,A_w)$ and $T_{v,w}(X) = \Hom_{\bF_q}(X,A_{v,w})$. 
The following diagram naturally commutes:
\begin{equation} \label{eq:changeofWeilstring2}
\xymatrix@M+1ex{
\AV_{w} \ar[rr]^(0.4){T_{w}} & & \Mod_\Ztf(S(X_w))\\
\AV_{v} \ar@{}[u]|{\displaystyle \bigcup} \ar[rr]^(0.4){T_{v,w}}  && \Mod_\Ztf(S(X_{v,w})) \ar@{}[u]|{\displaystyle \bigcup}.
}
\end{equation}
Here the right inclusion is defined by pulling back the $S(A_{v,w})$-module structure via $\pr_{v,w}$ to a $S(A_w)$-module structure.
\item
\label{propitem:balanced is preserved}
If $A_w$ is $w$-balanced, then $A_{v,w}$ is $v$-balanced.
\end{enumerate}
\end{prop}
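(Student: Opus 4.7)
The plan is to treat the five assertions in order, exploiting that $A_{v,w}$ plays for $\AV_v$ the role that $A_w$ plays for $\AV_w$. Part~\ref{propitem:inAVv} is handled by observing that each image $\im(f)$ is an abelian subvariety of $A_w$ with Weil support in $v$, and that the lattice of abelian subvarieties of $A_w$ satisfies the ascending chain condition---e.g.\ under the anti-equivalence of Theorem~\ref{thm:truncatedfullyfaithful} these correspond to quotients of the noetherian $S(A_w)$-module $S(A_w)$---so that $A_{v,w}$ is already a finite sum of such images and hence an abelian subvariety. For parts~\ref{prop:formal_compatiblefunctors3} and~\ref{propitem:commute}, the key point is that every $\varphi \in S(A_w)$ carries $A_{v,w}$ into itself, because for any generator $f \colon X \to A_w$ with $X \in \AV_v$ the composite $\varphi \circ f$ still starts in $\AV_v$. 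This defines $\pr_{v,w}$; compatibility with $R_w \twoheadrightarrow R_v$ is built into the $R_w$-linear structure. Surjectivity of $\pr_{v,w}$ I would deduce from Theorem~\ref{thm:formal}~\ref{thmitem:formal c2} applied to $\iota \colon A_{v,w} \hookrightarrow A_w$: any $\psi \in S(A_{v,w})$ yields $\iota \circ \psi \in T_w(A_{v,w})$, which lifts to some $\tilde\psi \in S(A_w)$ with $\pr_{v,w}(\tilde\psi)=\psi$. Commutativity of~\eqref{eq:changeofWeilstring2} is then tautological, since any $X \to A_w$ with $X \in \AV_v$ factors through $A_{v,w}$ by definition of $A_{v,w}$.

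For part~\ref{prop:formal_compatiblefunctors2}, the plan is to verify the injective-cogenerator criteria \ref{thmitem:formal c1} and~\ref{thmitem:formal c2} of Theorem~\ref{thm:formal} for $A_{v,w}$ inside $\AV_v$. Condition~\ref{thmitem:formal c1} follows by taking an embedding $Y \hookrightarrow A_w^n$ provided by the injective cogenerator $A_w$ and observing that each of its components lands in $A_{v,w}$, so the embedding already factors through $A_{v,w}^n$. Condition~\ref{thmitem:formal c2} follows by composing $g \colon X \to A_{v,w}$ with $\iota$, extending $\iota \circ g$ to $\tilde g \colon Y \to A_w$ via the injective cogenerator property of $A_w$, and noting that $\tilde g(Y) \subseteq A_{v,w}$ since $Y \in \AV_v$.

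The main point is~\ref{propitem:balanced is preserved}. My plan is to show that $A_{v,w}$ is both isogenous to a $v$-balanced reference object and $v$-locally projective, so that Proposition~\ref{prop:2outof3locallyisom} first upgrades this to a Tate-local isomorphism and then transports the ``$v$-balanced'' property. Since $A_w$ is $w$-balanced, Theorem~\ref{thm:integralstructure} gives an isogeny $A_w \sim \prod_{\pi \in w} B_\pi^{m_\pi}$; Poincar\'e reducibility combined with the maximality of $A_{v,w}$ among subvarieties of $A_w$ with support in $v$ forces $A_{v,w} \sim \prod_{\pi \in v} B_\pi^{m_\pi}$, and crucially the balanced multiplicities $m_\pi = 2\expo/s_\pi$ depend only on $\pi$ and are inherited unchanged from $A_w$. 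By part~\ref{prop:formal_compatiblefunctors2} combined with Theorem~\ref{thm:truncatedfullyfaithful}, the variety $A_{v,w}$ is moreover a $v$-locally projective injective cogenerator with support $v$. Choosing a $v$-balanced $A'_v$ from Theorem~\ref{thm:integralstructure}, one finds that $A_{v,w}$ and $A'_v$ are isogenous and both $v$-locally projective; Proposition~\ref{prop:2outof3locallyisom} then yields a Tate-local isomorphism between them, and a second application in its ``$v$-balanced'' variant transfers balance from $A'_v$ to $A_{v,w}$. The step I expect to require the most care is this multiplicity-bookkeeping at the passage $w \leadsto v$; the rest of the argument is formal, combining the local Tate theorems with the abstract characterization of injective cogenerators.
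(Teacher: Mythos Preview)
Your proposal is correct and follows essentially the same route as the paper's own proof. The paper dismisses \ref{propitem:inAVv} as ``trivial'' (relying on the well-known fact that sums of abelian subvarieties are abelian subvarieties, together with the obvious dimension bound for the ascending chain condition), whereas you give a more elaborate argument via the anti-equivalence; your version is fine but unnecessary. For \ref{prop:formal_compatiblefunctors2}, \ref{prop:formal_compatiblefunctors3}, \ref{propitem:commute} your arguments match the paper's almost verbatim, and for \ref{propitem:balanced is preserved} your plan---isogeny to a $v$-balanced reference via the multiplicity bookkeeping, $v$-local projectivity from Theorem~\ref{thm:truncatedfullyfaithful}, then two applications of Proposition~\ref{prop:2outof3locallyisom}---is exactly what the paper does.
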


\begin{proof}
Assertion \ref{propitem:inAVv}  is trivial. 

\ref{prop:formal_compatiblefunctors2}  Being an injective cogenerator means that property property \ref{thmitem:formal c} of Theorem~\ref{thm:formal} holds, so this holds for $A_w$. 
We are going to show that property \ref{thmitem:formal c}  also holds for $A_{v,w}$.
Now \ref{thmitem:formal c1} holds because for an arbitrary $X \in \AV_v$ there is an injection 
\[
X \inj A_w^n
\]
for a suitable $n$ that factors through $A_{v,w}^n \subseteq X_w^n$ by the definition of $A_{v,w}$. 

To show \ref{thmitem:formal c2}  we start with an injection $X \inj Y$ in $\AV_w$ and a homomorphism $f_0: X \to A_{v,w}$. By composing with $i : A_{v,w} \inj A_w$ we can extend $f = i \circ f_0$ to a $g : Y \to A_w$ since $A_w$ is an  injective object, i.e.\ $A_w$ satisfies property \ref{thmitem:formal c2}. By definition $g$ factors through $g_0 : Y \to A_{v,w}$ and $g_0$ extends $f_0$ because $i$ is injective.

\ref{prop:formal_compatiblefunctors3} Restriction with $i: A_{v,w} \inj A_w$ yields a surjection by Theorem~\ref{thm:formal} \ref{thmitem:formal c2} 
\[
\pr_{v,w}: S(A_w) = \Hom_{\bF_q}(A_w,A_w) \surj \Hom_{\bF_q}(A_{v,w},A_w) = \Hom_{\bF_q}(A_{v,w},A_{v,w}) = S(A_{v,w})
\]
with the next to last equality due to the definition of $A_{v,w}$. Here $\pr_{v,w}(g)$ for a homomorphism $g: A_w \to A_w$ is the unique homomorphism so that the following commutes:
\[
\xymatrix@M+1ex{
A_w \ar[r]^g & A_w  \\
A_{v,w}  \ar@{^(->}[u]^i \ar[r]^{\pr_{v,w}(g)} & A_{v,w}  \ar@{^(->}[u]^i. 
}
\]
The map $\pr_{v,w}$ is indeed a ring homomorphism as for a composition $f \circ g$ of $f,g \in S(A_w)$ the composition $\pr_{v,w}(f) \circ \pr_{v,w}(g)$ completes the square as in the following diagram:
\[
\xymatrix@M+1ex{
A_w \ar[r]^g & A_w \ar[r]^f & A_w \\
A_{v,w}  \ar@{^(->}[u]^i \ar[r]^{\pr_{v,w}(g)} & A_{v,w} \ar[r]^{\pr_{v,w}(f)}  \ar@{^(->}[u]^i  & A_{v,w}  \ar@{^(->}[u]^i . 
}
\]
Moreover, the map $\pr_{v,w}$ is an $R_w \surj R_v$ algebra map, because Frobenius and Verschiebung are natural with resepct to $i: A_{v,w} \inj A_w$. 

Assertion~\ref{propitem:commute} follows from the natural equality for every $X \in \AV_{v}$
\[
\Hom_{\bF_q}(X,A_{v,w}) = \Hom_{\bF_q}(X,A_{w}),
\]
since every morphism $f:X \to A_{w}$ takes values in the subvariety $A_{v,w}\subseteq A_{w}$.

Now we show \ref{propitem:balanced is preserved}. If $A_w$ is $w$-balanced,  then all simple abelian varieties $B_\pi$ for $\pi \in w$ occur in $A_w$ with multiplicity $m_\pi$. Since up to isogeny $A_{v,w}$ consists of all isogeny factors $B_\pi$ of $A_w$ with $\pi \in v$, we deduce that $A_{v,w}$ has the same multiplicities $m_\pi$ for all $\pi \in v$. Thus $A_{v,w}$ is isogenous to a $v$-balanced abelian variety $A_v$. Being an injective cogenerator, $A_{v,w}$  is $v$-locally projective by Theorem~\ref{thm:truncatedfullyfaithful}. Since $A_v$ is also $v$-locally projective, now 
Proposition~\ref{prop:2outof3locallyisom} shows that $A_{v,w}$ and $A_v$ are Tate-locally isomorphic. Again Proposition~\ref{prop:2outof3locallyisom} then shows that $A_{v,w}$ is $v$-balanced, since $A_v$ is $v$-balanced.
\end{proof}

\subsection{The direct system}
\label{sec:proof}
In order to prove Theorem~\ref{MainThm} we construct a direct system 
\[
\mathcal{A}= {\varinjlim_w} A_{w}   
\]
consisting of abelian varieties $A_{w}$ indexed by finite 
subsets $w \subseteq W_q$ of  Weil $q$--numbers such that for all $w$ the functors
\[
T_w \colon \AV_w \longrightarrow  \Mod_\Ztf(S_w), \qquad  T_w(X) = \Hom_{\bF_q}(X,A_w) 
\]
are anti-equivalences, that moreover are naturally compatible among each other. 

\begin{proof}[Proof of Theorem~\ref{MainThm}]
For any finite subset $w\subseteq W_q$  let $Z(w)$ be the set of isomorphism classes $[A]$ of $w$-balanced abelian varieties
The set $Z(w)$ is not empty by Theorem~\ref{thm:integralstructure}.
The elements of $Z(w)$ all belong to the same isogeny class, and so $Z(w)$ is finite, since there are only finitely many isomorphism classes of abelian varieties over a finite field lying in a given isogeny class 
(in fact, Zarhin shows in \cite[Theorem~4.1]{zarhin:endos} that finiteness holds for isomorphism classes of abelian varieties of fixed dimension).  

For any pair $v\subseteq w$ of finite 
subsets of Weil $q$--numbers, we construct a map
\[
\zeta_{v,w}:Z(w)  \longrightarrow Z(v)
\]
by $\zeta_{v,w}([A]) = [B]$ where $B$ is the abelian subvariety of $A$ generated by the image of all $f: C \to A$ with $w(C) \subseteq v$.  
Proposition~\ref{prop:formal_compatiblefunctors}~\ref{propitem:balanced is preserved} states that $\zeta_{v,w}$ indeed takes values in $Z(v)$. 
These maps satisfy the compatibility condition 
\[
\zeta_{u,w}=\zeta_{u,v} \zeta_{v,w},
\]
for all tuples $u \subseteq v\subseteq w$, hence they define a projective system
\[
(Z(w), \zeta_{v,w})
\]
indexed by finite 
subsets $w \subseteq W_q$. Since the sets $Z(w)$ are finite and non-empty, a standard compactness argument shows that the inverse limit is not empty:
\[
Z = \varprojlim_w Z(w) \not= \emptyset.
\]
We choose a compatible system $z = (z_w) \in Z$ of isomorphism classes of abelian varieties. 

\smallskip

Now we would like to choose abelian varieties $A_w$ in each class $z_w$ and inclusions 
\[
\ph_{w,v}: A_v \longrightarrow A_w
\]
for every $v \subseteq w$ that are isomorphic to the inclusions $A_{v,,w} \subseteq A_w$ discussed in Proposition~\ref{prop:formal_compatiblefunctors} in a compatible way: for $u \subseteq v \subseteq w$ we want 
\[
\ph_{w,u}=\ph_{w,v} \ph_{v,u}.
\]

Because the set of Weil numbers is countable, we may choose a cofinal totally ordered subsystem of finite subsets of $W_q$ 
\[
w_1 \subseteq w_2 \subseteq \ldots \subseteq w_i \subseteq \ldots ,
\]
where cofinal implies $\bigcup_i w_i = W_q$. 
Working with this totally ordered subsystem, we can construct  a direct system 
\[
\mathcal{A}_0=(A_{w_i},\ph_{w_j,w_i})
\]
of abelian varieties as desired by induction. If $A_{w_i}$ is already constructed, then we choose $A_{w_{i+1}}$ in $z_{w_{i+1}}$ and deduce from $\zeta_{w_{i},w_{i+1}}(z_{w_{i+1}}) = z_{w_i}$ that there is an injective homomorphism $\ph_{w_{i+1},w_i}: A_{w_i} \to A_{w_{i+1}}$ as desired. By construction, the map 
$\ph_{w_{i+1},w_i}$ is an isomorphism of $A_{w_i}$ with the abelian subvariety $A_{w_i,w_{i+1}}$ of $A_{w_{i+1}}$ as discussed in Proposition~\ref{prop:formal_compatiblefunctors}.

Once $\dA_0$ is constructed, we may identify all transfer maps of the restricted system $\dA_0$ with inclusions. 
Let now $w$ be a general finite subset of $W_q$. Since $\bigcup_i w_i = W_q$ is a union of a totally ordered system of $w_i$, we find $w \subseteq w_i$ for all large enough $i$. 
Then we can define the abelian variety $A_w$
by means of the construction of Proposition~\ref{prop:formal_compatiblefunctors} as the abelian subvariety 
\[
A_w := A_{w,w_i}  \subseteq A_{w_i}.
\]
This choice is well defined, i.e.\ independent of $i \gg 0$.
Furthermore, there are compatible transfer maps $\ph_{v,w} : A_v \to A_w$ for all $v \subseteq w$ that lead to the desired direct system 
\[
\dA = (A_w, \ph_{w,v}).
\]
In the sense of ind-objects we have $\dA_0 \simeq \dA$ and so $\dA_0$ would suffice for Theorem~\ref{MainThm}. 
But we prefer the more aesthetic ind-system $\dA$ indexed by  
all finite subsets $w \subseteq W_q$.

\smallskip

Let $X$ be any element of $\AV_{\bF_q}$, and set
\[
T(X)=\Hom_{\bF_q}(X, \mathcal{A})  = \varinjlim_w \Hom_{\bF_q}(X, A_{w})  = \varinjlim_w T_w(X).
\]
The
groups $\Hom_{\bF_q}(X, A_{w})$ are stable when $w$ is large enough. More precisely, if $w$, $w'$
are finite 
subsets of Weil $q$--numbers with $w(X)\subseteq w\subseteq w'$, then the map 
\[
\varphi_{w',w} \circ -  : \Hom_{\bF_q}(X, A_{w})\longrightarrow\Hom_{\bF_q}(X, A_{w'})
\]
is an isomorphism, cf.\ Proposition~\ref{prop:formal_compatiblefunctors}~\ref{propitem:commute}. 
Moreover, $T(-)$ restricted to
$\AV_{w}$ recovers the functor $T_{w}(-)$ of Theorem~\ref{thm:truncatedfullyfaithful} constructed using the object $A_{w}$ of $\mathcal{A}$. Furthermore, the functor $T_w(-)$ induces an anti-equivalence between $\AV_{w}$ and $\Mod_\Ztf(S_w)$ by Theorem~\ref{thm:truncatedfullyfaithful} because $A_w$ is $w$-balanced.

We next show that $T(-)$ is an $\dR_q$-linear functor. We denote the linear Frobenius isogeny for an abelian variety $X$ over $\bF_q$ by $\pi_X: X \to X$. Observe that, since the Frobenius isogeny is a natural transformation, 
for any finite subset  $w \subseteq W_q$ and for any $f\in\Hom_{\bF_q}(X, A_{w})$ the diagram
\[
\xymatrix@M+1ex{
X \ar[r]^{\pi_X}\ar[d]_f  & X \ar[d]^f\\
A_{w} \ar[r]^{\pi_{A_{w}}}& A_{w}\\
}
\]
is commutative. This implies that, for $w$ sufficiently large so that 
\[
T(X) = \Hom_{\bF_q}(X,A_w), \qquad \text{ $w$ large},
\]
the action $F(f) = \pi_{A_w} \circ f$ on $f \in T(X)$ of the image of $F$ in $R_{w} \subseteq S_w$ 
is given by the morphism induced by the Frobenius isogeny $\pi_X$ via functoriality of $T$
\[
T(\pi_X) : f \mapsto f \circ \pi_X = \pi_{A_w} \circ f .
\]
A similar consideration with the isogeny Verschiebung shows that indeed $T(-)$ is $\dR_q$-linear.

Compatibility in $w$ shows that $T(-)$ induces an anti-equivalence
\[
T = \varinjlim T_w \colon \AV_{q} = \varinjlim_w \AV_w  \xrightarrow{\sim} \varinjlim_w \Mod_\Ztf(S_w) = \Mod_\Ztf( \dS_q).
\]
Complemented by the rank computation of  Proposition~\ref{prop:rankOfTw},
this is precisely the claim of Theorem~\ref{MainThm} and so its proof is complete.
\end{proof}

\begin{rmk}
\label{rmk:equiv_reducedbalanced}
When reduced $w$-balanced abelian varieties exist, i.e.\ if we avoid $\pi = \pm \sqrt{q}$ when $r$ is odd, then the above proof of Theorem~\ref{MainThm} can be applied mutatis mutandis to construct an anti-equivalence of categories represented by an ind-abelian variety $\widebar{\dA} = (\widebar{A}_w,\ph_{w,\otherw})$ consisting of reduced $w$-balanced abelian varieties. This anti-equivalence yields modules $\Hom_{\bF_q}(X,A_w)$ of $\bZ$-rank $2\expo \dim(X)$. 
For $r=1$, this reproves the main result of \cite{CS:part1}. 
\end{rmk}

\subsection{Weil restriction of scalars and base change}
Another form of compatibility can be established with respect to Weil restriction of scalars and with respect to base change. We will not need these compatibilities here and therefore only report the results, all of which have quite formal proofs. 

\begin{prop} \label{prop:base_change_and_weil_restriction}
Let $w$ be a finite set of Weil $q^m$-numbers, and set 
$\sqrt[m]{w} := \{\pi \ ; \ \pi^m \in w\}$.
\begin{enumerate}[label=(\arabic*),align=left,labelindent=0pt,leftmargin=*,widest = (8)]
\item 
Let $A \in \AV_w$ be an injective cogenerator. Then the Weil restriction  $RA := R_{\bF_{q^m}|\bF_q}(A)$ is an injective cogenerator for $\AV_{\sqrt[m]{w}}$, and 
for all $X \in \AV_{\sqrt[m]{w}}$ we have a natural isomorphism of functors in $X$
\[
T_A(X \otimes_{\bF_q} \bF_{q^m}) \simeq  T_{RA}(X).
\]
\item 
Let $A \in \AV_{\sqrt[m]{w}}$ be an injective cogenerator.
The base change $A_m := A \otimes_{\bF_q} \bF_{q^m}$ is an injective cogenerator for $\AV_{w}$, and for all $X \in \AV_{w}$ we have a natural isomorphism of functors in $X$
\[
T_A(R_{\bF_{q^m}|\bF_q}(X)) = T_{A_m}(X).
\]
\item
Let $A_{\sqrt[m]{w}} \in \AV_{\sqrt[m]{w}}$ be $\sqrt[m]{w}$-balanced. Then  the scalar extension
\[
A_w : = A_{\sqrt[m]{w}} \otimes_{\bF_q} \bF_{q^m}
\]
in $\AV_w$ is $w$-balanced.
\end{enumerate}
\end{prop}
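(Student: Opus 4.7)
\begin{pro*}[Proof plan]
The common tool for all three parts is the pair of adjunctions between base change $(-)\otimes_{\bF_q}\bF_{q^m}$ and Weil restriction $R = R_{\bF_{q^m}|\bF_q}$ along the finite étale morphism $\Spec\bF_{q^m} \to \Spec\bF_q$. Since $R$ coincides with both the left and right pushforward for finite étale maps, one has two natural isomorphisms
\[
\Hom_{\bF_q}(Y, RA) \simeq \Hom_{\bF_{q^m}}(Y\otimes\bF_{q^m}, A), \qquad \Hom_{\bF_q}(RX, A') \simeq \Hom_{\bF_{q^m}}(X, A'\otimes\bF_{q^m}).
\]
The required natural isomorphisms $T_A(X\otimes\bF_{q^m}) \simeq T_{RA}(X)$ in part (1), and $T_A(RX) \simeq T_{A_m}(X)$ in part (2), are exactly these adjunctions. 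Compatibility with Weil supports is elementary: $X \in \AV_{\sqrt[m]{w}}$ implies $X\otimes\bF_{q^m} \in \AV_w$ because the Frobenius over $\bF_{q^m}$ is the $m$-th power of the Frobenius over $\bF_q$; and $X \in \AV_w$ implies $RX \in \AV_{\sqrt[m]{w}}$ via the decomposition $RX\otimes\bF_{q^m} \simeq \prod_{g \in \Gal(\bF_{q^m}/\bF_q)} X^{(g)}$.

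To prove that $RA$ (resp.\ $A_m$) is an injective cogenerator, I verify conditions \ref{thmitem:formal c1} and \ref{thmitem:formal c2} of Theorem~\ref{thm:formal}. Both base change and Weil restriction are exact on the abelian category of commutative finite type group schemes over a field, hence preserve injections. For part~(1)\ref{thmitem:formal c1}: given $X \in \AV_{\sqrt[m]{w}}$, embed $X\otimes\bF_{q^m} \inj A^n$ using the cogenerator property of $A$; apply $R$ to get $R(X\otimes\bF_{q^m}) \inj (RA)^n$, and precompose with the unit $X \inj R(X\otimes\bF_{q^m})$, which is a closed immersion. For part~(2)\ref{thmitem:formal c1}: given $X \in \AV_w$, embed $RX \inj A^n$, base change to get $RX\otimes\bF_{q^m} \inj A_m^n$, and precompose with the inclusion of $X$ as the $g=1$ direct factor of $RX\otimes\bF_{q^m} \simeq \prod_g X^{(g)}$. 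Condition \ref{thmitem:formal c2} in either case transports through the adjunction: an injection $X \inj Y$ in the smaller category remains an injection after base change (resp.\ Weil restriction), so the surjectivity of the $\Hom$-restriction map transfers via the natural isomorphism of $T$-functors.

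For part (3), I verify directly the Tate-module conditions of Definition~\ref{defi:balanced} for $A_w = A_{\sqrt[m]{w}}\otimes\bF_{q^m}$. The underlying $\bZ_\ell$-module (resp.\ Dieudonné module) of $T_\ell(A_w)$ equals $T_\ell(A_{\sqrt[m]{w}})$, while the new $R_w$-structure is obtained from the old $R_{\sqrt[m]{w}}$-structure via the ring map $R_w \to R_{\sqrt[m]{w}}$ induced by $F \mapsto F^m$, $V \mapsto V^m$ (well-defined since $F^mV^m = q^m$ and $h_w(F^m,V^m)$ vanishes in $R_{\sqrt[m]{w}}$ by construction of $\sqrt[m]{w}$). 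The assumption that $A_{\sqrt[m]{w}}$ is $\sqrt[m]{w}$-balanced gives $T_\ell(A_{\sqrt[m]{w}}) \simeq (R_{\sqrt[m]{w}}\otimes\bZ_\ell)^{\oplus 2\expo}$, and one identifies this with $(R_w\otimes\bZ_\ell)^{\oplus 2m\expo}$ as $R_w\otimes\bZ_\ell$-modules by exhibiting $R_{\sqrt[m]{w}}\otimes\bZ_\ell$ as a locally free $R_w\otimes\bZ_\ell$-module of rank $m$ (comparing $\bZ$-bases as in Section~\ref{sec:mincenord}). The analogous argument at $p$, using that $\cD_{\sqrt[m]{w}}$ is free of rank $m$ over its subring generated by $\cF^{m\expo}, \cV^{m\expo}$, handles the Dieudonné condition.

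The main obstacle is the rank-matching in part~(3): showing that the change-of-ring $R_w \to R_{\sqrt[m]{w}}$ (and the analogous map of Dieudonné rings) turns the rank-$2\expo$ free module into a rank-$2m\expo$ free module of the correct type. This requires a careful decomposition in terms of Honda–Tate data, tracking how a simple $B_\sigma/\bF_q$ for $\sigma \in \sqrt[m]{w}$ with $\sigma^m = \pi$ decomposes under base change to $\bF_{q^m}$ as a power of $B_\pi$, and comparing the balanced multiplicities $m_\sigma = 2\expo/s_\sigma$ with $m_\pi = 2m\expo/s_\pi$ via the multiplicativity of Brauer indices under scalar extension. Parts (1) and (2) are essentially formal once the two adjunctions and the support compatibility are set up.
\end{pro*}
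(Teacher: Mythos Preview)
The paper does not prove this proposition; it only remarks that the proofs are ``quite formal'' and omits them. Your treatment of parts~(1) and~(2) via the two adjunctions and the criterion of Theorem~\ref{thm:formal}\ref{thmitem:formal c} is exactly the kind of formal argument intended, and it is correct.

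For part~(3), your $\ell$-adic argument is fine: the key identity is $P_{\sqrt[m]{w}}(y)=P_w(y^m)$, which makes $R_{\sqrt[m]{w}}\otimes\bZ_\ell \simeq \bZ_\ell[y]/(P_w(y^m))$ free of rank~$m$ over $R_w\otimes\bZ_\ell\simeq\bZ_\ell[x]/(P_w(x))$ via $x\mapsto y^m$, and since $T_\ell(A_w)=T_\ell(A_{\sqrt[m]{w}})$ as abelian groups with the $\bF_{q^m}$-Frobenius acting as $F^m$, the restriction of scalars gives $(R_w\otimes\bZ_\ell)^{\oplus 2m\expo}$ as needed.

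However, your $p$-adic step has a genuine gap. It is \emph{not} true that the Dieudonn\'e module is unchanged under base change: one has instead
\[
T_p(A_w) \;\simeq\; T_p(A_{\sqrt[m]{w}}) \otimes_{W(\bF_q)} W(\bF_{q^m}),
\]
so the ``analogous argument'' via a subring of $\cD_{\sqrt[m]{w}}$ generated by $\cF^{m\expo},\cV^{m\expo}$ does not make sense as written --- $\cD_w$ lives over $W(\bF_{q^m})$, not over $W(\bF_q)$, and is not naturally a subring of $\cD_{\sqrt[m]{w}}$. The correct (and simpler) argument is to observe directly that $h_{\sqrt[m]{w}}(\cF^\expo,\cV^\expo)=h_w(\cF^{m\expo},\cV^{m\expo})$, whence
\[
\cD_{\sqrt[m]{w}} \otimes_{W(\bF_q)} W(\bF_{q^m}) \;\simeq\; \cD_w
\]
as $\cD_{q^m}$-modules, and therefore $T_p(A_w)\simeq\cD_{\sqrt[m]{w}}^{\oplus 2}\otimes_{W(\bF_q)} W(\bF_{q^m})\simeq\cD_w^{\oplus 2}$. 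With this correction your plan goes through; the Honda--Tate multiplicity bookkeeping you flag as the ``main obstacle'' is then unnecessary, since the Tate-module conditions alone already force the correct isogeny class.
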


One can further show that an ind-representing object $\dA = (A_\omega)_{\omega \subseteq W_q}$ provides a similar ind-representing object $\dA \otimes_{\bF_q}  \bF_{q^m}$ after base changing those with index  $\sqrt[m]{w}$ for subsets $w \subseteq W_{q^m}$. 

\section{The commutative case and optimal rank} 
\label{sec:lowermult}

We discuss when a modified anti-equivalence takes values in modules over a commutative ring. 

\subsection{Recovering Deligne's result: the ordinary case} \label{sec:ordinary}
In this section we will be concerned with lowering multiplicities of injective cogenerators for sets of ordinary Weil $q$-numbers. 

\begin{lem} 
\label{lem:ordinaryDisAzumaya}
For a finite set $w$ of ordinary Weil $q$-numbers, we have $\cD_w \simeq \rM_\expo(R_w \otimes \bZ_p)$. \end{lem}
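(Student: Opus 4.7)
The plan is to reduce the global statement to the local analysis already performed in Proposition~\ref{prop:localprojectiveDieudonnestructure}. First I would use the semi-local decomposition $R_w \otimes \bZ_p = \prod_\fp R_{w,\fp}$ over the finitely many maximal ideals $\fp$ of $R_w \otimes \bZ_p$, which induces a corresponding product decomposition of $\cD_w$ as an $R_w \otimes \bZ_p$-algebra:
\[
\cD_w = \prod_\fp \cD_{w,\fp}.
\]

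The key observation is that the ordinary assumption on $w$ is precisely what avoids the problematic supersingular maximal ideal. Indeed, since all $\pi \in w$ are ordinary, Proposition~\ref{prop:VB}~\ref{propitem:projectiveHom} tells us that $\fp_o = (F,V,p)$ is not a maximal ideal of $R_w$, and in particular not a maximal ideal of $R_w \otimes \bZ_p$. Since $p \in \fp$ forces $FV = q \in \fp$, we always have $F \in \fp$ or $V \in \fp$; but the failure of $\fp = \fp_o$ means that at least one of $F$ or $V$ is a unit in $R_{w,\fp}$. These are exactly the two cases handled in the proof of Proposition~\ref{prop:localprojectiveDieudonnestructure}, where equations \eqref{eq:structureDwfp Finvertible} and \eqref{eq:structureDwfp Vinvertible} provide $R_{w,\fp}$-algebra isomorphisms $\cD_{w,\fp} \simeq \rM_\expo(R_{w,\fp})$.

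Taking the product over all maximal ideals then yields
\[
\cD_w \simeq \prod_\fp \rM_\expo(R_{w,\fp}) \simeq \rM_\expo\Bigl(\prod_\fp R_{w,\fp}\Bigr) \simeq \rM_\expo(R_w \otimes \bZ_p),
\]
as desired. There is no real obstacle here: the argument is entirely formal once the ordinary hypothesis has been used to eliminate the supersingular stalk, which is the only place where $\cD_w$ could fail to be split as a cyclic Azumaya algebra. Conceptually, the lemma simply records that the triviality of the local Brauer classes (a consequence of Tate's invariant formula in Theorem~\ref{thm:TateEndostructure}~\ref{thmitem:invariantatp} at ordinary $p$-adic places) propagates from the rational level \eqref{eq:rationalcyclicalgebra} to the integral level under the ordinarity hypothesis.
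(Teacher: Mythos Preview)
Your proof is correct and follows essentially the same approach as the paper: decompose $\cD_w$ over the maximal ideals $\fp$ of $R_w \otimes \bZ_p$, use ordinarity to ensure that each $\fp$ has either $F \notin \fp$ or $V \notin \fp$, and then invoke the local structure results \eqref{eq:structureDwfp Finvertible} and \eqref{eq:structureDwfp Vinvertible} from the proof of Proposition~\ref{prop:localprojectiveDieudonnestructure}. Your version is slightly more verbose and adds a conceptual gloss at the end, but the mathematical content is identical.
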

\begin{proof}
It suffices to determine the structure locally at maximal ideals $\fp$ of $R_w \otimes \fp$, because $R_w\otimes \bZ_p$ is the product of these localizations. As $w$ consists only of ordinary Weil $q$-numbers either $F \notin \fp$ of $V \notin \fp$. The claim  $\cD_{w,\fp} = \rM_\expo(R_{w,\fp})$ now follows from the local structure results \eqref{eq:structureDwfp Finvertible} and \eqref{eq:structureDwfp Vinvertible} obtained in the proof of 
Proposition~\ref{prop:localprojectiveDieudonnestructure}.
\end{proof}

\begin{prop}
\label{prop:balancedEndMatrices}
Let $w$ be a finite set of ordinary Weil $q$-numbers. Then there exist a $w$-balanced abelian variety $A_w$ and an $R_w$-linear isomorphism
\[
\End_{\bF_q}(A_w) \simeq \rM_{2\expo}(R_w).
\]
\end{prop}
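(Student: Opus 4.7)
\begin{pro}[Plan]
The plan is to start with an arbitrary $w$-balanced abelian variety $A_w^0$, whose existence is guaranteed by Theorem~\ref{thm:integralstructure}, and then to modify its endomorphism ring using the local conjugation technique of Proposition~\ref{prop:modifyEnd} so as to obtain an isomorphism onto the prescribed order $\rM_{2\expo}(R_w)$. The input we need is that $\rM_{2\expo}(R_w)$ can be realized as an $R_w$-order in $\End_{\bF_q}(A_w^0)\otimes\bQ$ which is Tate-locally isomorphic to $\End_{\bF_q}(A_w^0)$.

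First I would verify that $\rM_{2\expo}(R_w)$ indeed lives as an $R_w$-order in $\cS^0(w)=\End_{\bF_q}(A_w^0)\otimes\bQ$. Since no real Weil $q$-number can be ordinary (its $p$-adic valuation would be $\expo/2$, neither $0$ nor $\expo$), Theorem~\ref{thm:TateEndostructure}~\ref{thmitem:invariantatp} yields that all local invariants of $E_\pi$ vanish for $\pi \in w$. Hence $s_\pi=1$, $E_\pi = \bQ(\pi)$, and $m_\pi = 2\expo$ for every $\pi \in w$, so
\[
\cS^0(w) \;=\; \prod_{\pi\in w}\rM_{2\expo}(\bQ(\pi)) \;\simeq\; \rM_{2\expo}(\bQ(w))
\]
contains $\rM_{2\expo}(R_w)$ as an $R_w$-order.

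Next I would check the Tate-local isomorphism prime by prime. For $\ell \neq p$, the $w$-balanced condition $T_\ell(A_w^0)\simeq (R_w\otimes\bZ_\ell)^{\oplus 2\expo}$ combined with Tate's theorem \eqref{TateThmell} gives
\[
\End_{\bF_q}(A_w^0)\otimes\bZ_\ell \;\simeq\; \rM_{2\expo}(R_w\otimes\bZ_\ell).
\]
For $\ell = p$ the crucial input is Lemma~\ref{lem:ordinaryDisAzumaya} stating $\cD_w \simeq \rM_\expo(R_w\otimes\bZ_p)$. Combined with $T_p(A_w^0)\simeq\cD_w^{\oplus 2}$ and Tate's theorem \eqref{TateThmp}, we obtain
\[
\End_{\bF_q}(A_w^0)\otimes\bZ_p \;\simeq\; \rM_2(\cD_w)^{\op} \;\simeq\; \rM_{2\expo}(R_w\otimes\bZ_p)^{\op} \;\simeq\; \rM_{2\expo}(R_w\otimes\bZ_p),
\]
the last isomorphism being transpose. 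Together these identifications show that $\rM_{2\expo}(R_w)$ and $\End_{\bF_q}(A_w^0)$ are Tate-locally isomorphic as $R_w$-orders in $\cS^0(w)$; by the Skolem--Noether observation of Remark~\ref{rmk:SkolemNoether} they are even Tate-locally conjugate inside $\cS^0(w)$.

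Proposition~\ref{prop:modifyEnd} then furnishes an isogeny $A_w^0 \to A_w$ onto an abelian variety $A_w$ that is Tate-locally isomorphic to $A_w^0$ and satisfies $\End_{\bF_q}(A_w) \simeq \rM_{2\expo}(R_w)$ via an $R_w$-linear isomorphism (the center on both sides being $R_w$). Since $A_w^0$ is $w$-balanced and $A_w$ is Tate-locally isomorphic to it, Proposition~\ref{prop:2outof3locallyisom} shows that $A_w$ is likewise $w$-balanced, completing the proof. The one nontrivial ingredient is the triviality of $\cD_w$ in the Brauer picture at $p$, i.e.\ Lemma~\ref{lem:ordinaryDisAzumaya}; everything else is a formal application of the Tate-local machinery developed earlier.
\end{pro}
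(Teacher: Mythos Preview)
Your proof is correct and follows essentially the same approach as the paper: start from an arbitrary $w$-balanced object, compute its endomorphism ring locally at every prime via Tate's theorems (using Lemma~\ref{lem:ordinaryDisAzumaya} at $p$), and then invoke Proposition~\ref{prop:modifyEnd} and Proposition~\ref{prop:2outof3locallyisom}. The only cosmetic difference is that you identify $\cS^0(w)\simeq\rM_{2\expo}(\bQ(w))$ by showing directly that each $E_\pi=\bQ(\pi)$ (since ordinary $\pi$ have no real places and integral $p$-adic invariants), whereas the paper phrases this as splitting the Azumaya algebra via the Hasse--Brauer--Noether local-global principle; these are the same computation of invariants.
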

\begin{proof} 
Let $A'_w$ be an arbitrary $w$-balanced abelian variety. Then $\End_{\bF_q}(A'_w) \otimes \bQ$ is an Azumaya algebra of degree $2\expo$ over its center $\bQ(w)$. This center is a product of number fields. The local-global principle for the Brauer group, see \cite{BHN}, shows that $\End_{\bF_q}(A'_w) \otimes \bQ$ is split as an Azumaya algebra over $\bQ(w)$, because all local invariants are trivial by Tate's formula as recalled in Theorem~\ref{thm:TateEndostructure}~\ref{thmitem:invariantatp}. Here we use that $w$ only contains ordinary Weil numbers. The spliting translates into a $\bQ(w)$-algebra isomorphism
\[
\End_{\bF_q}(A'_w) \otimes \bQ \simeq \rM_{2\expo}(\bQ(w)).
\]
Moreover, the integral local structure of $S'_w =\End_{\bF_q}(A'_w)$ can be deduced from Tate's theorems \eqref{TateThmell} and \eqref{TateThmp} and Lemma~\ref{lem:ordinaryDisAzumaya} as
\begin{align*}
S'_w\otimes\bZ_\ell & = \End_{R_w \otimes \bZ_\ell}(T_\ell(A'_w)) \simeq  \End_{R_w \otimes \bZ_\ell}\big((R_w \otimes \bZ_\ell)^{\oplus 2\expo}\big) = \rM_{2\expo}(R_w\otimes\bZ_\ell),  \\
S'_w\otimes\bZ_p  & = {\End_{\cD_w}(T_p(A'_w))}^\op \simeq  {\End_{\cD_w}(\cD_w^{\oplus 2})}^\op = \rM_2(\cD_w) =\rM_{2r}(R_w \otimes \bZ_p).
\end{align*}
Proposition~\ref{prop:modifyEnd} applied to the $R_w$-order $\fO = \rM_{2\expo}(R_w)$ shows that there exists an abelian variety $A_w$ that is Tate-locally isomorphic to $A'_w$ and such that $\End_{\bF_q}(A_w) \simeq \rM_{2\expo}(R_w)$. The abelian variety $A_w$ is also $w$-balanced by Proposition~\ref{prop:2outof3locallyisom}.
\end{proof}

\begin{prop}
\label{prop:MoritaReduction}
Let $w$ be a finite set of ordinary Weil $q$-numbers. Then there exists an abelian variety $A^{\ord}_w$ with the following properties.
\begin{enumerate}[label=(\roman*),align=left,labelindent=0pt,leftmargin=*,widest = (iii)]
\item
The Weil support of  $A^{\ord}_w$ is equal to $w$,
\item
the natural inclusion $R_w\subset\End_{\bF_q}(A^{\ord}_w)$ is an isomorphism,
\item
$A^{\ord}_w$ is $w$-locally projective, and
\item
$2\dim(A^{\ord}_w) = [\bQ(w):\bQ]$.
\end{enumerate}

In particular, the corresponding
contravariant functor 
\[
T_{A^{\ord}_w}  \colon \AV_w \longrightarrow \Mod_{\Ztf}(R_w), \qquad T_{A^{\ord}_w}(X) = \Hom_{\bF_q}(X , A^{\ord}_w)
\]
gives an $R_w$--linear anti-equivalence of categories.
\end{prop}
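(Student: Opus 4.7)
The plan is to extract $A^{\ord}_w$ as a Morita direct factor of the $w$-balanced abelian variety $A_w$ produced by Proposition~\ref{prop:balancedEndMatrices}, whose endomorphism ring is the matrix algebra $\rM_{2\expo}(R_w)$. Since $\rM_{2\expo}(R_w)$ is Morita equivalent to $R_w$ via an elementary idempotent, and since $\AV_w$ is Karoubian (any idempotent $e \in \End_{\bF_q}(A_w)$ splits the variety, as $(e,1-e)\colon A_w \to \im(e)\times\im(1-e)$ is an isomorphism of abelian varieties), this algebraic reduction lifts to a geometric decomposition of $A_w$.

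Concretely, I would fix an $R_w$-algebra isomorphism $\End_{\bF_q}(A_w)\simeq \rM_{2\expo}(R_w)$ and set $A^{\ord}_w := \im(e_{11})$, the image of the elementary idempotent $e_{11}$. Since the $e_{ii}$ for $1\le i\le 2\expo$ are pairwise orthogonal, sum to $1$, and are mutually conjugate in $\rM_{2\expo}(R_w)$, the Karoubian splitting yields simultaneously
\[
A_w \simeq (A^{\ord}_w)^{2\expo}, \qquad \End_{\bF_q}(A^{\ord}_w) = e_{11}\,\End_{\bF_q}(A_w)\,e_{11} \simeq R_w,
\]
the latter matching the inclusion coming from the central $R_w$-structure on $\AV_w$; this establishes (ii). For (i), the isomorphism $A_w \simeq (A^{\ord}_w)^{2\expo}$ combined with $A_w$ being isogenous to $\prod_{\pi\in w} B_\pi^{m_\pi}$ with positive multiplicities forces $w(A^{\ord}_w) = w$. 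For (iii), the local Tate modules $T_\lambda(A^{\ord}_w)$ and $T_\fp(A^{\ord}_w)$ are direct summands of the corresponding Tate modules of $A_w$, and projectivity is preserved under direct summands. For (iv), using Theorem~\ref{thm:TateEndostructure}~\ref{thmitem:dimfoemula} and $m_\pi = 2\expo/s_\pi$ I would compute
\[
\dim(A_w) = \sum_{\pi\in w} m_\pi \dim(B_\pi) = \sum_{\pi\in w}\frac{2\expo}{s_\pi}\cdot\frac{s_\pi[\bQ(\pi):\bQ]}{2} = \expo\,[\bQ(w):\bQ],
\]
and combine with $A_w \simeq (A^{\ord}_w)^{2\expo}$ to conclude $2\dim(A^{\ord}_w) = [\bQ(w):\bQ]$.

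The final assertion is then immediate from Theorem~\ref{thm:truncatedfullyfaithful}: properties (i) and (iii) say that $A^{\ord}_w$ is $w$-locally projective with full support $w$, hence $T_{A^{\ord}_w}$ is a contravariant anti-equivalence onto $\Mod_\Ztf(\End_{\bF_q}(A^{\ord}_w)) = \Mod_\Ztf(R_w)$ by (ii), and $R_w$-linearity is built in through the central action. The main conceptual obstacle has already been surmounted in Proposition~\ref{prop:balancedEndMatrices}: the triviality of the Brauer class of the endomorphism algebra of a balanced object (enabled by ordinarity via the local-invariant formula of Theorem~\ref{thm:TateEndostructure}~\ref{thmitem:invariantatp}) is precisely what permits the balanced endomorphism ring to be presented as a full matrix algebra over $R_w$; once that is in place, the passage to $A^{\ord}_w$ is a purely formal Morita-theoretic step.
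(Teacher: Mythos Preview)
Your proof is correct and follows essentially the same approach as the paper: both extract $A^{\ord}_w$ as the image of a diagonal idempotent in the $w$-balanced $A_w$ of Proposition~\ref{prop:balancedEndMatrices}, obtain $A_w \simeq (A^{\ord}_w)^{2\expo}$ via the mutually conjugate elementary idempotents, and then verify (i)--(iv) and the anti-equivalence via Theorem~\ref{thm:truncatedfullyfaithful} in the same way. The only cosmetic difference is in (iv): the paper first observes $s_\pi = 1$ for ordinary $\pi$ to identify $A^{\ord}_w$ as isogenous to $\prod_{\pi\in w} B_\pi$, whereas you compute $\dim(A_w)$ directly and divide by $2\expo$; both arguments are equivalent.
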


\begin{proof}
We choose a $w$-balanced $A_w$  and fix an isomorphism 
$\End(A_w) \simeq \rM_{2\expo}(R_w)$ as in Proposition~\ref{prop:balancedEndMatrices}. Let $e_i \in  \End(A_w)$ be the idemponent that corresponds in $\End(A_w) \simeq \rM_{2\expo}(R_w)$ 
to the matrix with a single $1$ at the $i$th diagonal entry. These idempotents $e_1, \ldots, e_{2\expo}$ commute and sum to the identity. Let $A_i$ be the image of $e_i : A_w \to A_w$. It follows formally as in any additive category that 
\[
A_w \simeq A_1 \times \ldots \times A_{2\expo}.
\]
Moreover, let $e_{ij} \in  \End(A_w) \simeq \rM_{2\expo}(R_w)$ correspond to the elementary matrix with a single $1$ in the $i$th row and $j$th column. Then $e_{ij}|_{A_j} : A_j \to A_i$ is an isomorphism, so all direct factors of $A_w$ are mutually isomorphic. We define 
\[
A^{\ord}_w = e_1 (A_w) = A_1,
\]
so that $A_w \simeq (A^{\ord}_w)^{2\expo}$. In particular, the support of $A^{\ord}_w$ is the support of $A_w$ and equal to $w$. Moreover, we can compute as $R_w$-algebras
\[
\End(A^{\ord}_w) = e_1 \End(A_w) e_1 = R_w.
\]
Now $A_w^{\ord}$ is $w$-locally projective because a multiple  $A_w = (A_w^{\ord})^{2\expo}$ is $w$-locally projective, and direct summands of projective modules are again projective. As  $A_w^{\ord}$ has full supprt, by Theorem~\ref{thm:truncatedfullyfaithful} we have that $A_w^{\ord}$ indeed represents the desired anti-equivalence of the claim.

In order to compute the dimension of $A_w^{\ord}$, we note that all $\pi \in w$ are ordinary and thus have commutative $E_\pi = \End_{\bF_q}(B_\pi)$. Hence $s_\pi = 1$, and the multiplicity of $B_\pi$ in $A_w$ is $m_\pi = 2\expo$. It follows that $A_w^{\ord}$ is isogenous to $\prod_{\pi \in w} B_\pi$. The dimension formula is a consequence of  Theorem~\ref{thm:TateEndostructure}~\ref{thmitem:dimfoemula} and $\bQ(w) = \prod_{\pi \in w} \bQ(\pi)$.
\end{proof}

We can now reprove Deligne's main result of \cite{De}. We choose to state a contravariant form in accordance with our overall choice. By precomposing with the functor \textit{dual abelian variety} we can pass from contravariant to covariant equivalences.

Recall that $\AV_{\bF_q}^{\ord}$ denotes the full subcategory of $\AV_{\bF_q}$ consisting of ordinary abelian varieties. Similarly, we denote by $\dR_q^{\ord}$ the projective system $(R_w,r_{v,w})$ restricted to finite sets $w$ of ordinary Weil $q$-numbers. 

\begin{thm}[Deligne \cite{De}] 
\label{thm:reprovedeligne}
Let $q=p^\expo$ be a power of a prime number $p$. There exists an ind-abelian variety $\dA^{\ord} = (A^{\ord}_w,\ph_{w,\otherw})$ indexed by finite sets $w \subseteq W^{\ord}_q$ of ordinary Weil $q$-numbers, such that $A^{\ord}_w$ is $w$-locally projective, the transition maps are inclusions, and $\End_{\bF_q}(A_w^{\ord}) = R_w$. The contravariant and  $\dR^{\ord}_q$-linear functor
\[
T^{\ord} \colon \AV_{\bF_q}^{\ord} \longrightarrow \Mod_{\Ztf}(\dR_q^{\ord}), \qquad T^{\ord}(X) = \Hom_{\bF_q}(X,\dA^{\ord})
\]
is an anti-equivalence of categories which preserves the support. Moreover, the $\bZ$-rank of $T^{\ord}(X)$ 
is equal to $2\dim(X)$. 
\end{thm}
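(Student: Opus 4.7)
The plan is to mimic the proof of Theorem~\ref{MainThm}, replacing the role of $w$-balanced abelian varieties by the abelian varieties $A^{\ord}_w$ produced in Proposition~\ref{prop:MoritaReduction}. For each finite $w \subseteq W^{\ord}_q$, Proposition~\ref{prop:MoritaReduction} provides an abelian variety $A^{\ord}_w$ in $\AV_w$ with full support, $w$-locally projective, and endomorphism ring $\End_{\bF_q}(A^{\ord}_w) = R_w$. Theorem~\ref{thm:truncatedfullyfaithful} applied to $A^{\ord}_w$ immediately yields the truncated $R_w$-linear anti-equivalence $T^{\ord}_w \colon \AV_w \to \Mod_\Ztf(R_w)$, $X \mapsto \Hom_{\bF_q}(X,A^{\ord}_w)$. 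Everything then comes down to assembling these truncated pieces into an ind-system compatible with inclusions.

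The key technical point is that the subvariety construction of Proposition~\ref{prop:formal_compatiblefunctors} preserves the refined property $\End_{\bF_q} = R_w$. For $v \subseteq w$, by Proposition~\ref{prop:formal_compatiblefunctors}~\ref{prop:formal_compatiblefunctors2} the subvariety $A_{v,w} \subseteq A^{\ord}_w$ is an injective cogenerator for $\AV_v$, hence $v$-locally projective with support $v$ by Theorem~\ref{thm:truncatedfullyfaithful}. Moreover, by Proposition~\ref{prop:formal_compatiblefunctors}~\ref{prop:formal_compatiblefunctors3}, the restriction map $\pr_{v,w}\colon \End_{\bF_q}(A^{\ord}_w) \twoheadrightarrow \End_{\bF_q}(A_{v,w})$ is an $R_w \twoheadrightarrow R_v$ algebra surjection. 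Combined with $\End_{\bF_q}(A^{\ord}_w)=R_w$, one obtains a surjection $R_v \twoheadrightarrow \End_{\bF_q}(A_{v,w})$, which together with the injectivity of the natural structural map $R_v \hookrightarrow \End_{\bF_q}(A_{v,w})$ forces $\End_{\bF_q}(A_{v,w}) = R_v$.

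With this preservation property in hand, the gluing proceeds exactly as in Section~\ref{sec:proof}: define $Z^{\ord}(w)$ to be the set of isomorphism classes of $w$-locally projective abelian varieties in $\AV_w$ having full support $w$ and endomorphism ring equal to $R_w$. All such varieties are isogenous to $\prod_{\pi \in w} B_\pi$, so $Z^{\ord}(w)$ is finite and non-empty by Zarhin's finiteness of isomorphism classes in an isogeny class \cite[Theorem~4.1]{zarhin:endos} combined with Proposition~\ref{prop:MoritaReduction}. The assignment $[A] \mapsto [A_{v,w}]$ defines transition maps $\zeta_{v,w}\colon Z^{\ord}(w) \to Z^{\ord}(v)$ satisfying the cocycle identity $\zeta_{u,w}=\zeta_{u,v}\zeta_{v,w}$. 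A standard compactness argument produces a compatible family $(z_w) \in \varprojlim_w Z^{\ord}(w)$; along a cofinal totally ordered chain $w_1 \subseteq w_2 \subseteq \ldots$ with $\bigcup_i w_i = W^{\ord}_q$, pick representatives together with compatible inclusions $\varphi_{w_{i+1},w_i}\colon A^{\ord}_{w_i} \hookrightarrow A^{\ord}_{w_{i+1}}$ and extend to arbitrary finite $w$ by setting $A^{\ord}_w \coloneqq (A^{\ord}_{w_i})_{w,w_i}$ for any $w_i \supseteq w$. This yields the ind-abelian variety $\dA^{\ord} = (A^{\ord}_w, \varphi_{w,\otherw})$.

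The resulting functor $T^{\ord}(X) = \varinjlim_w \Hom_{\bF_q}(X,A^{\ord}_w)$ stabilises to $T^{\ord}_w(X)$ whenever $w(X) \subseteq w$, hence is the filtered colimit of anti-equivalences and so is itself an anti-equivalence from $\AV^{\ord}_{\bF_q} = \varinjlim_w \AV_w$ onto $\Mod_\Ztf(\dR^{\ord}_q) = \varinjlim_w \Mod_\Ztf(R_w)$. Its $\dR^{\ord}_q$-linearity is verified exactly as in Theorem~\ref{MainThm} by the naturality of the Frobenius and Verschiebung isogenies. Support preservation is built into the construction. Finally, for the rank formula, by additivity and isogeny invariance it suffices to consider $X = B_\pi$ for $\pi \in W_q^{\ord}$. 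Since $\pi$ is ordinary one has $s_\pi = 1$, so $A^{\ord}_w$ contains $B_\pi$ as a simple factor with multiplicity one in the isogeny class, and $\Hom_{\bF_q}(B_\pi,A^{\ord}_w)\otimes\bQ \simeq E_\pi = \bQ(\pi)$ as a $\bQ(\pi)$-module of rank one, giving $\rank_\bZ T^{\ord}(B_\pi) = [\bQ(\pi):\bQ] = 2\dim(B_\pi)$ by Theorem~\ref{thm:TateEndostructure}~\ref{thmitem:dimfoemula}. The only substantive step is the second paragraph's preservation lemma for endomorphism rings under the subvariety construction; the rest is a formal transport of the machinery already developed for Theorem~\ref{MainThm}.
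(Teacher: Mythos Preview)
Your proof is correct and follows essentially the same approach as the paper: the key step is the preservation lemma in your second paragraph showing that $\End_{\bF_q}(A_{v,w}) = R_v$ via the surjection $\pr_{v,w}$ from Proposition~\ref{prop:formal_compatiblefunctors}~\ref{prop:formal_compatiblefunctors3}, after which one simply replays the compactness and gluing argument from the proof of Theorem~\ref{MainThm} with $w$-balanced objects replaced by the $A^{\ord}_w$ of Proposition~\ref{prop:MoritaReduction}. Your write-up supplies more detail than the paper (which dispatches the proof in two sentences), in particular making explicit the rank computation and the injectivity of $R_v \hookrightarrow \End_{\bF_q}(A_{v,w})$, but the substance is identical.
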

\begin{proof}
The construction of Proposition~\ref{prop:formal_compatiblefunctors} applied to an $A^{\ord}_w$ produced by 
Proposition~\ref{prop:MoritaReduction} yields an injective cogenerator $A^{\ord}_{v,w}$ such that $\End_{\bF_q}(A^{\ord}_{v,w})$ is an $R_v$-algebra that is a quotient of $R_w = \End_{\bF_q}(A^{\ord}_{w})$. It follows that also 
$\End_{\bF_q}(A^{\ord}_{v,w}) = R_v$. 

Now the construction of the ind-abelian variety $\dA^{\ord}$ works as in the proof of Theorem~\ref{MainThm} by replacing $w$-balanced abelian varieties by varieties $A_w^{\ord}$ that are $w$-locally projective with $\End_{\bF_q}(A_w^{\ord}) = R_w$. This proves the claim.
\end{proof}

The reduction of multiplicity achieved in Proposition~\ref{prop:MoritaReduction} allows to complete the structure theory of $w$-locally projective abelian varieties begun in Section~\S\ref{sec:truely local}. Recall that there is a tensor product construction between abelian varieties with multiplication by a ring $R$ and certain $R$-modules, see Serre's appendix to \cite{lauter:serre} and \cite[\S4.1]{JKPRSBT}.

\begin{thm}
\label{thm:classification wlocallyprojectiveordinary}
Let $w$ be a finite set of ordinary Weil $q$-numbers, and let $A_w^{\ord}$ be a $w$-locally projective abelian variety with $R_w = \End_{\bF_q}(A_w^{\ord})$ as in Proposition~\ref{prop:MoritaReduction}. 
Then any $w$-locally projective abelian variety $A$ is of the form 
\[
A \simeq A_w^{\ord} \otimes_{R_w} P
\]
where $P = \Hom_{\bF_q}(A_w^{\ord},A)$ is a finitely generated projective $R_w$-module.
\end{thm}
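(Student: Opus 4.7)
The plan is to combine the anti-equivalence from Theorem~\ref{thm:truncatedfullyfaithful} applied to $A_w^{\ord}$ with the Serre tensor construction and reflexivity of finitely generated projective modules.

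By Proposition~\ref{prop:MoritaReduction}, the abelian variety $A_w^{\ord}$ is $w$-locally projective with Weil support equal to $w$ and satisfies $\End_{\bF_q}(A_w^{\ord}) = R_w$; hence Theorem~\ref{thm:truncatedfullyfaithful} provides an $R_w$-linear anti-equivalence
\[
T := \Hom_{\bF_q}(-, A_w^{\ord}) \colon \AV_w \longrightarrow \Mod_\Ztf(R_w)
\]
sending $A_w^{\ord}$ to $R_w$. Writing $M := T(A) = \Hom_{\bF_q}(A, A_w^{\ord})$ and $(-)^\vee := \Hom_{R_w}(-, R_w)$, the anti-equivalence identifies
\[
P \;=\; \Hom_{\bF_q}(A_w^{\ord}, A) \;=\; \Hom_{R_w}\bigl(T(A),\, T(A_w^{\ord})\bigr) \;=\; M^\vee.
\]
Since $w$ consists of ordinary Weil $q$-numbers and both $A$ and $A_w^{\ord}$ are $w$-locally projective, Proposition~\ref{prop:VB}\ref{propitem:projectiveHom} shows that $M$ and $P$ are finitely generated projective $R_w$-modules. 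In particular $M$ is reflexive, so the canonical map $M \to (M^\vee)^\vee = P^\vee$ is an isomorphism.

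Next I would form the Serre tensor product $A_w^{\ord} \otimes_{R_w} P$ as in Serre's appendix to \cite{lauter:serre} or \cite[\S4.1]{JKPRSBT}, applied to the commutative subring $R_w \subseteq \End_{\bF_q}(A_w^{\ord})$ and the projective $R_w$-module $P$. By construction, $A_w^{\ord} \otimes_{R_w} P$ is a direct summand of a power of $A_w^{\ord}$, hence lies in $\AV_w$, and it satisfies the universal property
\[
\Hom_{\bF_q}(A_w^{\ord} \otimes_{R_w} P,\, B) \;=\; \Hom_{R_w}\bigl(P,\, \Hom_{\bF_q}(A_w^{\ord}, B)\bigr) \qquad (B \in \AV_w).
\]
Specialising to $B = A_w^{\ord}$ gives $T(A_w^{\ord} \otimes_{R_w} P) = P^\vee$, while specialising to $B = A$ and evaluating at $\id_P \in \End_{R_w}(P)$ produces a canonical morphism $\varepsilon \colon A_w^{\ord} \otimes_{R_w} P \to A$.

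The final step is to observe that, by chasing the adjunction, $T(\varepsilon) \colon M = T(A) \to T(A_w^{\ord} \otimes_{R_w} P) = P^\vee$ coincides with the reflexivity isomorphism $M \xrightarrow{\sim} P^\vee$ identified above. Since $T$ is an anti-equivalence, $\varepsilon$ is itself an isomorphism, proving $A \simeq A_w^{\ord} \otimes_{R_w} P$. The main delicate point is this compatibility of the Serre construction with $T$; it reduces, via additivity of $T$ and by writing $P$ as a direct summand of some $R_w^n$, to the tautological identification $T\bigl((A_w^{\ord})^n\bigr) = R_w^n$.
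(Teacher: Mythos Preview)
Your proof is correct and takes a genuinely different route from the paper. The paper forms the evaluation map $A_w^{\ord}\otimes_{R_w}P\to A$ and verifies it is an isomorphism \emph{locally}, by checking that the induced maps on $T_\lambda(-)$ and $T_\fp(-)$ are isomorphisms; this uses directly that $T_\lambda(A_w^{\ord})$ is free of rank~$1$ over $R_{w,\lambda}$ and that $T_\fp(A_w^{\ord})$ is the indecomposable projective over $\cD_{w,\fp}$ (Proposition~\ref{prop:localprojectiveDieudonnestructure}). You instead work \emph{globally}: you invoke the anti-equivalence $T=\Hom_{\bF_q}(-,A_w^{\ord})$ of Theorem~\ref{thm:truncatedfullyfaithful}, identify $P\cong M^\vee$, use reflexivity of the projective module $M$ to get $T(A)\cong P^\vee\cong T(A_w^{\ord}\otimes_{R_w}P)$, and then pull the isomorphism back through $T$. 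Your compatibility check that $T(\varepsilon)$ agrees with the biduality map is the only point requiring care, and it unwinds exactly as you say (for $f\in M$ and $p\in P$ both reduce to $f\circ p\in R_w$). The paper's argument is shorter and self-contained at the level of Tate modules, while yours shows that the result is a purely formal consequence of the anti-equivalence together with projectivity of $P$ and $M$; in particular your argument would go through verbatim for any $w$ (ordinary or not) once one knows that $\Hom_{\bF_q}(A_w^{\ord},A)$ and $\Hom_{\bF_q}(A,A_w^{\ord})$ are projective over the center.
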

\begin{proof}
Proposition~\ref{prop:VB} \ref{propitem:projectiveHom} shows that $\Hom_{\bF_q}(A_w^{\ord},A)$ is indeed projective. The general properties of the tensor product yield an evaluation map
\[
A_w^{\ord} \otimes_{R_w} \Hom_{\bF_q}(A_w^{\ord},A) \longrightarrow A.
\]
In order to show that this map is an isomorphism, it sufffices to show this locally on $\ell$-adic and $p$-adic Tate modules. There it follows because $A$ and $A_w^{\ord}$ are $w$-locally projective and $T_\ell(A_w^{\ord})$ (resp.\ $T_p(A_w^{\ord})$) is locally free of rank $1$ (resp.\ the unique indecomposable projective module, see Proposition~\ref{prop:localprojectiveDieudonnestructure}).
\end{proof}

\begin{rmk}
Using Theorem~\ref{thm:connectedSpec classification of locallyprojective} and 
Theorem~\ref{thm:classification wlocallyprojectiveordinary} one obtains a complete description of the isogeny classes of $\AV_w$ containing an injective cogenerator for $\AV_w$. See also Remark~\ref{rmk:survey of injective cogenerators}.
\end{rmk}

\subsection{Injective cogenerators with commutative endomorphism ring}

Let $W \subseteq W_q$ be a (possibly infinite\footnote{We choose the notation $W$ over $w$ to distinguish the case of a distinctive finite set $w$ of Weil numbers from the general case $W$.}) set of Weil numbers. The proof of Theorem~\ref{MainThm} adapts immediately to provide an  ind-abelian variety $\dA_W$ that  ind-represents an
anti-equivalence  of $\AV_W$ with $ \Mod_{\Ztf}(\dS_W)$, where $S_W$ equals the pro-ring $\End_{\bF_q}(\dA_W)$. 

We would like to determine all sets $W$ for which there is such an $\dA_W$ as above with $S_W$ commutative.

Recall that we denote by $\expo$ the degree of $\bF_q$ over $\bF_p$.

\begin{prop}
\label{prop:Weilnumbers commutative injective cogenerator}
Let $\pi$ be a Weil $q$-number, and let $A_\pi$ be an injective cogenerator for $\AV_\pi$ that has a commutative ring of endomorphisms $S_\pi$. Then 
\begin{enumerate}[label=(\arabic*),align=left,labelindent=0pt,leftmargin=*,widest = (8)]
\item $\pi$ is ordinary, or 
\item $r=1$ and $\pi$ is not the real conjugacy class of Weil $p$-numbers $\{\pm \sqrt{p}\}$.
\end{enumerate}
\end{prop}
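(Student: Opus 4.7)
The plan is to combine two earlier results in a short formal argument. The first is Proposition~\ref{prop:centerofinjectivecogenerators}, which identifies the centre of the endomorphism ring of any injective cogenerator for $\AV_w$ with the minimal central order $R_w$. The second is Theorem~\ref{thm:connectedSpec classification of locallyprojective}, which in the non-ordinary case classifies $\pi$-locally projective abelian varieties up to isogeny when $\Spec(R_\pi)$ is connected.

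First I would reduce to the case where $\pi$ is non-ordinary, as otherwise conclusion (1) already holds. By Theorem~\ref{thm:truncatedfullyfaithful}, $A_\pi$ is $\pi$-locally projective with support $\{\pi\}$; by Proposition~\ref{prop:centerofinjectivecogenerators}, $R_\pi$ is the centre of $S_\pi$, so the commutativity hypothesis forces $S_\pi = R_\pi$. In particular, $S_\pi \otimes \bQ = \bQ(\pi)$ is one-dimensional over its centre $\bQ(\pi)$.

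Next, since $R_\pi$ is a $\bZ$-order in the number field $\bQ(\pi)$, the scheme $\Spec(R_\pi)$ is irreducible, hence connected, and Theorem~\ref{thm:connectedSpec classification of locallyprojective} applies. It tells me that $A_\pi$ is isogenous to a power $B_\pi^m$, where $m$ is a positive integer multiple of $\widebar{m}_\pi = r/s_\pi$ in case~(1), or of $m_\pi = 2r/s_\pi$ in case~(2) of that theorem. The decisive step is then a comparison of $\bQ(\pi)$-dimensions: $S_\pi \otimes \bQ \simeq \rM_m(E_\pi)$ has $\bQ(\pi)$-dimension $m^2 s_\pi^2$, whereas $R_\pi \otimes \bQ = \bQ(\pi)$ has dimension $1$, forcing $m s_\pi = 1$ and hence $m = s_\pi = 1$.

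I conclude by case analysis. Case~(2) of Theorem~\ref{thm:connectedSpec classification of locallyprojective}, which occurs when $r$ is odd and $\pi = \pm\sqrt{q}$ is real, is immediately ruled out: by Theorem~\ref{thm:TateEndostructure}~(2) each real place of $\bQ(\pi)$ contributes invariant $\tfrac12$ to $E_\pi$, so $s_\pi = 2$, contradicting $s_\pi = 1$. Hence case~(1) applies; with $s_\pi = 1$ the divisibility $\widebar{m}_\pi \mid m$ reads $m = kr$ for some $k \geq 1$, and $m = 1$ then forces $k = r = 1$. Since $r = 1$ is odd, being in case~(1) further requires $\pi$ to be non-real, so $\pi \neq \pm\sqrt{p}$. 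The main obstacle here is purely bookkeeping of the two cases of Theorem~\ref{thm:connectedSpec classification of locallyprojective} and recognising that the case without reduced balanced objects is instantly excluded by the real-place invariant computation from Honda--Tate theory.
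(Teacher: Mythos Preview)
Your proof is correct and follows essentially the same route as the paper: reduce to the non-ordinary case, use that $\Spec(R_\pi)$ is connected to invoke Theorem~\ref{thm:connectedSpec classification of locallyprojective}, and then force $m=s_\pi=1$ and hence $r=1$. The only packaging difference is that the paper first observes directly that commutativity of $S_\pi$ forces $A_\pi$ to be $\bF_q$-simple with $s_\pi=1$ (hence no real places, so the reduced balanced object exists and one is automatically in case~(1) of Theorem~\ref{thm:connectedSpec classification of locallyprojective}), whereas you reach $m=s_\pi=1$ via Proposition~\ref{prop:centerofinjectivecogenerators} and a dimension count, and then rule out case~(2) afterwards; both orderings are fine.
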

\begin{proof}
In order to have a commutative ring of endomorphisms, $A_\pi$ must be $\bF_q$-simple and $s_\pi = 1$.  The last condition also imples that $\bQ(\pi)$ has no real places and thus that a reduced $\pi$-balanced object exists. 
If $\pi$ is not ordinary, then by Theorem~\ref{thm:connectedSpec classification of locallyprojective} (note that $\Spec(R_\pi)$ is connected), the abelian variety $A_\pi$ must be isogenous to a power of a reduced $\pi$-balanced abelian variety. The variety $A_\pi$ being $\bF_q$-simple, this in particular  implies $\widebar m_\pi = 1$, and thus $\expo =  s_\pi \cdot \widebar m_\pi = 1$.
\end{proof}

We conclude that the examples of commutative cases as presented in Section~\S\ref{subsection:subcategories} of  the introduction cover all commutative cases.

\begin{prop}
\label{prop:commutative cases}
The only sets of Weil $q$-numbers $W$  with a commutative $\dS_W$ and an ind-representable $\dR_W$-linear anti-equivalence $\AV_W \to \Mod_{\Ztf}(\dS_W)$ are contained in 
\begin{enumerate}[label=(\roman*),align=left,labelindent=0pt,leftmargin=*,widest = (iii)]
\item 
the set $W^{\ord}\subseteq W_q$ of ordinary Weil $q$-numbers, or 
\item 
the set $W_p^{\com}=W_p\setminus \{\pm\sqrt p\}$ of non-real Weil $p$-numbers. 
\end{enumerate}
\end{prop}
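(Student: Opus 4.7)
The plan is to reduce to the single-Weil-number case already settled by Proposition~\ref{prop:Weilnumbers commutative injective cogenerator}, using the truncation procedure of Proposition~\ref{prop:formal_compatiblefunctors} to pass from $W$ to each $\pi\in W$. Suppose $W\subseteq W_q$ admits an $\dR_W$-linear anti-equivalence $T_W\colon\AV_W\to\Mod_{\Ztf}(\dS_W)$ that is ind-represented by some $\dA_W=(A_w)_{w\subseteq W \text{ finite}}$ with $\dS_W=\End_{\bF_q}(\dA_W)=\varprojlim_w S_w$ commutative. Because the transition maps $\dS_W\to S_w$ are surjective ring homomorphisms, each endomorphism ring $S_w=\End_{\bF_q}(A_w)$ is itself commutative.

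Fix $\pi\in W$ and choose any finite $w\subseteq W$ with $\pi\in w$. Applying the construction of Proposition~\ref{prop:formal_compatiblefunctors} to $v=\{\pi\}\subseteq w$ yields an abelian subvariety $A_{\{\pi\},w}\subseteq A_w$ which, by parts \ref{prop:formal_compatiblefunctors2} and \ref{prop:formal_compatiblefunctors3}, is an injective cogenerator for $\AV_\pi$ whose endomorphism ring $\End_{\bF_q}(A_{\{\pi\},w})$ is a quotient of the commutative ring $S_w$, and is therefore commutative. Proposition~\ref{prop:Weilnumbers commutative injective cogenerator} then forces the following dichotomy for $\pi$: either $\pi$ is ordinary, or $\expo=1$ and $\pi\neq\pm\sqrt{p}$.

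It remains to combine this pointwise dichotomy into a statement about $W$. If every $\pi\in W$ is ordinary, then $W\subseteq W_q^{\ord}$ and we are done. Otherwise pick a non-ordinary $\pi_0\in W$; the dichotomy yields $\expo=1$, that is $q=p$, and $\pi_0\neq\pm\sqrt{p}$. Since $q=p$, automatically $W\subseteq W_p$. For an arbitrary $\pi\in W$ we must now show $\pi\neq\pm\sqrt{p}$: if $\pi$ is non-ordinary this is the second case of the dichotomy applied to $\pi$, while if $\pi$ is ordinary this follows from the fact that the real conjugacy class $\{\pm\sqrt{p}\}$ is supersingular, hence non-ordinary. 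Therefore $W\subseteq W_p\setminus\{\pm\sqrt{p}\}=W_p^{\com}$.

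The proof is largely formal once the pointwise statement is in hand; the only mildly delicate step is the reduction from $W$ to a single $\pi$, which hinges on the fact that the truncation $A_w\mapsto A_{\{\pi\},w}$ of Proposition~\ref{prop:formal_compatiblefunctors} both preserves the injective cogenerator property and realizes $\End_{\bF_q}(A_{\{\pi\},w})$ as a quotient of $\End_{\bF_q}(A_w)$, so that commutativity passes to the truncated endomorphism ring.
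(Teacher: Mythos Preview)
Your argument follows the same route as the paper's, and you spell out the final combinatorial step (combining the pointwise dichotomy into a statement about $W$) more carefully than the paper does. There is, however, one unstated hypothesis you are using: to invoke Proposition~\ref{prop:formal_compatiblefunctors} for $v=\{\pi\}\subseteq w$ you need $A_w$ to be an injective cogenerator for $\AV_w$, and an arbitrary ind-representing system $\dA_W$ need not have this property at each level (nor need the maps $\dS_W\to S_w=\End_{\bF_q}(A_w)$ be surjective, as you assert). The paper handles exactly this point by first passing, ``by a diagonal process based on the technique of Proposition~\ref{prop:formal_compatiblefunctors}'', to an ind-system indexed by finite $w\subseteq W$ in which each $A_w$ represents the restricted anti-equivalence on $\AV_w$; then $A_w$ is an injective cogenerator by Theorem~\ref{thm:truncatedfullyfaithful} and your argument goes through verbatim.
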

\begin{proof}
Let $\dA_W$ be an inductive system of abelian varieties from $\AV_W$ that ind-represents an anti-equivalence of categories as in the proposition. By a diagonal process based on the technique of Proposition~\ref{prop:formal_compatiblefunctors}, we may assume that $\dA_W = (A_w)_{w \subseteq W}$ is indexed by finite subsets of Weil $q$-numbers in $W$, and such that $A_w$ represents the restriction of the anti-equivalence to $\AV_w$. It follows from Theorem~\ref{thm:truncatedfullyfaithful} that,  for all $\pi \in W$, the abelian variety $A_\pi$ must be an injective cogenerator for $\AV_\pi$ with a commutative ring of endomorphisms.
Now the claim follows from Proposition~\ref{prop:Weilnumbers commutative injective cogenerator}.
\end{proof}

\subsection{Lattices of optimal rank}
A noticeable feature of the main result Theorem~\ref{MainThm} is the necessity to work with lattices whose rank is a multiple of the first Betti number. 

\begin{prop}\label{prop:constantC} 
Let $\Lambda:\AV_{\bF_q} \to \Mod_{\Ztf}(\bZ)$ be an additive, contravariant functor that attaches to any abelian variety over $\bF_q$
a free $\bZ$-module of finite rank. Assume that there is a constant $\gamma>0$ such that
\[
\rank_{\bZ}(\Lambda(X))=\gamma \cdot \dim(X)
\]
for any $X$ in $\AV_{\bF_q}$. Then $\gamma$ is divisible by $2\lcm(\expo,2)$.
\end{prop}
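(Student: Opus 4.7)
The plan is to extract divisibility constraints on $\gamma$ by evaluating $\Lambda$ on $\bF_q$-simple abelian varieties $B_\pi$ associated via Honda--Tate theory to well-chosen Weil $q$-numbers $\pi$. Since $\Lambda$ is additive and contravariant, $\Lambda(B_\pi) \otimes \bQ$ becomes a right module over $E_\pi = \End_{\bF_q}(B_\pi) \otimes \bQ$, a central division algebra of index $s_\pi$ over $\bQ(\pi)$. The hypothesis $\gamma > 0$ forces $\Lambda(B_\pi) \ne 0$, and by simplicity of $E_\pi$ this module is faithful, hence free, say $\Lambda(B_\pi) \otimes \bQ \simeq E_\pi^{\oplus m}$ with $m \ge 1$. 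Comparing $\bQ$-dimensions via the rank hypothesis on $\Lambda$ and the Honda--Tate dimension formula $2\dim B_\pi = s_\pi [\bQ(\pi):\bQ]$ from Theorem~\ref{thm:TateEndostructure}\ref{thmitem:dimfoemula} yields $\gamma = 2 m s_\pi$, so that
\[
2 s_\pi \mid \gamma \qquad \text{for every Weil $q$-number } \pi.
\]

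I would then exhibit two Weil numbers whose $s_\pi$-values jointly realize $\lcm(\expo, 2)$. The first is the real Weil $q$-number $\pi_1 = \sqrt q$, which lies in $\bQ$ when $\expo$ is even and in the real quadratic field $\bQ(\sqrt p)$ when $\expo$ is odd. A direct computation using Tate's local invariant formula (Theorem~\ref{thm:TateEndostructure}\ref{thmitem:invariantatp}) shows that the invariants of $E_{\pi_1}$ are $1/2$ at each real place of $\bQ(\pi_1)$ and zero at each $p$-adic place, so $s_{\pi_1} = 2$, yielding $4 \mid \gamma$. For $\expo \ge 2$, Honda's existence theorem produces a Weil number $\pi_2$ corresponding to an $\bF_q$-simple abelian variety isoclinic of slope $1/\expo$; the $p$-adic invariant of $E_{\pi_2}$ then has denominator exactly $\expo$, hence $s_{\pi_2} = \expo$, yielding $2\expo \mid \gamma$.

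Combining $4 \mid \gamma$ with $2\expo \mid \gamma$ (the latter being automatic when $\expo = 1$ since $4 \mid \gamma$), one concludes $\lcm(4, 2\expo) \mid \gamma$. An elementary case analysis verifies $\lcm(4, 2\expo) = 2\lcm(\expo, 2)$: both equal $2\expo$ when $\expo$ is even and $4\expo$ when $\expo$ is odd, completing the argument. The main obstacle is the construction of the slope-$1/\expo$ Weil number for $\expo \ge 3$: this requires invoking Honda's theorem to globalize the local $\bQ_p$-division algebra of invariant $1/\expo$ to a central simple algebra over an appropriate CM number field $\bQ(\pi_2)$. The remaining ingredients---the faithful-module computation in the first step, the local invariant calculation for $\sqrt q$, and the concluding lcm identity---are routine given Tate's formula and basic Brauer theory.
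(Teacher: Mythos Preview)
Your overall strategy---deduce $2s_\pi \mid \gamma$ from the $E_\pi$-module structure on $\Lambda(B_\pi)\otimes\bQ$, then exhibit Weil numbers realizing $s_\pi=2$ and $s_\pi=\expo$---is exactly the paper's approach. Two points need correction, one minor and one substantive.

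The minor one: your local invariant computation for $\pi_1=\sqrt q$ is wrong when $\expo$ is even. In that case $\bQ(\pi_1)=\bQ$ and the $p$-adic invariant is $v_p(p^{\expo/2})\cdot 1/\expo = 1/2$, not zero. The conclusion $s_{\pi_1}=2$ survives (both the real and the $p$-adic invariants are $1/2$), so this does not affect the argument. The paper uses the same Weil number, phrased as a root of $x^2-q$.

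The substantive gap is your construction of $\pi_2$. An $\bF_q$-simple abelian variety cannot be isoclinic of slope $1/\expo$ for $\expo>2$: the Newton slopes of $B_\pi$ are the numbers $v(\pi)/v(q)$ as $v$ ranges over $p$-adic places of $\bQ(\pi)$, and complex conjugation $\pi\mapsto q/\pi$ forces the slope multiset to be stable under $\lambda\mapsto 1-\lambda$, so ``isoclinic of slope $1/\expo$'' would force $\expo=2$. Moreover, Honda's theorem runs in the opposite direction (from a given Weil number to an abelian variety) and is not a statement about globalizing local division algebras; what you actually need is simply to \emph{write down} a Weil $q$-number $\pi$ for which some $p$-adic invariant of $E_\pi$ has exact denominator $\expo$. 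The paper does this explicitly in Lemma~\ref{lem:attaine}: for $\expo>2$ a root $\pi$ of $x^2 - px + q$ is a Weil $q$-number with $\bQ(\pi)$ imaginary quadratic, the Newton polygon has distinct slopes $1$ and $\expo-1$ so $p$ splits in $\bQ(\pi)$, and at the place with $v(\pi)=1$ the invariant is $1/\expo$. This yields $s_\pi=\expo$. Replacing your abstract appeal to Honda with this explicit quadratic Weil number closes the gap and matches the paper's proof.
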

\begin{proof}
Let $\pi$ be a Weil $q$-number, and $B_\pi$ a simple object of $\AV_{\bF_q}$ associated to $\pi$.
The $\bQ$-vector space $\Lambda(B_\pi)\otimes\bQ$ defines, via functoriality of $\Lambda$, a right representation of $E_\pi=\End_{\bF_q}(B_\pi)\otimes\bQ$. 

Since $E_\pi$ is a division algebra, $E_\pi$ is the unique simple object  in the category of right representations of $E_\pi$. It follows that $\Lambda(B_\pi)\otimes\bQ$ is isomorphic to a multiple of $E_\pi$. Therefore
\[
\gamma \cdot \dim(B_\pi) = \dim_{\bQ} \Lambda(B_\pi) \otimes \bQ 
\]
is a multiple of (using Theorem~\ref{thm:TateEndostructure}~\ref{thmitem:dimfoemula}) 
\[
\dim_{\bQ} E_\pi = s_\pi^2 [\bQ(\pi):\bQ] = 2 s_\pi \dim(B_\pi).
\]
Hence $\gamma$ is divisible by the least common multiple of all $2s_\pi$ for all Weil $q$-numbers $\pi$. It was discussed in Section~\S\ref{sec:multiplicities} that $s_\pi$ always divides $\lcm(\expo,2)$. 

To complete the proof it is enough to show that 
\begin{itemize}
\item for any $\expo > 2$ there exists a Weil $q$-number $\pi'$ such that $s_{\pi'}=\expo$, and
\item for any $\expo \geq 1$ there exists a Weil $q$-number $\pi''$ with $s_{\pi''}=2$. 
\end{itemize}
The first statement is the content of Lemma~\ref{lem:attaine} below. The second one
is proved after checking that any root $\pi''$ of $x^2-q$ satisfies the required properties.
\end{proof}

\begin{lem}\label{lem:attaine} 
Let $\expo>2$, and let $\pi$ be a root of the polynomial $f(x)=x^2-px+q$, which is irreducible in $\bQ[x]$.
Then $\pi$ defines a Weil $q$-number such that the index $s_\pi$ of $E_\pi$ is $\expo$.
\end{lem}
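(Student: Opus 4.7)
The plan is as follows. First I would verify that $\pi$ is a Weil $q$-number whose field of definition is an imaginary quadratic field. Since $r\ge 2$, the discriminant $p^2-4q = p^2-4p^r$ of $f$ is strictly negative, so $f$ has two complex conjugate roots $\pi,\bar\pi$ with $\pi\bar\pi=q$. This gives $|\pi|=\sqrt q$, confirms that $\bQ(\pi)$ is an imaginary quadratic field (so it has no real places and only one complex place, at which the invariant is automatically $0$), and incidentally gives irreducibility of $f$ (an integer root would have to be real).

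Next I would determine the factorisation of $p$ in $\bQ(\pi)$ by reading off the $p$-adic Newton polygon of $f(x)=x^2-px+q$, whose vertices are $(0,r)$, $(1,1)$, $(2,0)$. For $r>2$ the two slopes $-(r-1)$ and $-1$ are distinct, hence $f$ factors over $\bQ_p$ into two distinct linear factors. Consequently $p$ splits in $\bQ(\pi)$ as $p\,\fO_{\bQ(\pi)}=\fp_1\fp_2$ with inertial degrees $f_{\fp_i}=1$ and valuations
\[
v_{\fp_1}(\pi)=1,\qquad v_{\fp_2}(\pi)=r-1,
\]
consistent with $v_{\fp_1}(\pi)+v_{\fp_2}(\pi)=v_p(q)=r$ forced by $\pi\bar\pi=q$.

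Applying Tate's formula (Theorem~\ref{thm:TateEndostructure}~\ref{thmitem:invariantatp}), all $\ell$-adic and archimedean contributions to $\inv(E_\pi)$ vanish, and the two $p$-adic invariants are
\[
\inv_{\fp_1}(E_\pi)=\frac{v_{\fp_1}(\pi)\,f_{\fp_1}}{r}=\frac{1}{r},\qquad
\inv_{\fp_2}(E_\pi)=\frac{v_{\fp_2}(\pi)\,f_{\fp_2}}{r}=\frac{r-1}{r}\equiv -\frac{1}{r}\pmod{\bZ}.
\]
The index $s_\pi$ is the period of $E_\pi$ in $\Br(\bQ(\pi))$, which by the Hasse--Brauer--Noether theorem is the least common multiple of the orders of the local invariants in $\bQ/\bZ$. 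Since $\gcd(1,r)=\gcd(r-1,r)=1$, both $1/r$ and $(r-1)/r$ have exact order $r$, and therefore $s_\pi=r$.

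The one non-formal step is the Newton polygon argument; everything else is direct bookkeeping with Tate's formula. The hypothesis $r>2$ enters precisely there: for $r=2$ the polygon degenerates to a single slope $-1$, so $p$ no longer splits and the local invariants at $p$ either vanish or give only $1/2$, collapsing $s_\pi$ to at most $2$ and hence preventing the strict equality $s_\pi=r$.
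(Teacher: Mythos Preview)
Your proof is correct and follows essentially the same approach as the paper: both arguments verify that $\pi$ is a Weil $q$-number via the negative discriminant, use the Newton polygon of $f$ (with its two distinct slopes when $r>2$) to split $p$ in $\bQ(\pi)$, and then invoke Tate's formula for the local invariants at the $p$-adic places. The only minor difference is that the paper computes just one invariant $1/r$ and concludes $s_\pi=r$ from the a priori bound $s_\pi\mid r$ (valid since $\bQ(\pi)$ has no real places), whereas you compute both invariants and take their lcm explicitly; this is a cosmetic distinction.
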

\begin{proof} 
The discriminant of $f(x)$ is $p^2-4q<0$. Thus $f(x)$ is irreducible with two complex conjugate roots and hence $\pi$ is a Weil $q$-number. 

In order to compute $s_\pi$ we must compute the order of the local invariants at $p$-adic places as in Theorem~\ref{thm:TateEndostructure}~\ref{thmitem:invariantatp}. The assumption $\expo > 2$ ensures that the Newton polygon of $x^2 - px + q$ with respect to the $p$-adic valuation $v_p$ has two different (negative) slopes $1$ and $\expo-1$. Therefore $f(x)$ splits in $\bQ_p[x]$ into a product of two distinct linear factors corresponding to two distinct primes of $\bQ(\pi)$ above $p$. Moreover, by comparing with the slopes, we can choose a prime $\fp \mid p$  such that  $v_\fp(\pi) = 1$. 
By Theorem~\ref{thm:TateEndostructure}~\ref{thmitem:invariantatp}, the local invariant of the division ring
$E_\pi$ at $\fp$ is $1/\expo \pmod{1}$, which suffices to ensure $s_\pi=\expo$.
\end{proof}

\section{Abelian varieties isogenous to a power of an elliptic curve} 
\label{sec:multiplicity}

Let $E$ be an elliptic curve over $\bF_q$ and $\pi=\pi_E$ the associated Weil $q$-number.
The category $\AV_\pi$ defined in Section~\S\ref{subsection:notation}
is the full subcategory of $\AV_q$ whose objects are the abelian varieties over $\bF_q$ isogenous to a power of $E$.
This category is the object of study of \cite{JKPRSBT}.
In this stimulating paper the authors give a characterization of those elliptic curves $E$ for which the functor
\[
T_{E} \colon  \AV_\pi  \longrightarrow \Mod_\Ztf(\End_{\bF_q}(E)), \qquad T_{E}(X) = \Hom_{\bF_q}(X,E)
\]
is an anti-equivalence of categories. Their main result \cite[Theorem~1.1]{JKPRSBT} says that $T_E$ is an anti-equivalence precisely in the following cases (recall $\expo = [\bF_q: \bF_p]$):
\begin{itemize}
\item $\pi$ is ordinary and $\End(E) = R_\pi$;
\item $\pi$ is supersingular, $\expo = 1$, and $\End(E) = R_\pi$, or
\item  $\pi$ is supersingular, $\expo = 2$, and $\End(E)$ is  of $\bZ$-rank $4$.
\end{itemize}

\begin{rmk}
\label{rmk:supersingularEhasreducedbalanced}
This result provides the complete list of Weil numbers $\pi$ for which an injective cogenerator of dimension $1$ for 
$\AV_\pi$ exists. 
\end{rmk}

For those supersingular Weil numbers $\pi$ associated to an elliptic curve that are left out from the treatment of \cite{JKPRSBT}, it is natural to ask what is the minimal dimension of an injective cogenerator of $\AV_\pi$.
To this purpose we draw the following consequence of Theorem~\ref{thm:connectedSpec classification of locallyprojective}.

\begin{cor}
Let $\pi$ be a  Weil number corresponding to an isogeny class of supersingular elliptic curves over $\bF_q$. Then 
\[
\widebar{m}_\pi = \begin{cases}
\expo & \text{ if  $\pi \notin \bQ$}, \\
\expo/2 & \text{ if $\pi \in \bQ$}
\end{cases}
\]
is an integer and equals   the minimal dimension of an object $A \in \AV_\pi$
such that the functor
\[
T_A \colon  \AV_\pi  \longrightarrow \Mod_\Ztf(\End_{\bF_q}(A)), \qquad T_A(X) = \Hom_{\bF_q}(X,A),
\]
is an anti-equivalence of categories.
\end{cor}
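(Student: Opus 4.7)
The plan is first to compute $\widebar m_\pi$ explicitly using the Honda--Tate dimension formula, and then to deduce the minimality statement as a short consequence of the classification in Theorem~\ref{thm:connectedSpec classification of locallyprojective} combined with the characterization in Theorem~\ref{thm:truncatedfullyfaithful}.

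For the first step I would note that $B_\pi = E$ has dimension one, so Theorem~\ref{thm:TateEndostructure}~\ref{thmitem:dimfoemula} gives $s_\pi \cdot [\bQ(\pi):\bQ] = 2$. If $\pi \in \bQ$ then $s_\pi = 2$ and $\pi = \pm p^{\expo/2}$ forces $\expo$ to be even, so $\widebar m_\pi = \expo/s_\pi = \expo/2 \in \bZ$. If $\pi \notin \bQ$ then $s_\pi = 1$ and $\widebar m_\pi = \expo$. I would also record, for use in the next step, that the excluded case $\pi = \pm\sqrt{q}$ with $\expo$ odd cannot arise for an elliptic curve, since the field $\bQ(\sqrt q) = \bQ(\sqrt p)$ is then real quadratic and the real invariants $1/2$ force $s_\pi = 2$ and hence $\dim(B_\pi) = 2$. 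Consequently, for our $\pi$ either $\expo$ is even or $\pi$ is non-real, and a reduced $\pi$-balanced abelian variety $\widebar A_\pi$ exists by Theorem~\ref{thm:integralstructure}, with $\dim(\widebar A_\pi) = \widebar m_\pi$.

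For the minimality I would apply Theorem~\ref{thm:connectedSpec classification of locallyprojective} to $w = \{\pi\}$. The minimal central order $R_\pi$ is an order in the field $\bQ(\pi)$ (and equals $\bZ$ in the rational case), hence a domain, so $X_\pi = \Spec(R_\pi)$ is connected. The Weil number $\pi$ is supersingular, hence non-ordinary, so the hypotheses of that theorem are satisfied. Its conclusion~(1) then says that every $\pi$-locally projective abelian variety is isogenous to a power $\widebar A_\pi^n$ for some $n \geq 1$. By Theorem~\ref{thm:truncatedfullyfaithful}, the functor $T_A$ is an anti-equivalence precisely when $A$ is $\pi$-locally projective with full support $\{\pi\}$; such an $A$ is isogenous to $\widebar A_\pi^n$ with $n \geq 1$, and therefore $\dim(A) = n \widebar m_\pi \geq \widebar m_\pi$. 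Conversely, $\widebar A_\pi$ itself is $\pi$-locally projective with full support, so $T_{\widebar A_\pi}$ is an anti-equivalence and attains the bound.

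The corollary is thus a direct assembly of results already established. There is no real obstacle beyond the bookkeeping of cases; the only subtlety is verifying at the outset that the conditions required for the existence of $\widebar A_\pi$ and for the hypotheses of Theorem~\ref{thm:connectedSpec classification of locallyprojective} are both met for every supersingular $\pi$ arising from an elliptic curve, which is precisely the observation that $\pi = \pm\sqrt q$ with $\expo$ odd is incompatible with $\dim(B_\pi) = 1$.
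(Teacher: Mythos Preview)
Your proof is correct and follows essentially the same approach as the paper's: both compute $\widebar m_\pi$ from the dimension formula of Theorem~\ref{thm:TateEndostructure}~\ref{thmitem:dimfoemula}, verify that a reduced $\pi$-balanced object $\widebar A_\pi$ exists, and then combine Theorem~\ref{thm:connectedSpec classification of locallyprojective} (using that $\Spec(R_\pi)$ is connected and $\pi$ is non-ordinary) with Theorem~\ref{thm:truncatedfullyfaithful} to identify the injective cogenerators as powers of $\widebar A_\pi$. Your write-up is slightly more explicit than the paper's in checking that the exceptional case $\pi = \pm\sqrt q$ with $\expo$ odd cannot arise for an elliptic curve, which is exactly what is needed to land in case~(1) of Theorem~\ref{thm:connectedSpec classification of locallyprojective}.
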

\begin{proof}
The formula for $\widebar{m}_\pi $ follows from Theorem~\ref{thm:TateEndostructure}~\ref{thmitem:dimfoemula}.
If $\pi$ is rational, then $\expo$ is even, hence  $\widebar{m}_\pi$ is a natural number. 

Let $\widebar{A}_\pi$ be a reduced $\pi$-balanced abelian variety, see Theorem~\ref{thm:integralstructure} for the construction. Since $\Spec(R_\pi)$ is connected and $\pi$ is supersingular, Theorem~\ref{thm:connectedSpec classification of locallyprojective} and Theorem~\ref{thm:truncatedfullyfaithful} show that any injective cogenerator for $\AV_\pi$ is isogenous to a power of $\widebar{A}_\pi$. Because $\widebar{A}_\pi$ is $\pi$-locally projective, it is an injective cogenerator by Theorem~\ref{thm:truncatedfullyfaithful}.  In particular, the minimal dimension of an injective cogenerator is equal to $\dim(\widebar{A}_\pi)$, which equals $\widebar{m}_\pi\cdot\dim(B_\pi)$. Since $B_\pi$ is $1$-dimensional the corollary follows. 
\end{proof}

We conclude the section and the paper with the detailed analysis of an example of a Weil number $\pi$ corresponding to a supersingular elliptic curve where  the reduced
$\pi$-balanced object $\widebar{A}_\pi$ is $2$-dimensional. In this example, all local invariants of $E_\pi=\End_{\bF_q}(B_\pi)\otimes\bQ$ are trivial and the
endomorphism ring of $\widebar{A}_\pi$ is a non-maximal order of $\rM_2(\bQ(\pi))$ that will be computed explicitly.

\smallskip

Let $p$ be a prime number such that $p \equiv 3 \pmod 4$ and let $r=2$, so that $q=p^2$. Let 
$\pi=ip$ be the Weil $q$-number
whose minimal polynomial is $x^2+p^2$. Let $B_\pi$ be any $\bF_q$-simple abelian variety over $\bF_q$ associated to $\pi$ via Honda-Tate theory. Since  $p$ is inert in the quadratic field $\bQ(i)= \bQ(\pi)$, we deduce from Theorem~\ref{thm:TateEndostructure}~\ref{thmitem:invariantatp} that all local invariants of $E_\pi$ are trivial. Hence  
\[
E_\pi = \End_{\bF_q}(B_\pi)\otimes\bQ=\bQ(\pi)=\bQ(i)
\]
is commutative and $B_\pi$ is an elliptic curve by Theorem~\ref{thm:TateEndostructure}~\ref{thmitem:dimfoemula}.
Since the trace $\pi + q/\pi$ 
of $\pi$ is zero, the elliptic curve $B_\pi$ is supersingular, however, not all of its geometric endomorphisms are defined over $\bF_q$. The minimal central order associated to $\{\pi\}$ is 
\[
R_\pi = \bZ[\pi,q/\pi] = \bZ[ip],
\]
which has index $p$ in the maximal order $\bZ[i]$. In particular, $R_\pi$ is not maximal at $p$. On the other hand, any such supersingular elliptic curve $B_\pi$ has an endomorphism ring which is maximal at $p$, cf.\ \cite[Theorem~4.2(3)]{Wa}. Thus  the  inclusion of $R_\pi$ in $\End_{\bF_q}(B_\pi)$  is proper and more precisely
\[
\bZ[ip] = R_\pi\subsetneq \End_{\bF_q}(B_\pi) = \bZ[i].
\]
This shows that the functor 
\[
T_{B_\pi} \colon \AV_\pi \longrightarrow \Mod_{\Ztf}(\End_{\bF_q}(B_\pi)), \qquad  T_{B_\pi}(X)  = \Hom_{\bF_q}(X,B_\pi)
\]
is not an anti-equivalence, for otherwise $\End_{\bF_q}(B_\pi)$ would have to agree with $R_\pi$ (see Proposition~\ref{prop:centerofinjectivecogenerators}). More directly, we can use that $\End_{\bF_q}(B_\pi) = \bZ[i]$ is a principal ideal domain and therefore $\Mod_{\Ztf}(\End_{\bF_q}(B_\pi))$ has a unique simple object. On the other hand, it follows from \cite[Theorem~5.1]{Wa} that $\AV_\pi$ has two non-isomorphic objects of dimension $1$, which therefore are both simple objects (see also Remark~\ref{rmk:whyBpiNotInjectiveCogenerator}). That also shows that $T_{B_\pi}(-)$ is not an anti-equivalence. 

\smallskip

The functor $\Hom_{\bF_q}(-,B_\pi)$ has only a problem locally at $p$ that prevents it from being fully faithful. We will calculate Dieudonn\'e modules for $B_\pi$, for its Forbenius twist 
\[
B_\pi^{(p)} = B_\pi \times_{\bF_q, \Frob} \bF_q
\]
and  for a choice of a reduced balanced $\bar A_\pi$  in order to show what the obstacle is and how $\bar A_\pi$ circumvents the local problem at $p$. 
Let $K=\bQ_p(\pi)$ be the completion of $\bQ(\pi)$ at the unique prime above $p$, fix an embedding $W(\bF_q) \inj K$ and denote by $\fo_K$ the image of $W(\bF_q)$.
The non-trivial Galois automorphism of $K/\bQ_p$ will be denoted by $a \mapsto \bar a$.

\begin{lem}
\label{lem:explicitdieudonne pi equal to pi}
There is an isomorphism of $K$-algebras
$\psi :  \cD_{\pi}^0 \xrightarrow{\sim} \rM_2(K)$,
\[
\psi(a) = 
\begin{pmatrix}
a&0\\
0&\bar a
\end{pmatrix}
\text{ for all $a \in W(\bF_q)$,  } \quad \psi(\cF) =  
\begin{pmatrix}
0&1\\
ip&0\\
\end{pmatrix}
\text{ and } \quad  \psi(\cV) = 
\begin{pmatrix}
0&-i\\
p&0\\
\end{pmatrix}.
\]
The integral Dieudonn\'e ring $\cD_\pi$ is identified by $\psi$ with the $\fo$-subalgebra 
\begin{equation}
\label{eq:psiDpi}
\psi(\cD_\pi) = \left\{\begin{pmatrix}
a&b\\
c&d\\
\end{pmatrix} \in \rM_2(\fo_K) \ ; \ p|c \text{ and } \ a  \equiv  \bar d \pmod p \right\}
\end{equation}
of index $p^4$ in the maximal order $\rM_2(\fo_K)$.
\end{lem}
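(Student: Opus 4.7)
The plan is to first confirm that the formulas for $\psi$ define a well-defined $\bZ_p$-algebra homomorphism from $\cD_\pi$ into $\rM_2(\fo_K)$, next promote $\psi$ to an isomorphism of central simple $K$-algebras by a dimension count, and finally compute the image of the integral subring $\cD_\pi$ directly in matrix coordinates, from which the index drops out.

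For the first step, I would check the defining relations of $\cD_\pi$ one by one. Namely: (i) $\psi(\cF)\psi(\cV) = pI$ is a direct $2\times 2$ matrix multiplication; (ii) the semilinearities $\psi(\cF)\psi(a) = \psi(\sigma a)\psi(\cF)$ and $\psi(\cV)\psi(a) = \psi(\sigma^{-1}a)\psi(\cV)$ for $a \in W(\bF_q)$, where one uses that under the chosen embedding $W(\bF_q) \inj \fo_K$ the Frobenius $\sigma$ of $W(\bF_q)/\bZ_p$ coincides with the unique nontrivial element of $\Gal(K/\bQ_p)$, i.e.\ with $a \mapsto \bar a$; both identities then reduce to checking that conjugation by either off-diagonal matrix $\psi(\cF)$ or $\psi(\cV)$ swaps the diagonal entries of $\psi(a)$; (iii) $\psi(\cF)^2 = ip \cdot I$, which computes the image of the central element $F = \cF^{\expo}$ as the scalar $\pi = ip$, and consequently $\psi(\cF)^4 + p^2 = -p^2 I + p^2 I = 0$.

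Having a well-defined ring map, I would then invert $p$. By \eqref{eq:rationalcyclicalgebra} and the discussion thereafter, $\cD_\pi^0$ is a central simple $K$-algebra of dimension $\expo^2 = 4$, and so is $\rM_2(K)$. Since $\psi(\cF^2) = \pi \cdot I$, the map $\psi$ is the identity on the center $K$, hence a $K$-algebra map between CSAs of the same dimension, and therefore an isomorphism.

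For the integral statement, I would expand a general element in the $\fo_K$-basis $\{\cV, 1, \cF, \cF^2\}$ of $\cD_\pi$ given by \eqref{eq:W basis Dw} for $d = 1$ and $\expo = 2$, and compute entry-by-entry:
\[
\psi(a_0 \cV + a_1 + a_2 \cF + a_3 \cF^2) \ = \
\begin{pmatrix} a_1 + ip a_3 & -i a_0 + a_2 \\ p(\bar a_0 + i \bar a_2) & \bar a_1 + ip \bar a_3 \end{pmatrix}.
\]
The two conditions of \eqref{eq:psiDpi} are visible: the lower-left entry lies in $p\fo_K$, and the difference between the upper-left entry and the conjugate of the lower-right entry is $2ip a_3 \in p\fo_K$. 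The converse inclusion follows by solving for $(a_0, a_1, a_2, a_3)$ in $\fo_K$; this is where $p$ odd matters, so that $2$ is a unit in $\fo_K$. The index is now a product of two independent contributions: the condition $p \mid c$ is of index $|\fo_K/p\fo_K| = p^2$ in the $c$-coordinate, and the condition $a \equiv \bar d \pmod p$ is of index $p^2$ in the pair $(a,d)$, for a total index of $p^4$. No serious obstacle arises; the only point requiring care is consistent bookkeeping of Galois conjugation, since $\sigma$ nontrivially moves both $a \in \fo_K$ and the element $i$, with $\bar i = -i$.
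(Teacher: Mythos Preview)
Your proposal is correct and follows essentially the same approach as the paper: verify the relations to see $\psi$ is well defined, invoke that both sides are central simple $K$-algebras of the same dimension to get the isomorphism, then compute $\psi$ on a general element in the $W(\bF_q)$-basis $\cV,1,\cF,\cF^2$ and read off the congruence conditions. Your explicit solution of the converse inclusion (with the observation that $p$ odd is needed to invert $2$) is a touch more detailed than the paper, but the argument is the same; one minor cosmetic point is that the basis reference should be to the even-degree discussion following \eqref{eq:definitionDw} rather than \eqref{eq:W basis Dw}.
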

\begin{proof}
The minimal polynomial of $\pi = ip$ is $P_\pi(x) = x^2 + p^2$. As defined in Section~\S\ref{sec:mincenord}, it follows that $h_\pi(F,V) = F+V$, and \eqref{eq:definitionDw} implies 
the description 
\[
\cD_\pi^0 = W(\bF_{q})\{\cF,\cV\}/\cF\cV - p, \cF^2+\cV^2) \otimes_{\bZ_p} \bQ_p
\]
in terms of generators and relations. A computation shows that $\psi$ is well defined. Since both $\cD_\pi^0$ and $\rM_2(K)$ are central simple algebras over $K$ of degree $2$, the map $\psi$ must be an isomorphism. 

It remains to compute the image $\psi(\cD_\pi)$. The elements $\cV, 1, \cF$ and $\cF^2$ span $\cD_\pi$ as a left $W(\bF_{q})$-module. Hence $\psi(\cD_\pi)$ is the set of 
\[
\psi(u\cV+ x \cdot 1 + v \cF + y \cF^2) = \begin{pmatrix}
x+ yip & v - i u \\
p (-\bar u + i \bar v)  & \bar x + \bar y ip 
\end{pmatrix}
\]
for all $x,u,y,v \in W(\bF_{q})$. Now the claim follows from
\[
\left\{ \binom{x+yip}{\bar x + \bar y ip} \ ; \ x,y \in W(\bF_{q})\right\} = \left\{ \binom{a}{d} \ ; \ a,d \in W(\bF_{q}), \ a \equiv \bar d \pmod p\right\},
\]
\[
\left\{ \binom{v - i u}{-\bar u + i \bar v} \ ; \ u,v \in W(\bF_{q})\right\} = \left\{ \binom{b}{c'} \ ; \ b,c' \in W(\bF_{q})\right\}. \qedhere
\]
\end{proof}

Via the embedding $\cD_\pi \subseteq \rM_2(K)$ 
induced by $\psi$ of Lemma~\ref{lem:explicitdieudonne pi equal to pi} 
we obtain two $\cD_\pi$-modules 
\[
 \Lambda_1 = \{\binom{a}{c} \in \fo_K^{\oplus 2} \ ; \ p \mid c\}, \qquad \Lambda_2 = \fo_K^{\oplus 2}, 
\]
as the first column, respectively the second column, of the matrix description. This results in the following embedding of $\cD_\pi$-modules.
\begin{equation}
\label{eq:dieudonnemoduleApi}
\cD_\pi \subseteq \Lambda_1 \oplus \Lambda_2 \subseteq \rM_2(\fo_K) \simeq \Lambda_2 \oplus \Lambda_2.
\end{equation}

\begin{lem}
Up to $K^\times$-homothety, there are only two $\cD_\pi$-lattices in $K^{\oplus 2}$, namely $\Lambda_1$ and $\Lambda_2$.
\end{lem}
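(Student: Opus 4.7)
The plan is to classify $\cD_\pi$-stable lattices in $V=K^{\oplus 2}$ by reducing modulo the uniformizer $p$ of $K$ and analyzing $\cD_\pi/p\cD_\pi$-stable lines in $V/pV$.

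First I would verify, directly from the explicit description \eqref{eq:psiDpi}, that both $\Lambda_1$ and $\Lambda_2$ are $\cD_\pi$-stable and non-homothetic: for $\Lambda_2$ this is trivial; for $\Lambda_1 = \fo_K e_1 \oplus p\fo_K e_2$ one checks that the generic matrix $\bigl(\begin{smallmatrix} a & b \\ c & d\end{smallmatrix}\bigr)$ with $p\mid c$ preserves both $e_1$ and $pe_2$. Non-homothety is clear from comparing elementary divisors.

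Next, let $\Lambda \subset V$ be an arbitrary $\cD_\pi$-lattice. Because $p$ is a uniformizer of $\fo_K$ and $\fo_K \subseteq \cD_\pi$ acts diagonally, $\Lambda$ is automatically an $\fo_K$-lattice in $V$. A suitable power $p^n \Lambda$ may be arranged to sit inside $\Lambda_2$ but not inside $p\Lambda_2$, since $K^\times$ modulo $\fo_K^\times$ is generated by $p$. Replacing $\Lambda$ by $p^n\Lambda$ (a $K^\times$-homothety), I may assume $p\Lambda_2 \subsetneq \Lambda \subseteq \Lambda_2$; the inclusion $p\Lambda_2 \subseteq \Lambda$ follows from elementary divisor theory, which forces $\Lambda_2/\Lambda$ to be either $0$ or $\fo_K/p$. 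In the first case $\Lambda=\Lambda_2$ and we are done; in the second, the image $\bar\Lambda = \Lambda/p\Lambda_2$ is a one-dimensional $\bF_{p^2}$-subspace of $\Lambda_2/p\Lambda_2 \simeq \bF_{p^2}^{\oplus 2}$ that is stable under $\cD_\pi/p\cD_\pi$.

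The main step, which is the only computation with any content, is to identify the $\cD_\pi/p\cD_\pi$-stable lines. From \eqref{eq:psiDpi}, reduction modulo $p$ yields
\[
\cD_\pi/p\cD_\pi = \left\{\begin{pmatrix} d^p & b \\ 0 & d\end{pmatrix} : b,d \in \bF_{p^2}\right\} \subset \rM_2(\bF_{p^2}),
\]
since the Galois involution $a \mapsto \bar a$ on $\fo_K$ reduces to the Frobenius $d \mapsto d^p$ on $\bF_{p^2}$. A line $\bF_{p^2}\cdot \binom{x_0}{y_0}$ is stable if and only if $\binom{d^p x_0 + b y_0}{d y_0}$ lies on it for all $b,d$. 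If $y_0 \neq 0$, taking $d=0$ and varying $b$ shows that the entire first coordinate axis must lie on the line, a contradiction to one-dimensionality. Hence $y_0=0$ and $\bar\Lambda = \bF_{p^2}\binom{1}{0}$, which forces $\Lambda = \fo_K e_1 \oplus p\fo_K e_2 = \Lambda_1$. This completes the classification.

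The hard part is nothing more than the explicit but short linear-algebra computation above; the conceptual content — that reduction modulo $p$ determines a $\cD_\pi$-lattice sandwiched between $p\Lambda_2$ and $\Lambda_2$ — is entirely standard.
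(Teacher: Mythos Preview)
Your approach is essentially the same as the paper's --- normalize by homothety to sandwich $p\Lambda_2 \subseteq \Lambda \subseteq \Lambda_2$, then classify the $\cD_\pi$-stable subspaces of $\Lambda_2/p\Lambda_2$ via the explicit mod-$p$ reduction --- and your computation of the unique stable line is correct.

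There is, however, a genuine gap in your normalization step. After scaling so that $\Lambda \subseteq \Lambda_2$ and $\Lambda \not\subseteq p\Lambda_2$, you assert that ``the inclusion $p\Lambda_2 \subseteq \Lambda$ follows from elementary divisor theory.'' It does not. Elementary divisors over $\fo_K$ give $\Lambda = \fo_K f_1 \oplus p^{a}\fo_K f_2$ with $a \geq 0$ for some $\fo_K$-basis $f_1,f_2$ of $\Lambda_2$, but nothing prevents $a \geq 2$; the lattice $\fo_K e_1 \oplus p^{5}\fo_K e_2$ satisfies your hypotheses yet does not contain $p\Lambda_2$. What is missing is exactly the $\cD_\pi$-stability: from \eqref{eq:psiDpi} one reads off immediately that $p\rM_2(\fo_K) \subseteq \psi(\cD_\pi)$, so any $\cD_\pi$-stable lattice $\Lambda$ satisfies $p\rM_2(\fo_K)\cdot\Lambda \subseteq \Lambda$. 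Taking $v \in \Lambda$ with a unit coordinate (which exists since $\Lambda \not\subseteq p\Lambda_2$) then gives $p\Lambda_2 = p\rM_2(\fo_K)\cdot v \subseteq \Lambda$. This is precisely how the paper argues, and once you insert this one line your proof is complete and matches the paper's.
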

\begin{proof}
Since we have 
\[
p\rM_2(\fo_K) \subseteq \cD_\pi \subseteq \rM_2(\fo_K),
\]
any $\cD_\pi$-lattice $\Lambda$ can by homothety be brought to a position 
\[
p \Lambda_2 \subseteq \Lambda \subseteq \Lambda_2.
\]
The choice of $\Lambda$ corresponds to a $\cD_\pi$-stable subgroup of $\Lambda_2/p\Lambda_2 = \fo_K^{\oplus 2}/p\fo_K^{\oplus 2} = \bF_q^{\oplus 2}$ on which $\cD_\pi$ acts through the reduction mod $p$, i.e.\ the image of 
\[
\cD_\pi \xrightarrow{\psi} \rM_2(\fo_K) \surj \rM_2(\fo_K/p\fo_K) \simeq \rM_2(\bF_q).
\]
This image consists of 
\[
\left\{\begin{pmatrix}
a&b\\
0&d\\
\end{pmatrix} \in \rM_2(\bF_q) \ ; \  a = \bar d \right\}.
\]
Therefore the only nontrivial invariant subspace in $\Lambda_2/p\Lambda_2$ is the image of $\Lambda_1$.
\end{proof}

Let $\Frob: B_\pi\to B_\pi^{(p)}$ be the $\bF_q$-isogeny given by the relative ($p$-)Frobenius morphism.  
We set $T_p(B_\pi) = \Lambda$ and identify $\Frob$ via Dieudonn\'e theory with the inclusion of $\cD_\pi$-modules
\[
T_p(B_\pi^{(p)}) = \cF\Lambda \subseteq \Lambda = T_p(B_\pi),
\]
which has a cokernel 
of $W(\bF_q)$-length $1$ corresponding to the kernel of Frobenius 
under the Dieudonn\'e functor for finite group schemes. 
Up to exchanging $B_{\pi}$ with its Frobenius twist, we may and will assume that $T_p(B_\pi) = \Lambda_2$ and $T_p(B_\pi^{(p)}) = \Lambda_1 \subseteq \Lambda_2$.
With these identifications in place, the $\cD_\pi$-embedding $\cD_\pi \inj \Lambda_1 \oplus \Lambda_2$ from \eqref{eq:dieudonnemoduleApi} has cokernel 
of $W(\bF_q)$-length $1$ and determines an isogeny
\[
B_\pi^{(p)} \times B_\pi \longrightarrow \bar A_\pi
\]
of degree $p$ with $T_p(\bar A_\pi) = \cD_\pi$. 

\begin{prop}
The abelian variety $\bar A_\pi$ constructed above is a reduced $\pi$-balanced object in $\AV_\pi$ and hence an injective cogenerator.
\end{prop}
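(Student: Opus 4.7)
The plan is to verify that $\bar A_\pi$ satisfies the three defining properties of a reduced $\pi$-balanced abelian variety listed in Theorem~\ref{thm:integralstructure} and Definition~\ref{defi:balanced}, namely: it is isogenous to $B_\pi^{\bar m_\pi}$, its $\ell$-adic Tate module is free of rank $\expo$ over $R_\pi\otimes\bZ_\ell$ for every $\ell\neq p$, and its Dieudonn\'e module is isomorphic to $\cD_\pi$. Once these are verified, the statement that $\bar A_\pi$ is an injective cogenerator is immediate from Theorem~\ref{thm:truncatedfullyfaithful}, because a reduced $\pi$-balanced object is $\pi$-locally projective (Definition~\ref{defi:balanced} and the remark following it) and has Weil support equal to $\{\pi\}$.

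First I would record the numerics: in this example $\expo=2$, $E_\pi=\bQ(i)$ is commutative so $s_\pi=1$, and therefore $\bar m_\pi=\expo/s_\pi=2$. Since $B_\pi^{(p)}$ shares its Weil number with $B_\pi$ it is $\bF_q$-isogenous to $B_\pi$ (for instance via the relative Frobenius), so the degree $p$ isogeny $B_\pi^{(p)}\times B_\pi \to \bar A_\pi$ shows that $\bar A_\pi$ lies in the isogeny class of $B_\pi^{2}=B_\pi^{\bar m_\pi}$, settling the isogeny type. The Dieudonn\'e module condition is built into the construction: under the identification in \eqref{eq:dieudonnemoduleApi} the inclusion $\cD_\pi\subseteq\Lambda_1\oplus\Lambda_2=T_p(B_\pi^{(p)})\oplus T_p(B_\pi)$ has $W(\bF_q)$-length $1$ cokernel, and by definition $T_p(\bar A_\pi)=\cD_\pi$ as $\cD_\pi$-module.

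The remaining point is the freeness of $T_\ell(\bar A_\pi)$ over $R_\pi\otimes\bZ_\ell$ for $\ell\neq p$. Because $p$ is a unit in $\bZ_\ell$, the isogeny $B_\pi^{(p)}\times B_\pi\to\bar A_\pi$ of degree $p$ induces an isomorphism
\[
T_\ell(\bar A_\pi)\simeq T_\ell(B_\pi^{(p)})\oplus T_\ell(B_\pi)
\]
of $R_\pi\otimes\bZ_\ell$-modules. Moreover $R_\pi\otimes\bZ_\ell=\bZ_\ell[ip]=\bZ_\ell[i]$ since $p$ is invertible in $\bZ_\ell$; this is a semi-local ring, in fact a finite product of discrete valuation rings (according to the splitting behaviour of $\ell$ in $\bQ(i)$, including the ramified case $\ell=2$). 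By Lemma~\ref{lem:rationalAtell}, $V_\ell(B_\pi)$ has dimension $s_\pi=1$ over $\bQ(\pi)\otimes\bQ_\ell$, so the $\bZ_\ell$-torsion-free module $T_\ell(B_\pi)$ is finitely generated of rank $1$ over the product of DVRs $\bZ_\ell[i]$ and hence free of rank $1$. The same argument applies to $B_\pi^{(p)}$, giving $T_\ell(\bar A_\pi)\simeq (R_\pi\otimes\bZ_\ell)^{\oplus 2}$.

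With all three conditions established, $\bar A_\pi$ is reduced $\pi$-balanced and in particular $\pi$-locally projective with support $\{\pi\}$. Theorem~\ref{thm:truncatedfullyfaithful} then delivers the injective cogenerator conclusion. I do not foresee a serious obstacle: the construction of $\bar A_\pi$ was tailored so that the $p$-adic condition holds on the nose, and the $\ell\neq p$ verification is essentially a bookkeeping exercise using the fact that the only delicate integrality in $R_\pi$ is precisely at $p$.
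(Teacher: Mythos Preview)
Your proof is correct and follows essentially the same approach as the paper: both verify $T_p(\bar A_\pi)=\cD_\pi$ by construction and then use that $R_\pi\otimes\bZ_\ell=\bZ_\ell[i]$ (for $\ell\neq p$) is a product of discrete valuation rings to conclude freeness of the $\ell$-adic Tate module. Your argument is somewhat more explicit than the paper's, which simply observes that $R_\pi[\tfrac{1}{p}]=\bZ[i,\tfrac{1}{p}]$ is Dedekind and leaves the rank computation implicit, whereas you trace the rank through the degree-$p$ isogeny and Lemma~\ref{lem:rationalAtell}; both routes are fine.
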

\begin{proof}
By construction, $T_p(\bar A_\pi) = \cD_\pi$ is a free $\cD_\pi$-module of rank $1$.
For $\ell \not= p$, the $R_\pi \otimes \bZ_\ell$-module $T_\ell(\bar A_\pi)$ is free of rank $2$ because  $R_\pi = \bZ[ip]$ and the localization $R_\pi[\frac{1}{p}] = \bZ[i,\frac{1}{p}]$ is a Dedekind ring, hence $R_\pi$ is regular away from $p$. 
\end{proof}

\begin{rmk}
\label{rmk:whyBpiNotInjectiveCogenerator}
We may formulate the failure for $B_\pi$ to be an injective cogenerator as follows. There are failures in two steps:
\begin{enumerate}[label=(\arabic*),align=left,labelindent=0pt,leftmargin=*,widest = (8)]
\item
There are two non-isomorphic $\cD_\pi$ lattices $\Lambda_1$ and $\Lambda_2$ that represent non-isomorphic simple objects in $\AV_\pi$, while the target category of $\End_{\bF_q}(B_\pi)$-modules has only one isomorphism type of simple objects that those can be mapped to. 
\item 
The reduced $\pi$-balanced abelian variety $\bar A_\pi$ does a better job because first of all it combines both $\cD_\pi$-lattices. But $B_\pi^{(p)} \times B_\pi$ also does this, and still $\Hom_{\bF_q}(-,B_\pi^{(p)} \times B_\pi)$ is not an anti-equivalence, because otherwise $B_\pi^{(p)} \times B_\pi$ was $\pi$-locally projective and then the same would apply to the direct factor $B_\pi$, contradiction.
\item
The decisive improvement of $\bar A_\pi$ over the product $B_\pi^{(p)} \times B_\pi$ comes from choosing an appropriate sublattice $\cD_\pi$ in the product $\Lambda_1 \oplus \Lambda_2$. Now, the abelian variety $\bar A_\pi$ has $T_p(\bar A_\pi) = \cD_\pi$ and so is also locally projective at $p$. The construction of the sublattice can be described in terms of a congruence as follows.  Although $\Lambda_1$ is not isomorphic to $\Lambda_2$ as $\cD_\pi$-modules, the Frobenius on $\bF_q$ induces an isomorphism of finite $\cD_\pi$-modules
\begin{align*}
\Lambda_1/\cF\Lambda_1  \xrightarrow{\sim} \bF_q & \xrightarrow{x \mapsto \bar x} \bF_q \xleftarrow{\sim}   \Lambda_2/\cF\Lambda_2 
\\
\binom{a}{c} + \cF \Lambda_1  \mapsto a +  p\fo_K & \  \longmapsto \  d \equiv \bar{a}  + p\fo_K \mapsfrom \binom{b}{d} + \cF \Lambda_2 .
\end{align*}
This congruence yields a description of $\cD_\pi$ as a fiber product of $\cD_\pi$-modules
\[
\xymatrix@M+1ex{
\cD_\pi \ar[r] \ar[d] & \Lambda_2 \ar[d]^{\binom{b}{d} \mapsto \bar d} \\
\Lambda_1 \ar[r]^{\binom{a}{c} \mapsto a} & \bF_q ,
}
\]
and in some sense it is this coincidence of a congruence between $\Lambda_1$ and $\Lambda_2$ as $\cD_\pi$-lattices which endows $T_p(\bar A_\pi)$ and a posteriori $\bar A_\pi$ with its remarkable properties.
\end{enumerate}
\end{rmk}

We conclude the example by computing $S_\pi = \End_{\bF_q}(\bar A_\pi)$.

\begin{prop}
\label{prop:endoApiexample}
There is an isomorphism 
\[
S_\pi  = \End_{\bF_q}(\bar A_\pi) \simeq
\left\{\begin{pmatrix}
a&b\\
c&d\\
\end{pmatrix} \in \rM_2(\bZ[i]) \ ; \ p|c \text{ and } \ a  \equiv \bar d \pmod p \right\}.
\]
\end{prop}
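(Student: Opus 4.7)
The plan is to compute $S_\pi$ by using Tate's local theorems to determine $S_\pi \otimes \bZ_\ell$ for every prime $\ell$, and then gluing these local descriptions via the fpqc fiber product
\[
S_\pi = S_\pi[\tfrac{1}{p}] \cap (S_\pi \otimes \bZ_p)
\]
inside $\rM_2(\bQ(i)) \otimes \bQ_p = \rM_2(K)$, where the ambient identification of $S_\pi \otimes \bQ$ with $\rM_2(\bQ(i))$ comes from the fact that $\bar A_\pi$ is $\bF_q$-isogenous to $B_\pi^{2}$ with $\End_{\bF_q}(B_\pi) \otimes \bQ = E_\pi = \bQ(i)$.

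First I will treat the $p$-adic part. By construction $T_p(\bar A_\pi) = \cD_\pi$ as a free left $\cD_\pi$-module of rank one, so Proposition~\ref{prop:localTateTheorem}\ref{propitem:localTateTheorem p} yields a ring isomorphism
\[
S_\pi \otimes \bZ_p \;\xrightarrow{\sim}\; \End_{\cD_\pi}(\cD_\pi)^{\op} \;\simeq\; \cD_\pi,
\]
the last identification being the standard one that sends the opposite of right multiplication to the ring itself. Composing with $\psi$ from Lemma~\ref{lem:explicitdieudonne pi equal to pi} and the explicit description \eqref{eq:psiDpi} embeds $S_\pi \otimes \bZ_p$ into $\rM_2(K)$ as precisely the subring of matrices with entries in $\fo_K$ satisfying $p \mid M_{21}$ and $M_{11} \equiv \overline{M_{22}} \pmod p$. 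I must check here that under $\psi$ the central subring $\bQ(i) \subseteq \cD_\pi^0$ (image of $\pi/p = i$) maps to the scalar matrices $i \cdot I$, which is immediate from $\psi(\cF)^2 = ip \cdot I$; this fixes the compatibility of the embedding $\bQ(i) \hookrightarrow K$ with the diagonal embedding $\bQ(i) \subseteq \rM_2(\bQ(i)) \subseteq \rM_2(K)$.

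Second, at primes $\ell \neq p$ the relation $R_\pi \otimes \bZ_\ell = \bZ[ip]\otimes\bZ_\ell = \bZ[i]\otimes\bZ_\ell$ (since $p$ is a unit) together with reduced balancedness gives $T_\ell(\bar A_\pi) \simeq (R_\pi \otimes \bZ_\ell)^{\oplus 2}$, so Tate's isomorphism \eqref{TateThmell} identifies
\[
S_\pi \otimes \bZ_\ell \;\simeq\; \rM_2(R_\pi \otimes \bZ_\ell) \;=\; \rM_2(\bZ[i] \otimes \bZ_\ell).
\]
Since $S_\pi[\tfrac{1}{p}]$ and $\rM_2(\bZ[i,\tfrac{1}{p}])$ are both $\bZ[\tfrac{1}{p}]$-orders in $\rM_2(\bQ(i))$ that coincide after tensoring with $\bZ_\ell$ for every $\ell \neq p$, they are equal: $S_\pi[\tfrac{1}{p}] = \rM_2(\bZ[i,\tfrac{1}{p}])$.

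Finally, I will perform the gluing. A matrix $M \in \rM_2(\bQ(i))$ belongs to $S_\pi$ iff it belongs to $\rM_2(\bZ[i,\tfrac{1}{p}])$ and its image in $\rM_2(K)$ lies in $\psi(\cD_\pi)$. The entrywise intersection $\bZ[i,\tfrac{1}{p}] \cap \fo_K = \bZ[i]$ (the unique prime above $p$ being $(p)$, so $p^{-n} x \in \fo_K$ forces $p^n \mid x$ in $\bZ[i]$) forces $M \in \rM_2(\bZ[i])$; the remaining congruences $p \mid M_{21}$ and $M_{11} \equiv \overline{M_{22}} \pmod p$ descend to $\bZ[i]$ because the non-trivial Galois automorphism of $K/\bQ_p$, restricted via the embedding $i \mapsto i$ of $\bQ(i) \hookrightarrow K$, is ordinary complex conjugation on $\bZ[i]$. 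This yields exactly the claimed description of $S_\pi$. The main technical obstacle is bookkeeping these identifications consistently — in particular, making sure that the Galois-conjugate notation $\bar d$ in the image of $\psi$ matches complex conjugation on $\bZ[i]$ after the global embedding — and sorting out the $\op$ arising from the contravariance of Tate's theorem at $p$; the rest is a direct intersection of explicit orders.
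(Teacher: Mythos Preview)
Your approach is exactly the paper's: compute $S_\pi$ locally via Tate's theorems and glue by an fpqc fiber product. The paper's one-line proof just invokes the proof of Theorem~\ref{thm:Sw} applied to the explicit $p$-power isogeny $\ph\colon B_\pi \times B_\pi \xrightarrow{\Frob \times \id} B_\pi^{(p)} \times B_\pi \to \bar A_\pi$.

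There is one place where your argument is too quick. You assert that $S_\pi[\tfrac1p]$ and $\rM_2(\bZ[i,\tfrac1p])$, viewed as orders in $\rM_2(\bQ(i))$ via your fixed rational identification, ``coincide after tensoring with $\bZ_\ell$'' and hence are equal. But the $\ell$-adic isomorphism you obtained from Tate's theorem is only an \emph{abstract} ring isomorphism, not a priori the one induced by your rational identification; so ``coincide'' is not yet justified, and your check that the center goes to scalars does not close the gap (Skolem--Noether still leaves an inner ambiguity). The remedy --- and this is precisely what the paper supplies --- is to take the rational identification to be conjugation by the specific isogeny $\ph$ above. Since $\deg(\ph)$ is a $p$-power, $T_\ell(\ph)$ is an isomorphism for every $\ell \neq p$, and Lemma~\ref{lem:pushEndprimetodegree} then gives literal equality $S_\pi \otimes \bZ_\ell = \rM_2(\bZ[i] \otimes \bZ_\ell)$ under this identification. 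The same choice makes the $p$-adic compatibility automatic: by the construction of $\bar A_\pi$, the map $T_p(\ph)$ is the inclusion $\cD_\pi \subset \Lambda_1 \oplus \Lambda_2 \subset \rM_2(\fo_K)$ of \eqref{eq:dieudonnemoduleApi}, which is $\psi$ itself, so the two embeddings of $S_\pi \otimes \bQ_p$ into $\rM_2(K)$ genuinely agree and your intersection computes $S_\pi$ on the nose.
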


\begin{proof}
This follows by the proof of Theorem~\ref{thm:Sw} applied to the isogeny of $p$-power degree
\[
\ph: B_\pi \times B_\pi \xrightarrow{\Frob \times \id} B_\pi^{(p)} \times B_\pi \to \bar A_\pi. \qedhere
\]
\end{proof}

As a consequence of Proposition~\ref{prop:endoApiexample} the category $\AV_\pi$ is anti-equivalent to the category
of modules over the non maximal order $S_\pi$ of $\rM_2(\bQ(i))$ which are  finite and free as $\bZ$-modules.


\end{document}